\definecolor{darkblue}{rgb}{0,0,0.3}
\definecolor{darkgreen}{rgb}{0,0.4,0}
\setlist[enumerate]{label={\upshape(\arabic*)},topsep=.7ex, leftmargin=*}
\setlist[itemize]{leftmargin=*}
\theoremstyle{plain}
\newtheorem{step}{Step}
\newtheorem{thm}{Theorem}[section]
\newtheorem{conj}[thm]{Conjecture}
\newtheorem*{conjF}{Conjecture~$\cF$}
\newtheorem*{conjFplus}{Conjecture~$\cFplus$}
\newtheorem*{conjFconst}{Conjecture~$\cFconst$}
\newtheorem{lem}[thm]{Lemma}
\newtheorem{cor}[thm]{Corollary}
\newtheorem{prop}[thm]{Proposition}
\theoremstyle{definition}
\newtheorem{question}[thm]{Question}
\newtheorem{defn}[thm]{Definition}
\newtheorem{rmk}[thm]{Remark}
\newtheorem{rmks}[thm]{Remarks}
\newtheorem{example}[thm]{Example}
\numberwithin{equation}{section}
\DeclareFontFamily{U}{russian}{}
\DeclareFontShape{U}{russian}{m}{n}
        { <5><6> wncyr5
        <7><8><9> wncyr7
        <10><10.95><12><14.4><17.28><20.74><24.88> wncyr10 }{}
\DeclareSymbolFont{Russian}{U}{russian}{m}{n}
\DeclareSymbolFontAlphabet{\mathcyr}{Russian}
\let\@math@cyr\mathcyr
\renewcommand{\mathcyr}[1]{\@math@cyr{\cyracc #1}}
\newcommand{\Sha}{{\mathcyr{Sh}}}
\newcommand{\Frob}{\mathrm{Fr}}
\newcommand{\ab}{{\mathrm{ab}}}
\newcommand{\rank}{{\mathrm{rank}}}
\newcommand{\isoto}{\myxrightarrow{\,\sim\,}}
\newcommand{\isotoleft}{\myxleftarrow{\,\sim\,}}
\def\myrightarrow{{\setbox\z@\hbox{$\rightarrow$}\dimen0\ht\z@\multiply\dimen0 6\divide\dimen0 10\ht\z@\dimen0\box\z@}}
\def\myrightarrowfill@{\arrowfill@\relbar\relbar\myrightarrow}
\def\myleftarrow{{\setbox\z@\hbox{$\leftarrow$}\dimen0\ht\z@\multiply\dimen0 6\divide\dimen0 10\ht\z@\dimen0\box\z@}}
\def\myleftarrowfill@{\arrowfill@\myleftarrow\relbar\relbar}
\newcommand{\myxrightarrow}[2][]{\ext@arrow 0359\myrightarrowfill@{#1}{#2}}
\newcommand{\myxleftarrow}[2][]{\ext@arrow 3095\myleftarrowfill@{#1}{#2}}
\newcommand{\mtilde}{{\mathchoice
    {\widetilde{m}}
    {\widetilde{m}}
    {\rlap{$\scriptscriptstyle{m}$}\vphantom{\raise0pt\hbox{$m$}}\smash{\lower2.5pt\hbox{$\scriptscriptstyle\widetilde{\phantom{\scriptscriptstyle{m}}}$}}}
    {\rlap{$\scriptscriptstyle{m}$}\vphantom{\raise.2pt\hbox{$m$}}\smash{\lower2.05pt\hbox{$\scriptscriptstyle\widetilde{\phantom{\scriptscriptstyle{m}}}$}}}}}
\newcommand{\ctilde}{{\widetilde{c}\mkern1.1mu}}
\newcommand{\Mtilde}{{\mathchoice
    {\rlap{$M$}\mkern1mu\smash[b]{\lower.5pt\hbox{$\widetilde{\phantom{M}}$}}\mkern-1mu}
    {\rlap{$M$}\mkern1mu\smash[b]{\lower.5pt\hbox{$\widetilde{\phantom{M}}$}}\mkern-1mu}
    {\rlap{$\scriptstyle{M}$}\mkern1mu\smash[b]{\lower.5pt\hbox{$\widetilde{\phantom{\scriptstyle{M}}}$}}\mkern-1mu}
    {\widetilde{M}}}}
\newcommand{\etabar}{{\bar\eta}}
\newcommand{\et}{{\text{ét}}}
\newcommand{\sE}{{\mathscr E}}
\newcommand{\sF}{{\mathscr F}}
\newcommand{\sO}{{\mathscr O}}
\newcommand{\sOint}{{\mathcal O}}
\newcommand{\sP}{{\mathscr P}}
\newcommand{\sU}{{\mathscr U}}
\newcommand{\sW}{{\mathscr W}}
\newcommand{\sX}{{\mathscr X}}
\newcommand{\sY}{{\mathscr Y}}
\newcommand{\A}{{\mathbf A}}
\renewcommand{\C}{{\mathbf C}}}%
\newcommand{\C}{{\mathbf C}}}
\newcommand{\cF}{\mathrm F}
\newcommand{\cFplus}{\mathrm F_+}
\newcommand{\cFconst}{\mathrm F_\const}
\newcommand{\F}{{\mathbf F}}
\renewcommand{\P}{{\mathbf P}}
\newcommand{\Q}{{\mathbf Q}}
\newcommand{\Z}{{\mathbf Z}}
\newcommand{\nr}{\mathrm{nr}}
\newcommand{\const}{\mathrm{const}}
\newcommand{\Gm}{\mathbf{G}_\mathrm{m}}
\newcommand{\Gal}{\mathrm{Gal}}
\newcommand{\Pic}{\mathrm{Pic}}
\newcommand{\Br}{\mathrm{Br}}
\newcommand{\Spec}{\mathrm{Spec}}
\newcommand{\Div}{\mathrm{Div}}
\renewcommand{\phi}{\varphi}
\renewcommand{\emptyset}{\varnothing}
\newcommand{\Hom}{{\mathrm{Hom}}}
\newcommand{\mmu}{\boldsymbol{\mu}}
\newcommand{\chapeau}{{\rlap{\smash{\hbox{\lower4pt\hbox{\hskip1pt$\widehat{\phantom{u}}$}}}}}}
\newcommand{\Picplushat}{\Pic_+^{{\smash{\hbox{\lower4pt\hbox{\hskip0.4pt$\widehat{\phantom{u}}$}}}}}}
\newcommand{\PicplusAhat}{\Pic_{+,\A}^{{\smash{\hbox{\lower4pt\hbox{\hskip.4pt$\widehat{\phantom{u}}$}}}}}}
\newcommand{\Pichat}{\Pic^{{\smash{\hbox{\lower4pt\hbox{\hskip0.4pt$\widehat{\phantom{u}}$}}}}}}
\DeclareMathOperator{\Imm}{Im}
\DeclareMathOperator{\Ker}{Ker}
\DeclareMathOperator{\inv}{inv}
\DeclareMathOperator{\Cores}{Cores}
\newcommand{\Fv}[1]{\F_{\mkern-2mu#1}}
\newcommand{\cyc}{\mathrm{cyc}}
\newcommand{\wtl}{\widetilde}
\renewcommand{\G}{{\mathcal{G}}}}%
\newcommand{\G}{{\mathcal{G}}}}
\newcommand{\Ext}{\mathrm{Ext}}
 \renewcommand{\tocsection}[3]{%
   \indentlabel{\@ifnotempty{#2}{\bfseries\ignorespaces#1 #2\quad}}\bfseries#3}
 \renewcommand{\tocsubsection}[3]{%
   \indentlabel{\@ifnotempty{#2}{\hspace{1.6em}\ignorespaces#1 #2\quad}}#3}
\let\@wraptoccontribs\wraptoccontribs\makeatother
\date{September 7th, 2021; revised on June 1st, 2022}
\title[Rational points on fibrations with few non-split fibres]{Rational points on fibrations\\with few non-split fibres}
\author{Yonatan Harpaz}
\address{Institut Galil\'ee, Universit\'e Sorbonne Paris Nord, 99~avenue Jean-Baptiste Cl\'ement, 93430 Villetaneuse, France}
\email{harpaz@math.univ-paris13.fr}
\author{Dasheng Wei}
\address{Hua Loo-Keng Key Laboratory of Mathematics,
Academy of Mathematics and System Science, CAS, Beijing 100190,
P.\ R.\ China \& School of mathematical Sciences, University of  CAS, Beijing
100049, P.\ R.\ China}
\email{dshwei@amss.ac.cn}
\author{Olivier Wittenberg}
\address{Institut Galil\'ee, Universit\'e Sorbonne Paris Nord, 99~avenue Jean-Baptiste Cl\'ement, 93430 Villetaneuse, France}
\email{wittenberg@math.univ-paris13.fr}
\begin{document}

\begin{abstract}
We revisit the abstract framework underlying the fibration
method for producing rational points on the total space of fibrations over
the projective line.  By fine-tuning its dependence on external arithmetic
conjectures, we render the method unconditional when the degree of the
non-split locus is $\leq 2$, as well as in various instances where it
is~$3$. We are also able to obtain improved results in the regime that is
conditionally accessible under Schinzel's hypothesis, by incorporating into
it, for the first time, a technique due to Harari for controlling the
Brauer--Manin obstruction in families.
\end{abstract}

\maketitle

\section{Introduction}
\label{sec:intro}

In 1970, Manin~\cite{maninicm} showed that an obstruction based on
Brauer groups of schemes, now referred to as the Brauer--Manin
obstruction, can often explain failures of the Hasse principle and weak approximation
for the rational points of an algebraic variety~$X$ defined over a number field~$k$.
A conjecture of Colliot-Thélène predicts that the Brauer--Manin obstruction
explains all such failures when~$X$ is smooth, proper and rationally connected---by which we mean that for any
algebraically closed field extension~$K$ of~$k$, two general $K$\nobreakdash-points of~$X$
are joined by a rational curve defined over~$K$ (see~\cite[Chapter~IV]{kollarbook}).
In terms of the diagonal embedding of the set of rational points~$X(k)$ in the space
of adelic points~$X(\A_k)$, this conjecture is stated as follows:

\newcommand{\citeconjct}{\cite{ctbudapest}}
\begin{conj}[\citeconjct]
\label{conj:ct}
Let $X$ be a smooth, proper and rationally connected variety over a number field $k$.
The set $X(k)$ is dense in the Brauer--Manin set $X(\A_k)^{\Br(X)}$.
\end{conj}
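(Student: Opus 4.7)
The statement in question is a conjecture of Colliot-Thélène, which to date remains open in full generality. What I can offer is a strategic sketch of the standard lines of attack on which the present paper builds.

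The first paragraph of the strategy is reduction by fibration. Given a smooth proper rationally connected $X/k$, one tries to exhibit a dominant morphism $f\colon X\to \P^1_k$ (or more generally to $\P^n_k$, or to a variety known to satisfy the conjecture) whose generic fibre is ``simpler'' than $X$ itself---e.g.\ of lower dimension, or of a type for which Conjecture~\ref{conj:ct} is already known (Severi--Brauer varieties, quadrics, norm one tori, certain homogeneous spaces under connected linear algebraic groups). The approximation statement for $X$ is then extracted from: (a) approximation statements for smooth fibres $X_t$, and (b) an argument locating a rational point $t\in\P^1(k)$ near a given adelic point, with the property that $X_t$ is smooth and has adelic points surviving the Brauer--Manin obstruction. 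This is the classical fibration method.

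The second paragraph is the arithmetic heart: ensuring (b). One inspects the non-split fibres of $f$, i.e.\ those $t\in\P^1$ over which $X_t$ is geometrically reducible in a nontrivial Galois-theoretic way. The archetypal difficulty is that forcing $t$ to lie in prescribed $v$-adic neighbourhoods while ensuring the correct splitting behaviour at every non-split point amounts to solving a simultaneous primality/factorisation problem for the values of a family of polynomials, which is the content of Schinzel's hypothesis. Harari's technique of controlling the Brauer--Manin obstruction in families---invoked in the abstract of the paper---then transports the vanishing of the Brauer--Manin obstruction from $X$ down to the chosen fibre $X_t$, modulo a careful analysis of vertical Brauer classes and of the ramification of $\Br(X)$ along the non-split locus.

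The third paragraph would be the state of affairs, which is precisely where the conjecture becomes genuinely hard: once the degree of the non-split locus exceeds what unconditional analytic input (Dirichlet, the sieve of Green--Tao--Ziegler for linear forms, additive combinatorics over $\Z$) can handle, one must either appeal to Schinzel's hypothesis or restrict the geometry. The main obstacle I would expect to confront is therefore not the geometric bookkeeping but the analytic number theory: producing integers at which several polynomials simultaneously take values with prescribed factorisation behaviour, unconditionally. This is precisely the regime the present paper enters, by fine-tuning the dependence of the fibration method on such conjectures so as to dispense with them entirely when the non-split locus has degree $\leq 2$ (and in some degree-$3$ cases), and by incorporating Harari's formal lemma into the conditional part of the method to sharpen what Schinzel's hypothesis does yield. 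A full unconditional proof of the conjecture as stated lies well beyond current technology.
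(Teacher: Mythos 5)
You have correctly identified that this statement is an open conjecture of Colliot-Th\'el\`ene (Conjecture~\ref{conj:ct}) which the paper does not prove and only uses as motivation, so there is no proof to compare against; your sketch of the fibration-method context and its analytic bottlenecks accurately reflects the paper's own framing in the introduction. Nothing further is required.
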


Though wide open, Conjecture~\ref{conj:ct} has been established for many
special families of rationally connected varieties.
The reader will find  in~\cite[\textsection3]{wittenbergslc}
an almost up-to-date survey of known
methods and results.

A common structure that can often be fruitfully exploited to study rational points on~$X$ is that of a
fibration $f:X \to \P^1_k$.  (We use the term ``fibration'' in a loose sense,
to refer to a morphism whose generic fibre is geometrically irreducible.)
By the Graber--Harris--Starr theorem
\cite{ghs}, if the generic fibre of~$f$ is rationally connected, then $X$ is rationally connected as well. In the context of Conjecture~\ref{conj:ct}, this naturally leads one to ask:

\begin{question}\label{q:fibration-intro}
Let~$X$ be a smooth, proper, irreducible variety over a number field~$k$
and $f:X \to \P^1_k$ be a dominant morphism whose geometric generic fibre is rationally connected.
Assume that $X_c(k)$ is dense in $X_c(\A_k)^{\Br(X_c)}$
for all but finitely many $c \in \P^1(k)$,
where $X_c = f^{-1}(c)$.
Does it follow that~$X(k)$ is dense in $X(\A_k)^{\Br(X)}$?
\end{question}

Question~\ref{q:fibration-intro} has been extensively studied. One approach
consists in applying the theory of descent developed by
Colliot-Thélène and Sansuc~\cite{ctsandescent2} to reduce the problem to certain
torsors associated with the vertical Brauer group of $X$ relative to~$\P^1_k$.
This approach, systematically formalised by Skorobogatov \cite{skorodescent}
and by Colliot-Thélène and Skorobogatov \cite{ctskodescent},
has been applied successfully in many special
cases, including Châtelet surfaces~\cite{cssI}, other types of conic and
quadric bundles~\cite{bms}, and various normic (or more generally toric)
bundles \cite{heathbrownskorobogatov,cthasko,derenthalsmeetswei,skodescenttoric,browningmatthiesen}.
See also \cite{sd-conics-with-six,browningheathbrown} for instances of implicit uses of this approach.

Another approach is the fibration method, the first instance of which was Hasse's proof of
the local-global principle for the isotropy of quadratic forms over number fields.
Here, the argument is less concerned with
the particular geometry of $X$, but on the other hand
 it is highly sensitive to the
 \emph{rank} of~$f$,
defined as the degree of the finite locus in~$\P^1_k$ consisting of the closed points~$m$
such that the fibre~$X_m$ is not split over~$k(m)$,
and to the possible finite extensions $L/k(m)$ that split~$X_m$, i.e.\ that are
such that the $L$\nobreakdash-variety
$X_m \otimes_{k(m)} L$ is split.
(We recall that a variety is said to be \emph{split}
if it contains a geometrically integral open subset, see \cite[Definition~0.1]{skorodescent}.)

When the rank of~$f$ is~$\leq 1$,
a version of the fibration method yielding a positive answer to Question~\ref{q:fibration-intro} was established by Skorobogatov~\cite{skofibration} under the additional assumption that the smooth fibres of~$f$ satisfy the Hasse principle and weak approximation. Under the same assumption on the smooth fibres, this result was extended by Colliot-Thélène and Skorobogatov~\cite{skorodescent, ctskodescent} to the case where the rank of~$f$ is~$2$, and to the case where the rank is~$3$ and every non-split fibre $X_m$ is split by a quadratic extension of~$k(m)$.
When the rank of~$f$ is~$\leq 1$, the assumption that the fibres satisfy the Hasse principle and weak approximation was removed in Harari's thesis~\cite{harariduke}, using a delicate argument to control the Brauer--Manin
obstruction in the smooth fibres.

In situations more general than the above, one can still make the fibration method work conditionally if one assumes Schinzel's hypothesis. This was first observed by Colliot-Thélène and Sansuc~\cite{ctsansucschinzel} and
later extended in \cite{serrecollege,sdpencils,ctsd94,ctsksd98}, culminating 
in~\cite[Theorem 1.1(e)]{ctsksd98}
in a positive answer to Question~\ref{q:fibration-intro} under the following assumptions:
\begin{enumerate}
\item
Schinzel's hypothesis holds.
\item\label{item:abelian}
Each non-split fibre $X_m$ is split by an abelian extension of $k(m)$.
\item\label{item:smooth}
The smooth fibres of~$f$ satisfy the Hasse principle and weak approximation.
\end{enumerate}
We recall that Schinzel's hypothesis is a vast generalization of the twin prime conjecture, which says that
a finite collection of irreducible polynomials in $\Z[t]$ infinitely often  take prime values simultaneously,
unless there is a local obstruction to this being so.
Schinzel's hypothesis is required
in \emph{loc.\ cit.},
when $k=\Q$,
for the polynomials that encode
the non-split fibres of~$f$ over the points of $\A^1_k \subset \P^1_k$.
(When $k\neq \Q$, see \cite[Proposition~4.1]{ctsd94}.)
The only known case of Schinzel's hypothesis is the case of a single polynomial of degree~$1$ (i.e.\ Dirichlet's theorem on primes in arithmetic progression), which makes the last theorem unconditional when the locus of non-split
fibres consists of either one or two rational points of~$\P^1_k$.
However, as we described above, these small rank cases are also known under less restrictive conditions.

In a more recent development, the first- and third-named authors, in
collaboration with Skorobogatov~\cite{hsw}, showed that the fibration
method can also be set up using a variant of Schinzel's hypothesis applied
to homogeneous polynomials in two variables (still keeping conditions~\ref{item:abelian}
and~\ref{item:smooth} above).  This homogeneous variant, referred to as
Schinzel's hypothesis $(\mathrm{HH}_1)$, has the advantage of being known
in a slightly wider set of circumstances. Most notably,
by the seminal work of
Green--Tao--Ziegler~\cite{gt1,gt2,gtz3}, it is known, over~$\Q$, when all
the homogeneous polynomials are linear
(see \cite[Proposition~1.2]{hsw}).
Another known case, due to Heath-Brown and Moroz~\cite{hbm},
is that
of a single homogeneous polynomial of degree~$3$, over~$\Q$.

The use of the homogeneous variant of Schinzel's hypothesis
has led to more unconditional answers to Question~\ref{q:fibration-intro} (see \cite[Theorem~3.3]{hsw});
however, these were still subject to
conditions~\ref{item:abelian} and especially~\ref{item:smooth} above.
Later on,
in \cite[\textsection9]{hwfibration}, the first- and third-named authors suggested a way of
bypassing condition~\ref{item:abelian} by replacing Schinzel's hypothesis
$(\mathrm{HH}_1)$ with another hypothesis, namely~\cite[Conjecture~9.1]{hwfibration}, and showed that
the latter implies a positive
answer to Question~\ref{q:fibration-intro} in complete generality, even in the absence
of condition~\ref{item:smooth}. This
yields an unconditional positive answer to Question~\ref{q:fibration-intro} when $k=\Q$ and all the non-split
fibres of~$f$ lie above rational points of~$\P^1_\Q$, as the corresponding special
case of~\cite[Conjecture 9.1]{hwfibration} was established by
Matthiesen~\cite{matthiesen}, building on the work of Browning
and Matthiesen~\cite{browningmatthiesen}.

Conjecture 9.1 of~\cite{hwfibration} mentioned above depends, among others, on a collection of
closed points $m_1,\dots,m_n$ of $\P^1_k$ and
a collection of field extensions $L_1/k(m_1), \dots, L_n/k(m_n)$.
Unfortunately, given a fibration $f:X \to \P^1_k$ with
rationally connected geometric generic fibre,
the precise choice of $m_1,\dots,m_n$ and $L_1,\dots,L_n$
needed to obtain a positive answer to
Question~\ref{q:fibration-intro} using the results of~\cite{hwfibration}
is not completely
straightforward: while one has to  include in this list at least the closed
points over which the fibre of~$f$ is non-split, additional points might
be required in order to make it possible to describe the Brauer groups of the smooth fibres
uniformly in terms of Brauer classes defined on the complement, in~$X$, of
the union of the fibres $X_{m_1},\dots,X_{m_n}$.
Similarly, each $L_i$
needs to be not only large enough to split the fibre $X_{m_i}$, it
must also split the ``constant'' part of the residues of these Brauer
classes.  All in all, this state of affairs is inconvenient since for a given fibration $f:X\to \P^1_k$,
the precise choice of $m_1,\dots,m_n$ and $L_1,\dots,L_n$ to which
Conjecture 9.1 needs to be applied is implicit and hard to determine
in practice. A more serious consequence is that the
applicability of~\cite[\textsection9]{hwfibration} finds itself hindered in the regime where unconditional cases
of Question~\ref{q:fibration-intro} are potentially within reach. For
example:
\begin{enumerate}[label={\upshape(\roman*)}]
\setlength\itemsep{.7ex}
\item Conjecture 9.1 of \cite{hwfibration} is known in various
instances where $r = \sum_{i=1}^n \deg(m_i)$ is small, e.g.\ when $r \leq 2$,
or when $r \leq 3$ and each $L_i/k(m_i)$ is quadratic
(see~\cite[Theorem 9.11]{hwfibration} and Remark~\ref{rem:puncturedaffinecone}
below). However, due to the possible need for additional closed points, one
cannot deduce from this an unconditional answer to
Question~\ref{q:fibration-intro} for fibrations of rank~$2$, or for
fibrations of rank~$3$ whose non-split fibres are split by quadratic
extensions. One can still deduce a positive answer under the additional
assumption that the smooth fibres of~$f$ satisfy the Hasse principle and
weak approximation, thus (only) recovering \cite[Theorem~A and Theorem~B]{ctskodescent}.
\item
When all the field extensions $L_i/k(m_i)$ are abelian (or, more generally,
almost abelian in the sense of~\cite[Definition 9.4]{hwfibration}),
Conjecture 9.1 of \cite{hwfibration} is implied by Schinzel's hypothesis
$(\mathrm{HH}_1)$ for the defining homogeneous polynomials of the points
$m_1,\dots,m_n$. However, 
due to the possible need to increase the fields~$L_i$,
one cannot deduce that Schinzel's
hypothesis $(\mathrm{HH}_1)$ implies a positive answer to
Question~\ref{q:fibration-intro} when the fibres of~$f$ are split by, say, abelian extensions.
One can still deduce a positive answer under the additional
assumption that the smooth fibres of~$f$ satisfy the Hasse principle and weak
approximation, a result already implicit in~\cite{hsw}.
By Heath-Brown and
Moroz~\cite{hbm}, this yields
an unconditional positive answer to
Question~\ref{q:fibration-intro},
when $k=\Q$,
in the case of a single non-split fibre over
a point of degree~$3$, but only when conditions~\ref{item:abelian}
and~\ref{item:smooth} above both hold.
\end{enumerate}

Our goal in this article is to address these issues by putting forth an improved form
of~\cite[Conjecture 9.1]{hwfibration}. To this end, we rely on
a construction of auxiliary varieties that was introduced and exploited in~\cite{hwfibration} in
connection with Conjecture~9.1 of \emph{op.\ cit.}
This construction,
 antecedents of which
had previously come up
in the context of descent theory
(see \cite[\textsection3.3]{skorodescent}, \cite[p.~391]{ctskodescent},
\cite[\textsection4.4]{skobook}),
involves associating with a choice of
points $m_1,\dots,m_n$ and of finite extensions $L_1/k(m_1),\dots, L_n/k(m_n)$ a certain family of
non-proper varieties, typically denoted $W$, equipped with a
morphism $W \to \P^1_k$ whose singular fibres lie over the closed points
$m_1,\dots,m_n$ and are respectively split by the field extensions $L_1/k(m_1),\dots, L_n/k(m_n)$. It
is shown in~\cite[Proposition 9.9]{hwfibration} that Conjecture 9.1 of
\emph{op.\ cit.}\ holds for the parameters $m_1,\dots,m_n$ and $L_1/k(m_1),\dots,L_n/k(m_n)$
if and only if 
for every variety $W$ in the family associated with these parameters,
the subset $\bigcup_{c \in \P^1(k)} W_c(\A_k)$ of
$W(\A_k)$ is a dense subset.
In the present article, we study a strengthened version of this last property, which, on the one hand, becomes equivalent to the original version when stated for all possible choices of $m_1,\dots,m_n$ and $L_1/k(m_1),\dots, L_n/k(m_n)$, and, on the other hand, is sufficiently strong to allow the incorporation of non-vertical Brauer classes into the fibration method.
This leads to an equivalent, alternative approach to the framework of~\cite[\S 9]{hwfibration}
that has the practical advantage of unlocking many of the remaining unconditional cases which
escaped \emph{op.\ cit.} In particular, we obtain the following new cases as applications:

\begin{thm}[see Theorem~\ref{th:applicationrank2} and Theorem~\ref{thm:applicationrank3}]
\label{thm:applications-intro}
Question~\ref{q:fibration-intro} has a positive answer in each of the following cases:
\begin{enumerate}[label={\upshape(\roman*)}]
\item the rank of $f$ is at most $2$;
\item the rank of $f$ is $3$ and every fibre~$X_m$ is split by a quadratic extension of~$k(m)$;
\item the rank of $f$ is $3$, one fibre~$X_m$ lies above a rational point of~$\P^1_k$
and every remaining fibre~$X_m$  is split by a quadratic extension of~$k(m)$.
\end{enumerate}
\end{thm}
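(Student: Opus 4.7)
The plan is to deduce Theorem~\ref{thm:applications-intro} from the abstract framework developed in the body of this paper, namely the strengthened variant of~\cite[Conjecture~9.1]{hwfibration} together with its reduction, via auxiliary varieties, to a density property suitable for unconditional verification in small rank. Concretely, given $f:X\to\P^1_k$ as in Question~\ref{q:fibration-intro}, I would take $m_1,\dots,m_n$ to be exactly the closed points of $\P^1_k$ above which~$f$ is non-split, and choose finite extensions $L_i/k(m_i)$ splitting~$X_{m_i}$; in cases~(ii) and~(iii) I take each~$L_i$ quadratic whenever $\deg(m_i)>1$. The main reduction theorem of the paper then guarantees a positive answer to Question~\ref{q:fibration-intro} for~$f$ once one knows, for every variety~$W$ in the family associated with these parameters, that $\bigcup_{c\in\P^1(k)} W_c(\A_k)$ is dense in $W(\A_k)$ in the strengthened sense introduced here, which incorporates a Brauer--Manin-type constraint. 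The crucial feature of this improved framework, compared with~\cite[\S 9]{hwfibration}, is that Harari's technique for controlling the Brauer--Manin obstruction in families is absorbed into the density statement itself, so that one need no longer enlarge the list $(m_i)$ or the extensions~$L_i$ in order to accommodate non-vertical Brauer classes of~$X$; this is precisely what allows dispensing with condition~\ref{item:smooth} on the smooth fibres.

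For cases~(i) and~(ii), i.e.\ when $r:=\sum_i \deg(m_i) \leq 2$, or when $r=3$ with every~$L_i$ quadratic, each auxiliary~$W$ has a transparent geometric description, essentially as a punctured affine cone over a quadric or a twist thereof (compare Remark~\ref{rem:puncturedaffinecone}). The required strengthened density can then be checked by adapting the arguments behind~\cite[Theorem~9.11]{hwfibration} to the new formulation: the essential arithmetic inputs are the Hasse principle and weak approximation for quadrics, together with Harari's formal lemma, both available unconditionally. The adaptation is not entirely automatic, because the density now has to be produced adelic-point-by-adelic-point while respecting a prescribed Brauer--Manin constraint, but no new arithmetic ingredient is needed.

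The most delicate case is~(iii), where one non-split fibre lies above a rational point $m\in\P^1(k)$ but is split by a finite extension $L/k$ of possibly arbitrary degree. The plan is first to apply a $\PGL_2(k)$ change of coordinate moving~$m$ to infinity, so that the remaining non-split points lie in~$\A^1_k$ and contribute total degree~$2$. The associated~$W$ then combines a rank-$2$ problem over~$\A^1_k$ with an $L/k$-norm condition at infinity, and the hard step is to verify the strengthened density for such a hybrid variety: one must produce adelic points that meet the rank-$2$ constraints while satisfying the norm condition at infinity, and one must neutralise non-vertical Brauer classes using Harari's lemma as built into the framework. The main obstacle is that~$L$ need not be abelian (nor even solvable) over~$k$, so no direct appeal to class field theory is available for the norm-equation at~$m$; I would proceed by a fibration-over-fibration argument decoupling the $L/k$-norm condition from the rank-$2$ data and reducing it to weak approximation on the punctured affine cone of~$L$, which is known unconditionally and is the precise content of the rank-$1$ special case of the density property subsumed by Remark~\ref{rem:puncturedaffinecone}.
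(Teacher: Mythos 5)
Your global architecture is the paper's: reduce Question~\ref{q:fibration-intro} to Conjecture~$\cFplus$ for the parameters attached to the non-split fibres (Theorem~\ref{th:fibration}, Corollary~\ref{cor:concretecorollary}), and note that a strong-approximation-type density statement for~$W$ yields~$\cFplus$ essentially for free (this is Proposition~\ref{prop:strongapproximpliesfp}). The gap is in your verification of the density statement, most seriously in case~(iii). There, after moving the rational point~$m_0$ wherever you like, the auxiliary variety~$W$ is, up to a codimension~$2$ closed subset, the hybrid variety $N_{L/k}(z)=q(x_1,\dots,x_4)$, where $L=L_{m_0}$ is the splitting field of the fibre over the rational point (of arbitrary degree, possibly non-abelian) and~$q$ is the rank~$4$ quadratic form obtained by eliminating $(\lambda,\mu)$ from the two quadratic conditions. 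Your proposed ``decoupling'' of the norm condition from the rank-$2$ data cannot work as stated: once the rank-$2$ part of an adelic point is fixed, the right-hand side $c=q(x)$ is a fixed scalar, and the Hasse principle and weak approximation for the norm equation $N_{L/k}(z)=c$ fail in general for non-cyclic~$L/k$ (this is exactly the failure mode the whole paper is built to circumvent; see \textsection\ref{sec:unramified brauer classes} and \textsection\ref{subsec:examples}). There is no unconditional ``weak approximation on the punctured affine cone of~$L$'' to quote, and Remark~\ref{rem:puncturedaffinecone} concerns cones over quadrics, not norm varieties. What the paper actually proves (Lemma~\ref{lem:puncturedaffineconegeneralised}) is strong approximation off a place for the \emph{coupled} variety $\{N_{L/k}(z)=q(x)\}\setminus F$ as a whole, by induction on the rank of~$q$ (blow up a codimension-$2$ linear centre, fibre over a pencil of hyperplanes, and apply the fibration theorem recursively with split parameters), the base case $n=2$ being a toric variety handled by the strong approximation theorem for toric varieties of~\cite{weistrongtoric}. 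That toric input, together with the Brauer group computations showing $\Br(U)=\Br_0(U)$ for $n\geq 3$, is the essential arithmetic ingredient missing from your plan.

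In case~(ii) your claimed inputs (Hasse principle and weak approximation for quadrics, plus Harari's formal lemma) also understate what is needed: Proposition~\ref{prop:strongapproximpliesfp} requires strong approximation off every finite place for the punctured affine cone over a $4$-dimensional quadric with a codimension-$2$ closed subset removed --- an adelic density statement on a non-proper variety, not a statement about rational points on the proper quadric --- and Remark~\ref{rem:puncturedaffinecone} records that precisely this assertion was left with a gap in~\cite{hwfibration} and is given a new proof here via Lemmas~\ref{lem:puncturedaffineconegeneralised} and~\ref{lem:puncturedaffinecone}. So in both~(ii) and~(iii) the step you describe as an adaptation ``needing no new arithmetic ingredient'' is exactly where the substantive new work of \textsection\ref{sec:knowncases} lies. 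Case~(i) of your plan is fine and matches Corollary~\ref{cor:fplusstrongapprox}~(i), where~$W$ is the complement of a codimension-$2$ closed subset of an affine space rather than a cone over a quadric.
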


In addition, using this improved framework, we are able to combine for the first time
the arguments of Harari~\cite{hararifleches} for dealing with Brauer--Manin obstructions in the fibres
with the use of Schinzel's hypothesis 
in the fibration
method---a problem that had been open since the 1990's. This leads to a version of
the theorem of Colliot-Thélène, Skorobogatov and Swinnerton-Dyer~\cite[Theorem~1.1(e)]{ctsksd98}
in which the smooth fibres are not assumed any more to satisfy the Hasse principle or weak approximation
(though at the expense of assuming that the splitting fields are cyclic, or, alternatively, almost abelian but not abelian):

\begin{thm}[see Theorem~\ref{thm:schinzelcyclic}]\label{thm:schinzel-intro}
Question~\ref{q:fibration-intro} has a positive answer when the following two conditions are both satisfied:
\begin{enumerate}
\item Schinzel's hypothesis~$(\mathrm{HH}_1)$ holds for the set of irreducible homogeneous two-variable polynomials
vanishing on the closed points $m \in \P^1_k$ such that the fibre~$X_m$ is non-split;
\item\label{item:schinzel-intro-almostabelian} every fibre $X_m$
is split by an extension of $k(m)$ that is either cyclic or almost abelian but non-abelian
(e.g.\ a cubic extension).
\end{enumerate}
\end{thm}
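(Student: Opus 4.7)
The plan is to deduce the statement from the reformulated framework that forms the backbone of the paper. In this framework, a positive answer to Question~\ref{q:fibration-intro} for a fibration $f:X\to\P^1_k$ is obtained once one establishes a suitable strengthened density property, for an appropriate adelic neighbourhood, of the family of auxiliary varieties $W$ attached to closed points $m_1,\dots,m_n\in\P^1_k$ (including the non-split locus of~$f$) together with splitting extensions $L_i/k(m_i)$. Crucially, because the strengthened version permits the simultaneous treatment of non-vertical Brauer classes, it is \emph{no longer necessary} to enlarge the $L_i$ to absorb residues at additional points, so one can take the $L_i$ to be of exactly the form provided by hypothesis~\ref{item:schinzel-intro-almostabelian}---cyclic or almost abelian but non-abelian.

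The first step is therefore to fix the data $(m_i,L_i)$ directly from the hypotheses of the theorem and reduce, via the equivalence between the new framework and \cite[\S 9]{hwfibration} (valid when all possible choices of $(m_i,L_i)$ are considered), to verifying the strengthened density property for the corresponding varieties~$W$. The second step is to apply Schinzel's hypothesis $(\mathrm{HH}_1)$ to the homogeneous polynomials defining the~$m_i$, as in the tradition of \cite{ctsansucschinzel,ctsd94,ctsksd98,hsw}, to produce rational points $c\in\P^1(k)$ lying in a prescribed adelic neighbourhood and satisfying the local norm conditions from the~$L_i$ that make $W_c$ everywhere locally soluble.

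The decisive third step is to integrate into this Schinzel-based production of~$c$ a version of Harari's method \cite{hararifleches} for controlling the Brauer--Manin obstruction in the smooth fibres of~$f$. The mechanism is the following: the non-vertical Brauer classes that survive after the vertical ones have been dealt with have residues at the $m_i$ whose ``constant parts'' are characters of $\Gal(L_i/k(m_i))$; when $L_i/k(m_i)$ is cyclic, such a character can be captured by a class in $\Br(k(m_i))$, whereas when $L_i/k(m_i)$ is almost abelian but non-abelian these constant parts vanish altogether. In either case, one can use auxiliary places of good reduction to perform Harari's fleche-style reciprocity manoeuvre and, by an additional local adjustment of $c$ at these places, arrange for the global invariant sum of every chosen Brauer class on $X_c$ to vanish, while preserving the Schinzel-type local conditions at the primes that matter.

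The main obstacle, and the reason this had remained open since the 1990's, is precisely the joint compatibility of Schinzel's hypothesis with Harari's fleche argument: the former fixes the local behaviour of~$c$ at a set of primes determined by the~$m_i$ and $L_i$, while the latter wants to freely prescribe the local behaviour at further, auxiliary primes. What unlocks the compatibility is the strengthened density property for the auxiliary varieties $W$ established in the core of this paper, which formally separates the non-vertical Brauer constraints from the vertical ones and makes the Harari-type adjustment purely a matter of choosing additional local factors without perturbing the Schinzel input; the cyclic-or-almost-abelian-non-abelian hypothesis is exactly the condition under which the resulting reciprocity computation can be carried out.
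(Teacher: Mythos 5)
Your outline reproduces the paper's overall architecture (reduce to Conjecture~$\cFplus$ for the parameters given by the splitting fields via the fibration theorem, then obtain $\cFplus$ from Schinzel's hypothesis), but at the two places where the actual difficulty lies it asserts the conclusion instead of proving it, and it misidentifies the mechanism by which the cyclic/almost-abelian-non-abelian hypothesis enters. First, what Schinzel's hypothesis $(\mathrm{HH}_1)$ gives directly is only Conjecture~$\cFconst$ (Theorem~\ref{th:hhfconst}): the points $c$ it produces lie in fibres of the projection to $\A^2_k\setminus\{(0,0)\}$ over rational points, and are therefore automatically orthogonal to the \emph{vertical} classes $B_{\const}$, but nothing in that construction controls the ramified classes $\Cores_{L_m/k}(z_m,\chi)$ for general $\chi\in C_m=\Hom(\Gal(K_m/L_m),\Q/\Z)$. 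The passage from $\cFconst$ to $\cFplus$ is a genuine theorem (Theorem~\ref{th:conjfnrconjfplus} together with Corollary~\ref{cor:comparisonFconstFplus}), proved by a Chebotarev construction at auxiliary places with two carefully chosen places $u_{\theta,1},u_{\theta,2}$ of $L_m$ and a duality argument; and the hypothesis on $L_m/k(m)$ is used there only through the vanishing $\Br_{\nr}(R^1_{L_m/k(m)}\Gm)=\Br_0(R^1_{L_m/k(m)}\Gm)$ (Proposition~\ref{prop:criterion-for-brauer}, cyclic or prime-degree case), i.e.\ $C_{m,\nr}=C_{m,\const}$ via Proposition~\ref{prop:chinr}. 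Your substitute claims --- that the ``constant parts'' are characters of $\Gal(L_i/k(m_i))$ which in the cyclic case are ``captured by a class in $\Br(k(m_i))$'' and in the almost abelian non-abelian case ``vanish altogether'' --- are neither the relevant statements nor justified; the relevant characters live on $\Gal(K_m/L_m)$, and what must be shown is that the unramified ones among them come from $H^1(k(m),\Q/\Z)$.

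Second, your decisive third step --- that one can perform a Harari-type adjustment at auxiliary places so that every chosen Brauer class on $X_c$ has vanishing invariant sum ``while preserving the Schinzel-type local conditions'' --- is exactly the point that had been open, and you give no argument for it. In the paper this is the content of Theorem~\ref{th:fibration}: the orthogonality of the adelic point of $W_c$ to the full group $B$ (i.e.\ the $\cFplus$ conclusion, not just $\cF$ or $\cFconst$) is what forces the needed correction term to lie in the subgroup $H_m=\Gal(E_m/k(Y_m)K_m^0)$ (Lemma~\ref{lem:isinhm}), so that it can be realised as a Frobenius at the auxiliary place $v_m$, which was chosen split in $K_m$; one also needs the descent/formal-lemma choice of $(b_m)$, the specialisation of the Brauer group along a Hilbert subset (Proposition~\ref{prop:specialisation}), and the treatment of the fibre at infinity, none of which appear in your sketch. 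Finally, note that the reduction you invoke via ``the equivalence between the new framework and \cite[\S 9]{hwfibration} when all parameters vary'' (Proposition~\ref{prop:allequivalent}) cannot be used here: that equivalence requires Conjecture~$\cF$ for arbitrary parameters, including enlarged fields that need not be almost abelian, which is precisely what Schinzel's hypothesis does not supply; the correct reduction is Theorem~\ref{th:fibration}/Corollary~\ref{cor:concretecorollary} applied to the specific parameters $(M,(L_m),(b_m),(K_m))$, with $\cFplus$ for those parameters supplied by Corollary~\ref{cor:hh1fplus}.
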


When combined with the work of Heath-Brown and
Moroz~\cite{hbm},
 Theorem~\ref{thm:schinzel-intro}  yields
an unconditional positive answer to
Question~\ref{q:fibration-intro},
when $k=\Q$,
in the case of a single non-split fibre over
a point of degree~$3$, subject to condition~\ref{item:schinzel-intro-almostabelian}
of Theorem~\ref{thm:schinzel-intro}
but without assuming anything on the smooth fibres of~$f$ beyond their rational connectedness
(see Theorem~\ref{thm:applicationrank3}).

The article is organised as follows. We begin in \S\ref{sec:unramified brauer classes}
with a discussion of the unramified Brauer groups of norm~$1$ tori and their
torsors, over arbitrary fields of characteristic~$0$.
In~\S\ref{sec:W},
we discuss the variety~$W$ which plays a key r\^ole in this
work. After verifying that the Brauer group of $W$ is reduced to constant classes
(Proposition~\ref{prop:brauerw}),
we discuss
 some canonical ramified Brauer classes on~$W$ 
(see~\textsection\ref{subsec:canonical ramified})
and use them to formulate a conjecture on the arithmetic of~$W$, which we call
Conjecture~$\cFplus$
(see~\textsection\ref{sec:statementFplus}). It is a strengthened version of the property that
the subset
$\bigcup_{c \in \P^1(k)} W_c(\A_k)$ is dense in~$W(\A_k)$, which we dub Conjecture~$\cF$.
In~\S\ref{sec:fibration}, we establish the main technical result
of the article (Theorem~\ref{th:fibration}),
according to which Conjecture $\cFplus$ implies a positive answer to
Question~\ref{q:fibration-intro}.
The argument is  similar to the
proof of~\cite[Theorem~9.17]{hwfibration}, with the difference that the
variety $W$ for which one needs to assume Conjecture $\cFplus$ is under
better control. In \S\ref{sec:comparing}, we analyse the relationship
between Conjectures $\cF$ and $\cFplus$ in greater detail. The comparison goes through an
auxiliary intermediate statement, which we call Conjecture~$\cFconst$. The
results of this section, and in particular those of
\S\ref{sec:from-const-to-plus}, are the main input needed for
Theorem~\ref{thm:schinzel-intro}. In \S\ref{sec:knowncases}, we collect
all of the cases of Conjecture~$\cFplus$ that we are able to prove, either by strong
approximation arguments, from Schinzel's hypothesis or from additive combinatorics.
Finally, in~\S\ref{sec:newapplications}, we deduce
Theorem~\ref{thm:applications-intro} and Theorem~\ref{thm:schinzel-intro} and
discuss some examples.

\bigskip
\emph{Acknowledgements.}

We thank the anonymous referee for their helpful comments.
The second-named author is supported by National Key R\&D Program of China and National Natural Science
Foundation of China (Grant Nos.\ 11622111 and 11631009).

\bigskip
\emph{Notation and conventions.}
In this article, an \emph{extension} of a field will always be meant to be a field extension.
Let~$X$ be a variety, that is, a scheme of finite type, over a field~$k$.
We say that~$X$ is
 \emph{split} if it contains
a geometrically integral open subset.
We say that~$X$ is
\emph{rationally connected}
if for any  algebraically closed
field extension~$K$ of~$k$,
the variety $X \otimes_k K$ over~$K$
is rationally connected in the sense of Campana,
Koll\'ar, Miyaoka and Mori.
The Brauer group $\Br(X)=H^2_{\et}(X,\Gm)$
contains the \emph{algebraic Brauer group} $\Br_1(X)=\Ker\big(\Br(X) \to \Br(X \otimes_k K)\big)$,
where~$K$ denotes any algebraically closed field extension of~$k$
(the kernel does not depend on the choice of~$K$),
and the subgroup $\Br_0(X) = \Imm\big(\Br(k)\to\Br(X)\big)$ of constant classes.
When~$X$ is smooth and connected and~$k$ has characteristic~$0$, we denote by $\Br_{\nr}(X) \subseteq \Br(X)$
the \emph{unramified Brauer group}, which coincides with the Brauer group of any smooth and proper
variety birationally equivalent to~$X$.
\emph{Hilbert subsets} of an irreducible variety are meant in the sense of
Lang \cite[Chapter~9, \textsection5]{langfunddioph}, and thus do not only consist
of rational points; for example
the generic point belongs to every Hilbert subset
 (see also
\cite[Notation and conventions]{hwfibration}).
If~$k$ is a number field and~$v$ is a place of~$k$, we denote by~$k_v$ the completion of~$k$
at~$v$ and, if~$v$ is finite, by $\Fv{v}$ the residue field of~$v$.
Given a nonzero $k$\nobreakdash-algebra~$L$ and a variety~$Y$ over~$L$,
we denote by $R_{L/k}Y$ the Weil restriction of~$Y$ from~$L$ down to~$k$.
If~$M$ is an abelian group and~$L$ is an \'etale $k$\nobreakdash-algebra,
say $L=\prod L_i$ where the~$L_i$ are fields, we write $H^n(L,M)$ for
the group $\prod H^n(L_i,M)$, where $H^n(L_i,M)$ denotes the $n$\nobreakdash-th Galois cohomology
group of~$L_i$ with values in~$M$.
If~$G$ is a group and~$M$ is a $G$\nobreakdash-module,
we denote by $C^n(G,M)$ the group of $n$\nobreakdash-cochains of~$G$ with values in~$M$
and by $Z^n(G,M)$ the subgroup of $n$\nobreakdash-cocycles.
Finally, if~$k$ is a number field with algebraic closure~$\bar k$,
we say that a finite field extension $L/k$ is \emph{almost abelian} 
(in the sense of
 \cite[Definition~9.4]{hwfibration})
if it is abelian or if
there exist a prime number~$p$ and a bijection
$\Spec(L\otimes_k \bar k) \simeq \F_p$
that makes $\Gal(\bar k/k)$ act on~$\F_p$ by affine transformations.

\section{Unramified Brauer groups of torsors under norm tori}
\label{sec:unramified brauer classes}

We discuss three complementary
aspects of unramified Brauer groups of norm tori and of their torsors
over arbitrary fields of characteristic~$0$,
in three
 logically independent sections
\textsection\ref{subsec:criterion unramifiedness},
\textsection\ref{subsec:some norm tori},
\textsection\ref{subsec:central extensions}.
The first one provides a criterion
for classes of the shape $\Cores_{L/k}(z,\chi)$ to be unramified
(Proposition~\ref{prop:chinr}).
This will play a key  r\^ole
in the proof of Theorem~\ref{thm:schinzel-intro}.
The second one is devoted to norm tori~$T$ such that $\Br_{\nr}(T)=\Br_0(T)$,
a condition which also turns up in the proof
of Theorem~\ref{thm:schinzel-intro}
(see Corollary~\ref{cor:comparisonFconstFplus}).
The third one reinterprets the computation of the quotient group $\Br_{\nr}(T)/\Br_0(T)$
for norm tori~$T$
in terms of central extensions of a finite group~$G$ by~$\Q/\Z$
(Proposition~\ref{prop:bijection h2extgh}).
This last result
is not used
 in the rest of the article
but it may be of interest in a broader perspective.
We illustrate it
by analysing an example of Kunyavski\u{\i} of a class of three-dimensional tori
satisfying
 $\Br_{\nr}(T)\neq\Br_0(T)$
(Example~\ref{ex:abelian-alternating})
and by giving an independent proof of Bartels' theorem according to which $\Br_{\nr}(T)=\Br_0(T)$
if~$T$ is the norm torus associated with a degree~$n$ field extension whose Galois closure has Galois group~$D_n$
(Example~\ref{ex:bartels}).

\subsection{Unramifiedness of corestrictions}
\label{subsec:criterion unramifiedness}

Let $k$ be a field of characteristic $0$ with algebraic closure $\bar k$.
Let~$T$ be an algebraic torus over~$k$.
Let~$Z$ be a torsor, over~$k$, under~$T$.

The problem of calculating the unramified Brauer group of $Z$ is
classically addressed by noting, on the one hand,
that the group $\Br_\nr(Z \otimes_k \bar k)$ vanishes,
so that $\Br_\nr(Z) \subseteq \Br_1(Z)$, and on the other hand,
that the group $\Pic(Z \otimes_k \bar k)$ vanishes,
so that the Hochschild--Serre
spectral sequence determines an isomorphism $\Br_1(Z) = H^2(k,\bar k[Z]^*)$ and hence
an injection
\begin{align}
\label{eq:injectionbr1br0z}
\Br_1(Z)/\Br_0(Z) \hookrightarrow H^2(k,\bar k[Z]^*/\bar k^*) = H^2(k,\widehat{T})\rlap,
\end{align}
which is an isomorphism when
either $Z(k)\neq\emptyset$
or $H^3(k,\bar k^*) = 0$, e.g.\ when $k$ is a
number field.
Let us denote by $\Sha^2_\omega(k,\widehat T)$
the subgroup of $H^2(k,\widehat T)$
consisting of those classes which vanish in $H^2(k',\widehat T)$
for any field extension $k'/k$ such that the absolute Galois group
of~$k'$ is procyclic.
It is known that an element of $\Br_1(Z)$ belongs to
$\Br_{\nr}(Z)$ if and only if its image in $H^2(k,\widehat{T})$ belongs to
$\Sha^2_{\omega}(k,\widehat{T})$ (see \cite[Théorème~8.1]{bodeha}).
This results in an injection
\begin{align}
\label{eq:classical-formula-part1}
\Br_{\nr}(Z)/\Br_0(Z) \hookrightarrow \Sha^2_{\omega}(k,\widehat{T})\rlap{,}
\end{align}
which is an isomorphism when $Z(k)\neq\emptyset$ or $H^3(k,\bar k^*) = 0$,
thus
turning, in these cases, the determination of $\Br_{\nr}(Z)/\Br_0(Z)$ into a finite
and
computable endeavour: indeed,
the action of $\Gal(\bar k/k)$ on $\widehat{T}$
factors through a finite quotient, say~$G$,
and we can identify
\begin{align}
\label{eq:classical-formula-part2}
\Sha^2_{\omega}(k,\widehat{T}) \isotoleft \Sha^2_{\cyc}(G,\widehat{T})\rlap,
\end{align}
where $\Sha^2_{\cyc}(G,\widehat{T})$
 denotes the subgroup of $H^2(G,\widehat{T})$
consisting of those elements whose image in $H^2(H,\widehat{T})$ vanishes for
every cyclic subgroup~$H$ of~$G$.

Our goal in this section is to extract from the above an explicit
description of certain classes in $\Br_{\nr}(Z)$ in the case where~$T$
is the norm 1 torus associated with a given nonzero étale algebra $L$
over~$k$ and~$Z$ is the corresponding norm $c$ torsor for
some~$c \in k^*$.

We fix~$L$ and~$c$ until the end of~\textsection\ref{sec:unramified brauer classes}.
The torsor~$Z$ (resp.\ the torus~$T$) can be explicitly described
as the closed
subvariety
of $R_{L/k}(\A^1_L)$ defined by the equation $N_{L/k}(z)=c$
(resp.\ by the equation $N_{L/k}(z)=1$), where~$z$ denotes a point
of $R_{L/k}(\A^1_L)$.
We shall again denote by~$z$
the invertible function on $Z \otimes_k L$ obtained by restricting
the tautological regular function on $R_{L/k}(\A^1_L) \otimes_k L$.
For $\chi \in H^1(L,\Q/\Z)$, let us write $(z,\chi) \in \Br(Z\otimes_k L)$ for
the class obtained by identifying $H^1(L,\Q/\Z)$ with $H^2(L,\Z)$ and
considering the image of the pair $(z,\chi)$ by the cup product map
\begin{align*}
H^0(Z \otimes_k L,\Gm) \times H^2(L,\Z) \to \Br(Z \otimes_k L)\rlap.
\end{align*}
We note that $(z,\chi) \in \Ker\big(\Br(Z\otimes_k L) \to \Br(Z \otimes_k L \otimes_k \bar k)\big)$
since~$\chi$ vanishes when pulled back to $L\otimes_k \bar k$.
Hence its push-forward
$\Cores_{L/k}(z,\chi) \in \Br(Z)$  along the
projection $Z \otimes_k L \to Z$
belongs to the subgroup $\Br_1(Z) \subseteq \Br(Z)$.

The following proposition gives criteria for this class
$\Cores_{L/k}(z,\chi) \in \Br_1(Z)$ to be constant or 
unramified.
It builds on the ideas contained in \cite[Theorem~5]{weiunramified}, in which~$L$ is assumed
to be a Galois extension of~$k$.

\begin{prop}
\label{prop:chinr}
Let $\chi \in  H^1(L,\Q/\Z)$.
\begin{enumerate}
\item
$\Cores_{L/k}(z,\chi) \in \Br_0(Z)$ if and only if $\chi \in \Imm\mkern-2mu\big(H^1(k,\Q/\Z)\to H^1(L,\Q/\Z)\big)$.
\item
$\Cores_{L/k}(z,\chi) \in \Br_{\nr}(Z)$ if and only if for any field extension~$k'/k$ such
that the absolute Galois group of~$k'$ is procyclic and for any two
$k$\nobreakdash-algebra homomorphisms $\iota_1,\iota_2:L\to k'$,
the two images of~$\chi$ in $H^1(k',\Q/\Z)$ via~$\iota_1$ and via~$\iota_2$
are equal.
\end{enumerate}
\end{prop}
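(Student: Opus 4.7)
The plan is to identify the class $\Cores_{L/k}(z,\chi)$ in $\Br_1(Z)/\Br_0(Z)$, viewed inside $H^2(k,\widehat{T})$ via~\eqref{eq:injectionbr1br0z}, and then read off both statements from the resulting Galois-cohomological description. The character module of the norm-$1$ torus fits in a short exact sequence
\[
0 \to \Z \to \Z[S] \to \widehat{T} \to 0,
\]
where $S=\Hom_k(L,\bar{k})$, and Shapiro's lemma gives $H^\bullet(k,\Z[S])\cong H^\bullet(L,\Z)$, producing a connecting homomorphism $\partial\colon H^1(L,\Q/\Z)=H^2(L,\Z)\to H^2(k,\widehat{T})$. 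The first key step is to show that the image of $\Cores_{L/k}(z,\chi)$ in $H^2(k,\widehat{T})$ equals $\partial\chi$. Over $L$, the class $(z,\chi)\in\Br(Z\otimes_k L)$ corresponds, under $\Br_1/\Br_0\hookrightarrow H^2(L,\widehat{T})$, to the cup product $z_s\cup\chi$, where $z_s\in\widehat{T}^H$ (with $H=\Gal(\bar{k}/L)$) is the $H$-invariant class of the tautological coordinate; the desired identification then follows from compatibility of the corestriction with the Eckmann--Shapiro isomorphism and with the quotient $\Z[S]\twoheadrightarrow\widehat{T}$.

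Granting this, part~(1) is immediate from the long exact sequence of $0 \to \Z \to \Z[S] \to \widehat{T} \to 0$, whose relevant piece reads $H^1(k,\Q/\Z)\xrightarrow{\Res}H^1(L,\Q/\Z)\xrightarrow{\partial}H^2(k,\widehat{T})$: thus $\partial\chi=0$ iff $\chi$ lies in the image of restriction. One can also check the ``if'' direction of~(1) directly via the projection formula, which gives $\Cores_{L/k}(z,\chi)=(N_{L/k}(z),\chi_0)=(c,\chi_0)\in\Br_0(Z)$ whenever $\chi=\Res(\chi_0)$. For part~(2), \cite[Théorème~8.1]{bodeha} combined with the identification above shows that $\Cores_{L/k}(z,\chi)\in\Br_{\nr}(Z)$ iff $\partial\chi\in\Sha^2_\omega(k,\widehat{T})$, i.e.\ iff $\partial\chi$ vanishes in $H^2(k',\widehat{T})$ for every field extension $k'/k$ with procyclic absolute Galois group. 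Base-changing the exact sequence to such $k'$, writing $L\otimes_k k'=\prod_j L_j'$, and using the vanishing $H^2(k',\Q/\Z)=0$ (a consequence of procyclicity and the divisibility of $\Q/\Z$), one obtains
\[
H^1(k',\Q/\Z)\to {\textstyle\prod_j} H^1(L_j',\Q/\Z)\xrightarrow{\partial'} H^2(k',\widehat{T})\to 0,
\]
so $\partial\chi|_{k'}=0$ iff there exists $\chi_0\in H^1(k',\Q/\Z)$ with $\chi|_{L_j'}=\Res_{L_j'/k'}(\chi_0)$ for every~$j$.

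It remains to show that this reformulated criterion, quantified over all procyclic $k'/k$, is equivalent to the condition in~(2). The direction reformulated $\Rightarrow$~(2) is straightforward: any $k$\nobreakdash-algebra homomorphism $\iota\colon L\to k'$ corresponds to a factor $L_j'\cong k'$ for which $\Res_{L_j'/k'}=\id$, so $\iota^*\chi=\chi_0$ depends only on~$k'$. The direction (2) $\Rightarrow$ reformulated is the main obstacle. Given procyclic $k'/k$, I would form the compositum $\tilde{L}=L_1'\cdots L_m'\subset\bar{k}$, which still has procyclic absolute Galois group (being a finite extension of $k'$ corresponding to an intersection of closed subgroups of a procyclic group), and apply the hypothesis at $\tilde L$ to pin down a canonical class $\tilde\chi\in H^1(\tilde L,\Q/\Z)$ equal to every $\Res_{\tilde L/L_j'}(\chi|_{L_j'})$. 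Since $H^1(k',\Q/\Z)\to H^1(\tilde L,\Q/\Z)$ is surjective (the vanishing $H^2(\Gal(\tilde L/k'),\Q/\Z)=0$ for cyclic groups), one lifts $\tilde\chi$ to some $\chi_0\in H^1(k',\Q/\Z)$. The discrepancies $\delta_j:=\Res_{L_j'/k'}(\chi_0)-\chi|_{L_j'}$ lie in the kernel of restriction to $\tilde L$ and are therefore inflated from characters $\bar\delta_j$ of $\Gal(\tilde L/L_j')$; applying hypothesis~(2) to the procyclic composita $L_j'L_{j'}'$ shows that the $\bar\delta_j$ agree on pairwise intersections of those subgroups inside the cyclic group $\Gal(\tilde L/k')$. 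Since compatible characters on subgroups of a cyclic group extend to the whole group (by Pontryagin duality / the Chinese remainder theorem), the $\bar\delta_j$ come from a single character of $\Gal(\tilde L/k')$ whose inflation can be subtracted from $\chi_0$ to achieve the exact equalities $\Res_{L_j'/k'}(\chi_0)=\chi|_{L_j'}$ for every $j$ simultaneously.
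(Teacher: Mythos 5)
Your proposal is correct and follows essentially the same route as the paper's proof: you identify the image of $\Cores_{L/k}(z,\chi)$ in $H^2(k,\widehat T)$ with the image of $\chi$ under $H^1(L,\Q/\Z)=H^2(k,\Z[\Sigma])\to H^2(k,\widehat T)$ (this is exactly the content of Lemma~\ref{lem:whoisbeta}, which you assert via the standard compatibility of corestriction with the Shapiro isomorphism, proved in the paper at the level of étale sheaves), deduce~(1) from exactness, and reduce~(2) via \cite{bodeha} to the equivalence of the two conditions (a) and (b) appearing in the paper. The only divergence is the elementary step (a)$\Rightarrow$(b): you pass to the compositum $\tilde L$ and glue compatible characters on subgroups of the finite cyclic group $\Gal(\tilde L/k')$, which is a valid repackaging (cyclicity being essential, as you use) of the paper's Lemma~\ref{lem:elementary} on divisibility in $\Q/\Z$.
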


\begin{proof}
Letting~$\bar k$ denote a fixed algebraic closure of $k$
and $\Sigma$ the set of $k$\nobreakdash-algebra homomorphisms $L \to \bar k$,
the character group $\widehat T$ of the torus~$T$
sits in the commutative diagram of Galois modules
\begin{align}
\begin{aligned}
\label{diag:contains alpha}
\xymatrix@R=3ex{
0 \ar[r] & \Z \ar[r]\ar[d] & \Z[\Sigma] \ar[r]\ar[d]^(.45){\alpha} &  \widehat T \ar[r]\ar[d]^(.4)\wr & 0\\
0 \ar[r] & \bar k^* \ar[r] & \bar k[Z]^* \ar[r] & \bar k[Z]^*/\bar k^* \ar[r] & 0
}
\end{aligned}
\end{align}
whose rows are exact, whose leftmost vertical arrow sends~$1$ to~$c$,
and
whose middle vertical arrow sends $\sigma \in \Sigma$ to the image
of $z$ by the morphism $L[Z]^* \to \bar k[Z]^*$ induced by~$\sigma$.
The rightmost vertical arrow is an isomorphism.
Passing to cohomology and taking into account the canonical isomorphism
$\Br_1(Z)=H^2(k,\bar k[Z]^*)$
given by the Hochschild--Serre spectral sequence,
we obtain the commutative diagram with exact rows
\begin{align}
\label{diag:normtorus}
\begin{aligned}
\xymatrix@R=3ex{
H^1(k,\Q/\Z) \ar[r] \ar[d] & H^1(L,\Q/\Z) \ar[r]^\beta \ar[d]^(.45){\alpha'} & H^2(k,\widehat T)
\ar[d]^(.4)\wr \\
\Br(k) \ar[r] & \Br_1(Z) \ar[r] & H^2(k,\bar k[Z]^*/\bar k^*)\rlap{.}
}
\end{aligned}
\end{align}

\begin{lem}
\label{lem:whoisbeta}
The map $\alpha'$ sends $\chi \in H^1(L,\Q/\Z)$
to $\Cores_{L/k}(z,\chi) \in \Br_1(Z)$.
\end{lem}

\begin{proof}
We denote by $f:Z\to \Spec(k)$ and $f':Z \otimes_kL \to \Spec(L)$ the structure morphisms
and by $\nu:Z \otimes_kL \to Z$ the natural projection.
Let $\gamma:\Z \to \Gm$ be the morphism of \'etale sheaves on $Z \otimes_kL$
defined by $\gamma(1)=z \in H^0(Z \otimes_k L,\Gm)$.
Applying the functors $H^2(Z,-)$ and $H^2(k,f_*-)$, related by a natural transformation
$H^2(k,f_*-)\to H^2(Z,-)$,
to the morphisms of étale sheaves on~$Z$
\begin{align}
\label{eq:mapofsheaves}
\xymatrix{
\nu_*\Z \ar[r]^{\nu_*\gamma} & \nu_*\Gm \ar[r]^{N_{L/k}} & \Gm\rlap{,}
}
\end{align}
we obtain a commutative diagram
\begin{align}
\label{diag:whoisbeta}
\begin{aligned}
\xymatrix@C=2.4em@R=3ex{
\ar@{=}[d] H^2(k,f_*\nu_*\Z) \ar[r] & H^2(k,f_*\nu_*\Gm) \ar[r] & H^2(k,f_*\Gm) \ar@{=}[d] \\
*!<3.35em,0ex>\entrybox{H^1(L,\Q/\Z)=H^2(L,\Z)}
 \ar[d]_{f'^*} \ar[rr] &&
H^2(k,\bar k[Z]^*)
\ar[d] \\
 H^2(Z \otimes_k L,\Z) \ar[r]^(.52){\xi \mkern3mu\mapsto\mkern3mu z \smile \xi} &
\Br(Z \otimes_k L) \ar[r]^(.56){\Cores_{L/k}} & \Br(Z)\rlap,
}
\end{aligned}
\end{align}
The map $(f_*\nu_*\Z)(\bar k) \to (f_*\Gm)(\bar k)$
obtained from the composition of the morphisms in~\eqref{eq:mapofsheaves}
coincides,
via the canonical isomorphism $(f_*\nu_*\Z)(\bar k)=\Z[\Sigma]$,
with the map~$\alpha$ appearing in~\eqref{diag:contains alpha}.
Thus,
the composition of the middle horizontal maps
of~\eqref{diag:whoisbeta}
with the lower right-hand side vertical map
of~\eqref{diag:whoisbeta} coincides with~$\alpha'$.
The lemma then follows from the commutativity of this diagram.
\end{proof}

Lemma~\ref{lem:whoisbeta}
and a diagram chase in~\eqref{diag:normtorus} together
imply Proposition~\ref{prop:chinr}~(1).

Let us now prove Proposition~\ref{prop:chinr}~(2).
As discussed above,
an element of $\Br_1(Z)$ belongs to the subgroup $\Br_{\nr}(Z)$
if and only if its image in $H^2(k,\widehat T)$
by the map $\Br_1(Z) \to H^2(k,\widehat T)$
extracted from the diagram~\eqref{diag:normtorus}
belongs to $\Sha^2_\omega(k,\widehat T)$, see \cite[Théorème~8.1]{bodeha}.
In view of~\eqref{diag:normtorus} and of Lemma~\ref{lem:whoisbeta},
the proof will therefore be complete once we
check that
the following two conditions are equivalent:
\begin{enumerate}
\setlength\itemsep{.7ex}
\item[(a)]
for any
field extension $k'/k$ such that the absolute Galois group
of~$k'$ is procyclic,
the images of~$\chi$ in $H^1(k',\Q/\Z)$ via any
two $k$\nobreakdash-algebra morphisms $L\to k'$
are equal;
\item[(b)]
for any
field extension $k'/k$ such that the absolute Galois group
of~$k'$ is procyclic,
 the image of $\chi$ in $H^1(L \otimes_k k',\Q/\Z)$
comes from $H^1(k',\Q/\Z)$.
\end{enumerate}
Any $k$\nobreakdash-algebra morphism $L\to k'$ induces
a $k'$\nobreakdash-algebra morphism $L \otimes_k k' \to k'$ and hence a retraction of the natural
map $H^1(k',\Q/\Z)\to H^1(L\otimes_k k',\Q/\Z)$.
Therefore (b)$\Rightarrow$(a).
  For the converse,
we need the following lemma.

\begin{lem}
\label{lem:elementary}
Let $N \geq 1$.
Let $n_1,\dots,n_N$ be nonzero integers and let
$(\lambda_i)_{1\leq i \leq N} \in (\Q/\Z)^N$
satisfy the equality
\begin{align}
\frac{n}{n_i}\lambda_i=\frac{n}{n_j}\lambda_j
\end{align}
for all $i,j \in \{1,\dots,N\}$
and for all integers~$n$ divisible by both~$n_i$ and~$n_j$.
Then there exists $\lambda \in \Q/\Z$ such that $\lambda_i=n_i\lambda$
for all $i \in \{1,\dots,N\}$.
\end{lem}

\begin{proof}
By induction on~$N$, we may assume that $N=2$.
Then, as $\Q/\Z$ is divisible, we may assume that~$n_1$ and~$n_2$ are coprime.
Write $1=a_1 n_1 + a_2 n_2$ for $a_1,a_2\in \Z$.
Applying the hypothesis to $n=n_1n_2$ shows that
 $\lambda=a_1 \lambda_1 + a_2\lambda_2$
satisfies the desired property.
\end{proof}

Going back to the implication (a)$\Rightarrow$(b),
let us write $L\otimes_k k' = E_1\times\dots \times E_N$
where $E_i$ is a field extension of~$k'$ of degree~$n_i$
and let us choose an algebraic closure $\bar k'$ of~$k'$, a topological generator
$\tau \in \Gal(\bar k'/k')$ and, for each $i \in \{1,\dots,N\}$, a
$k'$\nobreakdash-linear
isomorphism between~$E_i$ and the subfield of~$\bar k'$ fixed by $\tau^{n_i}$.
Using these choices to identify
$H^1(L\otimes_k k',\Q/\Z)$ with~$(\Q/\Z)^N$, the implication
(a)$\Rightarrow$(b) now results from Lemma~\ref{lem:elementary}.
\end{proof}

When~$L$ is a field,
Proposition~\ref{prop:chinr}~(2) can be reformulated in Galois theoretical terms,
viewing~$\chi$ as a group homomorphism $\Gal(\bar k/L)\to \Q/\Z$.

\begin{cor}
\label{cor:chinr}
Suppose that $L$ is a field.
Fix an algebraic closure~$\bar k$ of~$k$ and a $k$\nobreakdash-linear embedding $L \hookrightarrow \bar k$.
Let $\chi \in H^1(L,\Q/\Z)$.
The following conditions are equivalent:
\begin{enumerate}
\item $\Cores_{L/k}(z,\chi) \in \Br_{\nr}(Z)$;
\item 
for any $\tau_1,\tau_2 \in \Gal(\bar k/L)$ that are conjugate in $\Gal(\bar k/k)$,
the equality
 $\chi(\tau_1)=\chi(\tau_2)$ holds,
where we regard~$\chi$ as a group homomorphism $\Gal(\bar k/L)\to \Q/\Z$.
\end{enumerate}
\end{cor}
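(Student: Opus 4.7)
The plan is to deduce the corollary from Proposition~\ref{prop:chinr}(2) by reinterpreting its condition~(a) in Galois-theoretic terms. The translation dictionary is standard: when $L$ is a field, every $k$\nobreakdash-algebra morphism $\iota:L \to k'$ (for $k \subseteq k' \subseteq \bar k$) is the restriction to $L$ of some $\sigma \in \Gal(\bar k/k)$ satisfying $\sigma(L) \subseteq k'$, and a direct computation shows that, viewing $\chi$ as a character of $\Gal(\bar k/L)$, the pull-back $\iota^*\chi \in H^1(k',\Q/\Z)$ is the character $\tau \mapsto \chi(\sigma^{-1}\tau\sigma)$ on $\Gal(\bar k/k')$. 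This makes sense because $\tau$ fixes $k' \supseteq \sigma(L)$, so $\sigma^{-1}\tau\sigma$ fixes~$L$.

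For the implication (2)$\Rightarrow$(1), I would pick $k'/k$ with procyclic absolute Galois group and two morphisms $\iota_1,\iota_2:L \to k'$, lifted to $\sigma_1,\sigma_2 \in \Gal(\bar k/k)$, and observe that for every $\tau \in \Gal(\bar k/k')$, the two elements $\sigma_1^{-1}\tau\sigma_1$ and $\sigma_2^{-1}\tau\sigma_2$ both lie in $\Gal(\bar k/L)$ and are conjugate in $\Gal(\bar k/k)$ (by $\sigma_1^{-1}\sigma_2$). Hypothesis~(2) therefore yields $\iota_1^*\chi = \iota_2^*\chi$ as elements of $H^1(k',\Q/\Z)$, which is exactly condition~(a) of Proposition~\ref{prop:chinr}(2), and hence~(1).

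For the converse (1)$\Rightarrow$(2), given $\tau_1,\tau_2 \in \Gal(\bar k/L)$ with $\tau_2 = g\tau_1 g^{-1}$ for some $g \in \Gal(\bar k/k)$, the key construction is to take $k' = \bar k^H$, where $H = \overline{\langle \tau_1\rangle}$, so that $\Gal(\bar k/k') = H$ is procyclic. Then $L \subseteq k'$ because $H \subseteq \Gal(\bar k/L)$, and $g^{-1}(L) \subseteq k'$ because $gHg^{-1} = \overline{\langle\tau_2\rangle}$ is contained in the closed subgroup $\Gal(\bar k/L)$, so $H$ fixes $g^{-1}(L)$ pointwise. Applying condition~(a) of Proposition~\ref{prop:chinr}(2) to the morphisms $\iota_1 = \id_L$ and $\iota_2 = g^{-1}|_L$ (lifted by $\sigma_1 = \id$ and $\sigma_2 = g^{-1}$) and to the element $\tau = \tau_1 \in H$ then gives $\chi(\tau_1) = \chi(g\tau_1 g^{-1}) = \chi(\tau_2)$, as required.

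No step presents a serious obstacle; the only verification requiring a little care is the inclusion $g^{-1}(L) \subseteq k'$, which reduces to the observation that the closure of a cyclic subgroup is compatible with conjugation, combined with the closedness of $\Gal(\bar k/L)$.
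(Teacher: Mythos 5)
Your implication (1)$\Rightarrow$(2) is exactly the paper's argument: take $k'$ to be the fixed field in $\bar k$ of the closed subgroup generated by $\tau_1$, use the inclusion of $L$ and the morphism induced by the conjugating element as the two embeddings, and evaluate the two pullbacks of $\chi$ at $\tau_1$. Your dictionary for pullbacks of characters and the check that $g^{-1}(L)\subseteq k'$ are both correct.

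The gap is in your (2)$\Rightarrow$(1) direction. Proposition~\ref{prop:chinr}~(2) requires the equality of the two images of~$\chi$ for \emph{every} field extension $k'/k$ whose absolute Galois group is procyclic and every pair of $k$-algebra morphisms $L\to k'$; such a $k'$ need not be algebraic over~$k$, and in particular need not be a subfield of~$\bar k$. Your verification presupposes $k\subseteq k'\subseteq\bar k$ (you write $\Gal(\bar k/k')$ and lift $\iota_1,\iota_2$ to elements of $\Gal(\bar k/k)$), so as written it does not establish the hypothesis of the proposition. The missing step is small but must be supplied, and it is precisely what the paper's proof of the converse does: for an arbitrary such $k'$, choose an algebraic closure $\bar k'$ of $k'$, embeddings $\bar\iota_1,\bar\iota_2:\bar k\to\bar k'$ extending $\iota_1,\iota_2$, and a topological generator $\tau$ of $\Gal(\bar k'/k')$; the restrictions $\tau_1,\tau_2\in\Gal(\bar k/L)$ of $\tau$ along $\bar\iota_1,\bar\iota_2$ are conjugate in $\Gal(\bar k/k)$ (the two embeddings differ by an element of $\Gal(\bar k/k)$), and since a character of a procyclic group is determined by its value on a topological generator, hypothesis~(2) yields the required equality in $H^1(k',\Q/\Z)$. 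Alternatively, you can reduce to the case you treated by factoring $\iota_1,\iota_2$ through the relative algebraic closure of~$k$ in~$k'$, whose absolute Galois group is a quotient of $\Gal(\bar k'/k')$, hence procyclic, and which embeds $k$-linearly into~$\bar k$. With this addition your proof coincides with the paper's.
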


\begin{proof}
Assume that $\Cores_{L/k}(z,\chi) \in \Br_{\nr}(Z)$
and let
 $\tau_1,\tau_2 \in \Gal(\bar k/L)$
and
$\sigma \in \Gal(\bar k/k)$ satisfy $\sigma^{-1}\tau_1\sigma = \tau_2$.
Let $\iota_1:L\to k'$ be the inclusion
 of~$L$
in the subfield~$k'$ of~$\bar k$ fixed by~$\tau_1$,
and $\iota_2:L \to k'$ the map  induced by~$\sigma$.
Proposition~\ref{prop:chinr}~(2) now ensures
that
$\chi(\tau_1)=\chi(\tau_2)$.

To prove the converse implication, it suffices to note that
if two embeddings~$\iota_1,\iota_2$ of the extension~$L/k$ into
a field extension~$k'/k$ with procyclic absolute Galois group
are given, then, after choosing an algebraic closure~$\bar k'$ of~$k'$,
a topological generator $\tau \in \Gal(\bar k'/k')$ and, for
each $i \in \{1,2\}$, an embedding
$\bar\iota_i:\bar k\to \bar k'$
that extends~$\iota_i$, the images~$\tau_1$ and~$\tau_2$ of~$\tau$
by the restriction maps $\bar\iota_i^*:\Gal(\bar k'/k')\to \Gal(\bar k/L)$
are conjugate in $\Gal(\bar k/k)$.
\end{proof}

\begin{rmk}
If $k$ is a number field, then $H^2(k,\Q/\Z)=0$
(see \cite[Corollary~18.17]{harbook}), so that the map~$\beta$ appearing in~\eqref{diag:normtorus} is onto.
As a consequence, all elements of $\Br_1(Z)$ can be written
as $\delta + \Cores_{L/k}(z,\chi)$ for some $\chi \in H^1(L,\Q/\Z)$ and $\delta \in \Br_0(Z)$
(see~\eqref{diag:normtorus}).
Thus, in this case, Proposition~\ref{prop:chinr} fully describes
the unramified Brauer group of~$Z$.
\end{rmk}

\subsection{Some norm tori with \texorpdfstring{$\Br_{\nr}(T)=\Br_0(T)$}{Brₙᵣ(T)=Br₀(T)}}
\label{subsec:some norm tori}

For later use in~\textsection\ref{subsec:from schinzel},
the next proposition collects a few cases in which norm tori satisfy
$\Br_{\nr}(T)=\Br_0(T)$.

\begin{prop}
\label{prop:criterion-for-brauer}
Let~$L/k$ be a finite extension of fields of characteristic~$0$.
After choosing a bijection $\Spec(L\otimes_k \bar k)=\{1,\dots,n\}$,
we view
 the Galois group~$G$ of a Galois closure of~$L/k$
as a
transitive subgroup of the symmetric group~$S_n$.
The torus $T=R^1_{L/k}\Gm$ over~$k$
satisfies $\Br_{\nr}(T)=\Br_0(T)$ under any of the following assumptions:
\begin{enumerate}[label={\upshape(\roman*)}]
\item
for every prime divisor $p$ of $n$, the $p$-Sylow subgroups of~$G$
are cyclic;
\item $G$ is cyclic;
\item $n$ is prime;
\item $L/k$ is Galois and $H^3(G,\Z)=0$;
\item
 $\Ker\big(H^{\ab} \to G^{\ab}\big)$ has order prime to~$n$ (for example $H^{\ab}=0$)
and $n$ is squarefree;
\item $G=S_n$ is the symmetric group;
\item $G=D_n$ is the dihedral group of order~$2n$;
\item $G=A_n$ is the alternating group and $n\geq 5$.
\end{enumerate}
\end{prop}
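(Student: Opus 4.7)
The plan is to exploit the fact that $T$ is a torus with $1 \in T(k)$, so the injection~\eqref{eq:classical-formula-part1} is an isomorphism and
\[
\Br_{\nr}(T)/\Br_0(T) \cong \Sha^2_{\omega}(k,\widehat T) \cong \Sha^2_{\cyc}(G,\widehat T).
\]
The task thus reduces to showing $\Sha^2_{\cyc}(G,\widehat T) = 0$ in each of the eight cases. Fix a core-free subgroup $H \leq G$ of index $n$ corresponding to~$L$ in its Galois closure, so that $\widehat T$ sits in the $G$-equivariant exact sequence $0 \to \Z \to \Z[G/H] \to \widehat T \to 0$, the left-hand map sending $1$ to the norm element. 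Combining the long cohomology sequence with Shapiro's lemma produces the fundamental five-term exact sequence
\[
\Hom(G^{\ab},\Q/\Z) \xrightarrow{\Res} \Hom(H^{\ab},\Q/\Z) \xrightarrow{\delta} H^2(G,\widehat T) \to H^3(G,\Z) \to H^3(H,\Z),
\]
and Mackey's decomposition $\Z[G/H]|_C = \bigoplus_{x \in C \backslash G/H}\Z[C/(C\cap xHx^{-1})]$ yields the analogous sequence for each cyclic subgroup $C\leq G$; a class lies in $\Sha^2_{\cyc}$ precisely when its restriction to every such $C$ vanishes.

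The simpler cases then follow quickly. Case (iv) gives $H = 1$, so $\Z[G/H] = \Z[G]$ is induced and $H^2(G,\widehat T) \cong H^3(G,\Z) = 0$; case (ii) reduces to (iv) because a core-free abelian $H$ forces $H = 1$ and $H^3$ of a cyclic group with $\Z$-coefficients vanishes; case (iii) falls under (i) because for $n$ prime, $|G|$ divides $n!$ and is divisible by $n$, so the $n$-Sylow of $G$ is cyclic of order $n$. For case (i), the injection $\Sha^2_{\cyc}(G,\widehat T)_{(p)} \hookrightarrow \Sha^2_{\cyc}(P_p,\widehat T|_{P_p})$ kills the $p$-part whenever the Sylow~$P_p$ is cyclic (take $C = P_p$); for primes $p$ with $p \nmid n$, the $p$-Sylow is contained in a conjugate of $H$ (since its orbits on $\{1,\ldots,n\}$ have $p$-power size and must include a fixed point), which allows a direct computation of $H^2(P_p,\widehat T|_{P_p})$. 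Case (v) exploits the identification of the cokernel of~$\Res$ in the five-term sequence with the Pontryagin dual of $\Ker(H^{\ab} \to G^{\ab})$: the order of this kernel being prime to $n$ while $H^2(G,\widehat T)$ is $n$-torsion forces the image of~$\delta$ to vanish, and squarefreeness of~$n$ combined with a cyclic-restriction argument kills the $\Sha$ contribution coming from $H^3(G,\Z) \to H^3(H,\Z)$.

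For cases (vi)--(viii), the specific structure of $G$ enters. Case (vii), $G = D_n$, is Bartels' theorem and is given an independent proof in Example~\ref{ex:bartels} via the central-extension reinterpretation of~\textsection\ref{subsec:central extensions}. For (vi), $G = S_n$ with $H = S_{n-1}$, any class in $H^2(G,\widehat T)$ is detected by restriction to the cyclic subgroup generated by an $n$-cycle (which intersects~$H$ trivially), reducing the vanishing to the cyclic case. For (viii), $G = A_n$ with $n \geq 5$ is perfect, so $G^{\ab}=0$ and $\delta$ is injective on $\Hom(H^{\ab},\Q/\Z)$; detection on cyclic subgroups generated by $n$-cycles (for $n$ odd) or by well-chosen cycle products (for $n$ even) completes the argument. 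The main obstacle will be the book-keeping across cases (v)--(viii): tracking~$\delta$ through Mackey's decomposition and verifying explicit cyclic-detection statements requires nontrivial group-theoretic input, with case (viii) in particular requiring some knowledge of the low-dimensional integral cohomology of $A_n$.
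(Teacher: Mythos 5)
Your global reduction to $\Sha^2_{\cyc}(G,\widehat T)=0$, the five-term sequence coming from $0\to\Z\to\Z[G/H]\to\widehat T\to 0$ and Shapiro, and your treatment of cases (i)--(iv) (plus deferring (vii) to the central-extension proof of Example~\ref{ex:bartels}) are correct and essentially the paper's route, your handling of (ii) via $H=1$ being a harmless variant. The gaps are in (v), (vi) and (viii). For (v): the claim that $H^2(G,\widehat T)$ is $n$-torsion is false; only $\Sha^2_{\cyc}(G,\widehat T)$ is killed by $n$ (restriction--corestriction through $H$, where $\widehat T$ is a permutation $H$-module). The image of $\delta$, dual to $\Ker(H^{\ab}\to G^{\ab})$, need not vanish under hypothesis (v): for $G=\mathrm{PSL}_2(\F_7)$ acting on $n=7$ points, $H\simeq S_4$, $G^{\ab}=0$, and the image of $\delta$ is $\Z/2\Z$. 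One can repair this by noting that for $p\mid n$ the image of $\delta$ has trivial $p$-part, so $H^2(G,\widehat T)\{p\}$ injects into $\Ker\big(H^3(G,\Z)\to H^3(H,\Z)\big)$; but your proposed ``cyclic-restriction argument'' there is vacuous, because $H^3(C,\Z)=0$ for every finite cyclic group $C$, so every class of $H^3(G,\Z)$ restricts to zero on all cyclic subgroups, and moreover that kernel can be nonzero for squarefree $n$: for $G=A_6$, $H=A_5$, $p=3$ it contains a $\Z/3\Z$ (the Schur multipliers are $\Z/6\Z$ and $\Z/2\Z$), and correspondingly $H^2(A_6,\widehat T)\{3\}\neq 0$. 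Since the paper routes $A_6$ (the hard case of (viii)) precisely through (v), this is exactly where your argument breaks: pushing into $H^3(G,\Z)$ loses the information that cyclic restrictions can still see in $H^2(-,\widehat T)$. The paper's Lemma~\ref{lem:sha2cycpvanishes}~(c) instead shows that $H^2(G,\widehat T)\{p\}$ injects into $H^2(G_p,\widehat T')$ for the Sylow-level index-$p$ situation (using surjectivity of $H^2(G,\Z)\{p\}\to H^2(H,\Z)\{p\}$, which is where the hypothesis on $\Ker(H^{\ab}\to G^{\ab})$ enters, and injectivity of restriction to the Sylow on $H^3$), and then invokes the prime-degree case (iii).

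For (vi) and (viii), the ``detection by the cyclic subgroup generated by an $n$-cycle'' cannot carry the argument: if $C$ is generated by an $n$-cycle then $C$ acts regularly on $G/H$, so $\widehat T|_C\simeq\Z[C]/\Z$ and $H^2(C,\widehat T|_C)=0$ (it sits between $H^2(C,\Z[C])=0$ and $H^3(C,\Z)=0$). Hence the restriction map you want to be injective has trivial target, and asserting its injectivity is the same as asserting $H^2(G,\widehat T)=0$, which is the statement to be proved. The genuine content for $S_n$ ($n\geq 6$) and $A_n$ ($n\geq 8$) is the surjectivity of $H^2(G,\Z)\to H^2(H,\Z)$ (Pontryagin dual to injectivity of $H^{\ab}\to G^{\ab}$) combined with the injectivity of $H^3(G,\Z)\to H^3(H,\Z)$, i.e.\ Schur-multiplier restriction results of Kunyavski\u{\i}--Voskresenski\u{\i} and Macedo, with the small values of $n$ handled through (iii), (v) and (vii); your sketch does not supply this input, and no choice of cyclic subgroups can replace it.
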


Cases~(vi), (vii), (viii)
of
Proposition~\ref{prop:criterion-for-brauer}
are
due to Kunyavski\u{\i}--Voskresenski{\u\i}~\cite{voskresenskii-kunyavskii},
Bartels~\cite{bartels} and Macedo~\cite{macedo}, respectively.  For the
convenience of the reader, we reproduce below the main points of the
arguments, and provide a simple alternative proof of Macedo's
theorem in the exceptional case $G=A_6$, which in \emph{op.\ cit.}\ relied on
the use of a computer.  The proof we give here was inspired by an argument
of Drakokhrust and Platonov~\cite{drakplatonov}.

\begin{proof}[Proof of Proposition~\ref{prop:criterion-for-brauer}]
Let $H \subseteq G$ denote the subgroup whose fixed field is~$L$,
so that
$\widehat T = \Z^{G/H}\mkern-2mu/\mkern2mu\Z$
and $H^2(G,\widehat T)$ fits into an exact sequence
\begin{align}
\label{se:h2ghatt}
\xymatrix{
H^2(G,\Z) \ar[r] & H^2(H,\Z) \ar[r] & H^2(G,\widehat T) \ar[r] & H^3(G,\Z) \ar[r] & H^3(H,\Z)\rlap,
}
\end{align}
in view of Shapiro's lemma.
We need to prove that
$\Sha^2_{\cyc}(G, \widehat T)=0$ (see~\eqref{eq:classical-formula-part1} and~\eqref{eq:classical-formula-part2}).

\begin{lem}
\label{lem:sha2cycpvanishes}
Let~$p$ be a prime number.
The $p$\nobreakdash-torsion subgroup of $\Sha^2_{\cyc}(G,\widehat T)$ vanishes
under any of the following assumptions:
\begin{enumerate}[label={\upshape(\alph*)}]
\item $p$ does not divide~$n$;
\item the $p$-Sylow subgroups of~$G$ are cyclic;
\item
the group $\Ker\big(H^{\ab} \to G^{\ab}\big)$ has order prime to~$p$,
and,
denoting by~$H_p$
a $p$\nobreakdash-Sylow subgroup of~$H$, by~$G_p$  a $p$\nobreakdash-Sylow subgroup of~$G$ containing~$H_p$,
and by~$L'$ and~$k'$ their respective fixed fields,  the torus
 $T'=R^1_{L'/k'}\Gm$ over~$k'$
satisfies $\Br_{\nr}(T')=\Br_0(T')$.
\end{enumerate}
\end{lem}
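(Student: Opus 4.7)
The plan is to treat each of the three hypotheses separately, in each case exploiting the restriction to a $p$\nobreakdash-Sylow subgroup $G_p\subseteq G$ together with the sequence~\eqref{se:h2ghatt}. Since $[G:G_p]$ is prime to~$p$, the restriction map $H^2(G,\widehat T)[p^\infty]\hookrightarrow H^2(G_p,\widehat T)$ is injective---its composition with corestriction equals multiplication by the unit $[G:G_p]$. It therefore suffices, in each case, to show that every $\xi\in \Sha^2_\cyc(G,\widehat T)[p^\infty]$ has vanishing image in $H^2(G_p,\widehat T)$.

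Case~(b) is immediate: if $G_p$ is itself cyclic, then the hypothesis $\xi\in\Sha^2_\cyc$ applied to $G_p$ gives $\Res_{G_p}(\xi)=0$ directly.

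For cases~(a) and~(c), the strategy is to analyse the $G_p$\nobreakdash-module structure of $\widehat T=\Z^{G/H}/\Z$ by decomposing $G/H$ into $G_p$\nobreakdash-orbits, so that $\Z^{G/H}=\bigoplus_O \Z^O$ where $\Z^O$ is a permutation $G_p$\nobreakdash-module with stabilizer $G_p\cap gHg^{-1}$ for the orbit of~$gH$. In case~(a), the hypothesis $p\nmid n$ forces (after conjugation) $G_p\subseteq H$, so the orbit of~$H$ is the singleton $\{H\}$; in particular, the projection onto this coordinate splits the diagonal $\Z\hookrightarrow \Z^{G/H}$ as $G_p$\nobreakdash-modules and identifies $\widehat T$ with the sum over the remaining orbits. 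In case~(c), the equality $G_p\cap H=H_p$---which holds because $H_p$ is the unique $p$\nobreakdash-Sylow of the $p$\nobreakdash-group $G_p\cap H$---makes the orbit of~$H$ under~$G_p$ equivariantly isomorphic to $G_p/H_p$, yielding a short exact sequence of $G_p$\nobreakdash-modules
\begin{equation*}
0\to M\to \widehat T\to \widehat{T'}\to 0\rlap,
\end{equation*}
with $\widehat{T'}=\Z^{G_p/H_p}/\Z$ the character module of~$T'$ and~$M$ a direct sum of permutation modules indexed by the remaining orbits. By naturality, the image $\xi'\in H^2(G_p,\widehat{T'})$ of $\Res_{G_p}(\xi)$ lies in $\Sha^2_\cyc(G_p,\widehat{T'})$, which vanishes by the hypothesis $\Br_\nr(T')=\Br_0(T')$. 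Hence $\Res_{G_p}(\xi)$ lifts, via Shapiro's lemma, to a class $\zeta\in H^2(G_p,M)=\bigoplus_O \Hom(G_p\cap gHg^{-1},\Q/\Z)$.

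The final step is to force~$\zeta$ to vanish componentwise. Using Mackey's formula to compute $\Res_C(\xi)$ for cyclic $C\subseteq G_p$, one shows that the $\Sha^2_\cyc$\nobreakdash-hypothesis forces each character $\zeta_O\in\Hom(G_p\cap gHg^{-1},\Q/\Z)$ to be trivial on every cyclic subgroup of its domain, and hence to vanish. In case~(c), this last implication requires the assumption on $\Ker(H^\ab\to G^\ab)$ being prime to~$p$: it is the tool that lets one identify the characters $\zeta_O$ with contributions coming from characters of~$H^\ab$ that extend across the transfer, ruling out any $p$\nobreakdash-primary obstruction at the level of abelianisations. Case~(a) is the degenerate specialisation where $\widehat{T'}=0$ and no assumption on $\Ker(H^\ab\to G^\ab)$ is needed, because every stabilizer $G_p\cap gHg^{-1}$ is automatically a subgroup of $G_p\subseteq H$. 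The main obstacle is this concluding Mackey\nobreakdash-theoretic computation, and in particular its interaction in case~(c) with the kernel hypothesis.
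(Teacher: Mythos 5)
Your treatment of (a) and (b) is sound: (b) is essentially the paper's own argument, and (a) is a harmless variant of it (the paper restricts to $H$, over which $\widehat T\simeq\Z^{(G/H)\setminus\{H\}}$ is a permutation module, and uses restriction--corestriction, rather than restricting to a $p$-Sylow subgroup contained in $H$; both work).

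The genuine gap is in case (c), in the step you yourself flag as ``the main obstacle'': that step is precisely the content of the lemma and it is not supplied, and as stated it does not follow from Mackey's formula. Once you lift $\Res_{G_p}\xi$ to a class $\zeta\in H^2(G_p,M)$, the hypothesis $\xi\in\Sha^2_{\cyc}(G,\widehat T)$ only tells you that for every cyclic $C\subseteq G_p$ the image of $\Res_C\zeta$ in $H^2(C,\widehat T)$ vanishes, i.e.\ that $\Res_C\zeta$ lies in the image of the connecting map $H^1(C,\widehat T')\to H^2(C,M)$; it does not give $\Res_C\zeta=0$, so the assertion that each component character $\zeta_O$ restricts trivially to all cyclic subgroups of its domain is unjustified. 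Moreover $\zeta$ is only well defined modulo the image of $H^1(G_p,\widehat T')\to H^2(G_p,M)$, so ``componentwise vanishing of $\zeta$'' is not even the invariant statement you need: what you need is that some lift can be taken to be zero, equivalently that $\Res_{G_p}\xi=0$. Finally, the sentence about the kernel hypothesis letting you ``identify the $\zeta_O$ with characters of $H^{\ab}$ that extend across the transfer'' is a gesture, not an argument, and it is unclear how a hypothesis on $\Ker(H^{\ab}\to G^{\ab})$ — a condition on the pair $(G,H)$ — could be brought to bear after everything has been restricted to $G_p$.

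For comparison, the paper's proof of (c) sidesteps this difficulty entirely: it compares the two Shapiro exact sequences $H^2(G,\Z)\to H^2(H,\Z)\to H^2(G,\widehat T)\to H^3(G,\Z)$ and $H^2(G_p,\Z)\to H^2(H_p,\Z)\to H^2(G_p,\widehat T')\to H^3(G_p,\Z)$ on $p$-primary parts. The kernel hypothesis gives surjectivity of $H^2(G,\Z)\{p\}\to H^2(H,\Z)\{p\}$, these groups being $\Hom(G^{\ab},\Q_p/\Z_p)$ and $\Hom(H^{\ab},\Q_p/\Z_p)$; restriction--corestriction gives injectivity of $H^3(G,\Z)\{p\}\to H^3(G_p,\Z)\{p\}$; a four-lemma chase then shows that the composite map $H^2(G,\widehat T)\{p\}\to H^2(G_p,\widehat T')\{p\}$ is injective, and $\Sha^2_{\cyc}(G_p,\widehat T')=0$ (from $\Br_{\nr}(T')=\Br_0(T')$) concludes. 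In particular the paper never has to control the $M$-component of the restriction at all — exactly the part your outline cannot handle — and it uses an $H^3$ ingredient that has no counterpart in your sketch. To salvage your route you would have to prove that your $\zeta$ may be chosen zero, and that is where analogous abelianisation and $H^3$ bookkeeping would have to enter.
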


\begin{proof}
The $H$\nobreakdash-module $\widehat T$ is isomorphic to
$\Z^{(G/H) \setminus \{H\}}$,
hence it is a permutation $H$\nobreakdash-module.
We deduce, by Shapiro's lemma,
that
 $\Sha^2_{\cyc}(H,\widehat T)=0$.
It follows
 that $(G:H)=n$ kills  $\Sha^2_{\cyc}(G,\widehat T)$
(see \cite[Corollary~1.5.7]{nsw}).
This takes care of~(a).
By the same token,
the index $(G:S)$ kills $\Sha^2_{\cyc}(G,\widehat T)$
for any cyclic subgroup~$S$ of~$G$,
since $\Sha^2_{\cyc}(S,\widehat T)$ obviously vanishes.
This takes care of~(b).  Now assume that~(c) holds.  We let $M\{p\}$ denote the $p$\nobreakdash-primary
torsion subgroup of an abelian group~$M$ and consider the commutative diagram
with exact rows
\begin{align*}
\xymatrix@R=3ex{
H^2(G,\Z)\{p\} \ar[d] \ar[r]^\alpha & H^2(H,\Z)\{p\} \ar[d] \ar[r] & H^2(G,\widehat T)\{p\} \ar[d]^(.44)\beta \ar[r] & H^3(G,\Z)\{p\} \ar[d]^(.44)\gamma \\
H^2(G_p,\Z)\{p\}  \ar[r] & H^2(H_p,\Z)\{p\}  \ar[r] & H^2(G_p,\widehat T')\{p\}  \ar[r] & H^3(G_p,\Z)\{p\}
}
\end{align*}
induced, thanks to Shapiro's lemma, by the natural morphism of exact sequences
\begin{align*}
\xymatrix@R=3ex{
0 \ar[r] & \Z \ar[r]\ar@{=}[d] & \Z^{G/H} \ar[r]\ar[d] & \widehat T \ar[d]\ar[r] & 0 \\
0 \ar[r] & \Z \ar[r] & \Z^{G_p/H_p} \ar[r] & \widehat T' \ar[r] & 0\rlap.
}
\end{align*}
The first part of~(c) implies that~$\alpha$ is surjective,
since $H^2(G,\Z)\{p\}=\Hom(G^\ab,\Q_p/\Z_p)$ and
 $H^2(H,\Z)\{p\}=\Hom(H^\ab,\Q_p/\Z_p)$.
In addition, a corestriction argument as above shows that~$\gamma$ is injective.
We conclude that $\beta$ is injective.
On the other hand,
the image of $\Sha^2_{\cyc}(G,\widehat T)\{p\}$ by~$\beta$ is contained in
$\Sha^2_{\cyc}(G_p,\widehat T')$, which vanishes by the second part of~(c),
in view of~\eqref{eq:classical-formula-part1} and~\eqref{eq:classical-formula-part2}.
Hence $\Sha^2_{\cyc}(G,\widehat T)\{p\}=0$.
\end{proof}

Proposition~\ref{prop:criterion-for-brauer} in case~(i)
results from
Lemma~\ref{lem:sha2cycpvanishes}~(a)(b).
Cases~(ii) and~(iii) are subcases of~(i), as the $p$\nobreakdash-Sylow subgroups of~$S_p$
are cyclic.
In case~(iv), even  $H^2(G,\widehat T)$ vanishes.
Proposition~\ref{prop:criterion-for-brauer} in case~(v)
results from
Lemma~\ref{lem:sha2cycpvanishes}~(a)(c)
and from case~(iii) of
Proposition~\ref{prop:criterion-for-brauer},
once one notes that if~$n$ is squarefree, then for any prime~$p$ dividing~$n$,
the extension $L'/k'$ appearing in Lemma~\ref{lem:sha2cycpvanishes}~(c) has prime degree
(namely, degree~$p$).

We now turn to cases~(vi)--(viii).
When $G=S_n$ and $n \geq 6$ (so that $H=S_{n-1}$)
or $G=A_n$ and $n \geq 8$ (so that $H=A_{n-1}$),
the restriction map $H^2(G,\Z) \to H^2(H,\Z)$ is onto,
being
Pontrjagin
dual to the natural map $H^{\ab}\to G^{\ab}$
(see \cite[p.~52, l.~7]{nsw}),
which is injective
 if $G=S_n$ and $n \geq 3$
(since
 $H^{\ab}=G^{\ab}=\Z/2\Z$ is generated by a transposition)
or
if $G=A_n$ and $n\geq 6$
(since
 $H^{\ab}=0$);
on the other hand, the restriction map $H^3(G,\Z) \to H^3(H,\Z)$ is injective,
by \cite[Lemma~2.2]{macedo}
and \cite[Proposition~3.1.2]{nsw}
when $G=A_n$ and $n \geq 8$,
and by similar arguments
starting from \cite[Proposition~2.4]{macedo}
when $G=S_n$ and $n \geq 6$.
Thus $H^2(G,\widehat T)=0$ in these cases.
When $G=D_n$ (so that $H=\Z/2\Z$), Lemma~\ref{lem:sha2cycpvanishes}~(a)(b)(c) reduces one to the case where~$n$
is a power of~$2$; in this case, a direct computation using the semidirect product structure of~$D_n$
shows that the restriction map $H^2(G,\widehat T) \to H^2(H,\widehat T)$ is injective
 (see \cite[Lemma~3]{bartels}
and apply \cite[Proposition~3.1.2]{nsw}),
hence $\Sha^2_{\cyc}(G,\widehat T)=0$.
When $n=4$ and $G=S_4$,
one has $\Sha^2_{\cyc}(G,\widehat T)=0$
by Lemma~\ref{lem:sha2cycpvanishes}~(a)(c)
and by Proposition~\ref{prop:criterion-for-brauer}~(vii) for $n=4$.
When $n=6$ and $G=A_6$,
case~(v) of Proposition~\ref{prop:criterion-for-brauer} is applicable.
Finally, when $n \in \{2,3,5,7\}$, case~(iii) of Proposition~\ref{prop:criterion-for-brauer} is applicable.
\end{proof}

\begin{rmks}
\label{rk:brnrtrivial}
(i)
When the extension~$L/k$ is
cyclic or has prime degree,
the torus~$T$ is even a direct factor of a rational variety
(when~$n$ is prime, see
\cite[Proposition~9.1, Proposition~9.5]{ctsanflasque};
when~$L/k$ is cyclic,
the torus~$T$ is itself rational, see \cite[Chapter~2, \textsection4.8]{voskbook}
and the proof
of \cite[Proposition~9.1]{ctsanflasque}).
We note that the case where~$n$ is prime
 includes the case where $L/k$ is an
 almost abelian extension in the sense of \cite[Definition~9.4]{hwfibration}
but is not abelian.

(ii)
The equality $\Br_{\nr}(T) = \Br_0(T)$ fails
whenever the extension $L/k$ is abelian but is not cyclic,
since in this
case $\Sha^2_{\cyc}(G,\widehat T)=H^3(G,\Z) \neq 0$.
Another situation in which this equality fails can be found
in Example~\ref{ex:abelian-alternating} below.
\end{rmks}

\subsection{Central extensions by \texorpdfstring{$\Q/\Z$}{𝐐/𝐙}}
\label{subsec:central extensions}

Keeping the set-up of \textsection\ref{subsec:criterion unramifiedness},
we now explain how the elements
of the group $\Sha^2_{\cyc}(G,\widehat T)$,
which appears in~\eqref{eq:classical-formula-part2},
can be interpreted in terms of central extensions of~$G$ by~$\Q/\Z$.
This turns the practical computation
of $\Br_{\nr}(T)/\Br_0(T)$ into a very concrete question on extensions and their trivialisations.
We illustrate this point of view
in Example~\ref{ex:abelian-alternating}, where we provide
a simple explanation for an example due to Kunyavski\u{\i} of a class of norm tori
such that
 $\Br_{\nr}(T)\neq\Br_0(T)$,
and in Example~\ref{ex:bartels}, where we give an independent proof of
Proposition~\ref{prop:criterion-for-brauer}~(vii)
(Bartels' theorem).

From now on, we assume that~$L$ is a field
and we fix an algebraic closure~$\bar k$ of~$k$ and a $k$\nobreakdash-linear
embedding $L \hookrightarrow \bar k$.  Let~$\wtl L$ be the Galois closure of~$L/k$
in~$\bar k$.
Set $G=\Gal(\wtl L/k)$ and $H=\Gal(\wtl L/L)$.
To interpret $H^2(G,\widehat T)$, we consider central
extensions
\begin{align}
\label{eq:central extension}
\xymatrix{
1 \ar[r] & \Q/\Z \ar[r]^(.55){\iota} & \widetilde G \ar[r]^{\smash[b]{\rho}} & G \ar[r] & 1
}
\end{align}
of~$G$ by~$\Q/\Z$
equipped with a splitting of their pull-back
\begin{align}
\label{eq:pulled extension}
\xymatrix{
1 \ar[r] & \Q/\Z \ar[r] & \rho^{-1}(H) \ar[r] & H \ar[r] & 1\rlap.
}
\end{align}
along the inclusion $H \hookrightarrow G$.
Such data can be encoded by a triple $(\widetilde G,\rho,r)$ formed by a group~$\widetilde G$,
a surjective homomorphism $\rho:\widetilde G\to G$ and a homomorphism $r:\rho^{-1}(H) \to \Q/\Z$
whose restriction to the kernel of~$\rho$ is an isomorphism.
A morphism from a triple
 $(\widetilde G_1,\rho_1,r_1)$
to a triple $(\widetilde G_2,\rho_2,r_2)$ 
is by definition a homomorphism
$\gamma:\widetilde G_1 \to \widetilde G_2$ such that
$\rho_2 \circ \gamma = \rho_1$ and
$r_2 \circ \Big( \gamma|_{\rho_1^{-1}(H)}\Big) = r_1$.
We denote by $\Ext(G,H;\Q/\Z)$ the set of isomorphism classes
of such triples $(\widetilde G,\rho,r)$
and by $\Ext_{\nr}(G,H;\Q/\Z)$ the subset consisting
of the isomorphism classes of unramified triples, in the following sense.

\begin{defn}
\label{def:unramified triple}
A triple 
 $(\widetilde G,\rho,r)$ is \emph{unramified} if
the equality $r(\tilde h_1)=r(\tilde h_2)$
holds for all $\tilde h_1,\tilde h_2 \in \rho^{-1}(H)$ that are conjugate in~$\widetilde G$.
\end{defn}

The next proposition justifies this terminology, in view of~\eqref{eq:classical-formula-part1}
and~\eqref{eq:classical-formula-part2}.

\begin{prop}
\label{prop:bijection h2extgh}
There is a canonical bijection $H^2(G,\widehat T)=\Ext(G,H;\Q/\Z)$.
It induces, by restriction, a bijection $\Sha^2_{\cyc}(G,\widehat T)=\Ext_{\nr}(G,H;\Q/\Z)$.
\end{prop}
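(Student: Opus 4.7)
The plan is to derive the bijection $H^2(G,\widehat T)=\Ext(G,H;\Q/\Z)$ from the long exact cohomology sequence associated with the short exact sequence of $G$\nobreakdash-modules $0\to\Z\to\Z[G/H]\to\widehat T\to 0$ (with $\widehat T=\Z[G/H]/\Z$). Combining Shapiro's lemma $H^i(G,\Z[G/H])=H^i(H,\Z)$ with the dimension shift $H^{i+1}(-,\Z)=H^i(-,\Q/\Z)$ (obtained from $0\to\Z\to\Q\to\Q/\Z\to 0$ and the unique divisibility of~$\Q$), we extract the exact sequence
$$\Hom(G,\Q/\Z)\to\Hom(H,\Q/\Z)\to H^2(G,\widehat T)\to H^2(G,\Q/\Z)\xrightarrow{\Res}H^2(H,\Q/\Z).$$
Interpreting $H^2(-,\Q/\Z)$ as classifying central extensions by~$\Q/\Z$ and~$\Res$ as pullback along $H\hookrightarrow G$, a class in $H^2(G,\widehat T)$ amounts to a central extension $1\to\Q/\Z\xrightarrow{\iota}\tilde G\xrightarrow{\rho}G\to 1$ whose pullback to~$H$ is split, equivalently endowed with a retraction $r:\rho^{-1}(H)\to\Q/\Z$ satisfying $r\circ\iota=\id$. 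Two retractions yield the same class iff they differ by the restriction of a character $\chi\in\Hom(G,\Q/\Z)$, and this redefinition matches precisely the action of composing~$r$ with the automorphism $\tilde g\mapsto\iota(\chi(\rho(\tilde g)))\tilde g$ of~$\tilde G$ as an extension. This sets up the bijection with $\Ext(G,H;\Q/\Z)$.

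For the refinement $\Sha^2_{\cyc}(G,\widehat T)=\Ext_{\nr}(G,H;\Q/\Z)$, I would analyse the restriction of a triple $(\tilde G,\rho,r)$ to a cyclic subgroup $S=\langle\sigma\rangle\subseteq G$. Decomposing $G/H=\bigsqcup_i Sg_iH$ with stabilizers $S_i=S\cap g_iHg_i^{-1}=\langle\sigma^{n_i}\rangle$ and $n_i=(S:S_i)$, the restriction of $\Z[G/H]$ to~$S$ splits as $\bigoplus_i\Z[S/S_i]$; Shapiro applied to each summand, combined with $H^3(S,\Z)=0$ for cyclic~$S$, yields
$$H^2(S,\widehat T)=\Coker\Bigl(\Hom(S,\Q/\Z)\to\bigoplus_i\Hom(S_i,\Q/\Z)\Bigr),$$
the map sending a character to its restrictions. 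Tracing the construction of the previous paragraph in this setting, the image of $(\tilde G,\rho,r)$ is represented by the tuple $\chi_i\in\Hom(S_i,\Q/\Z)$ defined, after choosing lifts $\tilde\sigma,\tilde g_i$ of $\sigma,g_i$, by $\chi_i(\sigma^{n_i})=r(\tilde g_i^{-1}\tilde\sigma^{n_i}\tilde g_i)$. Applied to $\lambda_i:=\chi_i(\sigma^{n_i})$, Lemma~\ref{lem:elementary} tells us that the tuple comes from a character of~$S$ if and only if $r(\tilde g_i^{-1}\tilde\sigma^n\tilde g_i)=r(\tilde g_j^{-1}\tilde\sigma^n\tilde g_j)$ for all~$i,j$ and every common multiple~$n$ of~$n_i$ and~$n_j$.

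These compatibility equalities are instances of the unramifiedness condition of Definition~\ref{def:unramified triple}, since the elements $\tilde g_i^{-1}\tilde\sigma^n\tilde g_i$ and $\tilde g_j^{-1}\tilde\sigma^n\tilde g_j$ lie in $\rho^{-1}(H)$ (by $n_i,n_j\mid n$) and are conjugate in~$\tilde G$ via $\tilde g_j^{-1}\tilde g_i$, so the implication unramified $\Rightarrow$ $\Sha^2_\cyc$ is immediate. For the converse, given any conjugate pair $\tilde h_1,\tilde h_2=\tilde g\tilde h_1\tilde g^{-1}$ in $\rho^{-1}(H)$, the case $\rho(\tilde g)\in H$ is trivial (both elements then lie in $\rho^{-1}(H)$ and $r$, being a homomorphism to an abelian group, kills commutators). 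In the case $\rho(\tilde g)\notin H$, I apply the compatibility above to the cyclic subgroup $S=\langle\rho(\tilde h_2)\rangle\subseteq H\subseteq G$ with $\sigma=\rho(\tilde h_2)$ and orbit representatives $g_0=e$, $g_1=\rho(\tilde g)$. Since $S\subseteq H$ and $\sigma=\rho(\tilde g)\rho(\tilde h_1)\rho(\tilde g)^{-1}\in\rho(\tilde g)H\rho(\tilde g)^{-1}$, both $S_0$ and $S_1$ equal~$S$, forcing $n_0=n_1=1$. The compatibility at $n=1$, applied with the lifts $\tilde\sigma:=\tilde h_2$ and $\tilde g_1:=\tilde g$, then gives $r(\tilde h_2)=r(\tilde g^{-1}\tilde h_2\tilde g)=r(\tilde h_1)$, as required. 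The technical heart of the argument is the computation of the restriction map to $H^2(S,\widehat T)$ and the identification of the tuple $(\chi_i)_i$; once this is in place, both directions of the equivalence with unramifiedness fall out cleanly.
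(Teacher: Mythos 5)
Your overall architecture is reasonable and the purely group-theoretic deductions in the second half (the conjugation argument showing unramifiedness implies the orbitwise compatibilities, the two-case analysis for the converse, the use of Lemma~\ref{lem:elementary}) are correct and run parallel to the paper's equivalences (i)--(iv). But as it stands the proposal has a genuine gap, and it sits exactly where the proposition's content lies. In the first part, the exact sequence $\Hom(G,\Q/\Z)\to\Hom(H,\Q/\Z)\to H^2(G,\widehat T)\to H^2(G,\Q/\Z)\xrightarrow{\Res}H^2(H,\Q/\Z)$ only shows that both $H^2(G,\widehat T)$ and $\Ext(G,H;\Q/\Z)$ map onto $\Ker(\Res)$ with fibres that are torsors under $\Coker\big(\Hom(G,\Q/\Z)\to\Hom(H,\Q/\Z)\big)$; it does not by itself produce a map between the two sets, let alone a canonical one compatible with these structures. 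Saying that ``a class in $H^2(G,\widehat T)$ amounts to'' an extension split over $H$ together with a retraction, modulo the character action, is an assertion of the statement to be proved. Making it precise is exactly what the paper's cocycle computation does (via $M=\widehat T\otimes\Q/\Z$, the push-forward extension by $(\Q/\Z)^{G/H}$, and Stix's comparison of splittings of the pushed-forward extension with splittings of the pulled-back one); some such construction, or a mapping-cone/relative-cohomology argument carried out at the cochain level, is unavoidable here.

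The same issue propagates to the second part: your formula $\chi_i(\sigma^{n_i})=r(\tilde g_i^{-1}\tilde\sigma^{n_i}\tilde g_i)$ for the restriction of the class to a cyclic subgroup $S$ is precisely what you call ``the technical heart'', and it is not proved -- it presupposes the explicit dictionary of the first part, and would then require a Mackey-type computation (or the paper's explicit manipulation of the cochain $\beta$) to verify. As stated it is also slightly off: $\chi_i$ is a homomorphism on $S_i$ only if $\tilde\sigma$ is chosen of the same order as $\sigma$, which is possible because $H^2(S,\Q/\Z)=0$ for $S$ cyclic, but must be said (and one must then check the compatibility conditions are insensitive to replacing this good lift by an arbitrary one such as $\tilde h_2$, which is what your converse argument uses). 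Granting these two identifications your argument goes through, so the plan is salvageable, but completing it would in effect reproduce the cocycle work that constitutes the paper's proof.
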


\begin{proof}
Let $M = \widehat T \otimes_\Z \Q/\Z$.
As $\widehat T$ is a free $\Z$\nobreakdash-module,
the sequence of $G$\nobreakdash-modules
\begin{align}
\label{eq:firstesproofbij}
\xymatrix{
0 \ar[r] & \widehat T \ar[r] & \widehat T \otimes_\Z \Q \ar[r] & M \ar[r] & 0
}
\end{align}
is exact.
For any subgroup $C \subseteq G$,
the abelian group $H^i(C,\widehat T\otimes_\Z\Q)$ vanishes
for all  $i>0$ as it is
 killed by the order of~$C$ while being
a $\Q$\nobreakdash-vector space.
The sequence~\eqref{eq:firstesproofbij} therefore
induces
isomorphisms $H^1(G,M)= H^2(G,\widehat T)$
and
 $\Sha^1_{\cyc}(G,M)= \Sha^2_{\cyc}(G,\widehat T)$.
In addition, the exact sequence of $G$\nobreakdash-modules
\begin{align}
\label{se:qzqzm}
\xymatrix{
0 \ar[r] & \Q/\Z \ar[r]^(.38)\Delta & (\Q/\Z)^{G/H} \ar[r] & M \ar[r] & 0\rlap,
}
\end{align}
where~$\Delta$ is the diagonal map,
yields
an identification of the group of cochains
\begin{align*}
C^1(G,M)=C^1\big(G,(\Q/\Z)^{G/H}\big)/\big(\Delta\circ C^1(G,\Q/\Z)\big)
\end{align*}
and hence
an identification of the cohomology group
\begin{align}
\label{eq:canonical identification h1gm}
H^1(G,M)=\frac{\big\{(\alpha,\beta) \in Z^2(G,\Q/\Z) \times C^1\big(G,(\Q/\Z)^{G/H}\big);\mkern2mu
 d\beta=\Delta\circ\alpha \big\}}{\big\{(d\gamma,d\delta+\Delta\circ\gamma);\mkern2mu (\gamma,\delta) \in C^1(G,\Q/\Z) \times (\Q/\Z)^{G/H}\big\}}\rlap.
\end{align}
To be precise,
in the notation of~\eqref{eq:canonical identification h1gm},
the group 
$C^1(G,M)$ consists
 of the $\beta$'s modulo the $\Delta\circ \gamma$'s;
the cocycles are defined by the condition that $d\beta=\Delta\circ\alpha$ for some~$\alpha$;
the coboundaries are represented by the $d\delta$'s.

Given a central extension~\eqref{eq:central extension}, let us consider
its push-forward by $\Delta$, i.e.\ the extension
\begin{align}
\label{eq:pushed extension}
\xymatrix{
1 \ar[r] & (\Q/\Z)^{G/H} \ar[r] &
\displaystyle \frac{(\Q/\Z)^{G/H} \rtimes \widetilde G}{((-\Delta)\times \iota)(\Q/\Z)} \ar[r] & G \ar[r] & 1\rlap,
}
\end{align}
where~$\widetilde G$ acts on $(\Q/\Z)^{G/H}$ through~$\rho$.
The extension~\eqref{eq:pushed extension} is also the induction from~$H$ to~$G$
of~\eqref{eq:pulled extension}
in the sense of \cite[\textsection2.4]{stixtrading}, since~\eqref{eq:pulled extension}
is the extension obtained from~\eqref{eq:pushed extension} by applying
the functor~$\mathrm{sh}^2$ of \emph{loc.\ cit.}\ (see \textsection2.3.4 and \textsection2.4.2 of \emph{op.\ cit.}).
By Corollary~15
of \emph{op.\ cit.},
splittings of~\eqref{eq:pulled extension}
are therefore in one-to-one correspondence with splittings
of~\eqref{eq:pushed extension}
 up to conjugation by~$(\Q/\Z)^{G/H}$.

Recall that $H^2(G,\Q/\Z)$
is in canonical bijection with the set of isomorphism classes of central extensions
of~$G$ by~$\Q/\Z$; if $s:G \to \widetilde G$
is
a set-theoretic section of~$\rho$,
the class of~\eqref{eq:central extension} is given by the cocycle $\alpha \in Z^2(G,\Q/\Z)$
defined by $\iota(\alpha(\sigma,\tau))= s(\sigma)s(\tau)s(\sigma\tau)^{-1}$
(see \cite[Chapter~IV, Theorem~3.12]{brown}).
Once~$s$ is fixed,
the datum of a splitting of~\eqref{eq:pushed extension}
is equivalent to that of a map $\beta:G \to (\Q/\Z)^{G/H}$ such that
$(-\beta) \times s:G \to (\Q/\Z)^{G/H}\rtimes \widetilde G$
is a homomorphism modulo $((-\Delta)\times \iota)(\Q/\Z)$, or in other words,
such that $d\beta=\Delta\circ\alpha$.
For $(\gamma,\delta) \in C^1(G,\Q/\Z) \times (\Q/\Z)^{G/H}$,
 conjugating $(-\beta)\times s$ by~$\delta$
and replacing~$s$ with $(\iota\circ \gamma)s$
amounts to adding
$(d\gamma,d\delta+\Delta\circ\gamma)$
to the pair $(\alpha,\beta)$.

Altogether,
we have now obtained a 
canonical map $\Ext(G,H;\Q/\Z) \to H^1(G,M)$.
By unwinding its construction and applying  \cite[Chapter~IV, Theorem~3.12]{brown},
one readily checks that it is bijective,
thus proving the first part of Proposition~\ref{prop:bijection h2extgh}.

To verify the second part, let us fix a central extension~\eqref{eq:central extension},
a set-theoretic section~$s$ of~$\rho$ and
a splitting of~\eqref{eq:pushed extension}, corresponding on the one hand to a triple $(\widetilde G,\rho,r)$
and on the other hand to  $\alpha \in Z^2(G,\Q/\Z)$
and $\beta \in C^1\big(G,(\Q/\Z)^{G/H}\big)$ as above, with $d\beta=\Delta\circ\alpha$.
Denoting by $\beta(g)(g')$, for $g,g' \in G$, the image of $g'H$ by $\beta(g):G/H\to \Q/\Z$,
we shall now
conclude the proof
by establishing the equivalence of the following conditions:
\begin{enumerate}[label={\upshape(\roman*)}]
\item the triple  $(\widetilde G,\rho,r)$ is unramified;
\item $\beta(h)(g') = \beta(h)(1)$ for all $h \in H$ and  $g' \in G$ such
that $g'^{-1}hg'\in H$;
\item $\beta(g)(g')=\beta(g)(g'')$ for all $g,g',g'' \in G$
such that $g'^{-1}gg' \in H$ and $g''^{-1}gg'' \in H$;
\item the image of $\beta$ in $H^1(G,M)$ belongs to the subgroup $\Sha^1_{\cyc}(G,M)$.
\end{enumerate}

The splitting of~\eqref{eq:pulled extension} induced by the given splitting of~\eqref{eq:pushed extension}
is $h \mapsto \iota(-\beta(h)(1))s(h)$ (see \cite[\textsection2.4.2]{stixtrading}); hence
$r(s(h))=\beta(h)(1)$ for every $h\in H$.
Writing elements of $\rho^{-1}(H)$ as $\iota(u)s(h)$ with $u \in \Q/\Z$ and $h \in H$,
we deduce that~(i) holds if and only if
\begin{align}
\label{eq:ibetas}
\iota\big(\beta(h_2)(1)-\beta(h_1)(1)\big) = s(g)s(h_2)s(g)^{-1}s(h_1)^{-1}
\end{align}
for all $h_1,h_2 \in H$ and $g \in G$ such that $h_1=gh_2g^{-1}$.
As the right-hand side of~\eqref{eq:ibetas} is equal to $\iota(\alpha(g,h_2)-\alpha(h_1,g))$
and as $d\beta=\Delta\circ \alpha$, we can rewrite~\eqref{eq:ibetas}
as
$\beta(h_2)(1) = \beta(h_2)(g^{-1})$.
This proves the equivalence (i)$\Leftrightarrow$(ii).
The implications (iv)$\Rightarrow$(iii)$\Rightarrow$(ii) are immediate.
To see that (ii)$\Rightarrow$(iii), we remark
that as $d\beta=\Delta\circ\alpha$,
the image $\bar\beta \in C^1(G,M)$ of~$\beta$ is a cocycle,
hence $\bar\beta(g)=\bar\beta(g')+g'(\bar\beta(g'^{-1}gg')) - g(\bar\beta(g'))$
for all $g,g' \in G$.
Evaluating at~$g'$ and~$g''$
and noting that $\beta(g')(g')=\beta(g')(g^{-1}g')$ and $\beta(g')(g'')=\beta(g')(g^{-1}g'')$
if~$g'$ and~$g''$ satisfy the hypotheses of~(iii), one deduces that (ii)$\Rightarrow$(iii).

It remains to prove that (iii)$\Rightarrow$(iv).
We assume that~(iii) holds, fix a cyclic subgroup $C \subseteq G$
and verify that the image of~$\bar\beta$ in $H^1(C,M)$ vanishes.
To this end, we choose a generator~$g$ of~$C$
and
representatives $g_1,\dots,g_N \in G$ of the orbits of~$C$ on~$G/H$.
Let $n_1,\dots,n_N$ be the lengths of these orbits.
Condition~(iii) implies the following:
\begin{enumerate}[label={\upshape(\roman*)}]
\item[(iii')]
 $\beta(g^n)(g_i)=\beta(g^n)(g_j)$
for all $n,i,j$
such that~$n_i$ and~$n_j$ both divide~$n$.
\end{enumerate}
On the other hand, the exact sequence~\eqref{se:qzqzm} induces an exact sequence
\begin{align}
\label{eq:h1ch1ch1ch2c}
\xymatrix{
H^1(C,\Q/\Z) \ar[r] & H^1\big(C,(\Q/\Z)^{G/H}\big) \ar[r] & H^1(C,M) \ar[r] & H^2(C,\Q/\Z)\rlap.
}
\end{align}
As $H^2(C,\Q/\Z)=0$
(see \cite[Proposition~1.7.1]{nsw} and note that~$C$ is cyclic and~$\Q/\Z$ is divisible),
there exists $\beta' \in Z^1\big(C,(\Q/\Z)^{G/H}\big)$
whose image in $H^1(C,M)$ equals the image of~$\bar\beta$.
Condition~(iii') still holds with~$\beta$ replaced with~$\beta'$.
Set $\lambda_i=\beta'(g^{n_i})(g_i)$.
For any integer~$n$ divisible by~$n_i$,
we have
 $\beta'(g^n)=(1+g^{n_i}+g^{2n_i}+\cdots+g^{n-n_i})(\beta'(g^{n_i}))$
since~$\beta'$ is a cocycle,
and hence $\beta'(g^n)(g_i)=\frac{n}{n_i}\lambda_i$.
Thus, 
thanks to~(iii'),
we deduce 
from Lemma~\ref{lem:elementary}
the existence of $\lambda \in \Q/\Z$ such that $\lambda_i=n_i\lambda$ for all~$i$.

Let~$m$ be the order of~$g$.
Decomposing $(\Q/\Z)^{G/H}$ as a direct sum indexed by the orbits of~$C$
on~$G/H$
and using Shapiro's lemma on each summand,
we rewrite~\eqref{eq:h1ch1ch1ch2c} as
\begin{align*}
\xymatrix{
\displaystyle\bigg(\frac 1 m \Z\bigg)/\mkern2mu\Z \ar[r] & \displaystyle\bigoplus_{i=1}^N\Bigg(\bigg(\frac{n_i}m\Z\bigg)/\mkern2mu\Z\Bigg)
\ar[r] & H^1(C,M) \ar[r] & 0\rlap,
}
\end{align*}
where the first map is $x \mapsto (n_ix)_{1 \leq i \leq N}$.  As the class of~$\beta'$ in the second group
is $(\lambda_i)_{1 \leq i \leq N}$, we see that~$\beta'$ comes from the left, hence $\bar\beta$ vanishes
in~$H^1(C,M)$
and~(iv) is proved.
\end{proof}

\begin{example}[Kunyavski\u{\i}~\cite{kunyavskii}]
\label{ex:abelian-alternating}
Let~$k$ be a field of characteristic~$0$ and~$L$ be a quartic field extension of~$k$.
If a  Galois
 closure of~$L/k$
has Galois group $G=A_4$,
then the torus $T=R^1_{L/k}\Gm$ over~$k$
satisfies $\Br_{\nr}(T)/\Br_0(T)=\Z/2\Z$.

To verify this claim, we first note that
 $H^2(G,\widehat T)=\Z/2\Z$, as one deduces from~\eqref{se:h2ghatt}.
Proposition~\ref{prop:bijection h2extgh} now
reduces us to showing that
$\Ext_{\nr}(A_4,A_3;\Q/\Z)=\Ext(A_4,A_3;\Q/\Z)$.
To this end, we fix a central extension~\eqref{eq:central extension}
and a retraction $r:\rho^{-1}(H)\to \Q/\Z$
of~\eqref{eq:pulled extension}, with $G=A_4$ and $H=A_3$,
and let  $\tilde h_1,\tilde h_2 \in \rho^{-1}(H)$ and $\tilde g \in \widetilde G$
satisfy $\tilde h_1=\tilde g \tilde h_2 \tilde g^{-1}$.
Set $h_i=\rho(\tilde h_i)$ and $g=\rho(\tilde g)$. Now $h_1,h_2\in A_3$ and $g\in A_4$ are such that
$h_1=gh_2g^{-1}$, and this implies that $h_1=h_2=1$ or $g \in A_3$.
In both cases, we deduce that~$\tilde h_1$
and~$\tilde h_2$ are conjugate in~$\rho^{-1}(H)$, and hence that $r(\tilde h_1)=r(\tilde h_2)$, as desired.
\end{example}

\begin{example}
\label{ex:bartels}
Taking up the notation of Proposition~\ref{prop:criterion-for-brauer}, let us assume that $G=D_n$
and explain how
 the point of view of central extensions
leads to a short proof of Bartels' theorem that $\Br_{\nr}(T)=\Br_0(T)$.
According to Proposition~\ref{prop:bijection h2extgh}, we have to check
that any unramified triple $(\widetilde G,\rho,r)$ is isomorphic to the trivial triple
$(\Q/\Z \times G, \mathrm{pr}_2, \mathrm{pr}_1)$.
Here $H=\Z/2\Z$ and the inclusion $H \hookrightarrow G$ admits a retraction, so that
the restriction map $H^1(G,\Q/\Z) \to H^1(H,\Q/\Z)$ is surjective, which implies
the desired conclusion
when the central extension~\eqref{eq:central extension} is trivial
(i.e.\ when $\widetilde G = \Q/\Z \times G$ and $\rho=\mathrm{pr}_2$)
since in this case the splittings of~\eqref{eq:pulled extension}
are parametrised by $H^1(H,\Q/\Z)$
and the automorphisms of~\eqref{eq:central extension} by $H^1(G,\Q/\Z)$.
Let us now fix an unramified triple $(\widetilde G,\rho,r)$ such that~\eqref{eq:central extension} is nontrivial.
As  $H^2(D_n,\Q/\Z)=0$
when~$n$ is odd
and $H^2(D_n,\Q/\Z)=\Z/2\Z$ when~$n$ is even
(see \cite[Theorem~5.2, Theorem~5.3]{handel}),
the integer~$n$ must be even and~\eqref{eq:central extension}
is the unique nontrivial central extension of~$D_n$ by~$\Q/\Z$ up to isomorphism.
Presenting~$D_m$ as $\langle \sigma,\tau;\mkern2mu\sigma^2=\tau^{m}=1, \sigma\tau\sigma=\tau^{-1}\rangle$
for any~$m$, setting
\begin{align}
\widetilde D_n = (\Q/\Z \times D_{2n}) \mkern1mu/ \mkern-3mu\left\langle\big( 1/2,\tau^n\big)\right\rangle
\end{align}
and letting
$\pi:\widetilde D_n \to D_n$ be induced by the second projection
(noting that $D_{2n}/\langle \tau^n \rangle=D_n$),
one checks that~$\pi$ does not admit a section, as~$n$ is even; hence the central extension
given by~$\widetilde D_n$ and~$\pi$ must be isomorphic to~\eqref{eq:central extension} and
we may assume that $\widetilde G = \widetilde D_n$ and $\rho=\pi$.
Now the classes $\tilde h_1,\tilde h_2 \in \widetilde D_n$
of $(0,\sigma),(0,\sigma\tau^n) \in \Q/\Z \times D_{2n}$
are conjugate in~$\widetilde D_n$ (indeed $\tau^{-n/2}\sigma\tau^{n/2}=\sigma\tau^{n}$ in $D_{2n}$)
and belong to $\pi^{-1}(H)$,
but differ by the class of $(1/2,1)$ in~$\widetilde D_n$, so that $r(\tilde h_1)\neq r(\tilde h_2)$,
a contradiction.
\end{example}

\section{The variety \texorpdfstring{$W$}{W} and Conjecture \texorpdfstring{$\cFplus$}{F₊}}
\label{sec:W}

We recall
in \textsection\textsection\ref{subsubsec:parametersdef}--\ref{subsubsec:geometry}
the
definition and geometry of the quasi-affine variety~$W$
which was introduced in
 \cite[\textsection 9.2.2]{hwfibration}
and partial compactifications of
which had previously appeared
in~\cite[\textsection3.3]{skorodescent}, \cite[p.~391]{ctskodescent}, and
in~\cite[\textsection4.4]{skobook}.
We discuss
in \textsection\textsection\ref{subsec:brauergroup}--\ref{subsubsec:localintpoint}
 some of its basic properties, showing, in particular, that its
Brauer group consists only of constant classes.
When the ground field is a number field, the adelic points of~$W$ are the subject
of two key conjectures studied in this article: Conjecture~$\cF$ and Conjecture~$\cFplus$.
We state them in~\textsection\ref{sec:statementFplus}.
Conjecture~$\cF$ is another name for \cite[Conjecture~9.1]{hwfibration}, while the formulation
of Conjecture~$\cFplus$ is new.
As we shall see later in~\textsection\ref{sec:comparing},
these two conjectures are equivalent when their parameters are allowed to vary.

\subsection{Parameters and definition}
\label{subsubsec:parametersdef}

Let~$k$ be a field of characteristic~$0$.
We denote by~$\sP$ the collection of all triples $\pi=(M, (L_m)_{m \in M}, (b_m)_{m \in M})$
consisting of a finite closed subset $M \subset \A^1_k$
together with the data, for each $m \in M$, of a finite field
extension $L_m$ of the residue field~$k(m)$ of~$m$ and of an element $b_m \in k(m)^*$.
Given $\pi \in \sP$, we
let $a_m \in k(m)$ denote the value at~$m$
of the regular function~$t$ on $\A^1_k=\Spec(k[t])$
and let~$F_m$ denote the singular locus of the variety
$R_{L_m/k}(\A^1_{L_m}) \setminus R_{L_m/k}(\mathbf{G}_{\mathrm{m},{L_m}})$.
Following \cite[\textsection 9.2.2]{hwfibration}, for $\pi \in \sP$, we consider
the morphism
\begin{align}
\label{eq:morphismdefW}
\left(\A^2_k \setminus \{(0,0)\}\right) \times \prod_{m \in M} \left(R_{L_m/k}(\A^1_{L_m}) \setminus F_m\right) \to \prod_{m\in M} R_{k(m)/k}(\A^1_{k(m)})
\end{align}
defined by $(\lambda,\mu,(z_m)_{m\in M}) \mapsto (b_m(\lambda-a_m\mu)-N_{L_m/k(m)}(z_m))_{m \in M}$,
where
$\lambda,\mu$
are the coordinates of~$\A^2_k$
and~$z_m$ stands for a point of $R_{L_m/k}(\A^1_{L_m})$.

\begin{defn}
\label{def:w}
The variety~$W$ over~$k$
associated with $\pi \in \sP$ is the fibre, above
the
rational point~$0$,
 of the morphism~\eqref{eq:morphismdefW}.
\end{defn}

\begin{rmk}
\label{rk:wwpiso}
Let $\pi=(M, (L_m)_{m \in M}, (b_m)_{m \in M}) \in \sP$.
Let $M' = \{m\in M\mkern1mu;\mkern2mu L_m \neq k(m)\}$.
Let $\pi'=(M', (L_m)_{m \in M'}, (b_m)_{m \in M'})$.
The varieties~$W$ and~$W'$ associated with~$\pi$ and~$\pi'$ are naturally isomorphic.
\end{rmk}

\subsection{Geometry}
\label{subsubsec:geometry}

For ease of reference, we collect in the next proposition some elementary facts about the geometry of~$W$
that already appear in the proof of \cite[Proposition~9.9]{hwfibration}.
We set $U=\P^1_k \setminus M$
and let $p:W \to \P^1_k$ denote the map $(\lambda,\mu,(z_m)_{m\in M}) \mapsto [\lambda:\mu]$.

\begin{prop}
\label{prop:fibresofp}
Let $\pi\in \sP$.
The morphism~$p$ is smooth (hence~$W$ is a smooth variety).  Its fibres over~$U$ are geometrically integral.
For each $m \in M$, the fibre $W_m=p^{-1}(m)$ is integral and the algebraic closure of~$k(m)$
in its function field is~$L_m$.
\end{prop}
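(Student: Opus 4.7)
The plan is to unravel the definition of~$W$ after base change to~$\bar k$ and to reduce everything to standard facts about the norm morphism $N_{L_m/k(m)}$ and about torsors under tori.

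First, for the smoothness of~$p$, I observe that the norm morphism
\begin{align*}
N_{L_m/k(m)}\colon R_{L_m/k}(\A^1_{L_m}) \setminus F_m \to R_{k(m)/k}(\A^1_{k(m)})
\end{align*}
is itself smooth: on base change to~$\bar k$, its source is the complement in $\A^{[L_m:k]}_{\bar k}$ of the locus where, for some $\tau \in \Hom_k(k(m),\bar k)$, at least two coordinates indexed by those $\tilde\tau \in \Hom_k(L_m,\bar k)$ satisfying $\tilde\tau|_{k(m)}=\tau$ vanish simultaneously; on this complement, the differential of the norm (which becomes the tuple of products of the coordinates within each fibre of the restriction map $\Hom_k(L_m,\bar k)\to\Hom_k(k(m),\bar k)$) is surjective. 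By construction, $W$ is the fibre product, over $\prod_{m\in M} R_{k(m)/k}(\A^1_{k(m)})$, of $\A^2_k \setminus \{(0,0)\}$ (mapped via $(\lambda,\mu) \mapsto (b_m(\lambda-a_m\mu))_{m}$) and of $\prod_m (R_{L_m/k}(\A^1_{L_m})\setminus F_m)$ (mapped via the product of the norm morphisms), so the projection $q\colon W \to \A^2_k \setminus \{(0,0)\}$ is smooth. Composing with the smooth projection $\pi\colon\A^2_k\setminus\{(0,0)\}\to\P^1_k$ shows that $p=\pi\circ q$ is smooth, and in particular $W$ is a smooth variety.

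For the geometric integrality of the fibres of~$p$ over~$U$, I pass to~$\bar k$. Identifying $R_{L_m/k}(\A^1_{L_m})\otimes_k \bar k$ with $\prod_{\tilde\tau \in \Hom_k(L_m,\bar k)} \A^1_{\bar k}$, with tautological coordinates $w_{\tilde\tau}$, the variety $W \otimes_k \bar k$ is cut out, inside the open subset of $(\A^2_{\bar k} \setminus \{(0,0)\}) \times \prod_m \prod_{\tilde\tau} \A^1_{\bar k}$ defined by the $F_m$-avoidance conditions, by the equations
\begin{align*}
\prod_{\tilde\tau|_{k(m)}=\tau} w_{\tilde\tau} = \tau(b_m)\bigl(\lambda - \tau(a_m)\mu\bigr),
\end{align*}
one for each $m \in M$ and each $\tau \in \Hom_k(k(m),\bar k)$. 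For $[\lambda_0:\mu_0]\in U(\bar k)$, parametrising $(\lambda,\mu)=(t\lambda_0, t\mu_0)$ with $t\in\Gm$, every right-hand side becomes $t\cdot c_{m,\tau}$ with $c_{m,\tau}\in\bar k^*$; the $F_m$-avoidance is automatic on this locus, and the fibre over $[\lambda_0:\mu_0]$ becomes the preimage of a point under a homomorphism of tori $\Gm \times \Gm^{\sum_m [L_m:k]} \to \Gm^{\deg(M)}$. A short check on cocharacters shows this map to be surjective with connected kernel, so the fibre is a torsor under a connected torus and thus geometrically integral.

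For the fibre over $m\in M$, the same presentation, specialised at any geometric point above~$m$, shows that the right-hand sides indexed by the~$\tau$'s lying above~$m$ vanish, while the remaining ones stay in~$\bar k^*$. Combined with the $F_m$-avoidance, this forces, for each such~$\tau$, exactly one coordinate $w_{\tilde\tau}$ with $\tilde\tau|_{k(m)}=\tau$ to vanish, yielding a decomposition of $W_m\otimes_k\bar k$ into components naturally indexed by $\Hom_k(L_m,\bar k)$; the torus-quotient argument of the previous paragraph still applies and shows each such component to be geometrically integral. The group $\Gal(\bar k/k)$ permutes the components through its natural action on $\Hom_k(L_m,\bar k)$, which is transitive because~$L_m$ is a field, with stabiliser of any basepoint~$\tilde\tau$ equal to $\Gal(\bar k/\tilde\tau(L_m))$; this yields both the integrality of~$W_m$ and the identification of~$L_m$ with the algebraic closure of~$k(m)$ in $k(W_m)$. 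The main delicate point, though essentially bookkeeping, will be to set up the correspondence between the components of $W_m\otimes_k\bar k$ and $\Hom_k(L_m,\bar k)$ in a Galois-equivariant way; this should follow directly from the construction of the Weil restriction and from the explicit description of~$F_m$.
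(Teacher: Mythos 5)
Your proposal is correct, and it is in substance the elementary verification that the paper itself only imports from the proof of Proposition~9.9 of \cite{hwfibration}: smoothness via the fibre-product presentation over $\prod_m R_{k(m)/k}(\A^1_{k(m)})$ and smoothness of the norm maps off the singular loci, explicit equations over $\bar k$ exhibiting the fibres as (products of tori with) torsors under connected tori, and the Galois action on the set of components of $W_m\otimes_k\bar k$ read off from the standard equivariant identification $R_{L_m/k}(\A^1_{L_m})\otimes_k\bar k\simeq\prod_{\tilde\tau\in\Hom_k(L_m,\bar k)}\A^1_{\bar k}$, $\sigma$ permuting the factors by $\tilde\tau\mapsto\sigma\circ\tilde\tau$ (this is exactly the ``bookkeeping'' you defer, and it is standard).

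One inaccuracy should be fixed, though it does not invalidate any step: $F_m$ is the singular locus of $R_{L_m/k}(\A^1_{L_m})\setminus R_{L_m/k}(\mathbf{G}_{\mathrm{m},L_m})$, so over $\bar k$ it is the locus where at least two among \emph{all} the coordinates $w_{\tilde\tau}$, $\tilde\tau\in\Hom_k(L_m,\bar k)$, vanish, not merely two lying over a common $\tau\in\Hom_k(k(m),\bar k)$ as in your description. This is harmless here for two reasons: the Jacobian computation you perform shows the norm map is smooth on your (larger) open set, hence a fortiori on the true complement of $F_m$; and on $W$ the two avoidance conditions cut out the same locus, since for $(\lambda,\mu)\neq(0,0)$ at most one of the linear forms $\lambda-\tau(a_m)\mu$, $\tau\in\Hom_k(k(m),\bar k)$, can vanish (because $a_m$ generates $k(m)$ over $k$), so at a point of $W$ at most one group of coordinates over $m$ can contain a zero. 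Finally, for ``the fibres over $U$ are geometrically integral'' you should run the same torus computation over an algebraic closure of the residue field of an arbitrary point of $U$ (including the generic point), not just over $\bar k$-points; the argument is unchanged.
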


\begin{rmk}
\label{rmk:wwp}
The variety~$W'$ defined in the same way as~$W$ except that one does not remove~$F_m$
from $R_{L_m/k}(\A^1_{L_m})$ in~\eqref{eq:morphismdefW} was first considered in
\cite[\textsection3.3]{skorodescent}.  It is a smooth variety (\emph{loc.\ cit.}, Lemma~3.3.1)
and it contains~$W$ as the complement of a codimension~$2$ closed subset.
More precisely, the variety~$W$ is the locus where
the faithfully flat morphism $p':W' \to \P^1_k$ that extends~$p$
is smooth.
\end{rmk}

\subsection{Brauer group}
\label{subsec:brauergroup}

The vertical Brauer group of~$p$ is reduced to constant classes,
as was shown in
\cite[Proof of Proposition~9.9]{hwfibration}.  More generally, so is the algebraic Brauer group of~$W$,
according to \cite[\textsection1]{ctcalcul} (which formally deals with~$W'$, in the notation of Remark~\ref{rmk:wwp}, but 
 by purity for the Brauer group \cite[Corollaire~6.2]{grothbr3}
applies to~$W$ too, since the smooth
varieties~$W'$ and~$W$ differ
by a codimension~$2$ closed subset)
or to \cite[Lemma~5.1]{caoweixu}.
The following proposition generalises this assertion to the full Brauer group.

\begin{prop}
\label{prop:brauerw}
Let $\pi \in \sP$.  The pull-back map $\Br(k) \to \Br(W)$ is onto.
\end{prop}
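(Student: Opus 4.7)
The plan is to reduce to showing that the transcendental Brauer group vanishes, i.e.\ $\Br(W\otimes_k\bar k)=0$, and then combine this with the fact noted in the paragraph preceding the proposition that the algebraic Brauer group $\Br_1(W)$ already coincides with $\Br_0(W)$ by \cite[\textsection1]{ctcalcul} (applied to $W$ via purity and the codimension-$2$ comparison in Remark~\ref{rmk:wwp}) or by \cite[Lemma~5.1]{caoweixu}. Once we know the algebraic part is constant and the transcendental part is zero, every class in $\Br(W)$ comes from $\Br(k)$.

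To establish $\Br(W\otimes_k\bar k)=0$, the strategy is to show that $W\otimes_k\bar k$ is $\bar k$\nobreakdash-rational; by the birational invariance of $\Br_{\nr}$ for smooth proper varieties and purity for the Brauer group, combined with $\Br(\P^N_{\bar k})=0$, this then gives $\Br(W\otimes_k\bar k)\subseteq \Br_{\nr}(W\otimes_k\bar k)=0$ (Proposition~\ref{prop:fibresofp} provides the smoothness needed to apply purity). The rationality of $W\otimes_k\bar k$ should be verified by explicit inspection of the defining equations. Decomposing $L_m\otimes_k\bar k\simeq\prod_{\sigma\in\Sigma_m}\bar k$ and $k(m)\otimes_k\bar k\simeq\prod_{\tau\in\Sigma'_m}\bar k$ (where $\Sigma_m=\Hom_k(L_m,\bar k)$, $\Sigma'_m=\Hom_k(k(m),\bar k)$, with the natural restriction $\Sigma_m\to\Sigma'_m$), the defining equations of $W\otimes_k\bar k$ inside $(\A^2\setminus\{0\})\times\prod_{m,\sigma}\A^1_{\bar k}$ take the shape
\begin{equation*}
\prod_{\sigma|_{k(m)}=\tau} z_{m,\sigma}\;=\;\tau(b_m)\big(\lambda-\tau(a_m)\mu\big),\qquad m\in M,\;\tau\in\Sigma'_m.
\end{equation*}
On the dense open $V\subseteq W\otimes_k\bar k$ where all the right-hand sides and all the coordinates $z_{m,\sigma}$ are nonzero, we may, for each pair $(m,\tau)$, pick a distinguished $\sigma_0(m,\tau)$ and solve for $z_{m,\sigma_0}$ as $\tau(b_m)(\lambda-\tau(a_m)\mu)$ divided by the product of the remaining $z_{m,\sigma}$'s. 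This identifies $V$ with an explicit open subset of an affine space over~$\bar k$, and thereby exhibits $W\otimes_k\bar k$ as $\bar k$\nobreakdash-rational.

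No step is genuinely difficult, but the main point to verify carefully is the claim that the dense open $V$ is truly isomorphic to an open of affine space, i.e.\ that the \textquotedblleft solve for one variable\textquotedblright\ procedure globalises into an isomorphism with a Zariski open of a product $\A^2\times\prod\Gm^{?}$ (not merely a birational equivalence), and that $V$ meets every irreducible component of $W\otimes_k\bar k$ (which is clear since by Proposition~\ref{prop:fibresofp} the scheme $W\otimes_k\bar k$ is irreducible, being smooth and having irreducible generic fibre over~$\P^1_{\bar k}$). Once $\bar k$\nobreakdash-rationality is in hand, the conclusion $\Br(W\otimes_k\bar k)=0$, and hence the surjectivity of $\Br(k)\to\Br(W)$, follows formally from the already cited vanishing $\Br_1(W)/\Br_0(W)=0$.
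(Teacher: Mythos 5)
There is a genuine gap, and it sits at the heart of the matter: your claimed inclusion $\Br(W\otimes_k\bar k)\subseteq \Br_{\nr}(W\otimes_k\bar k)$ is backwards. For a smooth but \emph{non-proper} variety $V$ over $\bar k$, the unramified Brauer group is a subgroup of $\Br(V)$ (it is the Brauer group of a smooth proper model, cut out inside $\Br(V)\subseteq\Br(\bar k(V))$ by the residue conditions along the divisors at infinity), so rationality of $W\otimes_k\bar k$ only gives $\Br_{\nr}(W\otimes_k\bar k)=0$ and says nothing about the classes that are ramified on a compactification. Since $W$ is quasi-affine, this is a real loss: the Brauer group of an open variety is not a birational invariant, and the relevant example here is exactly of the kind you produce in your explicit chart --- your open set $V$ is an open subset of a product of $\Gm$'s with an affine space, and such varieties have large Brauer groups over $\bar k$ (already $\Br(\Gm^2_{\bar k})\simeq\Q/\Z$, generated by the symbol in the two coordinates). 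Moreover, exhibiting a dense open $V\subseteq W\otimes_k\bar k$ as an open of affine space cannot help directly, because restriction gives an \emph{injection} $\Br(W\otimes_k\bar k)\hookrightarrow\Br(V)$, and $\Br(V)$ need not vanish. So the whole content of the proposition --- ruling out ramified, ``transcendental-looking'' classes on the open variety $W$ over $\bar k$ --- is precisely what your argument skips.

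Your first reduction is fine and agrees with the paper: the algebraic Brauer group of $W$ is constant (via \cite[\textsection1]{ctcalcul} together with purity and Remark~\ref{rmk:wwp}, or \cite[Lemma~5.1]{caoweixu}), so one may assume $k=\bar k$. But over $\bar k$ the paper does not argue via rationality; it proves $\Br(W)=0$ by induction on $\dim W$, after enlarging the class $\sP$ to allow the $L_m$ to be \'etale algebras. Writing the equations as $\prod_j z_{m,j}=b_m(\lambda-a_m\mu)$, one projects to a single coordinate $z_{m,d_m}$; the generic fibre of this projection is again a variety of type $W$ over the function field $k(\A^1_k)$, so by induction its Brauer group is generated by constants, hence vanishes by Tsen's theorem, and since $W$ is regular and irreducible with the same generic point as this generic fibre, $\Br(W)$ injects into it and vanishes too. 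If you want to salvage your approach you would have to control the residues of an arbitrary class of $\Br(W\otimes_k\bar k)$ along all the boundary divisors of a compactification, which is essentially a different (and harder) bookkeeping than the paper's induction.
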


\begin{proof}
Only until the end of this proof, let us modify the definition of~$\sP$ by allowing~$L_m$
to be an arbitrary nonzero finite étale $k(m)$\nobreakdash-algebra, not necessarily a field.
We shall prove that the proposition holds in this slightly more general situation, with the
same definition for~$W$, by induction on the dimension of~$W$.
If $\dim(W)<3$, then $L_m=k(m)$ for all $m \in M$, hence $W = \A^2_k \setminus \{(0,0)\}$,
so that $\Br(k) = \Br(W)$ (see \cite[Theorem~3.7.1, Theorem~6.1.1]{ctskobook}).
Let us now assume that $\dim(W)\geq 3$ and that the conclusion of the proposition holds
for lower dimensions of~$W$ (letting both~$k$ and~$\pi$ vary).

In order to prove that the pull-back map $\Br(k) \to \Br(W)$ is onto, we may and will assume
that~$k$ is algebraically closed, since the algebraic Brauer
group of~$W$ is reduced to constant classes by \cite[Proposition~1.1, Proposition~1.2 (ii)--(iii)]{ctcalcul}.

Setting $d_m=\dim_k L_m$
and letting $\lambda,\mu,(z_{m,j})_{m \in M, 1 \leq j \leq d_m}$ denote
the coordinates of $\A^2_k \times \prod_{m\in M} \A^{d_m}_k$,
the variety~$W$ is then isomorphic to the subvariety of this affine space defined by the following equations:
$\prod_{j=1}^{d_m} z_{m,j} = b_m(\lambda-a_m\mu)$
for each $m \in M$;
at most one of the coordinates $z_{m,1},\dots,z_{m,d_m}$ vanishes for each $m \in M$;
and $(\lambda,\mu)\neq (0,0)$.

Let us fix $m \in M$ such that $d_m>1$.
The generic fibre~$V$ of the projection $W \to \A^1_k$ to the coordinate $z_{m,d_m}$
is a variety of the form~$W$ associated with a parameter in~$\sP$
over the function field of~$\A^1_k$. By the induction hypothesis, we deduce
that $\Br(k(\A^1_k))$ surjects onto $\Br(V)$.
Hence $\Br(V)=0$, in view of Tsen's theorem.
Now, as the schemes~$V$ and~$W$ are irreducible and regular and share the same generic point,
the group~$\Br(W)$ injects into~$\Br(V)$.
We conclude that $\Br(W)=0$; the proof is complete.
\end{proof}

\begin{rmk}
When~$k$ is a number field, it was proved in
\cite[Corollary~9.10]{hwfibration} that if, for every $\pi \in \sP$ and every
finite place~$v_0$ of~$k$, the variety~$W$ satisfies strong approximation
off~$v_0$, then Conjecture~9.1 of \emph{op.\ cit.}\ is true.
This strong approximation property holds in several cases
(see \cite[Theorem~9.11]{hwfibration} and \cite[Theorem~1.2]{browningschindler})
and it is natural to hope that it may hold in general.
Proposition~\ref{prop:brauerw} supports this hope, as it
shows that the Brauer--Manin obstruction to strong approximation on~$W$ always vanishes.
\end{rmk}

\subsection{Canonical ramified Brauer classes}
\label{subsec:canonical ramified}

Given $\pi \in \sP$,
the classes
\begin{align}
\Cores_{L_m/k}(z_m,\chi) \in \Br(k(W))
\end{align}
for $m \in M$ and $\chi \in H^1(L_m,\Q/\Z)$,
where~$z_m$ denotes
 the regular function
on $W \otimes_k L_m$ obtained by pulling back
the tautological
regular function on $(R_{L_m/k}(\A^1_{L_m})) \otimes_k L_m$,
will play a special rôle in the article.

\begin{prop}
\label{prop:residueofcores}
Let $\pi\in \sP$.  Let $m \in M$ and $\chi \in H^1(L_m,\Q/\Z)$.
\begin{enumerate}
\item The class $\Cores_{L_m/k}(z_m,\chi)$ belongs to $\Br(p^{-1}(\P^1_k\setminus \{m\}))$.
\item Its residue at the generic point of $W_m=p^{-1}(m)$
is equal to the image of~$\chi$ by the restriction map
$H^1(L_m,\Q/\Z) \to H^1(k(W_m),\Q/\Z)$
(see Proposition~\ref{prop:fibresofp}).
\end{enumerate}
\end{prop}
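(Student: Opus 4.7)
My plan is as follows.

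For part~(1), the key observation is that the linear form $\lambda-a_m\mu$ is nowhere vanishing on $(\P^1_k\setminus\{m\})\otimes_k k(m)$: this base change is $\P^1_{k(m)}$ minus the Galois conjugates of the $k(m)$-point $[a_m:1]$, which include $[a_m:1]$ itself, the unique zero of $\lambda-a_m\mu$. Combined with $b_m\in k(m)^*$ and the defining equation $N_{L_m/k(m)}(z_m)=b_m(\lambda-a_m\mu)$ of~$W$, this yields that $N_{L_m/k(m)}(z_m)$, and hence $z_m$ itself, is invertible on $(p^{-1}(\P^1_k\setminus\{m\}))\otimes_k L_m$. The cup product $(z_m,\chi)$ therefore defines a class in $\Br((p^{-1}(\P^1_k\setminus\{m\}))\otimes_k L_m)$, and corestriction along the finite étale morphism $(p^{-1}(\P^1_k\setminus\{m\}))\otimes_k L_m \to p^{-1}(\P^1_k\setminus\{m\})$ gives~(1).

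For part~(2), the strategy is to apply the projection formula expressing the residue of a corestriction as a sum of corestrictions of residues. Writing~$v$ for the valuation of $k(W)$ at~$\eta_m$ (with residue field $k(W_m)$) and $w_1,\dots,w_r$ for the valuations of the field $k(W)\otimes_k L_m$ above~$v$ (with respective residue fields $\kappa_1,\dots,\kappa_r$), one has
\[
\partial_v\bigl(\Cores_{L_m/k}(z_m\cup\chi)\bigr)=\sum_{i=1}^r\Cores_{\kappa_i/k(W_m)}\bigl(\partial_{w_i}(z_m\cup\chi)\bigr).
\]
As~$\chi$ is pulled back from the base field~$L_m$, it is unramified at every~$w_i$, so $\partial_{w_i}(z_m\cup\chi)=w_i(z_m)\cdot\chi|_{\kappa_i}$, where $\chi|_{\kappa_i}$ denotes the image of~$\chi$ under $H^1(L_m,\Q/\Z)\to H^1(\kappa_i,\Q/\Z)$.

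It remains to analyse the~$w_i$ and the valuations $w_i(z_m)$. Using the canonical inclusion $L_m\subseteq k(W_m)$ from Proposition~\ref{prop:fibresofp}, one has $k(W_m)\otimes_k L_m=k(W_m)\otimes_{L_m}(L_m\otimes_k L_m)$; the standard decomposition $L_m\otimes_k L_m=L_m\times\prod_\tau E_\tau$, whose first factor corresponds to multiplication, then singles out a distinguished component $W_m^{\ast}$ of $W_m\otimes_k L_m$ whose function field is canonically identified with~$k(W_m)$ as an $L_m$-algebra. The removal of~$F_m$ in the definition of~$W$ forces exactly one Galois conjugate of~$z_m$ to vanish at each point of~$W_m$, and the canonical $L_m$-structure on~$k(W_m)$ is pinned down by the requirement that it is $z_m$ itself that vanishes, to order~$1$, along~$W_m^{\ast}$, while remaining a unit on every other component.

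Combining these observations, all terms in the sum except the one indexed by~$W_m^{\ast}$ vanish, and the surviving term equals $\chi|_{k(W_m)}$, yielding~(2). The main technical hurdle is to verify the compatibility of the two $L_m$-structures on~$k(W_m)$ under discussion: the one furnished by Proposition~\ref{prop:fibresofp} as the algebraic closure of~$k(m)$ in~$k(W_m)$, and the one arising from base change along the component~$W_m^{\ast}$. This compatibility rests on a careful unwinding of the Weil restriction $R_{L_m/k}(\A^1_{L_m})$ and of the multiplication map $L_m\otimes_k L_m\to L_m$, which is what distinguishes the component on which~$z_m$ vanishes from its Galois conjugates.
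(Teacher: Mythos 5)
Your argument is correct and is essentially the paper's: part (1) rests on the same invertibility remark, and part (2) spells out precisely the residue-of-corestriction formulas that the paper simply cites from Colliot-Th\'el\`ene--Swinnerton-Dyer (Propositions 1.1.2 and 1.1.3 of \cite{ctsd94}), applied after identifying the places of $k(W)\otimes_k L_m$ over the generic point of $W_m$ exactly as you do. The compatibility you flag at the end is genuine but harmless: the embedding $L_m\subseteq k(W_m)$ furnished by Proposition~\ref{prop:fibresofp} is precisely the one for which the multiplication factor of $L_m\otimes_k L_m$ corresponds to the component along which the tautological function $z_m$ vanishes (to order $1$, by smoothness of $p$), so your distinguished component $W_m^{\ast}$ is the intended one and the surviving term is indeed $\chi|_{k(W_m)}$.
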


\begin{proof}
Assertion~(1) follows from the remark that the regular function~$z_m$ on $W \otimes_k L_m$
is invertible
on $p^{-1}(\P^1_k \setminus \{m\}) \otimes_k L_m$.
Assertion~(2)
results from \cite[Proposition~1.1.2 and Proposition~1.1.3]{ctsd94}.
\end{proof}

\subsection{Local integral points}
\label{subsubsec:localintpoint}

We now assume that~$k$ is a number field.
Given a finite set~$S$ of places of~$k$, we denote by $\sOint_S$ the ring of $S$\nobreakdash-integers
of~$k$ and by $p:\sW\to \P^1_{\sOint_S}$ the integral model of $p:W\to \P^1_k$
obtained by replacing,
in the definition of~$W$,
all occurrences of the fields~$k$, $k(m)$, $L_m$, respectively, by their rings of $S$\nobreakdash-integers
$\sOint_S$, $\sOint_{k(m),S}$, $\sOint_{L_m,S}$,
and~$F_m$ by the singular locus~$\sF_m$ of the scheme
$R_{\sOint_{L_m,S}/\sOint_S}(\A^1_{\sOint_{L_m,S}}) \setminus R_{\sOint_{L_m,S}/\sOint_S}(\mathbf{G}_{\mathrm{m},{\sOint_{L_m,S}}})$.  We will refer to~$\sW$
as the \emph{standard integral model} of~$W$.

For $m \in \P^1_k$, we denote by~$\mtilde$ the Zariski closure of~$m$
in~$\P^1_{\sOint_S}$,
and we set $\Mtilde = \bigcup_{m \in M} \mtilde$.

The next proposition records a useful interpretation for the integral local points of~$\sW$
at the places of~$k$ outside of~$S$, when~$S$ is large enough.

\begin{prop}
\label{prop:localintegralpoint}
Assume that~$k$ is a number field. Let $S\subset \Omega$ be a finite subset containing the
archimedean places, the finite places that ramify in~$L_m$ for some $m \in M$
and the finite places above which at least one of the~$b_m$ for $m \in M$ fails to be a unit.
Assume, finally, that~$S$ is large enough that
$\Mtilde \cup\widetilde\infty$
is \'etale over~$\sOint_S$.
For any $v \in \Omega \setminus S$, the set $\sW(\sOint_v)$ can be identified with the set of families
$(\lambda_v,\mu_v,(z_{m,u})_{m\in M,u|v})$
consisting of two elements $\lambda_v, \mu_v \in \sOint_v$ at least one of which is a unit
and, for each $m \in M$ and each place~$u$ of~$L_m$ dividing~$v$,
of an element $z_{m,u} \in \sOint_{(L_m)_u}$, such that for all $m \in M$, the following properties
are satisfied:
\begin{itemize}
\item letting the product run over the places~$u$ of~$L_m$ dividing~$v$ and letting~$w$ denote the trace of~$u$
on~$k(m)$, the equality
\begin{align*}
\prod N_{(L_m)_u/k(m)_w}(z_{m,u})=b_m(\lambda_v-a_m\mu_v)
\end{align*}
holds in $k(m)_w$;
\item there exists at most one place~$u$ of~$L_m$ dividing~$v$ such that $z_{m,u}$ is not a unit;
\item if such a place~$u$ of~$L_m$ exists, then it has degree~$1$ over~$v$.
\end{itemize}
\end{prop}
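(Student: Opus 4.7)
The plan is to unwind the definition of $\sW(\sOint_v)$ by analyzing separately the three constituents $\A^2\setminus\{(0,0)\}$, $R_{\sOint_{L_m,S}/\sOint_S}(\A^1)\setminus\sF_m$, and the norm equations cutting out $\sW$. By the hypotheses on~$S$, for every $v\in\Omega\setminus S$ the algebra $\sOint_{L_m,S}\otimes_{\sOint_S}\sOint_v$ decomposes canonically as $\prod_{u\mid v}\sOint_{(L_m)_u}$, with each factor unramified over $\sOint_v$, and similarly for $\sOint_{k(m),S}$.

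From this, the universal property of Weil restriction gives $R_{\sOint_{L_m,S}/\sOint_S}(\A^1)(\sOint_v)=\prod_{u\mid v}\sOint_{(L_m)_u}$, and the norm $N_{L_m/k(m)}$ becomes, in the factor indexed by $w\mid v$, the product $\prod_{u\mid w}N_{(L_m)_u/k(m)_w}$. An $\sOint_v$-point of $\A^2_{\sOint_S}\setminus\{(0,0)\}$ amounts to a pair $(\lambda_v,\mu_v)\in\sOint_v^2$ whose reduction modulo $\mathfrak{m}_v$ is nonzero, i.e.\ with at least one coordinate a unit, and the norm equation translates to the equality in the first bullet (all terms being $v$-integral thanks to $b_m\in\sOint_v^*$ and $a_m\in\sOint_{k(m),S}$).

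The main obstacle is translating the condition that the $\sOint_v$-point avoid $\sF_m$ into the last two bullets. Since $\sF_m$ is closed in $R_{\sOint_{L_m,S}/\sOint_S}(\A^1)$, this is equivalent to the reduction of the point avoiding the closed fibre of $\sF_m$. I would argue as follows: after base change to a separable closure $\overline{\Fv{v}}$ of the residue field, the scheme $R_{\sOint_{L_m,S}/\sOint_S}(\A^1)\otimes\overline{\Fv{v}}$ becomes affine $[L_m:k]$-space, with coordinates indexed by the $k$-algebra embeddings $L_m\hookrightarrow\overline{\Fv{v}}$; under this identification, the reduction of $R(\A^1)\setminus R(\Gm)$ is the union of the $[L_m:k]$ coordinate hyperplanes (their union being the vanishing locus of $N_{L_m/k}$), and the reduction of $\sF_m$ is the singular locus of this union, namely the locus where at least two coordinates vanish.

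To conclude, the embeddings $L_m\hookrightarrow\overline{\Fv{v}}$ group according to the places $u\mid v$, each $u$ contributing exactly $f_u=[(L_m)_u:k_v]$ Galois-conjugate embeddings. The $f_u$ embeddings attached to~$u$ agree on whether the reduction of $z_{m,u}$ vanishes or not, all vanishing precisely when $z_{m,u}$ is a non-unit. Avoiding $\sF_m$ therefore amounts to requiring at most one vanishing coordinate, which forces at most one $u$ to have $z_{m,u}$ a non-unit, and, in the case such a $u$ exists, forces $f_u=1$, i.e., $u$ has degree~$1$ over~$v$. Conversely, these conditions are plainly sufficient, yielding the last two bullets.
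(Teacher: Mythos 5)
Your proposal is correct and follows essentially the same route as the paper: unwind the Weil restriction over $\sOint_v$, translate the norm equation, and observe that membership of the reduction in $\sF_m$ can be tested after an unramified base change splitting $L_m$, where the locus is the pairwise intersection of coordinate hyperplanes. The paper merely compresses the last step (``it is enough to check it when~$v$ splits completely in~$L_m$, in which case it is clear''), whereas you spell out the computation over $\overline{\Fv{v}}$ explicitly.
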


\begin{proof}
We need only verify that for any $\sOint_v$\nobreakdash-point
$(z_{m,u})_{u|v}$ of $R_{\sOint_{L_m,S}/\sOint_S}(\A^1_{\sOint_{L_m,S}})$,
the reduction modulo~$v$ of $(z_{m,u})_{u|v}$
lies in $\sF_m(\Fv{v})$ if and only if $z_{m,u}$ fails to be a unit at~$u$
either for at least two distinct~$u$ or for at least
one~$u$ of degree~$>1$ over~$v$.  This local
assertion can be checked after replacing~$k_v$ with any
unramified extension of~$k_v$, in particular it is enough to check it when~$v$ splits completely in~$L_m$,
in which case it is clear.
\end{proof}

\subsection{Statement of Conjecture \texorpdfstring{$\cFplus$}{F₊}}
\label{sec:statementFplus}

We take up the notation introduced in
\textsection\textsection\ref{subsubsec:parametersdef}--\ref{subsubsec:geometry}
and assume, in addition, that~$k$ is a number field:
with any $\pi \in \sP$ are associated a variety~$W$ and a smooth morphism $p:W\to \P^1_k$,
with split fibres above $U=\P^1_k \setminus M$.
For any $c \in \P^1_k$, we set $W_c=p^{-1}(c)$.
The following conjecture on the arithmetic of~$W$ was put forward in
\cite[\textsection9]{hwfibration}.
To be precise,
it is equivalent to \emph{op.\ cit.}, Conjecture~9.1,
according to
\emph{op.\ cit.}, Proposition~9.9.

\begin{conjF}
Let $\pi \in\sP$.
The subset $\displaystyle\bigcup_{c \in U(k)} W_c(\A_k)$ is dense in $W(\A_k)$.
\end{conjF}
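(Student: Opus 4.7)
The plan is to reduce Conjecture~$\cF$ to a rational-approximation problem on~$\P^1_k$ controlled by a Schinzel-type hypothesis, and then use the smoothness of~$p$ to transfer the resulting rational point back to~$W$. Fix an adelic point $(x_v) \in W(\A_k)$, together with chosen neighbourhoods of each $x_v$, and set $c_v = p(x_v) \in \P^1(k_v)$. My goal is to produce $c \in U(k)$ together with $(y_v) \in W_c(\A_k)$ such that each $y_v$ lies in the prescribed neighbourhood of~$x_v$.

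Enlarge a finite set of places $S$ so that it satisfies the hypotheses of Proposition~\ref{prop:localintegralpoint} and also that at every $v \notin S$ the chosen neighbourhood of~$x_v$ is the full set of $\sOint_v$\nobreakdash-points of the standard integral model~$\sW$. Since $p$ is smooth (Proposition~\ref{prop:fibresofp}), the implicit function theorem guarantees that for each $v \in S$ any $c \in U(k)$ sufficiently close to $c_v$ in~$k_v$ admits a fibre point $y_v \in W_c(k_v)$ arbitrarily close to~$x_v$. At places $v \notin S$, Proposition~\ref{prop:localintegralpoint} recasts the existence of a lift $y_v \in \sW(\sOint_v)$ mapping to $c = [\lambda:\mu]$ as a condition on the $v$\nobreakdash-adic factorisation of the values $b_m(\lambda-a_m\mu)$: whenever $c$ reduces modulo~$v$ to some $m \in M$, the value $b_m(\lambda-a_m\mu)$ must have exactly one ``bad'' prime divisor, lying over a place of $k(m)$ that splits appropriately in~$L_m/k(m)$. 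The whole task therefore reduces to finding $c = [\lambda:\mu] \in U(k)$, $v$\nobreakdash-adically close to $c_v$ for each $v \in S$, such that for every $m \in M$ the value $b_m(\lambda-a_m\mu)$ factors as a bounded part supported on~$S$ times a prime of~$k(m)$ splitting in $L_m/k(m)$ in a prescribed way.

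This last step is precisely the content of a Schinzel-type hypothesis applied to the homogeneous polynomials that cut out the closed points of~$M$ on~$\P^1_k$, and is where the main obstacle lies: in full generality such a hypothesis is presently out of reach, which is why the statement is formulated as a conjecture rather than a theorem. Tractable regimes include: the case when $M$ consists of one or two rational points, where Dirichlet's theorem alone suffices; arbitrarily many rational points over~$\Q$, where one invokes Green--Tao--Ziegler and Matthiesen; a single point of degree~$3$ over~$\Q$, handled by Heath-Brown--Moroz; and the cases in which~$W$ itself satisfies strong approximation off one place, such as those of \cite[Theorem~9.11]{hwfibration} and \cite[Theorem~1.2]{browningschindler}, in which Proposition~\ref{prop:brauerw} ensures the vanishing of the Brauer--Manin obstruction on~$W$ and thereby obviates any Schinzel-type input.
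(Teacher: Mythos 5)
The statement you were asked to prove is not proved in the paper: Conjecture~$\cF$ is stated as a conjecture (it is another name for \cite[Conjecture~9.1]{hwfibration}, via \cite[Proposition~9.9]{hwfibration}), and the paper only establishes special cases of it (in fact of the stronger Conjecture~$\cFplus$) in \S\ref{sec:knowncases}. So your text cannot be, and honestly does not claim to be, a proof; as a reduction-plus-survey it is broadly consistent with the paper's machinery (approximation at the places of~$S$ via smoothness of~$p$ and the implicit function theorem, integrality at $v\notin S$ via Proposition~\ref{prop:localintegralpoint}, and the list of tractable regimes matching Proposition~\ref{prop:strongapproximpliesfp}, Corollary~\ref{cor:fplusstrongapprox}, Theorem~\ref{th:hhfconst} and Theorem~\ref{thm:lilian}).

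There is, however, one genuine inaccuracy in your reduction, and it sits exactly where the real difficulty of the conjecture lies. Proposition~\ref{prop:localintegralpoint} demands, at a place $v\notin S$ where $c$ reduces to some $m\in M$, not only that $b_m(\lambda-a_m\mu)$ be (up to $S$-units) essentially a single prime~$w$ of~$k(m)$ of degree one over~$v$, but also that~$w$ admit a place of~$L_m$ of degree one over~$v$ above it. Schinzel's hypothesis $(\mathrm{HH}_1)$ applied to the polynomials cutting out~$M$ produces the prime values, but it gives no control over the Frobenius of the resulting primes in the extensions $L_m/k(m)$; hence the problem does not ``reduce to a Schinzel-type hypothesis'' as you assert. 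In the paper (following \cite[Theorem~9.6]{hwfibration}) this splitting condition is forced by a reciprocity argument with vertical Brauer classes, which works only when each $L_m/k(m)$ is abelian, or almost abelian via an additional device --- whence the hypotheses of Theorem~\ref{th:hhfconst} and Corollary~\ref{cor:hh1fplus}; for general $L_m/k(m)$, Conjecture~$\cF$ is strictly stronger than $(\mathrm{HH}_1)$ and is not known to follow from it. A smaller slip: Proposition~\ref{prop:brauerw} does not ``ensure the vanishing of the Brauer--Manin obstruction and thereby'' yield the strong approximation cases; it merely shows there is no Brauer--Manin obstruction to strong approximation on~$W$, while the actual strong approximation statements used in Corollary~\ref{cor:fplusstrongapprox} are proved by separate geometric or analytic arguments and then fed into Proposition~\ref{prop:strongapproximpliesfp}.
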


The goal of \textsection\ref{sec:statementFplus}
is to propose a more general formulation, which we call Conjecture~$\cFplus$
and view as an improved substitute for Conjecture~$\cF$.  Before stating it, we need
to introduce some additional notation related to the parameters on which
Conjecture~$\cFplus$ depends.

We let $\sP_+$ be the collection
of all
quadruples $\pi_+=(M, (L_m)_{m \in M}, (b_m)_{m \in M}, (K_m)_{m\in M})$
consisting of a triple $\pi=(M, (L_m)_{m \in M}, (b_m)_{m \in M})$ belonging to~$\sP$
together with the data,
for each $m \in M$, of a finite abelian field extension~$K_m$ of~$L_m$.

Given $\pi_+ \in \sP_+$, we set
\begin{align}
C_m = \Ker\big(H^1(L_m,\Q/\Z) \to H^1(K_m,\Q/\Z)\big) = \Hom(\Gal(K_m/L_m),\Q/\Z)
\end{align}
and denote by $B \subseteq \Br(p^{-1}(U))$ the subgroup generated by the
classes
$\Cores_{L_m/k}(z_m,\chi)$
 for $m \in M$ and $\chi \in C_m$
(see Proposition~\ref{prop:residueofcores}~(1)).
This is a finite subgroup.
Finally, for $c \in U(k)$, we denote by $W_c(\A_k)^B$ the subset of $W_c(\A_k)$
consisting of those adelic points that are orthogonal, with respect to the
Brauer--Manin pairing, to the image of~$B$
by the restriction map
$\Br(p^{-1}(U)) \to \Br(W_c)$.

\begin{conjFplus}
Let $\pi_+\in\sP_+$.
The subset $\displaystyle\bigcup_{c \in U(k)} W_c(\A_k)^B$ is dense in $W(\A_k)$.
\end{conjFplus}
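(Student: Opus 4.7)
The plan is to realise Conjecture~$\cFplus$ as a refinement of the density statement of Conjecture~$\cF$ in which the adelic points one produces in the fibres are further constrained by the Brauer--Manin pairing with the finite subgroup~$B$. The additional constraint should be accommodated by a global reciprocity argument that translates it into a prescription on the Frobenius, in the abelian extensions $K_m/L_m$, of auxiliary primes at which the norms $b_m(\lambda-a_m\mu)$ happen to concentrate.

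Concretely, fix an adelic point $(P_v)_v\in W(\A_k)$ together with a neighbourhood of the form $\prod_{v\in S}\Omega_v\times\prod_{v\notin S}\sW(\sOint_v)$ containing it, for a finite set of places~$S$ large enough that Proposition~\ref{prop:localintegralpoint} applies. Writing $p(P_v)=[\lambda_v:\mu_v]$, the first task is to find $(\lambda_0,\mu_0)\in k^2\setminus\{(0,0)\}$ approximating $(\lambda_v,\mu_v)$ at every $v\in S$ such that, for every $m\in M$, the element $b_m(\lambda_0-a_m\mu_0)\in k(m)$ is a local norm from~$L_m$ at every place. The natural tool here is Schinzel's hypothesis~$(\mathrm{HH}_1)$ applied to the homogeneous polynomials cutting out~$M$ in~$\P^1_k$: it would supply $(\lambda_0,\mu_0)$ such that the fractional ideal of $b_m(\lambda_0-a_m\mu_0)$ in $k(m)$ is the product of an $S$\nobreakdash-unit with a single prime ideal $\mathfrak{q}_m$, which can moreover be demanded to split completely in $L_m$. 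This automatically produces local points on $W_{[\lambda_0:\mu_0]}$ close to $(P_v)_{v\in S}$.

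With such $(\lambda_0,\mu_0)$ in hand, the Brauer--Manin condition is handled via reciprocity. By Proposition~\ref{prop:residueofcores}(2), the residue of $\Cores_{L_m/k}(z_m,\chi)$ at the generic point of $W_m$ is the image of $\chi\in C_m=\Hom(\Gal(K_m/L_m),\Q/\Z)$ in $H^1(k(W_m),\Q/\Z)$, so the local invariant of this class at the auxiliary prime~$\mathfrak{q}_m$ is computed from the Frobenius of~$\mathfrak{q}_m$ in~$K_m/L_m$. Since by construction all other local invariants are either controlled from the initial approximation at~$S$ or vanish by integrality and by the fact that $\mathfrak{q}_m$ splits completely in~$L_m$, global reciprocity expresses orthogonality to each generator of~$B$ as the prescription of a specific Frobenius coset for~$\mathfrak{q}_m$. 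This prescription can in principle be folded into the congruence conditions one imposes on $(\lambda_0,\mu_0)$ when invoking~$(\mathrm{HH}_1)$, completing the argument.

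The hard part is of course the arithmetic input: $(\mathrm{HH}_1)$ is open in essentially every case beyond Dirichlet's theorem, the Green--Tao--Ziegler theorem for collections of linear homogeneous polynomials over~$\Q$, and the Heath-Brown--Moroz theorem for a single cubic over~$\Q$, which is why Conjecture~$\cFplus$---like Conjecture~$\cF$ before it---is only stated as a conjecture. One is thus forced either to extract small-rank unconditional instances by replacing $(\mathrm{HH}_1)$ with strong approximation on~$W$ itself in the spirit of \cite[Theorem~9.11]{hwfibration}, or to leave $(\mathrm{HH}_1)$ as a hypothesis in the statement of the applications. A complementary route, evidently planned for~\S\ref{sec:comparing}, is to show that $\cFplus$ for a given $\pi_+\in \sP_+$ follows from $\cF$ for an auxiliary $\pi\in\sP$ obtained by enlarging the parameters so as to absorb the data of the extensions~$K_m/L_m$; this merely reshuffles the arithmetic difficulty, but has the practical advantage of isolating the purely new content of the conjecture.
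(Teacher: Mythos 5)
The statement you are trying to prove is Conjecture~$\cFplus$ itself: the paper does not prove it, but states it as a conjecture, establishes its formal relationship to Conjecture~$\cF$ (Proposition~\ref{prop:allequivalent}, via Corollary~\ref{cor:fplusimpliesfplus}, which in turn rests on the fibration theorem applied to~$W$; and the finer comparisons of Proposition~\ref{prop:fromFtoFconst}, Theorem~\ref{th:conjfnrconjfplus} and Corollary~\ref{cor:comparisonFconstFplus}), and proves special cases in \S\ref{sec:knowncases}. You correctly recognise that no unconditional proof is possible with current technology, so the only substantive content of your sketch is the conditional argument under Schinzel's hypothesis~$(\mathrm{HH}_1)$, and that argument has a genuine gap.

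The gap is the final claim that the Frobenius prescription for the auxiliary prime~$\mathfrak{q}_m$ ``can be folded into the congruence conditions'' imposed on $(\lambda_0,\mu_0)$ when invoking~$(\mathrm{HH}_1)$. Reciprocity for the value $b_m(\lambda_0-a_m\mu_0)\in k(m)^*$ controls the Frobenius of~$\mathfrak{q}_m$ only in \emph{abelian extensions of~$k(m)$}; equivalently, by the projection formula one has $\Cores_{L_m/k}(z_m,\chi)=\Cores_{k(m)/k}(b_m(\lambda-a_m\mu),\chi)$ only when $\chi$ is the restriction of a character of~$k(m)$. This controls exactly the subgroup~$B_\const$, not all of~$B$: for a general $\chi\in C_m=\Hom(\Gal(K_m/L_m),\Q/\Z)$ the relevant Frobenius lives in the abelian extension $K_m/L_m$ of~$L_m$, which need not be abelian (or even Galois) over~$k(m)$, and its value at a place of~$L_m$ above~$\mathfrak{q}_m$ is not a function of $(\lambda_0,\mu_0)$ through congruences or reciprocity over~$k$. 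This is precisely why Theorem~\ref{th:hhfconst} deduces only Conjecture~$\cFconst$ from~$(\mathrm{HH}_1)$, and why the upgrade to~$\cFplus$ requires a separate input: either $B_\nr=B_\const$, e.g.\ when $\Br_\nr(R^1_{L_m/k(m)}\Gm)=\Br_0$ (Corollary~\ref{cor:comparisonFconstFplus} together with Proposition~\ref{prop:chinr}), or the Chebotarev-based equivalence of Theorem~\ref{th:conjfnrconjfplus}, in which the only freedom available is to switch between places $u_{\theta,1},u_{\theta,2}$ of~$L_m$ above the same prime, and this only moves the invariants by the subgroup dual to $C_m/C_{m,\nr}$. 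For $\chi\in C_{m,\nr}\setminus C_{m,\const}$ --- such classes exist, e.g.\ when $L_m/k(m)$ is biquadratic (Remark~\ref{rk:brnrtrivial}~(ii)) or as in Example~\ref{ex:abelian-alternating} --- the class $\Cores_{L_m/k}(z_m,\chi)$ restricts to a genuinely non-constant unramified Brauer class on the fibres, its local invariant at~$\mathfrak{q}_m$ does not depend on the choice of place of~$L_m$ above~$\mathfrak{q}_m$, and no choice of auxiliary prime can force the required vanishing of the sum of invariants. So your scheme, as written, proves (at best) $\cFconst$ under~$(\mathrm{HH}_1)$, not~$\cFplus$; the genuinely new content of the paper on this point is exactly the mechanism (Theorem~\ref{th:conjfnrconjfplus}, or the parameter-enlargement route through Corollary~\ref{cor:fplusimpliesfplus}, which is itself a nontrivial application of Theorem~\ref{th:fibration} to~$W$ and not a mere reshuffling) that bridges this gap.
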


Conjecture~$\cF$ can be seen as the special case of Conjecture~$\cFplus$
for those $\pi_+ \in \sP_+$ that satisfy $K_m=L_m$ for all $m \in M$.
In \textsection\textsection\ref{sec:fibration}--\ref{sec:newapplications},
we shall see that Conjecture~$\cFplus$ should be expected to be just as true
as Conjecture~$\cF$, and that the flexibility provided by
allowing nontrivial abelian extensions $K_m/L_m$ leads at the same time
to a better theory
and to more general results about rational points in fibrations
into rationally connected varieties over the projective line.

\section{Fibration theorem for rational points}
\label{sec:fibration}

In this section, we show that the fibration method for proving the density
of rational points in the Brauer--Manin set works for fibrations into
proper smooth rationally connected varieties, if Conjecture~$\cFplus$ holds
for certain parameters~$\pi_+$ associated with the fibration.

\subsection{Main theorem}

Theorem~\ref{th:fibration} should be compared with
\cite[Theorem~9.17]{hwfibration}, whose statement, based on
Conjecture~$\cF$, is very similar but contains an unpleasant
technical assumption
 absent from Theorem~\ref{th:fibration}
(namely, the surjectivity of \cite[(9.9)]{hwfibration};
this assumption is satisfied when~$k$ is totally imaginary or~$M$ contains
a rational point, by \cite[Remark~9.18~(ii)]{hwfibration}).
The proof of Theorem~\ref{th:fibration} refines ideas that were elaborated
over a long
series of works, among which
\cite{harariduke,hararifleches,ctsd94,ctsksd98,hwfibration}.

\begin{thm}
\label{th:fibration}
Let~$X$ be a smooth, irreducible variety over a number field~$k$, endowed with
a morphism $f:X\to \P^1_k$ with geometrically irreducible generic fibre.
Let $U \subset \P^1_k$ be an open subset over which the fibres of~$f$ are split,
with $\infty \in U$.  Let $A \subset \Br(f^{-1}(U))$ be a finite subgroup.
Let $H \subset \P^1_k$ be a Hilbert subset.
Let $M = \P^1_k \setminus U$.

For each $m \in M$, suppose given
an irreducible component~$Y_m$ of multiplicity~$1$ of
the fibre $X_m=f^{-1}(m)$
and a finite abelian extension $E_m/k(Y_m)$ such that
the residue of any element of~$A$ at the generic point of~$Y_m$
belongs to the kernel of the restriction map
$H^1(k(Y_m),\Q/\Z)\to H^1(E_m,\Q/\Z)$.
Let~$L^0_m$ (resp.\ $K^0_m$) denote the algebraic closure of~$k(m)$ in~$k(Y_m)$ (resp.\ in~$E_m$).
For each $m \in M$, suppose given a finite extension~$L_m$ of~$L^0_m$
and a finite abelian extension~$K_m$ of~$L_m$
in which $K^0_m$ can be embedded $L^0_m$\nobreakdash-linearly.

Assume that
for all choices of $(b_m)_{m \in M} \in \prod_{m \in M}k(m)^*$,
 Conjecture~$\cFplus$ holds for
the parameter $\pi_+=(M, (L_m)_{m \in M}, (b_m)_{m \in M}, (K_m)_{m\in M})$.
Then the subset
\begin{align}
\bigcup_{c \in U(k)\cap H} X_c(\A_k)^A
\end{align}
is dense in
$X(\A_k)^{(A+f^*_\eta\Br(\eta)) \cap \Br(X)}$.
\end{thm}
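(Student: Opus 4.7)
The strategy is to adapt the proof of \cite[Theorem~9.17]{hwfibration}, replacing Conjecture~$\cF$ with Conjecture~$\cFplus$ in order to track the Brauer classes in~$A$ and thereby dispense with the surjectivity hypothesis needed in \emph{op.\ cit.} Fix an adelic point $(x_v) \in X(\A_k)^{(A+f^*_\eta\Br(\eta)) \cap \Br(X)}$ together with an adelic neighbourhood of it; the task is to produce $c \in U(k) \cap H$ and an adelic point of~$X_c$ orthogonal to~$A$ inside this neighbourhood. After applying weak approximation on~$\P^1_k$ we may assume $f(x_v) \in U(k_v)$ for every place~$v$.

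Choose next $(b_m)_{m \in M} \in \prod_{m \in M} k(m)^*$ so that the associated variety~$W$ and morphism $p : W \to \P^1_k$ admit an adelic point $(w_v) \in W(\A_k)$ with $p(w_v) = f(x_v)$ for every~$v$. At each place~$v$, this amounts to producing local sections $z_{m,v} \in (L_m \otimes_k k_v)^*$ of prescribed norm, which is possible thanks to the assumption that~$Y_m$ is a multiplicity-$1$ component of~$X_m$ and that~$L_m$ contains~$L^0_m$: one deforms~$x_v$ inside~$X_{f(x_v)}$ so that it reduces, on~$Y_m$, to an~$L^0_m$-point. The $b_m$'s are then pinned down globally by strong approximation on~$\A^1_{k(m)}$, following \cite[Proposition~9.9 and its proof]{hwfibration}. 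Set $\pi_+ = (M, (L_m)_{m\in M}, (b_m)_{m\in M}, (K_m)_{m\in M}) \in \sP_+$ and let $B \subseteq \Br(p^{-1}(U))$ be the associated finite subgroup.

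Applying Conjecture~$\cFplus$ to~$\pi_+$ yields $c \in U(k)$ and an adelic point $(w_v^c) \in W_c(\A_k)^B$ arbitrarily close to~$(w_v)$. A Hilbert irreducibility argument, combined with the freedom to approximate at any finite set of places, ensures that~$c$ may moreover be required to lie in~$H$. From $(w_v^c)$ one recovers an adelic point $(x_v^c) \in X_c(\A_k)$ close to $(x_v)$, by lifting via smoothness of~$f$ away from the non-split locus and by using the specific local data~$z_{m,v}^c$ to control the lift at places where $f(x_v)$ is close to some~$m \in M$, exactly as in \cite[proof of Theorem~9.17]{hwfibration}.

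\textbf{The main obstacle} is to verify that $(x_v^c) \in X_c(\A_k)^A$, i.e.\ that $\sum_v \inv_v \alpha(x_v^c) = 0$ for every $\alpha \in A$. The Brauer hypothesis on~$A$ means that for each $m \in M$, the residue of every $\alpha \in A$ at the generic point of~$Y_m$ lies in $\Hom(\Gal(E_m/k(Y_m)),\Q/\Z) \subset H^1(k(Y_m),\Q/\Z)$, whose ``constant part'' $\Hom(\Gal(K^0_m/L^0_m),\Q/\Z)$ embeds, via the prescribed $L^0_m$-linear embedding $K^0_m \hookrightarrow K_m$, into $C_m = \Hom(\Gal(K_m/L_m),\Q/\Z)$. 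By Proposition~\ref{prop:residueofcores}(2), the corresponding class $\beta = \Cores_{L_m/k}(z_m,\chi) \in B$ has residue at the generic point of~$W_m$ matching that of~$\alpha$ at~$Y_m$. A place-by-place Brauer--Manin computation, in the spirit of Harari's fibration technique~\cite{hararifleches} and \cite[\textsection 9.3]{hwfibration}, then identifies $\sum_v \inv_v \alpha(x_v^c)$ with $\sum_v \inv_v \beta(w_v^c)$ modulo contributions from~$f^*_\eta \Br(\eta)$ that are absorbed by the orthogonality hypothesis on~$(x_v)$. The former sum vanishes because $(w_v^c) \in W_c(\A_k)^B$. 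This is precisely the step where the extra information encoded in the extensions~$K_m/L_m$ renders the surjectivity assumption of \cite[Theorem~9.17]{hwfibration} unnecessary, and is the delicate heart of the argument; the rest is book-keeping.
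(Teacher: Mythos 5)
Your overall scaffolding (choose $(b_m)$, lift to an adelic point of~$W$, apply Conjecture~$\cFplus$, descend back to~$X_c$) matches the paper, but the step you call the ``delicate heart'' is exactly where the proposal breaks down, and the fix you sketch does not work. Orthogonality of $(w_v^c)$ to~$B$ only controls the \emph{constant} part of the residues: for $v$ near $m$ it constrains the sum of Frobenius elements in $\Gal(K_m/L_m)$, hence, after projection, in $\Gal(K_m^0/L_m^0)=G_m/H_m$. But the residue $\partial_{\alpha,m}\in\Hom(G_m,\Q/\Z)$ of $\alpha\in A$ is in general nontrivial on $H_m=\Gal(E_m/k(Y_m)K_m^0)$, and by Harari's formula $\inv_v\alpha(x_v^c)=n_v\partial_{\alpha,m}(\Frob_{\xi_v})$ depends on the full Frobenius of the chosen lift point $\xi_v$ in $G_m$, which no class of~$B$ sees. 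So there is no identification of $\sum_v\inv_v\alpha(x_v^c)$ with $\sum_v\inv_v\beta(w_v^c)$, and the discrepancy is \emph{not} ``absorbed by the orthogonality hypothesis on $(x_v)$'': by the time you build points of $X_c$ at the places outside~$S$ (via Lang--Weil and Hensel), the original adelic point is gone, and the hypothesis only guarantees vanishing of the $S$-part of the $\alpha$-sums. The paper's proof supplies precisely the missing mechanism: before invoking Conjecture~$\cFplus$ it inserts, for each $m$, an auxiliary place $v_m$ split completely in $K_m$ with $t_{v_m}-a_m$ a uniformiser; Lemma~\ref{lem:isinhm} then shows (using $B$-orthogonality and Pontrjagin duality) that the leftover sum $\sum_{v\in\Omega_m\setminus\{v_m\}}n_v\Frob_{\xi_v}$ lies in the subgroup $H_m$, and a geometric Chebotarev statement (built into the choice of~$S$) lets one prescribe the Frobenius $\sigma_m\in H_m$ at $\xi_{v_m}$ so as to cancel it, while $\inv_{v_m}\beta=0$ for all $\beta\in B$ because $v_m$ splits in $K_m$. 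None of this appears in your proposal, and without it the construction cannot force $(x_v^c)\in X_c(\A_k)^A$.

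Two further points are glossed over in a way that matters. First, you cannot ``apply weak approximation on $\P^1_k$'' to move $(x_v)$ into $f^{-1}(U)$: one needs Harari's formal lemma applied to~$A$ \emph{and} to the vertical classes obtained by cupping the class of a torsor with $H^1(k,\widehat T)$, which is exactly where the hypothesis of orthogonality to $(A+f^*_\eta\Br(\eta))\cap\Br(X)$ is consumed. Second, in the paper the family $(b_m)$ is chosen by open descent for the torsor under the torus built from the extensions $K_m$ (not $L_m$), and the lifted adelic point of~$W$ is the image under $W_+\to W$ of a point of the $K_m$-version $W_+$; the projection formula then makes it automatically orthogonal to~$B$ (Lemma~\ref{lem:choiceofbm}), a fact used later to control the $S$-part of the $\beta$-sums in Lemma~\ref{lem:isinhm}. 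Your alternative (deforming $x_v$ so it reduces to an $L_m^0$-point of $Y_m$ and pinning $b_m$ by strong approximation) neither produces this $B$-orthogonality nor explains how to dispense with it.
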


For the reader's ease, we summarise the various fields involved in the above statement with a commutative diagram:
\begin{align*}
\xymatrix@R=1ex@C=1em{
K_m && E_m \\
& K_m^0  \ar@{.}[ul]  \ar@{-}[ur] \\
L_m \ar@{-}[uu] && k(Y_m) \ar@{-}[uu] \\
& L_m^0 \ar@{-}[uu] \ar@{-}[ur] \ar@{-}[ul] \\
\\
& k(m)  \ar@{-}[uu]
}
\end{align*}

\begin{proof}
The arguments used to deduce
\cite[Theorem~9.22]{hwfibration}
 from
\cite[Theorem~9.17]{hwfibration}
also prove, in the exact same way,
that
Theorem~\ref{th:fibration} in the special case where $H=\P^1_k$ implies
Theorem~\ref{th:fibration} in general.
Hence we may, and will, assume that $H=\P^1_k$.

Let~$\Omega$ denote the set of places of~$k$.
Let $(x_v)_{v \in \Omega} \in X(\A_k)^{(A+f^*_\eta\Br(\eta)) \cap \Br(X)}$
be the adelic point that we shall approximate.
Our goal is to produce $c \in U(k)$ and $(x_v''')_{v \in \Omega} \in X_c(\A_k)^A$
arbitrarily close to $(x_v)_{v \in \Omega}$ in $X(\A_k)$.
We break up the proof into five steps.

\begin{step}
\label{step:1}
Choice of the parameter~$\pi_+$.
\end{step}

In Step~\ref{step:3}, we shall apply Conjecture~$\cFplus$
to the parameter
\begin{align}
\label{eq:piplus}
\pi_+=(M, (L_m)_{m \in M}, (b_m)_{m \in M}, (K_m)_{m\in M})
\end{align}
for a certain choice
of a family $(b_m)_{m \in M} \in \prod_{m \in M} k(m)^*$.
Step~\ref{step:1} is devoted to specifying this choice, using the next lemma.

For its statement, we take up the notation of~\textsection\ref{subsubsec:parametersdef}
and~\textsection\ref{sec:statementFplus},
where a variety~$W$,
a morphism
$p:W\to \P^1_k$
and various groups
$(C_m)_{m\in M}$ and $B \subseteq \Br(p^{-1}(U))$
were associated with the parameter~$\pi_+$ defined by~\eqref{eq:piplus}.
In addition, we let $U^0=U \setminus \{\infty\}$, $X^0=f^{-1}(U^0)$
and $W^0=p^{-1}(U^0)$.
We stress that in the statement of Lemma~\ref{lem:choiceofbm}, the
variety~$W^0$, the morphism~$p$ and the group~$B$ depend on $(b_m)_{m \in M}$.
We also note that Lemma~\ref{lem:choiceofbm} does not depend on Conjecture~$\cFplus$.

\begin{lem}
\label{lem:choiceofbm}
There exist an adelic point
 $(x'_v)_{v \in \Omega} \in X^0(\A_k)^A$
 arbitrarily close to $(x_v)_{v \in \Omega}$ in~$X(\A_k)$
and
 a family $(b_m)_{m \in M} \in \prod_{m \in M} k(m)^*$ such that
if we let~$W$, $p$ and~$B$ be defined in terms of $(b_m)_{m \in M}$ as explained
above,
there exists
an adelic point $(z'_v)_{v \in \Omega}\in W^0(\A_k)^B$
satisfying $p(z'_v)=f(x'_v)$ for all $v \in \Omega$.
One can require, in addition, that the $x'_v$ for $v \in \Omega$ all belong to smooth fibres of~$f$.
\end{lem}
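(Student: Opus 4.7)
The plan is to produce, in order, (i) a perturbation $(x'_v)$ of $(x_v)$ inside $X^0(\A_k)^A$ with every $x'_v$ in a smooth fibre of $f$, (ii) a family $(b_m)_{m\in M}\in \prod_m k(m)^*$, and (iii) an adelic lift $(z'_v)\in W^0(\A_k)^B$ above $(f(x'_v))$, where $W^0$, $p$ and $B$ are built from the $b_m$'s as explained.

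Task~(i) is a soft approximation: the finite subgroup $A\subseteq \Br(f^{-1}(U))$ has a locally constant evaluation pairing on $X^0(k_v)$, and the smooth locus of $f$ is open and dense in $X^0$; perturbing each $x_v$ slightly in $X^0(k_v)$, and using $\infty\in U$ at the archimedean and bad places, therefore yields $x'_v$ in a smooth fibre over a point of $U^0(k_v)$ while preserving orthogonality to $A$.

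For (ii) and (iii), the first step is to assemble local lifts. For each place $v$ and each $m\in M$ such that $f(x'_v)$ is $v$-adically close to $m$, the multiplicity-$1$ component $Y_m$ and the smoothness of $X^0$ at $x'_v$ let one apply Hensel's lemma along a local section of $f$ through $x'_v$, producing a $k_v$-point of the smooth locus of~$Y_m$; since $L^0_m$ is the algebraic closure of $k(m)$ in $k(Y_m)$ and $L_m\supseteq L^0_m$, this yields an element $\zeta_{m,v}\in (L_m\otimes_k k_v)^*$ whose norm down to $(k(m)\otimes_k k_v)^*$ equals $\lambda_v-a_m\mu_v$ for a suitable choice of local coordinates $(\lambda_v,\mu_v)$. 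At the remaining finitely many places, $\lambda_v-a_m\mu_v$ is a local unit at every place of $k(m)$ above~$v$ and a local norm from $L_m$ is available for free. The idèle $(\zeta_{m,v})_v$ determines a class in the Shafarevich--Tate group of the norm-one torus $T_m=R^1_{L_m/k(m)}\Gm$, and choosing $b_m\in k(m)^*$ so that $b_m(\lambda_v-a_m\mu_v)$ is everywhere locally a norm from $L_m$ is exactly the task of killing this class.

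The crux, and the main obstacle, is to show that these classes vanish thanks to the hypothesis $(x_v)\in X(\A_k)^{(A+f^*_\eta\Br(\eta))\cap \Br(X)}$. By Poitou--Tate duality for tori, the obstruction is dual to a subgroup of $H^2(k(m),\widehat{T}_m)$, which by the formalism of \textsection\ref{subsec:criterion unramifiedness} is generated by classes of the shape $\Cores_{L_m/k}(z_m,\chi)$. The bridge to Brauer classes on $X$ is through residues: the residue of $\alpha\in A$ at the generic point of $Y_m$ lies in $\Hom(\Gal(E_m/k(Y_m)),\Q/\Z)$, its constant part along $L^0_m$ takes values in $\Hom(\Gal(K^0_m/L^0_m),\Q/\Z)$, and via the chosen embedding $K^0_m\hookrightarrow K_m$ it defines a character in $C_m=\Hom(\Gal(K_m/L_m),\Q/\Z)$. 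A Harari-type reciprocity argument comparing, place by place, the local Brauer--Manin pairing of $x'_v$ with $\alpha$ to that of a putative lift with $\Cores_{L_m/k}(z_m,\chi)$ then translates the sum-zero condition from $(x_v)\perp A$ into the vanishing of the global obstruction to $(b_m)$; the contribution of $f^*_\eta\Br(\eta)$ is absorbed at a finite set of auxiliary places, where $b_m$ may be varied freely. Once $b_m$ is chosen accordingly, local solutions $z'_v$ to the norm equations exist at every place, yielding $(z'_v)\in W^0(\A_k)$, and the same pairing computation shows $(z'_v)\perp B$, completing the construction.
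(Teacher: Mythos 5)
Your plan is morally the Poitou--Tate--dual side of what the paper does by descent, but as written it has genuine gaps, already in step (i). The adelic point $(x_v)$ is only assumed orthogonal to $(A+f^*_\eta\Br(\eta))\cap\Br(X)$, whereas $A\subset\Br(f^{-1}(U))$ will in general contain classes that are ramified on $X\setminus f^{-1}(U)$; a small perturbation cannot \emph{create} orthogonality to such classes, since local constancy of the evaluation maps only preserves whatever (possibly nonzero) invariants you started with. Passing from the unramified hypothesis to a nearby point of $X^0(\A_k)$ orthogonal to the full finite group $A$ requires Harari's formal lemma (with its auxiliary places), and in the paper the lemma is in fact applied to a strictly larger finite group $A^0$, obtained by adjoining to $A$ the pull-backs $f^*(\tau\cup\chi)$, $\chi\in H^1(k,\widehat T)$, where $T$ is the torus $\Ker\big(\Gm\times\prod_m R_{K_m/k}\Gm\to\prod_m R_{k(m)/k}\Gm\big)$ and $\tau$ is the class of the torsor $W_+^0\to U^0$ built from the fields $K_m$. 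Adjoining these classes at this stage is exactly what makes the later choice of $(b_m)$ possible; your proposal omits them and defers the issue to a ``reciprocity argument'' that is never carried out.

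That deferred argument is where the core of the lemma lies, and your sketch of it does not go through. First, the local input is not what you claim: a smooth $k_v$-point of $Y_m$ relates $\lambda_v-a_m\mu_v$ to norms from $L^0_m$ (and only at places of good reduction), and an element that is a norm from $L^0_m\otimes k_v$ need not be a norm from the \emph{larger} algebra $L_m\otimes k_v$, let alone from $K_m\otimes k_v$. Second, to find one global $b_m$ making the norm equation locally solvable at \emph{every} place, the relevant obstruction must vanish against all characters of the norm torus (Poitou--Tate), not merely against the characters arising as residues of elements of $A$; the hypothesis on $(x_v)$ does not by itself give this, and the vague statement that the contribution of $f^*_\eta\Br(\eta)$ is ``absorbed at auxiliary places where $b_m$ may be varied freely'' is precisely the missing mechanism --- in the paper it is supplied by open descent (Harari--Skorobogatov) applied to $W_+\times_{\P^1_k}X^0\to X^0$, together with the observation that twisting $\tau$ by $H^1(k,T)$ amounts to changing $(b_m)$, via the surjection $\prod_m k(m)^*\twoheadrightarrow H^1(k,T)$. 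Third, orthogonality to $B$ is not ``the same pairing computation'': in the paper it is automatic because the lift is constructed on $W_+$ (norm equations for the $K_m$) and pushed down along $q:W_+\to W$, so the projection formula kills every $\Cores_{L_m/k}(z_m,\chi)$ with $\chi\in C_m=\Hom(\Gal(K_m/L_m),\Q/\Z)$; if you only solve the $L_m$-norm equations, as you propose, there is no reason the resulting adelic point of $W^0$ is orthogonal to $B$, and arranging this by hand would force you to redo the obstruction analysis with $K_m$ in place of $L_m$, i.e.\ essentially to reconstruct the paper's descent argument.
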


\begin{proof}
Let~$T$ be the torus over~$k$ defined by the exact sequence of tori
\begin{align}
\label{se:deft}
\xymatrix{
1 \ar[r] & T \ar[r] & \displaystyle\Gm \times \prod_{m\in M}R_{K_m/k}\Gm \ar[r] & \displaystyle\prod_{m\in M}
R_{k(m)/k}\Gm \ar[r] & 1\rlap,
}
\end{align}
where the second map is $(\nu,(z_m)_{m\in M})\mapsto (N_{K_m/k(m)}(z_m)/\nu)_{m\in M}$.
According to Shapiro's lemma and Hilbert's Theorem~90,
this exact sequence
induces
a surjection
\begin{align}
\label{eq:surjectionh1kt}
\prod_{m\in M}k(m)^* \twoheadrightarrow H^1(k,T)\rlap.
\end{align}
Given $(b_m)_{m \in M} \in \prod_{m\in M} k(m)^*$,
let $p_+:W_+\to \P^1_k$ denote the variety and the morphism associated
in~\textsection\ref{subsubsec:parametersdef}
with the triple
$(M, (K_m)_{m \in M}, (b_m)_{m\in M})$
and let $W_+^0=p_+^{-1}(U^0)$.
The morphism $W_+^0 \to U^0$ induced by~$p_+$
is a torsor under~$T$.
We denote by $\tau \in H^1_{\et}(U^0,T)$ its isomorphism class.

Starting with an arbitrary choice of $(b_m)_{m \in M}$
(for instance $b_m=1$ for all~$m$),
let~$A^0$ be the subgroup of $\Br(X^0)$ generated by~$A$ and by
the inverse images, by $f^*:\Br(U^0)\to \Br(X^0)$,
of the cup products
of $\tau$ with the elements of the finite group $H^1(k,\widehat T)$,
where $\widehat T$ denotes the character group of~$T$.
As $(x_v)_{v \in \Omega} \in X(\A_k)^{A^0\cap \Br(X)}$,
the version of Harari's formal lemma that can be found in \cite[Th\'eor\`eme~1.4]{ctbudapest}
provides $(x'_v)_{v \in \Omega} \in X^0(\A_k)^{A^0}$
arbitrarily close to $(x_v)_{v \in \Omega}$ in~$X(\A_k)$.
As~$A^0$ is finite, we may assume, after modifying the~$x'_v$ using the implicit function theorem,
that they all belong to smooth fibres of~$f$.

Let us apply open descent theory
to the projection $W_+ \times_{\P^1_k} X^0 \to X^0$, which is a torsor under~$T$.
According to \cite[Theorem~8.4, Proposition~8.12]{haskoopendescent},
the adelic point
 $(x'_v)_{v \in \Omega}$ can be lifted to an adelic point of some
twist of this torsor.
Now, twisting the class~$\tau$ by a class in $H^1(k,T)$
amounts, in view of the surjectivity of~\eqref{eq:surjectionh1kt}, to modifying
the choice of $(b_m)_{m\in M}$.  Thus, after modifying our choice
 of $(b_m)_{m\in M}$, we may assume that $(x_v')_{v \in \Omega}$ comes
from
$(W_+ \times_{\P^1_k} X^0)(\A_k)$. In particular, there exists
 $(z_{+,v}')_{v \in \Omega} \in W_+^0(\A_k)$ such that
$p_+(z_{+,v}')=f(x_v')$ for all $v\in \Omega$.

Let
$q:W_+ \to W$ be  defined
by $q(\lambda,\mu,(z_m)_{m\in M})=(\lambda,\mu,(N_{K_m/L_m}(z_m))_{m\in M})$
and set $z_v' = q(z_{+,v}')$,
so that $p(z_v')=f(x_v')$ for all $v\in \Omega$.
The projection formula implies that~$B$ is contained in the kernel
of the pull-back map $q^*:\Br(p^{-1}(U)) \to \Br(p_+^{-1}(U))$
and hence that $\beta(z_v')=0$ for any $\beta \in B$ and any $v \in \Omega$.
It follows that $(z_v')_{v \in \Omega} \in W^0(\A_k)^B$.
\end{proof}

We now apply Lemma~\ref{lem:choiceofbm}
and leave the resulting
$(x'_v)_{v \in \Omega}$, $(b_m)_{m \in M}$, $\pi_+$,
$W$, $p$, $B$ and $(z_v')_{v \in \Omega}$ fixed
for the remainder of the proof of Theorem~\ref{th:fibration}.

\begin{step}
\label{step:2}
Choice of the finite set of bad places~$S$.
\end{step}

Let~$S$ be a finite set of places of~$k$, containing the archimedean places
and the places at which
we want
to approximate $(x_v)_{v \in \Omega}$,
large enough that $f:X \to \P^1_k$ extends
to a flat morphism $f:\sX \to \P^1_{\sOint_S}$, where~$\sOint_S$ denotes the ring of
$S$\nobreakdash-integers of~$k$.

For $m \in \P^1_k$, we denote by~$\mtilde$ the Zariski closure of~$m$
in~$\P^1_{\sOint_S}$ and by~$\sX_m$ and~$\sY_m$ the Zariski closures
of~$X_m$ and~$Y_m$ in~$\sX$, all endowed with the reduced scheme structures.
When $m\in M$, we denote by $\mtilde' \to \mtilde$ the normalisation of~$\mtilde$
in the finite extension $L^0_m/k(m)$.
We set $G_m=\Gal(E_m/k(Y_m))$
and $H_m=\Gal(E_m/k(Y_m)K_m^0) \subseteq G_m$
and
 fix,
for each $m \in M$,  a dense open subset
 $\sY_m^0 \subseteq \sY_m$,
small enough that
 the normalisation $\sE_m \to \sY_m^0$
of~$\sY_m^0$ in the finite extension $E_m/k(Y_m)$ is a finite and \'etale
morphism
and that the restriction of~$f$ to $\sY_m^0$ factors through a
smooth morphism
$f_m:\sY_m^0 \to \mtilde'$.

Set $\Mtilde = \bigcup_{m \in M} \mtilde$,
$\sU=\P^1_{\sOint_S} \setminus \Mtilde$
and $\sU^0=\P^1_{\sOint_S} \setminus (\Mtilde \cup \widetilde\infty)$.
Let $p:\sW\to \P^1_{\sOint_S}$ denote the standard integral model of $p:W\to \P^1_k$
(see~\textsection\ref{subsubsec:localintpoint}).
Let $\sW^0 = p^{-1}(\sU^0)$.

After enlarging~$S$, we may assume
that~$\Mtilde \cup\widetilde\infty$
is \'etale over~$\sOint_S$,
that the morphisms $\mtilde' \to \mtilde$ for $m \in M$ are \'etale,
that the $b_m$ for $m \in M$ are units above the places of~$\Omega \setminus S$,
that the extensions $K_m/k$ are unramified above the places of
 $\Omega \setminus S$,
that $z'_v \in \sW^0(\sOint_v)$ for all $v \in \Omega\setminus S$,
that $A \subseteq \Br(f^{-1}(\sU))$,
that $\sum_{v \in S} \inv_v \alpha(x'_v)=0$ for all $\alpha \in A$,
that $\sum_{v \in S} \inv_v \beta(z'_v)=0$ for all $\beta \in B$,
that~$S$ contains the finite places which divide the order of~$A$,
and,
by the Lang--Weil--Nisnevich bounds~\cite{langweil} \cite{nisnevic}
and by a geometric version of Chebotarev's density theorem~\cite[Lemma~1.2]{ekedahl},
that the following hold:
\begin{itemize}
\item every closed fibre of~$f$ above~$\sU$ contains a smooth rational point;
\item for every $m \in M$, every closed fibre of
$f_m:\sY_m^0 \to \mtilde'$
contains a rational point;
\item for every $m \in M$ and every place~$u$ of~$L_m^0$
 which splits in~$K_m^0$ and does not lie above a place of~$S$,
any element of~$H_m$ can
be realised as the Frobenius automorphism
of the irreducible abelian \'etale cover $\sE_m\to\sY_m^0$
at some rational point
of the fibre of~$f_m$
 above the closed point of~$\mtilde'$ corresponding to~$u$.
\end{itemize}

\begin{step}
\label{step:3}
Application of Conjecture~$\cFplus$.
\end{step}

Let us fix a collection $(v_m)_{m \in M}$ of places of~$k$ that are pairwise distinct
and do not belong to~$S$, such that $v_m$ splits completely in~$K_m$ for all $m \in M$.
For each $m \in M$, let us fix a place $w_m$ of~$k(m)$ dividing~$v_m$,
an element
$t_{v_m} \in k_{v_m}$ such that $t_{v_m}-a_m$ is a uniformiser at~$w_m$,
and a point
$z''_{v_m} \in \sW(\sOint_{v_m})$ such that $p(z''_{v_m}) = t_{v_m}$,
where we view $t_{v_m}$ inside $k_{v_m} = \A^1(k_{v_m}) \subset \P^1(k_{v_m})$
(see Proposition~\ref{prop:localintegralpoint} for the existence of $z_{v_m}''$).
For every $v \in \Omega \setminus \{v_m\mkern1mu;\mkern2mu m \in M\}$,
we set $z''_v=z'_v$.
We have thus defined an adelic point $(z''_v)_{v \in \Omega} \in W(\A_k)$
such that $z''_v \in \sW(\sOint_v)$ for all $v \in \Omega\setminus S$.

We now apply Conjecture~$\cFplus$ to~$\pi_+$ and $(z''_v)_{v \in \Omega}$
and deduce that there exist $c \in U(k)$ and $(z_v''')_{v \in \Omega} \in W_c(\A_k)^B$ arbitrarily
close to $(z_v'')_{v\in\Omega}$ in $W(\A_k)$,  in particular such that
$z_v''' \in \sW(\sOint_v)$ for all $v\in \Omega\setminus S$
and such that $\sum_{v \in S} \inv_v \beta(z_v''')=0$ for all $\beta \in B$.

For each $v \in \Omega \setminus S$, let $(\lambda'''_v, \mu'''_v, (z_{m,u}''')_{m\in M,u|v})$
be the data corresponding to $z_v'''$ in the notation of Proposition~\ref{prop:localintegralpoint}
and let $w$ denote the closed point $\ctilde \cap \P^1_{\Fv{v}}$
of~$\P^1_{\sOint_S}$.

For $m \in M$, we set $\Omega_m=\{v \in \Omega \setminus S\mkern1mu;\mkern2mu w \in \mtilde\}$;
equivalently, this is the set of places $v\in \Omega\setminus S$
such that $\lambda'''_v-a_m\mu'''_v$ has positive valuation
at some place of~$k(m)$ dividing~$v$.
The sets $\Omega_m$ for $m \in M$ are finite and pairwise disjoint.
When $v \in \Omega_m$, we shall identify~$w$ with a place of~$k(m)$ of degree~$1$ over~$v$.
For each $m \in M$,
we may assume, by choosing~$z_{v_m}'''$ close enough to~$z_{v_m}''$,
that $v_m \in \Omega_m$ and that $\lambda_{v_m}'''-a_m\mu_{v_m}'''$ is a uniformiser at~$w$.

For later use, we note that according to Proposition~\ref{prop:localintegralpoint}, for each $m \in M$ and each $v \in \Omega_m$,
there exists a unique place~$u$ of~$L_m$ dividing~$v$ such that~$z_{m,u}'''$ is not a unit.
Moreover, this place divides both~$w$ and~$v$ and it has degree~$1$ over them.

\begin{step}
\label{step:4}
Construction of $(x_v''')_{v \in \Omega} \in X_c(\A_k)$
assuming given $(\sigma_m)_{m \in M} \in \prod_{m \in M} H_m$.
\end{step}

For $v \in S$,
we can ensure that~$c$
is arbitrarily close to $f(x_v')$,
by choosing~$z_v'''$ close enough to $z_v''$.
On the other hand, by the implicit function theorem,
the map $X^0(k_v) \to \P^1(k_v)$ induced by~$f$
admits a local $v$\nobreakdash-adic analytic section, around~$c$, passing through~$x_v'$.
Thus, by choosing~$z_v'''$ close enough to $z_v''$, we may assume that for
every $v \in S$, there exists $x_v''' \in X_c(k_v)$ arbitrarily close to~$x_v'$ in~$X(k_v)$.
We fix $x_v'''$ in this way for every $v \in S$.
By ensuring that~$x_v'''$ is close enough to $x_v'$ for $v \in S$, we may assume
that
\begin{align}
\label{eq:suminvs}
\sum_{v \in S} \inv_v \alpha(x_v''')=0
\end{align}
for all $\alpha \in A$.

For $v \in \Omega \setminus (S \cup \bigcup_{m \in M} \Omega_m)$,
noting that $w \in \sU$,
we fix a smooth rational point of the fibre of $f:\sX \to \P^1_{\sOint_S}$ above~$w$
and use Hensel's lemma to lift it
to a $k_v$\nobreakdash-point $x_v'''$ of~$X_c$.

For any $m \in M$ and any $v \in \Omega_m \setminus \{v_m\}$,
let us consider
 the trace on~$L_m^0$ of the unique place~$u$ of~$L_m$ dividing~$v$
such that $z_{m,u}'''$ is not a unit.
It defines a closed point of~$\mtilde'$.
We fix a rational point~$\xi_v$
of the fibre of
$f_m:\sY_m^0 \to \mtilde'$ above this
closed point and, viewing it as a smooth rational point of the fibre
of $f:\sX\to \P^1_{\sOint_S}$
above~$w$, we lift it, using Hensel's lemma,
to a $k_v$\nobreakdash-point $x_v'''$ of~$X_c$.

Let us assume that we are given a family $(\sigma_m)_{m \in M} \in \prod_{m \in M} H_m$
(to be specified at the end of Step~\ref{step:5}).
Then, for $m \in M$, we construct $x_v'''$ for $v=v_m$ in the exact same way
as we constructed $x_v'''$ for $v \in \Omega_m \setminus \{v_m\}$,
except that we require, in addition,
that
the Frobenius automorphism
of the irreducible abelian \'etale cover $\sE_m\to\sY_m^0$
at~$\xi_v$ be equal to~$\sigma_m$.

At this stage, we have now constructed an adelic point
$(x_v''')_{v \in \Omega}$ of~$X_c(\A_k)$,
depending on the choice
of $(\sigma_m)_{m \in M} \in \prod_{m \in M} H_m$.
To conclude the proof of the theorem, it only remains to show
that the family
$(\sigma_m)_{m \in M}$ can be prescribed in such a way
that $(x_v''')_{v \in \Omega}$
automatically belongs to $X_c(\A_k)^A$.

\begin{step}
\label{step:5}
Evaluation of the Brauer--Manin obstruction.
\end{step}

For $m \in M$ and $v \in \Omega_m$, let $n_v$ denote the local intersection multiplicity
of~$\ctilde$ and~$\mtilde$ at~$w$ inside~$\P^1_{\sOint_S}$ (i.e.\ the length of the
local ring of $\ctilde \cap \mtilde$ at~$w$)
and let $\Frob_{\xi_v} \in G_m$ denote the Frobenius
automorphism of the irreducible abelian \'etale cover $\sE_m\to\sY_m^0$
at~$\xi_v$.

\begin{lem}
\label{lem:isinhm}
For each $m \in M$, the element
\begin{align*}
\sum_{v \in \Omega_m \setminus \{v_m\}} n_v \Frob_{\xi_v}
\end{align*}
of~$G_m$ belongs to the subgroup~$H_m$.
\end{lem}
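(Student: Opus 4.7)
The plan is a reciprocity argument. Since $H_m=\Gal(E_m/k(Y_m)K_m^0)$ and $K_m^0\cap k(Y_m)=L_m^0$ (as $L_m^0$ is algebraically closed in $k(Y_m)$ by definition), the quotient $G_m/H_m$ is canonically identified with $\Gal(K_m^0/L_m^0)$. It therefore suffices to show that every character $\chi_0\colon \Gal(K_m^0/L_m^0)\to\Q/\Z$ annihilates the image of $\sigma:=\sum_{v\in\Omega_m\setminus\{v_m\}}n_v\Frob_{\xi_v}$ in this quotient. The hypothesis that $K_m^0$ admits an $L_m^0$\nobreakdash-linear embedding into $K_m$ provides a surjection $\Gal(K_m/L_m)\twoheadrightarrow\Gal(K_m^0/L_m^0)$, through which we pull $\chi_0$ back to a character $\chi\in C_m$, and consider the Brauer class $\beta=\Cores_{L_m/k}(z_m,\chi)\in B$.

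The orthogonality of $(z_v''')_{v\in\Omega}$ to $B$ gives $\sum_{v\in\Omega}\inv_v\beta(z_v''')=0$, and the construction of~$S$ in Step~\ref{step:2} combined with the continuity of the local Brauer pairing implies $\sum_{v\in S}\inv_v\beta(z_v''')=0$; hence $\sum_{v\in\Omega\setminus S}\inv_v\beta(z_v''')=0$. Non-zero contributions to this sum can only come from places in~$\Omega_m$: for $v\in\Omega\setminus S$ with $v\notin\bigcup_{m'\in M}\Omega_{m'}$, the point $z_v'''$ reduces into $p^{-1}(\sU)(\sOint_v)$ on which~$\beta$ extends integrally (as $z_m$ is a unit at the relevant places of~$L_m$, and~$\chi$ is unramified outside~$S$), so its local invariant vanishes; for $v\in\Omega_{m'}$ with $m'\neq m$, Proposition~\ref{prop:localintegralpoint} tells us that all components $z_{m,u}'''$ at places~$u$ of~$L_m$ above~$v$ are units, and again the invariant vanishes.

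For $v\in\Omega_m$, Proposition~\ref{prop:localintegralpoint} combined with the standard formula for the cup product $(z,\chi)$ at a local field yields $\inv_v\beta(z_v''')=n_v\,\chi(\Frob_{u_0(v)})$, where $u_0(v)$ is the unique place of~$L_m$ above~$v$ at which $z_{m,u_0(v)}'''$ is a non-unit, of valuation~$n_v$ and residue degree~$1$ over~$v$. The key matching identity $\chi(\Frob_{u_0(v)})=\chi_0(\Frob_{\xi_v}\bmod H_m)$ then follows from the compatibility of Frobenius with the surjection $\Gal(K_m/L_m)\twoheadrightarrow\Gal(K_m^0/L_m^0)$, together with the identification of the subcover of $\sE_m\to\sY_m^0$ fixed by~$H_m$ with the pullback, to~$\sY_m^0$, of the normalisation of~$\mtilde'$ in $K_m^0/L_m^0$. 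Since $v_m$ was chosen to split completely in~$K_m$, one has $\Frob_{u_0(v_m)}=1$, so that place contributes nothing, and summing over $v\in\Omega_m$ gives $\chi_0(\sigma\bmod H_m)=0$, as required. I expect the Frobenius matching step to be the main technical hurdle, requiring care in tracking how $K_m^0$ sits simultaneously inside~$K_m$ (via the chosen $L_m^0$\nobreakdash-linear embedding) and inside~$E_m$.
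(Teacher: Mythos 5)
Your proof is correct and follows essentially the same route as the paper's: apply the orthogonality of $(z_v''')_{v\in\Omega}$ to $B$ with $\beta=\Cores_{L_m/k}(z_m,\chi)$, observe that only places of $\Omega_m$ contribute (with $v_m$ contributing trivially because it splits completely in $K_m$), compute $\inv_v\beta(z_v''')=n_v\,\chi(\Frob_u)$ at the unique non-unit place $u$, and match Frobenius elements under the map to $G_m/H_m=\Gal(K_m^0/L_m^0)$ --- the paper merely runs the duality in the other order, first concluding $\sum n_v\Frob_u=0$ in $\Gal(K_m/L_m)$ by Pontrjagin duality and then pushing the relation down, whereas you fix a character of the quotient and pull it back. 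One inessential slip: the restriction map $\Gal(K_m/L_m)\to\Gal(K_m^0/L_m^0)$ induced by the chosen $L_m^0$\nobreakdash-linear embedding need not be surjective (its image is $\Gal(K_m^0/K_m^0\cap L_m)$), but your argument never uses surjectivity, since the pullback of $\chi_0$ lies in $C_m$ in any case and the Frobenius matching is all that is needed.
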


\begin{proof}
Let us fix $m \in M$.  We recall that $(z_v''')_{v \in \Omega} \in W_c(\A_k)^B$
and that $\sum_{v \in S} \inv_v \beta(z_v''')=0$ for all $\beta\in B$,
so that
$\sum_{v \in \Omega\setminus S} \inv_v \beta(z_v''')=0$
for all $\beta \in B$.
Let us apply this equality to the class $\beta=\Cores_{L_m/k}(z_m,\chi)$ for $\chi \in C_m$.
For $v \in \Omega \setminus (S \cup \Omega_m)$,
we have $\inv_v \beta(z_v''')=0$
since~$\chi$ is unramified above~$v$ and~$z_{m,u}'''$ is a unit at all places~$u$ of~$L_m$ above~$v$
(see Proposition~\ref{prop:localintegralpoint}).
For $v \in \Omega_m$, we have
$\inv_v \beta(z_v''')=\inv_u (z_{m,u}''',\chi)$ where~$u$ is the unique place of~$L_m$ dividing~$v$
such that~$z_{m,u}'''$ fails to be a unit;
moreover,
the normalised valuation of $z_{m,u}'''$ is equal to~$n_v$, so that
$\inv_u (z_{m,u}''',\chi)=n_v\chi(\Frob_u)$, where we view~$\chi$ as a homomorphism $\Gal(K_m/L_m)\to \Q/\Z$
and $\Frob_u \in \Gal(K_m/L_m)$ denotes the Frobenius automorphism at~$u$.
Finally, we recall that $\Frob_u=0$ if $v=v_m$, since~$v_m$ splits completely in~$K_m$.
All in all, we conclude that
\begin{align}
\sum_{v \in \Omega_m \setminus \{v_m\}} n_v \chi(\Frob_u)=0
\end{align}
for all $\chi \in C_m=\Hom(\Gal(K_m/L_m),\Q/\Z)$, hence
$\sum_{v \in \Omega_m \setminus \{v_m\}} n_v \Frob_u=0$ in $\Gal(K_m/L_m)$
by Pontrjagin duality.
Applying the natural map $\Gal(K_m/L_m) \to \Gal(K_m^0/L_m^0)$ to this  equality now
yields the statement of the lemma,
as the image of $\Frob_u$ by this map
coincides with the image of $\Frob_{\xi_v}$ by the quotient map $G_m \to G_m/H_m=\Gal(K_m^0/L_m^0)$.
\end{proof}

For $m \in M$ and $\alpha \in A$, let $\partial_{\alpha,m} \in H^1(G_m,\Q/\Z) \subset H^1(k(Y_m),\Q/\Z)$
 denote the residue
of~$\alpha$ at the generic point of~$Y_m$.
Viewing $\partial_{\alpha,m}$ as a homomorphism $G_m \to \Q/\Z$,
 we have
\begin{align}
\inv_v \alpha(x'''_v) = n_v\partial_{\alpha,m}(\Frob_{\xi_v})
\end{align}
for any $m \in M$ and any $v \in \Omega_m$, since $\alpha \in \Br(f^{-1}(\sU))$
(see \cite[Corollaire~2.4.3]{harariduke}).  Moreover,
we have
$\inv_v \alpha(x_v''')=0$ for all $v \in \Omega \setminus (S \cup \bigcup_{m \in M} \Omega_m)$,
as
$\alpha \in \Br(f^{-1}(\sU))$ and $x_v'''$ is an $\sOint_v$\nobreakdash-point of~$f^{-1}(\sU)$
while $\Br(\sOint_v)=0$.
In view of these remarks and of~\eqref{eq:suminvs},
we deduce that in order for $(x'''_v)_{v \in \Omega}$ to be orthogonal to~$A$ with respect to
the Brauer--Manin pairing,
it suffices that the equality
\begin{align}
\label{eq:sumfrobvanishes}
\sum_{v \in \Omega_m} n_v \Frob_{\xi_v}=0
\end{align}
hold in~$G_m$ for all $m \in M$.
Now, as $n_{v_m}=1$ and as $\Frob_{\xi_{v_m}}=\sigma_m$ for every $m \in M$,
we can force~\eqref{eq:sumfrobvanishes} to hold by
choosing
$\sigma_m= - \sum_{v \in \Omega_m \setminus \{v_m\}} n_v \Frob_{\xi_v}$
for all $m \in M$
in Step~\ref{step:4},
thanks to Lemma~\ref{lem:isinhm}.
This concludes the proof
of Theorem~\ref{th:fibration}.
\end{proof}

\subsection{Stability of Conjecture~\texorpdfstring{$\cFplus$}{F₊}}

We first use Theorem~\ref{th:fibration} to show,
in Corollary~\ref{cor:fplusimpliesfplus} below,
that when the~$b_m$ are allowed to vary,
Conjecture~$\cFplus$ is stable under the operation of replacing the fields~$L_m$
and~$K_m$ by subfields.

\begin{cor}
\label{cor:fplusimpliesfplus}
Let~$k$ be a number field
and $\pi_+^0 \in \sP_+$ be a parameter for Conjecture~$\cFplus$.
Write
$\pi_+^0=(M, (L_m^0)_{m \in M}, (b_m^0)_{m \in M}, (K_m^0)_{m\in M})$.
For every $m \in M$, suppose given a finite extension~$L_m$ of~$L^0_m$ and a finite abelian
extension~$K_m$ of~$L_m$ in which~$K^0_m$ can be embedded $L^0_m$\nobreakdash-linearly.
Assume that
for all choices of $(b_m)_{m \in M} \in \prod_{m \in M}k(m)^*$,
 Conjecture~$\cFplus$ holds for
 $\pi_+=(M, (L_m)_{m \in M}, (b_m)_{m \in M}, (K_m)_{m\in M}) \in \sP_+$.
Then Conjecture~$\cFplus$ holds
for $\pi^0_+$.
\end{cor}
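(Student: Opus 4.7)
The plan is to apply Theorem~\ref{th:fibration} directly to the morphism $p^0:W^0\to\P^1_k$ associated with the parameter~$\pi_+^0$, where $W^0$ denotes the variety in question. This variety is smooth and irreducible by Proposition~\ref{prop:fibresofp}, we have $\infty\in U=\P^1_k\setminus M$ by construction, and I would take the Hilbert subset to be all of~$\P^1_k$. As the finite subgroup $A\subseteq\Br(p^{0,-1}(U))$ required by the theorem, I would use the subgroup~$B^0$ appearing in the statement of Conjecture~$\cFplus$ for~$\pi_+^0$, namely the group generated by the classes $\Cores_{L_m^0/k}(z_m,\chi)$ for $m\in M$ and $\chi\in C_m^0=\Hom(\Gal(K_m^0/L_m^0),\Q/\Z)$.

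For each $m\in M$ I would take $Y_m=W_m^0$---integral of multiplicity one by Proposition~\ref{prop:fibresofp}---and set $E_m=k(Y_m)\cdot K_m^0$ inside a fixed algebraic closure of~$k(Y_m)$. Since $L_m^0$ is algebraically closed in~$k(Y_m)$ (again Proposition~\ref{prop:fibresofp}) and the characteristic is~$0$, the extension $k(Y_m)/L_m^0$ is regular. Consequently $E_m=k(Y_m)\otimes_{L_m^0}K_m^0$ is a field, the extension $E_m/k(Y_m)$ is finite abelian, and the algebraic closure of $k(m)$ in~$E_m$ is exactly~$K_m^0$. Proposition~\ref{prop:residueofcores}~(2) then yields the required residue condition: for each $\chi\in C_m^0$, the image of $\chi$ in $H^1(K_m^0,\Q/\Z)$, and a fortiori in $H^1(E_m,\Q/\Z)$, vanishes, so the residue of $\Cores_{L_m^0/k}(z_m,\chi)$ at the generic point of~$Y_m$ dies in $H^1(E_m,\Q/\Z)$. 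The fields denoted $L_m^0$ and $K_m^0$ in Theorem~\ref{th:fibration} therefore coincide with those in the corollary, and the pair $(L_m,K_m)$ furnished by the corollary fulfils the embeddability requirement by hypothesis. The remaining input of the theorem---that Conjecture~$\cFplus$ hold for $(M,(L_m)_{m\in M},(b_m)_{m\in M},(K_m)_{m\in M})$ for all choices of $(b_m)_{m\in M}$---is exactly the assumption of the corollary.

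Applying Theorem~\ref{th:fibration} then yields the density of $\bigcup_{c\in U(k)}W^0_c(\A_k)^{B^0}$ inside $W^0(\A_k)^{(B^0+p^{0,*}_\eta\Br(\eta))\cap\Br(W^0)}$. To identify this with Conjecture~$\cFplus$ for~$\pi_+^0$, I would invoke Proposition~\ref{prop:brauerw}, which gives $\Br(W^0)=\Br_0(W^0)$; global reciprocity then forces the target Brauer--Manin set to be the whole of~$W^0(\A_k)$, and the resulting statement is precisely Conjecture~$\cFplus$ for~$\pi_+^0$. I do not expect any serious obstacle: the only delicate point is the verification that $E_m/k(Y_m)$ is abelian and that the algebraic closure of $k(m)$ in $E_m$ equals~$K_m^0$, both of which follow directly from the regularity of $k(Y_m)/L_m^0$.
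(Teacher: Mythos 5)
Your proposal is correct and is essentially the paper's own argument: the paper likewise applies Theorem~\ref{th:fibration} to $X=W$, $f=p$, $A=B$, $H=\P^1_k$, $Y_m=W_m$ and $E_m=k(W_m)\otimes_{L_m^0}K_m^0$, citing Propositions~\ref{prop:fibresofp}, \ref{prop:residueofcores} and~\ref{prop:brauerw} for exactly the verifications you spell out. The extra details you supply (regularity of $k(W_m)/L_m^0$, identification of the algebraic closures, and the reduction of the target Brauer--Manin set to $W(\A_k)$ via global reciprocity) are the routine checks left implicit in the paper.
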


\begin{proof}
Set $U = \P^1_k \setminus M$.
Let $p:W \to \P^1_k$ and $B \subset \Br(p^{-1}(U))$
denote the morphism and the subgroup associated
in~\textsection\ref{subsubsec:parametersdef}
and~\textsection\ref{sec:statementFplus}
with the parameter~$\pi_+^0$.
Taking Proposition~\ref{prop:fibresofp}
and
Proposition~\ref{prop:residueofcores}
into account,
we may apply
 Theorem~\ref{th:fibration} to
 $X=W$, $f=p$, $A=B$, $H=\P^1_k$,
$Y_m=W_m$ and
 $E_m=k(W_m) \otimes_{L_m^0} K_m^0$.
In view of
Proposition~\ref{prop:brauerw},
the desired conclusion follows.
\end{proof}

\subsection{Specialisation of the Brauer group}

To obtain concrete corollaries for the fibration method, we shall apply Theorem~\ref{th:fibration}
in~\textsection\ref{subsec:maincorollary} in conjunction with
the following specialisation result for the Brauer group, which goes back to the work of
Harari \cite{harariduke, hararifleches}.
The version we state here simultaneously generalises
 \cite[Proposition~4.1]{hwfibration} (in which~$f$ was assumed to be proper)
and \cite[Théorème~2.7]{cthararifamilles} (in which the generic fibre of~$f$ was assumed
to be a homogeneous space of a connected, semi-simple, simply connected linear algebraic group, with connected
and reductive geometric
stabilisers).

\begin{prop}
\label{prop:specialisation}
Let~$C$ be a smooth, irreducible curve over a number field~$k$.
Let~$X$ be a smooth, separated, irreducible variety over~$k$, endowed with a morphism
$f:X \to C$ whose geometric generic fibre~$X_\etabar$ is irreducible.
Assume that $H^1_\et(X_\etabar,\Q/\Z)=0$ and that $\Br(X_{\etabar})$ is finite.
Let $C^0 \subseteq C$ be a dense open subset, let $X^0=f^{-1}(C^0)$ and let
$B \subseteq \Br(X^0)$ be a subgroup.
If the natural map $B \to \Br(X_\eta)/f_\eta^*\mkern.5mu\Br(\eta)$ is surjective,
there exists
a Hilbert subset $H \subseteq C^0$ such that the natural map $B \to \Br(X_h)/f_h^*\mkern.5mu\Br(h)$ is surjective
for all $h \in H$.
\end{prop}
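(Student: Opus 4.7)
My plan is to adapt the specialisation technique of Harari~\cite{harariduke, hararifleches} developed in \cite[Proposition~4.1]{hwfibration} and \cite[Théorème~2.7]{cthararifamilles}, removing the properness of~$f$ required in the former reference and the homogeneous space hypothesis of the latter. I would proceed in three steps.

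First, I would reduce to the case where $B$ is a finite subgroup of $\Br(X^0)$. Since $\Br(X_{\etabar})$ is finite, the transcendental quotient $\Br(X_\eta)/\Br_1(X_\eta)$ embeds into $\Br(X_{\etabar})^{\Gal(\etabar/\eta)}$ and is therefore finite. For the algebraic part, the fact that $X_{\etabar}$ is smooth and irreducible lets Rosenlicht's theorem identify $\etabar[X_\eta]^*/\etabar^*$ with a finitely generated free abelian group, and the vanishing of $H^1_{\et}(X_{\etabar},\Q/\Z)$ controls $\Pic(X_{\etabar})_{\mathrm{tors}}$. Feeding these inputs into the Hochschild--Serre spectral sequence for $X_\eta \to \Spec(\eta)$ expresses $\Br_1(X_\eta)/f_\eta^*\Br(\eta)$ in terms of finitely many Galois cohomology groups attached to the finite quotient of $\Gal(\etabar/\eta)$ through which all relevant actions factor. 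One can then pick a finite subgroup $B_0\subseteq B$ whose image still generates $\Br(X_\eta)/f_\eta^*\Br(\eta)$.

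Next, by standard spreading-out arguments for étale cohomology, I would shrink $C^0$ to a dense open $C^1$ such that (a)~$B_0\subseteq \Br(f^{-1}(C^1))$, (b)~the sheaves $R^i f_*\mu_n^{\otimes j}$ are lisse on $C^1$ for the finitely many triples $(i,j,n)$ relevant to~$B_0$, and (c)~a connected finite étale Galois cover $\widetilde C\to C^1$ trivialises both the monodromy of these sheaves and the finite Galois action on $\Pic(X_{\etabar})_{\mathrm{tors}}$ and on $\etabar[X_\eta]^*/\etabar^*$. I would then define $H\subseteq C^0$ as the Hilbert subset formed by those $h$ such that $\widetilde C\times_{C^1}\Spec(k(h))$ is connected with full Galois group $\Gal(\widetilde C/C^1)$. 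For $h\in H$, a diagram chase comparing the Hochschild--Serre five-term exact sequences for $X_\eta/\eta$ and $X_h/k(h)$, combined with smooth base change in torsion coefficients and the Hilbert-irreducibility alignment of Galois modules at~$\eta$ and at~$h$, will show that the specialisation map $\Br(f^{-1}(C^1))\to \Br(X_h)$ induces a surjection $\Br(X_\eta)/f_\eta^*\Br(\eta)\to\Br(X_h)/f_h^*\Br(h)$. Combined with the hypothesis at~$\eta$, this produces the required surjectivity of $B\to \Br(X_h)/f_h^*\Br(h)$ for every $h\in H$.

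The main obstacle lies in carrying out this last step without properness: smooth base change for $\Gm$-cohomology is not directly available, so one must pass through $\mu_n$-coefficients via the Kummer sequence. Concretely, after further shrinking $C^1$, I would compactify $X^0\to C^1$ to a proper morphism $\overline{X^0}\to C^1$, apply proper-smooth base change with $\mu_n$\nobreakdash-coefficients to the compactification, and then descend back from $\Br(\overline{X^0}_h)$ to $\Br(X_h)$ using purity for the Brauer group \cite[Corollaire~6.2]{grothbr3}. Careful bookkeeping is then needed to ensure that the comparison of the Hochschild--Serre sequences at~$\eta$ and at~$h$ remains consistent on both the algebraic and transcendental pieces, and that residues along the boundary divisors introduced by the compactification are controlled by the monodromy cover $\widetilde C\to C^1$.
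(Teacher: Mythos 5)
Your overall skeleton (reduce to a finite subgroup of $B$ using finiteness of $\Br(X_\eta)/f_\eta^*\Br(\eta)$, spread out, trivialise the relevant monodromy by a finite étale Galois cover of a shrunken base, take the Hilbert subset of points where that cover stays irreducible, and compare Hochschild--Serre sequences at~$\eta$ and at~$h$) is exactly the Harari-type argument that the paper also relies on, by importing the proof of the proper case verbatim. The problem is concentrated in the one step that is genuinely new here, namely how to handle the non-properness of~$f$, and there your proposal has a real gap. You compactify $X^0\to C^1$ to a proper morphism, apply smooth--proper base change to the compactification, and then claim to ``descend back from $\Br(\overline{X^0}_h)$ to $\Br(X_h)$ using purity for the Brauer group''. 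Purity in the sense of Grothendieck's Corollaire~6.2 only allows one to remove closed subsets of codimension~$\geq 2$ from a smooth variety; the boundary $\overline{X^0}_h\setminus X_h$ is a divisor, and the inclusion $\Br(\overline{X^0}_h)\subseteq\Br(X_h)$ goes the wrong way for your purposes: $\Br(X_h)$ is in general strictly larger, containing classes ramified along the boundary, and it is precisely $\Br(X_h)$ that the proposition requires $B$ to hit modulo constants. (There is also the preliminary issue that a Nagata compactification is proper but not smooth, so smooth--proper base change does not apply to it as stated; that can be repaired by resolution and generic smoothness, but the descent step cannot.)

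What is actually needed is a statement about the \emph{open} fibration itself: after shrinking~$C$, the sheaves $R^qf_*\mmu_n$ are locally constant with stalk at a geometric point $\bar h$ canonically $H^q_\et(X_{\bar h},\mmu_n)$ (and, for $q=1$, zero). This is how the paper proceeds: by Nagata and Hironaka one arranges, over a dense open of~$C$, a smooth proper $g:X'\to C$ with $X'\setminus X$ a divisor with simple normal crossings relative to~$C$, and then Deligne's generic cohomological properness and local acyclicity (the Appendice to \emph{Th.\ finitude}, together with smoothness of~$f$) give exactly the local constancy and the identification of stalks for the open morphism; from that point the proper-case argument of \cite[Proposition~4.1]{hwfibration} applies unchanged. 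Your closing remark that ``residues along the boundary divisors are controlled by the monodromy cover'' gestures at a hands-on version of this (a relative Gysin/residue comparison along a relative SNC boundary), but as written it is not carried out, and the mechanism you do invoke---codimension-$\geq 2$ purity---cannot supply it.
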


\begin{proof}
We may assume, after shrinking~$C$,  that $C^0=C$.
By the next lemma,
we may assume, after further shrinking~$C$, that
the étale sheaf $R^2f_*\Q/\Z(1)$ is a direct limit of locally constant
sheaves with finite stalks
and
that
 $H^1_\et(X_{\bar h},\Q/\Z(1))=0$ for any geometric point~$\bar h$ of~$C$.
Indeed,
our hypothesis that $H^1_\et(X_\etabar,\Q/\Z)=0$
implies that $H^1_\et(X_\etabar,\mmu_n)=0$ for all $n>0$,
and
the sheaf $R^1f_*\mmu_n$  must vanish if it is locally constant
while its stalk at~$\etabar$ vanishes.
From this point on,
the proof given in
 \cite[Proposition~4.1]{hwfibration}
for the special case in which~$f$ is proper works
verbatim (ignoring its first sentence).
\end{proof}

\begin{lem}
For any $q\geq 0$, there exists a dense open subset $U\subseteq C$
such that for every $n>0$,
the restriction of the étale sheaf
$R^qf_*\mmu_n$ to~$U$ is
 locally constant and its stalk
at any geometric point~$\bar h$ of~$U$
is naturally isomorphic to $H^q_\et(X_{\bar h},\mmu_n)$.
\end{lem}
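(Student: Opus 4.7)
The plan is to reduce to the smooth and proper case via a compactification, then to exploit absolute cohomological purity together with smooth and proper base change, all in a manner that is uniform in~$n$.

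First, I would factor~$f$ by Nagata's theorem as $X \xrightarrow{j} \overline X \xrightarrow{\bar f} C$, with~$j$ an open immersion and $\bar f$ proper. Using resolution of singularities in characteristic zero applied to the pair $(\overline X,\overline X\setminus X)$, we may arrange that~$\overline X$ is smooth and that $D=\overline X\setminus X$ is a strict normal crossings divisor with smooth components $D_1,\dots,D_r$. Applying generic flatness and smoothness to~$\bar f$ and to all the scheme-theoretic intersections $D_I=\bigcap_{i\in I}D_i$, we may shrink~$C$ to a dense open subset~$U$ over which $\bar f$ and every $\bar f|_{D_I}$ is smooth and proper with smooth fibres. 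It suffices to prove the conclusion of the lemma over this~$U$.

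Next, I would invoke absolute cohomological purity (due to Grothendieck in this smooth characteristic~$0$ setting) to obtain an explicit description of the sheaves $R^b j_* \mmu_n$ on $\overline X$. Stratum by stratum, these sheaves are locally constant constructible with stalks built from Tate twists $\mmu_n^{\otimes(1-b)}$ on the smooth closed subschemes $D_I$ of codimension~$b$. The key point is that this description is uniform in~$n$: the same stratification of the boundary $D$ governs $R^b j_*\mmu_n$ for every~$n$, and on each stratum the sheaf is built functorially from $\mmu_n^{\otimes j}$. In particular, for every geometric point $\bar h$ of $U$, the formation of $R^b j_*\mmu_n$ commutes with the base change $\overline X_{\bar h} \to \overline X$.

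I would then apply smooth and proper base change to $\bar f$ and to its restrictions $\bar f|_{D_I}$. Applied to the constant sheaves $\mmu_n^{\otimes j}$, it yields that each $R^p(\bar f|_{D_I})_* \mmu_n^{\otimes j}$ is locally constant constructible on~$U$, with stalk $H^p_\et(D_{I,\bar h},\mmu_n^{\otimes j})$ at every geometric point~$\bar h$. Combining this with the purity description of~$R^b j_*\mmu_n$, we conclude that $R^p\bar f_* R^b j_*\mmu_n$ is locally constant on~$U$, with stalk $H^p_\et(\overline X_{\bar h},R^b j_*\mmu_n|_{\overline X_{\bar h}})$.

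Finally, the Leray spectral sequence
\[
E_2^{p,b}=R^p\bar f_* R^b j_*\mmu_n \;\Longrightarrow\; R^{p+b} f_*\mmu_n
\]
on~$U$, together with the analogous spectral sequence on each fibre $\overline X_{\bar h}$ for the open immersion $X_{\bar h}\hookrightarrow \overline X_{\bar h}$, implies that $R^q f_*\mmu_n$ is locally constant on~$U$ with stalk canonically isomorphic to $H^q_\et(X_{\bar h},\mmu_n)$. The main obstacle is the uniformity in~$n$: for any fixed~$n$, generic local constancy of higher direct images of a constructible sheaf is classical, but nothing in such a statement alone produces a single open~$U$ good for every~$n$. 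That uniformity is exactly what the purity description of $R^b j_*\mmu_n$ and smooth-proper base change applied to the constant coefficients $\mmu_n^{\otimes j}$ provide.
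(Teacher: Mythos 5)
Your argument is correct, and its first half is the same as the paper's: both shrink $C$ and use Nagata plus Hironaka to place $f$ in a diagram $X\hookrightarrow \overline X\to C$ with $\overline X\to C$ smooth and proper and the boundary a normal crossings divisor relative to the shrunken base. You diverge in the second half. The paper simply quotes Deligne's machinery from the appendix to \emph{Th\'eor\`emes de finitude}: a compactifiable morphism whose boundary is a relative normal crossings divisor is cohomologically proper for $\mmu_n$ (giving the stalk identification), and a smooth morphism is locally acyclic, whence $R^qf_*\mmu_n$ is locally constant. You instead unwind this by hand: absolute purity (in characteristic $0$ this is Artin's theorem in SGA~4 rather than Grothendieck's) identifies $R^bj_*\mmu_n$ with $\bigoplus_{|I|=b}(i_{D_I})_*\mmu_n^{\otimes(1-b)}$, compatibly with restriction to fibres because the $D_I$ are smooth over the shrunken base; smooth--proper base change for the $\bar f|_{D_I}$ then makes the Leray $E_2$\nobreakdash-terms locally constant with the expected stalks, and the spectral sequence transfers this to $R^qf_*\mmu_n$. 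This yields a more self-contained proof, at the cost of two compatibilities you should record explicitly: the base-change maps must be compatible with the Leray spectral sequences, and kernels, cokernels and extensions of locally constant constructible sheaves on the connected base are again locally constant, so that local constancy passes from $E_2$ to $E_\infty$ and to the filtered abutment. These are standard, so there is no gap, and the uniformity in $n$ is secured exactly as in the paper, by choosing $U$ once from the geometry of the relative normal crossings compactification.
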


\begin{proof}
After shrinking~$C$, we may assume,
thanks to Nagata~\cite{nagataimbedding,delignenagata,conradnagata}
and to Hironaka~\cite[\textsection3, Main theorem~I, \textsection5, Corollary~3]{hironaka},
that there exist an open immersion $j:X \to X'$ and a smooth and proper morphism $g:X' \to C$
such that $f = g \circ j$ and that $X' \setminus X$ is a divisor with simple normal crossings
on~$X'$
relatively to~$C$ (in the sense of \cite[Exp.~XIII, \textsection2.1]{sga1}).
The morphism~$f$
is then cohomologically proper with respect to~$\mmu_n$ for every $n>0$ (see
\cite[Appendice, \textsection1.3.1, \textsection1.3.3]{delignefinitude}),
which already ensures the second part of the assertion.
On the other hand, it is a locally acyclic morphism, since it is smooth (see \cite[Theorem~4.15]{milneet}).
Hence the étale sheaf
$R^qf_*\mmu_n$ is
 locally constant for every $n>0$
(see
\cite[Appendice, \textsection2.4]{delignefinitude} and
\cite[Proposition~2.11]{artinfaisceauxconstructibles};
or see the proof of \cite[Corollary~4.2]{milneet}).
\end{proof}

\subsection{Main corollary}
\label{subsec:maincorollary}

It is through Corollary~\ref{cor:concretecorollary} below,
which depends on Proposition~\ref{prop:specialisation},
that we shall apply Theorem~\ref{th:fibration}
in~\textsection\ref{sec:newapplications}.

\begin{cor}
\label{cor:concretecorollary}
Let~$X$ be a smooth, separated, irreducible variety over a number field~$k$
and $f:X \to \P^1_k$ be a morphism with irreducible geometric generic fibre~$X_\etabar$.
Assume
that
\begin{enumerate}[label={\upshape(\roman*)}]
\setlength\itemsep{.5ex}
\item
the group $H^1_\et(X_\etabar,\Q/\Z)$ vanishes and the group $\Br(X_{\etabar})$ is finite,
\item
 every fibre of~$f$ contains an irreducible component of multiplicity~$1$,
\item
the geometric fibre $X_{\bar\infty}=f^{-1}(\infty) \otimes_k \bar k$ is smooth and irreducible, and the group $H^1_\et(X_{\bar\infty},\Q/\Z)$ vanishes.
\end{enumerate}
For each $m \in \A^1_k$,
choose an irreducible component of multiplicity~$1$ of $f^{-1}(m)$
and let~$L_m$ denote the algebraic closure
of~$k(m)$ in its function field.
Finally, assume that
for all finite subsets $M \subset \A^1_k$
and for all choices of $(b_m)_{m \in M}$
and of $(K_m)_{m\in M}$,
Conjecture~$\cFplus$ holds for
the parameter $\pi_+=(M, (L_m)_{m \in M}, (b_m)_{m \in M}, (K_m)_{m\in M})$.
Then,
for any Hilbert subset $H \subset \P^1_k$,
 the subset
\begin{equation}
\bigcup_{c \in U(k)\cap H} X_c(\A_k)^{\Br(X_c)}
\end{equation}
is dense in $X(\A_k)^{\Br(X)}$.
In particular, if $X_c(k)$ is dense in $X_c(\A_k)^{\Br(X_c)}$
for all rational points~$c$ of a Hilbert subset of~$\P^1_k$,
then $X(k)$ is dense in $X(\A_k)^{\Br(X)}$.
\end{cor}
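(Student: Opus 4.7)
The plan is to deduce the corollary from Theorem~\ref{th:fibration} by using Proposition~\ref{prop:specialisation} to pass from orthogonality to a finite subgroup $A$ of $\Br(f^{-1}(U))$ on a chosen fibre to orthogonality to the full Brauer group of that fibre. The fibration theorem will produce the fibre and the new adelic point, and the specialisation result will guarantee that $A$ already sees all Brauer classes on the fibre modulo constants.

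First I would fix $(x_v) \in X(\A_k)^{\Br(X)}$ and the Hilbert subset $H \subset \P^1_k$, let $M \subset \A^1_k$ be the finite set of closed points above which $f$ has non-split fibre, and set $U := \P^1_k \setminus M$, which contains $\infty$ by hypothesis~(iii) and over which $f$ is split. Hypothesis~(i) should imply that $\Br(X_\eta)/f_\eta^*\Br(\eta)$ is finite: $\Br(X_\eta)/\Br_1(X_\eta)$ injects into the finite group $\Br(X_\etabar)$, while $H^1_\et(X_\etabar,\Q/\Z)=0$ forces $\Pic(X_\etabar)$ to be torsion-free (via Kummer), and Hochschild--Serre then bounds $\Br_1(X_\eta)/f_\eta^*\Br(\eta)$ by $H^1(\Gal(\bar\eta/\eta),\Pic(X_\etabar))$, which is finite. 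I would lift representatives of this finite quotient to $\Br(f^{-1}(U))$, after shrinking $U$ (i.e.\ enlarging $M$ while keeping it finite) if needed to accommodate common denominators, and let $A \subset \Br(f^{-1}(U))$ be the finite subgroup they generate. Then I would apply Proposition~\ref{prop:specialisation} to this $A$, whose hypotheses are exactly~(i), to obtain a Hilbert subset $H' \subseteq U$ such that $A \to \Br(X_h)/f_h^*\Br(h)$ is surjective for every $h \in H'$.

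Next, for each $m \in M$, I would take $Y_m$ to be the multiplicity-one irreducible component of $X_m$ selected in the statement (from hypothesis~(ii)), so that the corollary's $L_m$ matches $L_m^0$ in the notation of Theorem~\ref{th:fibration}. The residues at the generic point of $Y_m$ of the finitely many classes in $A$ generate a finite subgroup of $H^1(k(Y_m),\Q/\Z)$ and cut out a finite abelian extension $E_m/k(Y_m)$. Setting $K_m^0$ equal to the algebraic closure of $k(m)$ in $E_m$, the extension $K_m^0/L_m$ is abelian, and I would simply take $K_m := K_m^0$. The corollary's blanket assumption of Conjecture~$\cFplus$ for all $(b_m)$ with these fixed $(L_m)$ and $(K_m)$ then feeds directly into Theorem~\ref{th:fibration}, which I would apply to $(x_v)$ with Hilbert subset $H \cap H'$; since $X(\A_k)^{\Br(X)} \subseteq X(\A_k)^{(A+f_\eta^*\Br(\eta)) \cap \Br(X)}$, the input is admissible and the theorem returns $c \in U(k) \cap H \cap H'$ together with $(x_v''') \in X_c(\A_k)^A$ arbitrarily close to $(x_v)$.

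Finally, I would upgrade orthogonality to $A$ into orthogonality to all of $\Br(X_c)$: given $\alpha \in \Br(X_c)$, the surjectivity of $A \to \Br(X_c)/f_c^*\Br(c)$ at $c \in H'$, combined with $f_c^*\Br(c) = \Br_0(X_c)$ (because $c$ is rational), lets me write $\alpha = a|_{X_c} + \alpha_0$ with $a \in A$ and $\alpha_0 \in \Br_0(X_c)$; then $\sum_v \inv_v \alpha(x_v''')$ splits into two pieces that vanish respectively by $(x_v''') \in X_c(\A_k)^A$ and by global reciprocity applied to the preimage of $\alpha_0$ in $\Br(k)$. This shows $(x_v''') \in X_c(\A_k)^{\Br(X_c)}$, yielding the main density claim; the ``in particular'' clause then follows by composing this density with the hypothesised density of $X_c(k)$ in $X_c(\A_k)^{\Br(X_c)}$ for $c$ ranging over rational points of the Hilbert subset in question. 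The principal obstacle I anticipate is carrying out the preparatory second paragraph rigorously: one must verify both that $\Br(X_\eta)/f_\eta^*\Br(\eta)$ is genuinely finite under~(i) and that the finite $A$ spreads to a single open $f^{-1}(U)$ compatibly with a uniform choice of $(K_m)$ suitable for Conjecture~$\cFplus$. Once those finiteness and spreading issues are handled, Theorem~\ref{th:fibration}, Proposition~\ref{prop:specialisation}, and global reciprocity slot together cleanly.
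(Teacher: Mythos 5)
Your overall strategy is the paper's own (construct a finite $A\subset\Br(f^{-1}(U))$ surjecting onto $\Br(X_\eta)/f_\eta^*\Br(\eta)$, invoke Proposition~\ref{prop:specialisation} to get a Hilbert subset $H_0$, apply Theorem~\ref{th:fibration} to $H\cap H_0$, and upgrade $A$-orthogonality to $\Br(X_c)$-orthogonality via reciprocity), but there is a genuine gap in your construction of $A$: you never arrange for your representatives to be unramified along $X_\infty=f^{-1}(\infty)$. A class $\beta\in\Br(X_\eta)$ spreads out to $\Br(f^{-1}(V))$ only for opens $V$ avoiding the fibres along which $\beta$ is ramified; if $\beta$ has a nontrivial residue at the generic point of $X_\infty$, then (by purity on the smooth variety $X$) it lies in $\Br(f^{-1}(V))$ for \emph{no} open $V$ containing $\infty$. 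So your ``shrink $U$, i.e.\ enlarge $M$'' step may force $\infty\in M$, and then Theorem~\ref{th:fibration} is not applicable (it requires $\infty\in U$), nor is Conjecture~$\cFplus$ (its parameter $M$ must be a closed subset of $\A^1_k$, and the corollary's hypotheses only provide the fields $L_m$ for $m\in\A^1_k$). This is exactly where the second half of hypothesis~(iii) is needed and where your proof uses it only for splitness: since $X_{\bar\infty}$ is irreducible and $H^1_\et(X_{\bar\infty},\Q/\Z)=0$, the residue of any $\beta\in\Br(X_\eta)$ at the generic point of $X_\infty$ is of the form $f_\eta^*\chi$ with $\chi\in H^1(k,\Q/\Z)$; replacing $\beta$ by $\beta+f_\eta^*(t,\chi)$ --- legitimate, since you only need representatives modulo $f_\eta^*\Br(\eta)$, and the residue of $f_\eta^*(t,\chi)$ along $X_\infty$ is $-f_\eta^*\chi$ by \cite[Proposition~1.1.1]{ctsd94} --- yields representatives unramified along $f^{-1}(\infty)$, and only then can one choose $U\ni\infty$ with $A\subset\Br(f^{-1}(U))$. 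Without this correction the argument does not go through.

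Two smaller remarks. First, your finiteness argument for $\Br(X_\eta)/f_\eta^*\Br(\eta)$ via Hochschild--Serre also requires knowing that all invertible functions on $X_\etabar$ are constant (otherwise the comparison of $\Br_1(X_\eta)/f_\eta^*\Br(\eta)$ with $H^1(\eta,\Pic(X_\etabar))$ is not immediate, and the finiteness of the latter needs $\Pic(X_\etabar)$ finitely generated); the paper deduces constancy from the divisibility of $\bar k[X_\etabar]^*$, itself a consequence of $H^1_\et(X_\etabar,\Q/\Z)=0$, together with Rosenlicht's theorem. This is a fixable omission rather than an error. Second, the remainder of your argument --- taking $Y_m$ to be the chosen multiplicity-one component, $E_m$ the abelian extension cut out by the residues of $A$, $K_m=K_m^0$, and the final step identifying $X_c(\A_k)^A$ with $X_c(\A_k)^{\Br(X_c)}$ for $c\in H_0\cap\P^1(k)$ using surjectivity of $A\to\Br(X_c)/f_c^*\Br(k)$ and global reciprocity --- is correct and matches what the paper leaves implicit.
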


\begin{proof}
By the Kummer exact sequence, the vanishing of $H^1_\et(X_{\etabar},\Q/\Z)$,
which amounts to the vanishing of
 $H^1_\et(X_{\etabar},\Z/n\Z)$ for all $n>0$,
implies that the group $\Pic(X_{\etabar})$ is torsion-free and that
the group of invertible functions on~$X_\etabar$
is divisible
(see \cite[Proposition~4.11]{milneet}).
As the group of invertible functions on~$X_\etabar$ modulo
the subgroup
of constant invertible functions
is finitely generated (see \cite[Lemma on p.~28]{rosenlicht}),
we deduce
that every invertible function on~$X_\etabar$
is constant.
From these  facts and from the finiteness
of $\Br(X_{\etabar})$,
it follows, by the Hochschild--Serre spectral sequence,
that the quotient $\Br(X_\eta)/f_\eta^*\Br(\eta)$ is finite,
where $f_\eta:X_\eta\to \eta$ denotes the generic fibre of~$f$.

For $\beta \in \Br(X_\eta)$,
our hypothesis~(iii) implies
 that the residue of~$\beta$ at the generic point of $X_\infty=f^{-1}(\infty)$ can be written
as $f_\eta^* \chi$ for some $\chi \in H^1(k,\Q/\Z)$.
Let $\delta=(t,\chi) \in \Br(\eta)$.  By \cite[Proposition~1.1.1]{ctsd94},
the residue of $f_\eta^*\delta$ at the generic point of~$X_\infty$ is equal to $-f_\eta^*\chi$.
We have thus shown that
for any $\beta \in \Br(X_\eta)$, there exists $\delta \in \Br(\eta)$
such that $\beta + f_\eta^*\delta$ is unramified 
along $f^{-1}(\infty)$.
As
 the quotient $\Br(X_\eta)/f_\eta^*\Br(\eta)$ is finite,
we can therefore
choose a
 finite subgroup $A \subset \Br(X_\eta)$
 that surjects
onto $\Br(X_\eta)/f_\eta^*\Br(\eta)$
and whose elements are unramified along~$f^{-1}(\infty)$.

Let $U \subset \P^1_k$ be a dense open subset containing~$\infty$
such that $A \subset \Br(f^{-1}(U))$.
According to Proposition~\ref{prop:specialisation},
there exists a Hilbert subset
 $H_0 \subseteq U$
such that the natural map $A \to \Br(X_c)/f_c^*\Br(k)$ is surjective
for all $c \in H_0 \cap \P^1(k)$.
To conclude the proof of Corollary~\ref{cor:concretecorollary},
we now apply Theorem~\ref{th:fibration} to the Hilbert subset $H \cap H_0$.
\end{proof}

\begin{rmks}
\label{rk:onconcretecorollary}
(i)
When assumption~(i) of Corollary~\ref{cor:concretecorollary} is satisfied,
assumption~(iii)
can always be made to hold by a change of coordinates on~$\P^1_k$.

(ii)
If~$f$ is proper and~$X_\etabar$ is rationally connected,
 assumptions~(i) and~(ii) of
Corollary~\ref{cor:concretecorollary}
are satisfied
(see \cite[Corollary~4.18(b)]{debarrehigherdim}, \cite[Lemma~1.3~(i)]{ctskogoodreduction}, \cite[Lemma~8.6]{hwfibration}, \cite[Theorem~1.1]{ghs}).
\end{rmks}

\section{Comparing Conjectures \texorpdfstring{$\cF$}{F} and \texorpdfstring{$\cFplus$}{F₊}}
\label{sec:comparing}

This section is devoted to a detailed study of the relationship between Conjecture~$\cF$ and
Conjecture~$\cFplus$.
In order to facilitate their comparison,
we introduce an intermediate statement, Conjecture~$\cFconst$.
We prove that when the parameters are allowed to vary,
the three conjecture are equivalent,
and that in certain special circumstances, Conjecture~$\cF$ coincides with Conjecture~$\cFconst$, while
in certain other special circumstances, Conjecture~$\cFconst$ coincides with Conjecture~$\cFplus$.
One advantage of considering Conjecture~$\cFconst$ is that
under some abelianness assumptions, it is
implied by Schinzel's hypothesis~$(\mathrm{HH}_1)$,
as we will see in~\S\ref{sec:knowncases}. This is what will eventually allow us to deduce
Theorem~\ref{thm:schinzel-intro}.

\subsection{Introduction}

A number field~$k$ is fixed until 
the end of~\textsection\ref{sec:from-const-to-plus}.
Let us first formulate Conjecture~$\cFconst$.
Given
$\pi_+=(M, (L_m)_{m \in M}, (b_m)_{m \in M}, (K_m)_{m\in M}) \in \sP_+$, we
define
\begin{align*}
C_{m,{\const}} = C_m \cap \Imm\mkern-2mu\Big(H^1(k(m),\Q/\Z) \to H^1(L_m,\Q/\Z)\Big)
\end{align*}
for all $m \in M$ and let $B_{\const} \subseteq B$ be
the subgroup
generated by the
classes
$\Cores_{L_m/k}(z_m,\chi)$
for $m \in M$ and $\chi \in C_{m,\const}$.

\begin{conjFconst}
Let $\pi_+\in\sP_+$.
The subset $\displaystyle\bigcup_{c \in U(k)} W_c(\A_k)^{B_\const}$ is dense in $W(\A_k)$.
\end{conjFconst}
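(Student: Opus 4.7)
The plan is to exploit the special form of the classes spanning $B_\const$. By definition of $C_{m,\const}$, any $\chi\in C_{m,\const}$ lifts to some $\chi'\in H^1(k(m),\Q/\Z)$, and the projection formula yields
\begin{align*}
\Cores_{L_m/k}(z_m,\chi) = \Cores_{k(m)/k}\bigl(N_{L_m/k(m)}(z_m),\chi'\bigr).
\end{align*}
On~$W$, the defining equation $N_{L_m/k(m)}(z_m)=b_m(\lambda-a_m\mu)$ shows that this Brauer class is, up to a constant, the pullback along $p$ of the class $\Cores_{k(m)/k}(b_m(\lambda-a_m\mu),\chi')$ defined on $\P^1_k\setminus\{m,\infty\}$. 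In particular, $B_\const$ is \emph{vertical} with respect to $p$: for any $c\in U(k)$, the image of $B_\const$ in $\Br(W_c)$ lies in $\Br_0(W_c)$, so the nonemptiness of $W_c(\A_k)^{B_\const}$ is equivalent to a finite family of Brauer--Manin reciprocity conditions on~$c$ itself, of the form
\begin{align*}
\sum_{v\in\Omega} \inv_v\bigl(\Cores_{k(m)/k}(b_m(c-a_m),\chi')\bigr)=0.
\end{align*}

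With this reformulation, Conjecture~$\cFconst$ reduces to the following problem: given an adelic point $(w_v)_v \in W(\A_k)$, find $c \in U(k)$ approximating $p(w_v)_v$ at a prescribed finite set of places and satisfying the above reciprocity constraints. Once such a~$c$ is produced, an adelic point on~$W_c$ close to $(w_v)$ exists by the smoothness of~$p$ over~$U$ (Proposition~\ref{prop:fibresofp}) together with the implicit function theorem. The natural tool for producing such a~$c$ would be Schinzel's hypothesis~$(\mathrm{HH}_1)$ applied to the irreducible homogeneous polynomials cutting out~$M$, in the spirit of~\cite{ctsansucschinzel,ctsksd98}: one arranges that for each $m \in M$, the element $c-a_m$ is a uniformiser at a single place of $k(m)$ whose Frobenius in the abelian extension of~$k(m)$ cut out by~$\chi'$ is prescribed, so that Chebotarev and global class field theory force the reciprocity conditions to hold.

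The main obstacle is that Schinzel's hypothesis remains open beyond the linear case, so Conjecture~$\cFconst$ is itself genuinely conjectural in general. The key advantage of $\cFconst$ over $\cFplus$, and presumably the reason for isolating it as an intermediate statement, is that the auxiliary characters~$\chi'$ live over $k(m)$ rather than over~$L_m$: once Schinzel's hypothesis is granted, the residual obstructions that must be killed are controlled by \emph{abelian} extensions of $k(m)$, which is exactly what makes the class field theoretic reciprocity step close and what should ultimately feed into Theorem~\ref{thm:schinzel-intro}. Unconditionally, $\cFconst$ should be accessible when $M$ consists of rational points via Dirichlet's theorem combined with Harari's formal lemma (using Proposition~\ref{prop:brauerw}), and more generally when the degrees of the points of~$M$ permit an appeal to the additive-combinatorics results of Green--Tao--Ziegler; these unconditional cases are what one would collect in Section~\ref{sec:knowncases}.
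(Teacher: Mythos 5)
First, note that the statement you are asked about is a \emph{conjecture}: the paper never proves Conjecture~$\cFconst$ unconditionally, and indeed it implies Conjecture~$\cF$, which is open in general. Your proposal correctly treats it as conjectural and sketches a conditional strategy, so the real question is whether your reduction is sound. It is not: the key claim that $B_{\const}$ is ``vertical with respect to $p$'', i.e.\ that its image in $\Br(W_c)$ lies in $\Br_0(W_c)$ for every $c\in U(k)$, is false in general. Writing $\chi=\mathrm{res}_{L_m/k(m)}\chi'$, the projection formula does give $\Cores_{L_m/k}(z_m,\chi)=\Cores_{k(m)/k}\big(b_m(\lambda-a_m\mu),\chi'\big)$, but $\lambda-a_m\mu$ is a function on the punctured affine cone, not on $\P^1_k$: on the fibre $W_c$ over $c=[\lambda_0:\mu_0]\in U(k)$ one has $(\lambda,\mu)=t\,(\lambda_0,\mu_0)$ with $t$ a non-constant invertible function, so the restriction equals $\big(t,\Cores_{k(m)/k}\chi'\big)$ plus a constant class, and this is generally \emph{not} constant. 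Concretely, take $M=\{m\}$ with $k(m)=k=\Q$, $L_m=\Q(\sqrt2\mkern2mu)$, $K_m=L_m(i)$ and $\chi'$ the quadratic character of $\Q(i)/\Q$: then $W_c\simeq R_{L_m/\Q}\Gm$ and the class restricts to $(N_{L_m/\Q}(z),-1)$, which takes the value $0$ at $z=1$ and the nontrivial value at $v=\infty$ for $z=1+\sqrt2$. Hence orthogonality to $B_{\const}$ is a genuine Brauer--Manin condition on the adelic point of the fibre, not ``a finite family of reciprocity conditions on $c$ itself'', and your subsequent plan (choose $c$ by Schinzel/Chebotarev so that reciprocity holds) does not establish the conjecture even granting $(\mathrm{HH}_1)$ as you set it up. Whether $B_{\const}\subseteq p^*\Br(U)$ actually holds is exactly the content of Proposition~\ref{prop:fromFtoFconst}, whose criterion (equality of images under the sum-of-corestrictions map) is not automatic.

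What the paper does instead, in its conditional result Theorem~\ref{th:hhfconst}, is to use the factorisation $p=r\circ q$ with $q:W\to\A^2_k\setminus\{(0,0)\}$ and the inclusion $B_{\const}\subseteq q^*\Br(r^{-1}(U))$: the Schinzel-type argument of \cite{hwfibration} yields approximating adelic points lying in fibres of $q$ over \emph{rational points of the punctured plane}, and on such fibres the classes of $B_{\const}$ are constant, so orthogonality follows from global reciprocity for free. This finer output of the Schinzel argument (points in $q$-fibres, not merely $p$-fibres) is precisely what replaces your faulty verticality claim. Similarly, the unconditional cases you gesture at (rational points of $M$, additive combinatorics) do not follow from Conjecture~$\cF$ plus a formal lemma: passing from $\cF$ to $\cFconst$ (and $\cFplus$) there requires the field-enlargement arguments of Corollary~\ref{cor:comparisonFFconst} together with Corollary~\ref{cor:fplusimpliesfplus}, as in Theorem~\ref{thm:lilian}.
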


We note right away, in the next proposition,
that when the parameters are allowed to vary,
the conjectures we have introduced are all equivalent.
Thus, the existing evidence for
Conjecture~$\cF$ (see \cite[\textsection9.2]{hwfibration},
\cite{browningschindler}) lends support to
Conjecture~$\cFplus$ as well.

\begin{prop}
\label{prop:allequivalent}
The following statements are equivalent:
\begin{enumerate}
\item Conjecture~$\cF$ holds for all $\pi \in \sP$;
\item Conjecture~$\cFconst$ holds for all $\pi_+ \in \sP_+$;
\item Conjecture~$\cFplus$ holds for all $\pi_+ \in \sP_+$.
\end{enumerate}
\end{prop}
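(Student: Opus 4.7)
The plan is to verify the two easy implications $(3)\Rightarrow(2)\Rightarrow(1)$ directly from the definitions, and then to derive the nontrivial implication $(1)\Rightarrow(3)$ as an immediate consequence of Corollary~\ref{cor:fplusimpliesfplus}, which packages exactly the content we need.

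For $(3)\Rightarrow(2)$, I would simply observe that $B_\const \subseteq B$ by construction, so that $W_c(\A_k)^B \subseteq W_c(\A_k)^{B_\const}$ for every $c \in U(k)$. Hence the inclusion of unions
\begin{align*}
\bigcup_{c \in U(k)} W_c(\A_k)^B \subseteq \bigcup_{c \in U(k)} W_c(\A_k)^{B_\const}
\end{align*}
shows that density of the left-hand side in $W(\A_k)$ implies density of the right-hand side.

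For $(2)\Rightarrow(1)$, given $\pi=(M,(L_m)_{m\in M},(b_m)_{m\in M})\in\sP$, I would apply Conjecture~$\cFconst$ to the quadruple $\pi_+=(M,(L_m)_{m\in M},(b_m)_{m\in M},(L_m)_{m\in M})\in\sP_+$ obtained by taking $K_m=L_m$ for every $m\in M$. For such a $\pi_+$, the group $C_m=\Hom(\Gal(K_m/L_m),\Q/\Z)$ vanishes, so $C_{m,\const}=0$ and $B_\const=0$; hence $W_c(\A_k)^{B_\const}=W_c(\A_k)$ and the statement of Conjecture~$\cFconst$ for this $\pi_+$ reduces to the statement of Conjecture~$\cF$ for~$\pi$.

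For $(1)\Rightarrow(3)$, let $\pi_+^0=(M,(L_m^0)_{m\in M},(b_m^0)_{m\in M},(K_m^0)_{m\in M})\in\sP_+$. I would invoke Corollary~\ref{cor:fplusimpliesfplus} with the choice $L_m=K_m=K_m^0$ for each $m\in M$: this is legal because $K_m^0$ is a finite extension of $L_m^0$, the extension $K_m/L_m$ is trivial (hence abelian), and $K_m^0$ embeds $L_m^0$\nobreakdash-linearly in $K_m$ via the identity. The hypothesis of the corollary then becomes the assertion that Conjecture~$\cFplus$ holds for every parameter $\pi_+=(M,(K_m^0)_{m\in M},(b_m)_{m\in M},(K_m^0)_{m\in M})$ with $(b_m)_{m\in M}\in\prod_{m\in M}k(m)^*$. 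But for such $\pi_+$ the equality $K_m=L_m$ forces $C_m=0$ and thus $B=0$, so that $W_c(\A_k)^B=W_c(\A_k)$; Conjecture~$\cFplus$ for this $\pi_+$ therefore coincides with Conjecture~$\cF$ for the triple $(M,(K_m^0)_{m\in M},(b_m)_{m\in M})\in\sP$, which is granted by~(1). The conclusion of Corollary~\ref{cor:fplusimpliesfplus} then yields Conjecture~$\cFplus$ for $\pi_+^0$, as desired.

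The only nontrivial step is the last one, but its difficulty is fully absorbed into Corollary~\ref{cor:fplusimpliesfplus} (and hence into Theorem~\ref{th:fibration}), so no further work is required here beyond verifying that the field-theoretic hypotheses of that corollary are met by the tautological choice $L_m=K_m=K_m^0$.
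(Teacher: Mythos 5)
Your proof is correct and follows essentially the same route as the paper: the implications $(3)\Rightarrow(2)\Rightarrow(1)$ are handled by the obvious inclusions/specialisations, and $(1)\Rightarrow(3)$ is obtained by applying Corollary~\ref{cor:fplusimpliesfplus} with the tautological choice $L_m=K_m=K_m^0$, after observing that Conjecture~$\cFplus$ with $K_m=L_m$ (so $B=0$) is just Conjecture~$\cF$. This matches the paper's argument, which phrases the same reduction via its conditions (i)--(iii).
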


\begin{proof}
The implications (3)$\mkern2mu\Rightarrow\mkern2mu$(2)$\mkern2mu\Rightarrow\mkern2mu$(1) being obvious, we need only
prove that (1)$\mkern2mu\Rightarrow\mkern2mu$(3).
Let us fix $\pi_+=(M, (L_m)_{m \in M}, (b_m)_{m \in M}, (K_m)_{m\in M}) \in \sP_+$.
Among the conditions
\begin{enumerate}[label={\upshape(\roman*)}]
\item
Conjecture~$\cF$ for $(M, (K_m)_{m \in M}, (b'_m)_{m \in M})$
for all choices of $(b'_m)_{m\in M}$,
\item
Conjecture~$\cFplus$ for
 $(M, (K_m)_{m \in M}, (b'_m)_{m \in M},(K_m)_{m \in M})$
for all choices of $(b'_m)_{m\in M}$,
\item
Conjecture~$\cFplus$ for~$\pi_+$,
\end{enumerate}
we have (i)$\mkern2mu\Leftrightarrow\mkern2mu$(ii) by definition, and (ii)$\mkern2mu\Rightarrow\mkern2mu$(iii)
by Corollary~\ref{cor:fplusimpliesfplus}; hence (1)$\mkern2mu\Rightarrow\mkern2mu$(3).
\end{proof}

From here onwards, we shall systematically consider the three conjectures with fixed parameters.
Whenever a quadruple $\pi_+ \in \sP_+$ is given,
it will be understood that the notation
$M$, $(L_m)_{m \in M}$, $(b_m)_{m \in M}$ and $(K_m)_{m\in M}$ refers
to its components, unless specified otherwise,
and
that
$\pi$ denotes the underlying triple $(M, (L_m)_{m \in M}, (b_m)_{m \in M}) \in \sP$.

It is immediate that Conjecture~$\cFplus$ for~$\pi_+$ implies Conjecture~$\cFconst$
for~$\pi_+$ and that the latter in turn implies Conjecture~$\cF$ for~$\pi$.
We shall now attempt to reverse these trivial implications.

\subsection{From Conjecture \texorpdfstring{$\cF$}{F} to Conjecture \texorpdfstring{$\cFconst$}{Fconst}}

\begin{prop}
\label{prop:fromFtoFconst}
Let $\pi_+ \in \sP_+$.
The inclusion
\begin{align}
B_{\const} \subset p^*\Br(U)
\end{align}
of subgroups of $\Br(p^{-1}(U))$
holds if and only if
the two groups
\begin{align}
\label{eq:grouplm}
\bigoplus_{m \in M} \Ker\big(H^1(k(m),\Q/\Z) \to H^1(L_m,\Q/\Z)\big)
\end{align}
and
\begin{align}
\label{eq:groupkm}
\bigoplus_{m \in M} \Ker\big(H^1(k(m),\Q/\Z) \to H^1(K_m,\Q/\Z)\big)
\end{align}
have the same image by
the ``sum of corestrictions'' map
\begin{align}
\label{eq:sumcorestrictions}
\bigoplus_{m \in M} H^1(k(m),\Q/\Z) \to H^1(k,\Q/\Z)\rlap.
\end{align}
When these conditions are satisfied,
Conjecture~$\cF$ for~$\pi$ implies
Conjecture~$\cFconst$ for~$\pi_+$.
\end{prop}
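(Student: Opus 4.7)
The plan is to analyse the Brauer classes generating~$B_{\const}$ by means of the defining equation $N_{L_m/k(m)}(z_m)=b_m(\lambda-a_m\mu)$ of~$W$. Working on the open $p^{-1}(U\setminus\{\infty\})\subseteq W$, where~$\mu$ is invertible, and setting $t=\lambda/\mu$ so that the relation becomes $N_{L_m/k(m)}(z_m)=b_m\mu(t-a_m)$, the projection formula yields, for each $m\in M$ and any $\chi_m\in H^1(k(m),\Q/\Z)$, the identity
\[
\Cores_{L_m/k}(z_m,\chi_m|_{L_m}) = \Cores_{k(m)/k}(b_m,\chi_m)+(\mu,\Cores_{k(m)/k}\chi_m)+p^*\Cores_{k(m)/k}(t-a_m,\chi_m)
\]
in $\Br(p^{-1}(U\setminus\{\infty\}))$, where $\chi_m|_{L_m}$ denotes the restriction of~$\chi_m$ to $H^1(L_m,\Q/\Z)$. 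The first summand on the right is a constant, the third is pulled back from $\Br(U\setminus\{\infty\})$, and only the middle summand $(\mu,\Cores_{k(m)/k}\chi_m)$ \emph{a priori} obstructs the class on the left from lying in $\Br(k)+p^*\Br(U\setminus\{\infty\})$. The key observation is that when $\chi_m\in\Ker(H^1(k(m),\Q/\Z)\to H^1(L_m,\Q/\Z))$ the left-hand side vanishes, so rearranging exhibits $(\mu,\Cores_{k(m)/k}\chi_m)$ as an element of $\Br(k)+p^*\Br(U\setminus\{\infty\})$; summing over~$m$, the class $(\mu,\psi)$ therefore belongs to $\Br(k)+p^*\Br(U\setminus\{\infty\})$ for every $\psi$ in the image of the group~\eqref{eq:grouplm} under the map~\eqref{eq:sumcorestrictions}.

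For the forward direction of the first assertion, assume the images of~\eqref{eq:grouplm} and~\eqref{eq:groupkm} under~\eqref{eq:sumcorestrictions} coincide. Given $(\chi_m)$ in~\eqref{eq:groupkm}, choose $(\chi'_m)$ in~\eqref{eq:grouplm} with $\sum_m\Cores_{k(m)/k}\chi_m=\sum_m\Cores_{k(m)/k}\chi'_m$ in $H^1(k,\Q/\Z)$, and subtract the sum over~$m$ of the displayed identity applied to~$(\chi'_m)$ (whose left-hand sides each vanish) from the corresponding sum applied to~$(\chi_m)$: the $(\mu,-)$ contributions cancel owing to this equality, and one is left with
\[
\sum_m\Cores_{L_m/k}(z_m,\chi_m|_{L_m}) = \sum_m\Cores_{k(m)/k}(b_m,\chi_m-\chi'_m)+p^*\sum_m\Cores_{k(m)/k}(t-a_m,\chi_m-\chi'_m).
\]
The residue at~$\infty$ of the pulled-back class on the right equals $\sum_m\Cores_{k(m)/k}(\chi_m-\chi'_m)=0$, so it extends to an element of~$\Br(U)$, whence $\sum_m\Cores_{L_m/k}(z_m,\chi_m|_{L_m})\in p^*\Br(U)$. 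As the classes of this form generate~$B_{\const}$, this yields $B_{\const}\subseteq p^*\Br(U)$. For the reverse direction, the crucial input is that the assignment $\chi\mapsto(\mu,\chi)$ induces an injection of $H^1(k,\Q/\Z)$ modulo the image of~\eqref{eq:grouplm} into $\Br(p^{-1}(U\setminus\{\infty\}))/(\Br(k)+p^*\Br(U\setminus\{\infty\}))$; granting this, if $B_{\const}\subseteq p^*\Br(U)$ then for every $(\chi_m)$ in~\eqref{eq:groupkm} a rearrangement of the displayed identity shows $(\mu,\sum_m\Cores_{k(m)/k}\chi_m)\in\Br(k)+p^*\Br(U\setminus\{\infty\})$, and the injectivity forces $\sum_m\Cores_{k(m)/k}\chi_m$ to lie in the image of~\eqref{eq:grouplm}. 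This injectivity would be established via the Hochschild--Serre spectral sequence applied to the structure of~$p$ as a torsor, over $U\setminus\{\infty\}$, under the torus~$T$ sitting in the exact sequence $1\to T\to\Gm\times\prod_m R_{L_m/k}\Gm\to\prod_m R_{k(m)/k}\Gm\to 1$ whose rightmost arrow is induced by the norms.

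For the second assertion, once $B_{\const}\subseteq p^*\Br(U)$ is known, every $\beta\in B_{\const}$ can be written as $p^*\alpha$ with $\alpha\in\Br(U)$, so the restriction $\beta|_{W_c}$ at a rational point $c\in U(k)$ is the constant class $\alpha(c)\in\Br(k)\subseteq\Br_0(W_c)$. Global reciprocity then gives $\sum_v\inv_v\beta(w_v)=0$ for every adelic point $(w_v)\in W_c(\A_k)$, whence $W_c(\A_k)\subseteq W_c(\A_k)^{B_{\const}}$ for every $c\in U(k)$; the density in $W(\A_k)$ of $\bigcup_{c\in U(k)}W_c(\A_k)$ asserted by Conjecture~$\cF$ for~$\pi$ therefore upgrades to the density in $W(\A_k)$ of $\bigcup_{c\in U(k)}W_c(\A_k)^{B_{\const}}$, which is exactly Conjecture~$\cFconst$ for~$\pi_+$. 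The main technical obstacle is the injectivity claim underpinning the reverse direction of the first equivalence; every other step follows directly from the cup-product decomposition and from global reciprocity.
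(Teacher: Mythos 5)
Your proof follows a genuinely different route from the paper's, and most of it is sound. The paper handles both directions of the equivalence by computing residues along the fibres $W_m$: for ``same image $\Rightarrow$ $B_{\const}\subset p^*\Br(U)$'' it constructs $\gamma\in\Br(U)$ with prescribed residues via the Faddeev exact sequence, checks that $\beta-p^*\gamma$ is unramified along the $W_m$ (Propositions~\ref{prop:fibresofp} and~\ref{prop:residueofcores}), and then invokes $\Br(W)=\Br_0(W)$ (Proposition~\ref{prop:brauerw}); for the converse it compares residues of $p^*\gamma$ and of the corestriction classes at the generic points of the $W_m$. Your projection-formula identity $\Cores_{L_m/k}(z_m,\chi_m|_{L_m})=\Cores_{k(m)/k}(b_m,\chi_m)+(\mu,\Cores_{k(m)/k}\chi_m)+p^*\Cores_{k(m)/k}(t-a_m,\chi_m)$ replaces all of this, so your ``forward'' direction needs neither purity nor Proposition~\ref{prop:brauerw}, only the residue-at-$\infty$ check you perform. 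Two small points should be made explicit there: the generators of $B_{\const}$ are exactly the classes $\Cores_{L_m/k}(z_m,\chi_m|_{L_m})$ with $\chi_m$ in the $m$\nobreakdash-th summand of~\eqref{eq:groupkm} (immediate from the definition of $C_{m,\const}$, since restriction to $K_m$ factors through $L_m$), and an identity established on $p^{-1}(U\setminus\{\infty\})$ promotes to $\Br(p^{-1}(U))$ because restriction to a dense open subset of the regular integral scheme $p^{-1}(U)$ is injective on Brauer groups. Your treatment of the last assertion coincides with the paper's.

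The one genuine gap is the injectivity claim you defer, and it does need to be proved rather than asserted; fortunately it is true and the torsor viewpoint you indicate does yield it. Concretely: over $U^0=U\setminus\{\infty\}$ the map $p$ is a torsor under the torus $T=\Ker\big(\Gm\times\prod_m R_{L_m/k}\Gm\to\prod_m R_{k(m)/k}\Gm\big)$, so each fibre $W_c$, $c\in U^0(k)$, is a $k$\nobreakdash-torsor under $T$; if $(\mu,\psi)\in\Br(k)+p^*\Br(U^0)$ then $(\mu,\psi)|_{W_c}$ is constant, hence by~\eqref{eq:injectionbr1br0z} its image in $H^2(k,\widehat T)$ vanishes, and that image is the cup product of $\psi\in H^2(k,\Z)$ with the class of $\mu$ in $\bar k[W_c]^*/\bar k^*=\widehat T$, i.e.\ with the character $(\nu,(w_m))\mapsto\nu$. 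Dualising the torus sequence gives $0\to\bigoplus_m\Z[\Sigma_{k(m)}]\to\Z\oplus\bigoplus_m\Z[\Sigma_{L_m}]\to\widehat T\to 0$, where $\Sigma_F$ is the set of $k$\nobreakdash-embeddings of~$F$ in~$\bar k$ and the $\sigma$\nobreakdash-component of the first map is $\sigma\mapsto(-1,\sum_{\tau|\sigma}\tau)$; Shapiro's lemma identifies the induced map on $H^2(k,-)$ with $\chi_m\mapsto(-\Cores_{k(m)/k}\chi_m,\Res_{L_m/k(m)}\chi_m)$, and the long exact sequence then shows that $\psi\cup[\mu]=0$ in $H^2(k,\widehat T)$ exactly when $\psi$ lies in the image of~\eqref{eq:grouplm} under~\eqref{eq:sumcorestrictions}, which is what you need (specialising at a rational point this way also spares you the restriction-injectivity statement $H^2(k,\widehat T)\hookrightarrow H^2(k(t),\widehat T)$ that a generic-fibre argument would require). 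With this paragraph supplied, your proof is complete; compared with the paper's argument it trades the Faddeev sequence and Proposition~\ref{prop:brauerw} for a character-group computation in the spirit of \S\ref{sec:unramified brauer classes}, at the cost of being tied to the explicit equation of~$W$ rather than only to the formal residue data.
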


\begin{proof}
If $B_{\const} \subset p^*\Br(U)$,
then
$W_c(\A_k) = W_c(\A_k)^{B_{\const}}$ for all $c \in U(k)$.
Hence in this case
Conjecture~$\cF$ for~$\pi$ implies
Conjecture~$\cFconst$ for~$\pi_+$.
It only remains to check the first assertion of Proposition~\ref{prop:fromFtoFconst}.

Assume first that
$B_{\const} \subset p^*\Br(U)$
and
let $(\chi_m)_{m \in M}$ belong to~\eqref{eq:groupkm}.
Let $\gamma \in \Br(U)$
be such that $\sum_{m \in M} \Cores_{L_m/k}(z_m,\chi_m)=p^*\gamma$.
By the Faddeev exact sequence,
the family $(\partial_m\gamma)_{m \in M}$
given by the residues of~$\gamma$
belongs to the kernel of~\eqref{eq:sumcorestrictions}
(see \cite[\textsection1.2]{ctsd94}).
On the other hand,
by
Proposition~\ref{prop:fibresofp},
Proposition~\ref{prop:residueofcores}
and
\cite[Proposition~1.1.1]{ctsd94},
computing residues at the generic point of~$W_m$ for $m \in M$
shows that $(\partial_m\gamma-\chi_m)_{m \in M}$
belongs to~\eqref{eq:grouplm}.
We have thus proved that~\eqref{eq:grouplm} and~\eqref{eq:groupkm}
have the same image by~\eqref{eq:sumcorestrictions}.

Conversely, assuming that this last condition holds,
let us fix $\beta \in B_{\const}$ and write it as
$\beta=\sum_{m \in M} \Cores_{L_m/k}(z_m,\chi_m)$
with $(\chi_m)_{m \in M}$ in~\eqref{eq:groupkm}.
Our assumption provides $(\delta_m)_{m\in M}$
in the kernel of~\eqref{eq:sumcorestrictions}
such that $(\delta_m-\chi_m)_{m \in M}$ belongs to~\eqref{eq:grouplm}.
By the Faddeev exact sequence again, there exists $\gamma \in \Br(U)$
such that $\partial_m\gamma=\delta_m$ for all $m \in M$.
By
Proposition~\ref{prop:fibresofp}
and
Proposition~\ref{prop:residueofcores},
the class
 $\beta-p^*\gamma \in \Br(p^{-1}(U))$
belongs to the subgroup~$\Br(W)$.
By Proposition~\ref{prop:brauerw}, we conclude that $\beta \in p^*\Br(U)$.
\end{proof}

\begin{cor}
\label{cor:comparisonFFconst}
Suppose given a finite closed subset $M \subset \A^1_k$,
a point $m_0 \in M$
and, for each $m \in M \setminus \{m_0\}$, a finite
extension $L_m/k(m)$ and a finite abelian extension $K_m/L_m$.
Assume that $k(m_0)=k$ or that~$k$ is totally imaginary.
Then there exists a finite abelian extension $L_0/k(m_0)$ with the following property:
for any $(b_m)_{m\in M}\in \prod_{m\in M}k(m)^*$
and any finite extension $L_{m_0}/L_0$,
if we set $K_{m_0}=L_{m_0}$,
Conjecture~$\cF$ for
$\pi=(M, (L_m)_{m \in M}, (b_m)_{m \in M})$
 implies
Conjecture~$\cFconst$ for
 $\pi_+=(M, (L_m)_{m \in M}, (b_m)_{m \in M}, (K_m)_{m\in M})$.
\end{cor}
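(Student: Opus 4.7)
The plan is to apply Proposition~\ref{prop:fromFtoFconst}, whose criterion is independent of $(b_m)_{m\in M}$ and depends on~$L_{m_0}$ only through the kernel $\Ker(H^1(k(m_0),\Q/\Z)\to H^1(L_{m_0},\Q/\Z))$. For any finite abelian extension~$L_0/k(m_0)$, any $L_{m_0}\supset L_0$, any $(b_m)_{m\in M}$, and with $K_{m_0}=L_{m_0}$, that proposition reduces the implication ``Conjecture~$\cF$ for~$\pi$ implies Conjecture~$\cFconst$ for~$\pi_+$'' to the coincidence in~$H^1(k,\Q/\Z)$ of the images of~\eqref{eq:grouplm} and~\eqref{eq:groupkm} under~\eqref{eq:sumcorestrictions}. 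Because $K_{m_0}=L_{m_0}$, the $m_0$\nobreakdash-summands of these direct sums agree, and the inclusion of images holds automatically in one direction; the reverse inclusion amounts to the containment
\begin{equation*}
V' \subseteq \Cores_{k(m_0)/k}\Big(\Ker\big(H^1(k(m_0),\Q/\Z)\to H^1(L_0,\Q/\Z)\big)\Big),
\end{equation*}
where
\begin{equation*}
V' := \sum_{m \in M\setminus\{m_0\}}\Cores_{k(m)/k}\Big(\Ker\big(H^1(k(m),\Q/\Z)\to H^1(K_m,\Q/\Z)\big)\Big) \subseteq H^1(k,\Q/\Z)
\end{equation*}
is a finite subgroup depending only on $(L_m,K_m)_{m\neq m_0}$, and in particular independent of~$L_{m_0}$ and~$(b_m)_{m\in M}$.

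The task thus reduces to constructing a single~$L_0/k(m_0)$ finite abelian realising the above containment, the analogous inclusion for $L_{m_0}\supset L_0$ following at once because enlarging~$L_0$ only enlarges the right\nobreakdash-hand side. The construction proceeds in two steps. First, one shows $V'\subseteq\Imm(\Cores_{k(m_0)/k})$. This is immediate when $k(m_0)=k$; when~$k$ is totally imaginary it rests on the surjectivity of $\Cores_{k(m_0)/k}\colon H^1(k(m_0),\Q/\Z)\to H^1(k,\Q/\Z)$, a consequence, via Pontrjagin duality, of the injectivity of the Verlagerung $G_k^{\ab}\to G_{k(m_0)}^{\ab}$; by class field theory the latter is induced by the inclusion $C_k\hookrightarrow C_{k(m_0)}$ of idele class groups, and the absence of real archimedean places on~$k$ is used to ensure that $C_k\cap C_{k(m_0)}^0=C_k^0$, whence the injectivity. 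Second, once $V'\subseteq\Imm(\Cores_{k(m_0)/k})$, one lifts a finite generating set $v_1,\dots,v_N$ of~$V'$ to characters $\chi_1,\dots,\chi_N\in H^1(k(m_0),\Q/\Z)$; each~$\chi_i$ has finite order and hence cuts out a finite cyclic extension of~$k(m_0)$. Taking~$L_0$ to be the compositum of these cyclic extensions yields a finite abelian extension of~$k(m_0)$ in which every~$\chi_i$ becomes trivial, so that $\chi_i\in\Ker(H^1(k(m_0),\Q/\Z)\to H^1(L_0,\Q/\Z))$ for all~$i$, and the required containment follows.

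The principal obstacle is the surjectivity of corestriction in the totally imaginary case; once this cohomological input is in place, the combinatorial construction of~$L_0$ is purely formal.
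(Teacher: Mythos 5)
Your proof is correct and takes essentially the same route as the paper's: reduce via Proposition~\ref{prop:fromFtoFconst} to comparing the images of~\eqref{eq:grouplm} and~\eqref{eq:groupkm} under the sum-of-corestrictions map, invoke the surjectivity of $\Cores_{k(m_0)/k}\colon H^1(k(m_0),\Q/\Z)\to H^1(k,\Q/\Z)$ (trivial when $k(m_0)=k$, classical when $k$ is totally imaginary), lift the finitely many classes coming from the $m\neq m_0$ to characters over~$k(m_0)$, and take for~$L_0$ a finite abelian extension killing these lifts, enlarging to~$L_{m_0}\supseteq L_0$ being harmless. The only divergence is that the paper simply cites this surjectivity (\cite[Note, p.~327]{gras}, \cite[Remark~9.18~(ii)]{hwfibration}), whereas you sketch its classical proof via Pontrjagin duality, the Verlagerung and id\`ele class groups; that sketch is along the right lines, but the assertion $C_k\cap C_{k(m_0)}^0=C_k^0$ for totally imaginary~$k$ is precisely the content of the cited result rather than an immediate observation, so in a final write-up you should either cite it or prove it in full.
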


\begin{proof}
The corestriction map $H^1(k(m_0),\Q/\Z) \to H^1(k,\Q/\Z)$
is surjective
since $k(m_0)=k$ or~$k$ is totally imaginary
(see~\cite[Note, p.~327]{gras},
\cite[Remark~9.18~(ii)]{hwfibration}).
Hence we can choose a finite subgroup $C \subset H^1(k(m_0),\Q/\Z)$
whose image by this map coincides with the image,
by the map~\eqref{eq:sumcorestrictions}, of the finite group
\begin{align}
\bigoplus_{m \in M \setminus \{m_0\}} \Ker\big(H^1(k(m),\Q/\Z) \to H^1(K_m,\Q/\Z)\big)\rlap{.}
\end{align}
Let $L_0/k(m_0)$
be a finite abelian extension
 such that the image
of~$C$ in $H^1(L_0,\Q/\Z)$ vanishes.
The condition of Proposition~\ref{prop:fromFtoFconst} is now satisfied
for any~$L_{m_0}$ and~$K_{m_0}$ as in the statement of the corollary.
\end{proof}

\begin{rmks}
\label{rks:surj99}
(i)
More generally, one can verify that in the situation
of \cite[Theorem~9.17]{hwfibration},
if we set
$K_m=L_m$ for  $m \in M'$
and $L_m=k(m)$ for $m \in M''$
and if we are given
finite abelian extensions $K_m/k(m)$ for $m \in M''$,
the assumption that the map~(9.9) of \emph{loc.\ cit.}\ is onto
for all $m \in M''$
(an assumption that is satisfied when $M'$ contains a rational point
or~$k$ is totally imaginary, by \cite[Remark~9.18~(ii)]{hwfibration})
implies that the condition of Proposition~\ref{prop:fromFtoFconst}
holds, with $M=M' \cup M''$,
as soon as the fields~$L_m$ for $m \in M'$ are large enough (in the sense
that they contain certain subfields, as in Corollary~\ref{cor:comparisonFFconst}).

(ii)
Conjecture~$\cFconst$ for~$\pi_+$ is the same as Conjecture~$\cFplus$ for~$\pi_+$
when for all $m \in M$, at least one of the extensions $K_m/L_m$ and $L_m/k(m)$ is trivial.

(iii)
In view of
Remark~\ref{rk:wwpiso} and of Remarks~\ref{rks:surj99}~(i)--(ii),
the statement of \cite[Theorem~9.17]{hwfibration}
can be recovered by combining
Theorem~\ref{th:fibration} with
Proposition~\ref{prop:fromFtoFconst}.
\end{rmks}

\subsection{From Conjecture \texorpdfstring{$\cFconst$}{Fconst} to Conjecture \texorpdfstring{$\cFplus$}{F₊}}\label{sec:from-const-to-plus}

The next theorem, which gives an equivalent formulation for Conjecture~$\cFplus$,
will allow us to pass from Conjecture~$\cFconst$ to Conjecture~$\cFplus$
in more general situations than that of Remark~\ref{rks:surj99}~(ii).

Let $\pi_+ \in \sP_+$.
For $m\in M$,
we denote by $C_{m,\nr} \subseteq C_m$  the subgroup
consisting of those
$\chi \in C_m$
such that
the class
\begin{align}
\Cores_{L_m/k(m)}(z,\chi) \in \Br(R^1_{L_m/k(m)}\Gm)
\end{align}
belongs to the subgroup
 $\Br_\nr(R^1_{L_m/k(m)}\Gm)$,
where~$z$ stands for
the tautological invertible
function on $R^1_{L_m/k(m)}\Gm \otimes_{k(m)} L_m$.
We denote by $B_{\nr} \subseteq B$
 the subgroup
 generated by the
classes
$\Cores_{L_m/k}(z_m,\chi) \in \Br(p^{-1}(U))$
for $m \in M$ and
 $\chi \in C_{m,\nr}$.

\begin{thm}
\label{th:conjfnrconjfplus}
Let $\pi_+\in \sP_+$.  The following conditions are equivalent:
\begin{enumerate}
\item
 the subset
$\bigcup_{c \in U(k)} W_c(\A_k)^{B_\nr}$ is dense in $W(\A_k)$;
\item
Conjecture~$\cFplus$ holds for~$\pi_+$.
\end{enumerate}
\end{thm}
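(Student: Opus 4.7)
The direction $(2)\Rightarrow(1)$ is trivial, since $B_\nr\subseteq B$ gives $W_c(\A_k)^B\subseteq W_c(\A_k)^{B_\nr}$ for every $c\in U(k)$. For the converse, fix $(w_v)\in W(\A_k)$ and an open neighbourhood $\mathcal{N}$ of $(w_v)$ in $W(\A_k)$; the plan is to chain Harari's formal lemma, hypothesis~$(1)$, and a continuity argument so as to land inside $\mathcal{N}\cap \bigcup_{c\in U(k)} W_c(\A_k)^B$.

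First I would apply Harari's formal lemma \cite[Théorème~1.4]{ctbudapest} to the open immersion $p^{-1}(U)\hookrightarrow W$ and the finite group $B\subseteq \Br(p^{-1}(U))$. Its hypothesis $(w_v)\in W(\A_k)^{B\cap \Br(W)}$ is automatic because Proposition~\ref{prop:brauerw} shows that $\Br(W)$ is reduced to constant classes, to which every adelic point is orthogonal by global reciprocity. The lemma produces $(w'_v)\in p^{-1}(U)(\A_k)\cap \mathcal{N}$ with $\sum_v\inv_v\beta(w'_v)=0$ for every $\beta\in B$. Hypothesis~$(1)$ applied to $(w'_v)$ then yields $c\in U(k)$ and $(w''_v)\in W_c(\A_k)^{B_\nr}\cap \mathcal{N}$ as close to $(w'_v)$ in $W(\A_k)$ as desired; since $c\in U(k)$, this point lies in $p^{-1}(U)(\A_k)$, so pairing it against elements of $B$ is well-defined.

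It remains to upgrade the $B_\nr$-orthogonality of $(w''_v)$ to full $B$-orthogonality. Enlarge a finite set $S$ of places of $k$ (containing the archimedean places) so that every $\beta\in B$ extends to an unramified class on the standard integral model of $p^{-1}(U)$ over $\sOint_S$, and so that both $(w'_v)$ and $(w''_v)$ are integral at each $v\notin S$. For $v\notin S$, the vanishing of $\Br(\sOint_v)$ at finite places forces $\inv_v\beta(w'_v)=\inv_v\beta(w''_v)=0$ for all $\beta\in B$. For $v\in S$, the map $x\mapsto\inv_v\beta(x)$ is locally constant on $p^{-1}(U)(k_v)$, so by tightening the approximation of $(w'_v)$ by $(w''_v)$ at the finitely many pairs $(v,\beta)\in S\times B$ one forces $\inv_v\beta(w''_v)=\inv_v\beta(w'_v)$ on the nose. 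Summing over $v$ then gives $\sum_v\inv_v\beta(w''_v)=0$ for every $\beta\in B$, so $(w''_v)\in W_c(\A_k)^B\cap \mathcal{N}$, as required.

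The main obstacle in this plan, though logistical rather than conceptual, is the availability of the very first step: without Proposition~\ref{prop:brauerw}, Harari's formal lemma could not be applied freely to an arbitrary adelic point of $W$. That proposition is precisely the input that permits an unrestricted start, after which the reduction to $p^{-1}(U)$—the open subscheme on whose fibres $W_c$ for $c\in U(k)$ hypothesis~$(1)$ operates—proceeds by a routine continuity-and-integrality argument.
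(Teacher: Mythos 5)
The trivial direction and your first step (Harari's formal lemma, legitimised by Proposition~\ref{prop:brauerw}) are fine, but the argument collapses at the integrality step for $v\notin S$, and this is precisely where the whole content of the theorem lies. The classes $\beta=\Cores_{L_m/k}(z_m,\chi)\in B$ are \emph{ramified} along the fibres $W_m$, $m\in M$: by Proposition~\ref{prop:residueofcores} their residue at the generic point of $W_m$ is the (nonzero, for $\chi\notin C_{m,\const}$) image of $\chi$. Consequently, for a place $v\notin S$ an $\sOint_v$\nobreakdash-point $w''_v$ of $\sW$ lying in $W_c(k_v)$ need not pair trivially with $\beta$: whenever the Zariski closure $\ctilde$ of $c$ in $\P^1_{\sOint_S}$ meets $\Mtilde$ above $v$, one has, in the notation of Proposition~\ref{prop:localintegralpoint}, $\inv_v\beta(w''_v)=n_v\,\chi(\Frob_u)$ for the unique place $u$ of $L_m$ over $v$ at which the coordinate $z_{m,u}$ is not a unit, and this is in general nonzero. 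These places of intersection depend on the point $c$ produced \emph{a posteriori} by hypothesis~(1), so they cannot be absorbed into a finite set $S$ fixed in advance, and neither integrality ($\Br(\sOint_v)=0$ only applies to classes extending over $\sOint_v$, which $\beta$ does not at such $v$) nor continuity at the places of $S$ controls them. If your argument were correct, the distinction between $B_{\nr}$ and $B$ would never be felt and the theorem would be formal; in fact the hypothesis $(w''_v)\in W_c(\A_k)^{B_{\nr}}$ only gives the vanishing of $\sum_v\inv_v\beta(w''_v)$ for $\chi\in C_{m,\nr}$, and the nontrivial task is to cancel the residual defect for general $\chi\in C_m$.

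The paper's proof supplies exactly the two ingredients your plan lacks. First, an algebraic one: by Corollary~\ref{cor:chinr}, under the duality $C_m\times\Gal(K_m/L_m)\to\Q/\Z$ the subgroup $C_{m,\nr}$ is the exact orthogonal complement of the subgroup generated by the elements $\phi_m(\theta)$ (restrictions of commutators $\sigma\tau\sigma^{-1}\tau^{-1}$), so the defect character $\chi\mapsto\sum_v\inv_v\beta(w''_v)$ is of the form $\sum_{\theta}\chi(\phi_m(\theta))$ over a subset of such triples. Second, an arithmetic one: \emph{before} invoking~(1), one inserts auxiliary places $v_\theta$ chosen by Chebotarev, where Lemma~\ref{lem:existszvi} provides two integral local points, attached to the two conjugate places $u_{\theta,1},u_{\theta,2}$ of $L_m$, whose $\beta$\nobreakdash-invariants differ by exactly $\chi(\phi_m(\theta))$; after~(1) produces $c$ and $(w''_v)$, one swaps the local point at the selected places $v_\theta$ inside the fibre $W_c$ to shift the global sum by $-\chi(\phi_m(\theta))$ and annihilate the defect. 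Your proposal contains neither the duality computation nor the pre-placed auxiliary places, and Harari's formal lemma cannot substitute for them, since the offending contributions occur outside every fixed finite set of places.
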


\begin{proof}
The implication (2)$\mkern2mu\Rightarrow\mkern2mu$(1) being trivial,
we assume~(1) and prove~(2).
To this end, we fix an adelic point
 $(z_v)_{v \in \Omega} \in W(\A_k)$ and a finite subset $S\subset \Omega$
large enough that it satisfies the assumptions of Proposition~\ref{prop:localintegralpoint}.
We take up the notation of~\textsection\ref{subsubsec:localintpoint}
for
the standard integral model $p:\sW\to \P^1_{\sOint_S}$  of $p:W\to \P^1_k$,
 for the Zariski closure~$\mtilde$ of~$m \in \P^1_k$ in~$\P^1_{\sOint_S}$,
and for
 $\Mtilde = \bigcup_{m \in M} \mtilde$.
We choose~$S$
large enough
that
 $z_v \in \sW(\sOint_v)$ for all $v \in \Omega \setminus S$.
Our goal is then
to produce $c \in U(k)$ and $(z_v''')_{v \in \Omega} \in W_c(\A_k)^B$ with~$z_v'''$ arbitrarily close
to~$z_v$ for $v \in S$ and $z_v''' \in \sW(\sOint_v)$
for $v \in \Omega\setminus S$.

For each $m\in M$,  fix a
Galois closure $J_m/k$ of $K_m/k$.
We denote by $\Theta$ the set of triples $(m,\sigma,\tau)$,
where $m \in M$ and $(\sigma,\tau) \in \Gal(J_m/L_m) \times \Gal(J_m/k(m))$
are such that $\tau\sigma\tau^{-1} \in \Gal(J_m/L_m)$.
For $m \in M$,
we denote by $\Theta_m \subseteq \Theta$ the subset of triples whose first component is~$m$
and by $\phi_m:\Theta_m \to \Gal(K_m/L_m)$
 the map
that sends $(m,\sigma,\tau)$ to the restriction, to~$K_m$,
of the automorphism
$\sigma\tau\sigma^{-1}\tau^{-1}$ of~$J_m$.

For $\theta=(m,\sigma,\tau) \in \Theta$,
Chebotarev's density theorem ensures the existence of
infinitely many places of~$J_m$
that do not lie over a place of~$S$
and
whose Frobenius automorphism in $\Gal(J_m/k)$ is
equal to~$\sigma$.
For each $\theta$, we choose such a place~$r_\theta$ of~$J_m$ and let~$v_\theta$
denote its trace on~$k$.
We choose the~$r_\theta$ for $\theta \in \Theta$ in such a way that the~$v_\theta$
are pairwise distinct, and let $T \subset \Omega \setminus S$ denote
the subset consisting of the places $v_\theta$ for $\theta \in \Theta$.

Given $\theta=(m,\sigma,\tau) \in \Theta$,
we denote
by~$w_\theta$ the trace of~$r_\theta$ on~$k(m)$
and
by~$u_{\theta,1}$ and~$u_{\theta,2}$
the traces of~$r_\theta$ and of~$\tau(r_\theta)$ on~$L_m$,
respectively.
We note that~$u_{\theta,1}$ and~$u_{\theta,2}$ both divide~$w_\theta$
and that the Frobenius automorphisms of~$r_\theta$ and of~$\tau(r_\theta)$ in $\Gal(J_m/k)$
 are equal to~$\sigma$ and to~$\tau\sigma\tau^{-1}$, respectively.
In particular, letting $\Frob_{u_{\theta,i}} \in \Gal(K_m/L_m)$ denote the Frobenius automorphism
of the abelian extension $K_m/L_m$ at~$u_{\theta,i}$,
we have
\begin{align}
\label{eq:frobu1u2phim}
\Frob_{u_{\theta,1}} - \Frob_{u_{\theta,2}} = \phi_m(\theta)
\end{align}
in the abelian group $\Gal(K_m/L_m)$.

\begin{lem}
\label{lem:existszvi}
Let $\theta=(m,\sigma,\tau)\in\Theta$.
For ease of notation,
set $v=v_\theta$ and $w=w_\theta$.
\begin{enumerate}
\item There exists
$t_{v} \in k_{v}$ such that $t_{v}-a_m \in k(m)_w$ is a uniformiser.
\item
For any
$t_{v} \in k_{v}$  such that
 $t_{v}-a_m \in k(m)_w$ is a uniformiser
and any $i \in \{1,2\}$,
there exists
$z_{v,i} \in \sW(\sOint_v)$
such that $p(z_{v,i})=t_v$
(viewing $t_v$ in $k_v=\A^1(k_v) \subset \P^1(k_v)$)
and such that
for all $m' \in M$ and all $\chi \in C_{m'}$,
the following equality holds:
\begin{align}
\label{eq:invcoreschi}
\inv_v \big(\mkern-2mu\Cores_{L_{m'}/k}(z_{m'},\chi)\big)(z_{v,i})=
\begin{cases}
\chi\big(\Frob_{u_{\theta,i}}\big) & \text{ if } m'=m\rlap{,}\\
0  & \text{ if } m'\neq m\rlap{.}
\end{cases}
\end{align}
\end{enumerate}
\end{lem}

In the right-hand side of~\eqref{eq:invcoreschi}, we view~$\chi$ as a
homomorphism $\Gal(K_m/L_m)\to\Q/\Z$.

\begin{proof}
As the Frobenius automorphisms of~$r_\theta$ and of~$\tau(r_\theta)$ in $\Gal(J_m/k)$
 lie in $\Gal(J_m/L_m)$,
the three places~$u_{\theta,1}$, $u_{\theta,2}$ and $w$ have degree~$1$ over~$v$.
Assertion~(1) follows.

Let now $t_v$ be as in~(2),
in particular $t_v \in \sOint_v$.
As $\Mtilde \otimes_{\sOint_S} \sOint_v$ is regular, we may interpret its closed points as pairs $(m',w')$
where $m' \in M$ and~$w'$ is a place of~$k(m')$ dividing~$v$.
For any $(m',w') \in \Mtilde\otimes_{\sOint_S}\sOint_v$,
the element $t_v-a_{m'}$ has positive $w'$\nobreakdash-adic valuation
if and only if
the Zariski closure of $t_v \in \P^1(k_v)$ in~$\P^1_{\sOint_v}$ meets $\Mtilde \otimes_{\sOint_S} \sOint_v$
at $(m',w')$.
As $\Mtilde \cup \widetilde\infty$ is \'etale over~$\sOint_S$
and as $t_v-a_m$ is a uniformiser at~$w$,
it follows that $t_v-a_{m'}$ is a unit at~$w'$ for all $(m',w') \in \Mtilde \otimes_{\sOint_S}\sOint_v$
distinct from $(m,w)$.

Fix $i \in \{1,2\}$
and set $\lambda_v=t_v$ and $\mu_v=1$.
By the conclusions of the last two paragraphs,
there exists
  $(z_{m',u})_{m' \in M,u|v} \in \prod_{m' \in M,u|v} \sOint_{(L_{m'})_u}$,
where~$u$ runs over the places of~$L_{m'}$ dividing~$v$,
that satisfies the conditions of Proposition~\ref{prop:localintegralpoint}
as well as the following property: $z_{m,u_i}$ is a uniformiser while
$z_{m',u}$ is a unit for every $m' \in M$ and every place~$u$ of~$L_{m'}$
dividing~$v$ such that $(m',u) \neq (m,u_i)$.
By Proposition~\ref{prop:localintegralpoint},
the family
$(\lambda_v,\mu_v,(z_{m',u})_{m'\in M,u|v})$
then gives rise to a  point $z_{v,i} \in \sW(\sOint_v)$
such that $p(z_{v,i})=t_v$.

For any $m' \in M$ and $\chi \in C_{m'}$,
the left-hand side of~\eqref{eq:invcoreschi} is equal to
$\sum_{u|v}\inv_u(z_{m',u},\chi)$,
where the sum runs over the places~$u$ of~$L_{m'}$
dividing~$v$
(see \cite[Theorem~8.9]{harbook}).
As~$\chi$ is unramified above~$v$ and as $z_{m',u}$ is a unit if $m'\neq m$ or if $u \neq u_i$,
and is a uniformiser otherwise, the validity of~\eqref{eq:invcoreschi} follows
(see \cite[Corollary~9.6]{harbook}).
\end{proof}

For $v \in \Omega \setminus T$,
we set $z_v' = z_v$.
For each $v \in T$,
let us apply the two parts of Lemma~\ref{lem:existszvi}, with $i=1$,
and denote by $z'_{v} \in \sW(\sOint_{v})$ the resulting local integral point.

Applying
 our assumption~(1)
to the adelic point $(z'_v)_{v \in \Omega} \in W(\A_k)$ thus constructed
 yields $c \in U(k)$ and $(z''_v)_{v \in\Omega} \in W_c(\A_k)^{B_{\nr}}$,
with $(z''_v)_{v \in \Omega}$
 arbitrarily close
to~$(z'_v)_{v \in \Omega}$.  We may, in particular, assume
that $c\neq \infty$,
that~$z''_v$ is arbitrarily close to~$z_v$ for $v \in S$,
that $z''_v \in \sW(\sOint_v)$ for all $v \in \Omega\setminus S$
and that $\beta(z''_v)=\beta(z'_v)$ for all $v \in T$
and all $\beta \in B$
(so that $\inv_v \beta(z''_v)$ is described by~\eqref{eq:invcoreschi}).

\begin{lem}
There exists a subset $\Theta^\sharp \subseteq \Theta$
such that
\begin{align}
\label{eq:suminvntchi}
\sum_{v \in \Omega} \inv_v \big(\mkern-2mu\Cores_{L_m/k}(z_m,\chi)\big)(z''_v)=
\sum_{\theta \in \Theta_m^\sharp}  \chi\big(\phi_m(\theta)\big)
\end{align}
for all $m \in M$
and all $\chi \in C_m$, where $\Theta_m^\sharp=\Theta^\sharp\cap\Theta_m$.
\end{lem}

\begin{proof}
For $m\in M$, let $D_m \subseteq \Gal(K_m/L_m)$ denote the subgroup generated
by the image of~$\phi_m$.
Under the perfect duality of finite abelian groups
\begin{align}
C_m \times \Gal(K_m/L_m) \to \Q/\Z\rlap{,}
\end{align}
the subgroups $C_{m,\nr}$ and~$D_m$ are exact orthogonal complements,
according to Corollary~\ref{cor:chinr}.
On the other hand,
the homomorphism
$C_m \to \Q/\Z$ that sends $\chi \in C_m$ to the left-hand side of~\eqref{eq:suminvntchi}
vanishes on~$C_{m,\nr}$,
since $(z''_v)_{v \in \Omega} \in W_c(\A_k)^{B_{\nr}}$.
It follows that there exists
 a family of integers $(n_\theta)_{\theta \in \Theta}$
such that
for all $m \in M$
and all $\chi \in C_m$,
the left-hand side of~\eqref{eq:suminvntchi} is equal
to
$\sum_{\theta \in \Theta_m} n_{\theta} \chi\big(\phi_m(\theta)\big)$.
Now one observes that for any $m \in M$ and any integer~$n$,
the image of~$\phi_m$ is
 stable
under multiplication by~$n$ in the abelian
group $\Gal(K_m/L_m)$.  Hence we can choose the $n_{\theta}$ in $\{0,1\}$,
and the lemma is proved.
\end{proof}

Let $T^\sharp \subseteq T$
denote
the subset consisting of the places $v_\theta$ for $\theta \in \Theta^\sharp$.
For each $v \in T^\sharp$,
apply
Lemma~\ref{lem:existszvi}~(2) with $i=2$ and with $t_v=c \in \A^1(k)$,
and let $z_v''' \in \sW(\sOint_v) \cap W_c(k_v)$ denote
the resulting local integral point.
Set $z_v'''=z_v''$ for
all $v \in \Omega \setminus T^\sharp$.
We have thus constructed an adelic point $(z_v''')_{v \in \Omega} \in W_c(\A_k)$
with $z_v''' \in \sW(\sOint_v)$ for $v \in \Omega \setminus S$
and
with $z_v'''$ arbitrarily close to~$z_v$ for $v \in S$.

We finally check that $(z_v''')_{v \in \Omega} \in W_c(\A_k)^B$.
It is enough to see that
for any $m' \in M$ and any $\chi \in C_{m'}$,
if we set
$\beta=\Cores_{L_{m'}/k}(z_{m'},\chi)$,
then $\sum_{v \in \Omega} \inv_v \beta(z_v''')$ vanishes.
Now, as $z_v'''=z_v''$ for all
 $v \in \Omega \setminus T^\sharp$,
 this sum can be rewritten as
\begin{align}
\label{eq:sumrewritten}
\sum_{v \in \Omega} \inv_v \beta(z''_v)
+
\sum_{\theta \in \Theta^\sharp}\Big( \inv_{v_\theta} \beta(z'''_{v_\theta})-\inv_{v_\theta}\beta(z''_{v_\theta}) \Big)\rlap{.}
\end{align}
The first term of~\eqref{eq:sumrewritten}
is equal to
$\sum_{\theta \in \Theta_{m'}^\sharp}  \chi\big(\phi_{m'}(\theta)\big)$,
according to~\eqref{eq:suminvntchi}.
On the other hand,
 the  definition of~$z_v'$ and of~$z_v'''$
shows that
for any $m \in M$ and any $\theta \in \Theta_m^\sharp$,
the term
$\inv_{v_\theta} \beta(z'''_{v_\theta})-\inv_{v_\theta}\beta(z''_{v_\theta})$
vanishes  if $m\neq m'$ and is equal to $\chi(\Frob_{u_{\theta,2}})-\chi(\Frob_{u_{\theta,1}})$ otherwise
(see~\eqref{eq:invcoreschi}).
In the latter case we have
 $\chi(\Frob_{u_{\theta,2}})-\chi(\Frob_{u_{\theta,1}})=-\chi(\phi_{m'}(\theta))$
(see~\eqref{eq:frobu1u2phim}).
Hence~\eqref{eq:sumrewritten} indeed vanishes.
\end{proof}

Theorem~\ref{th:conjfnrconjfplus} is especially useful
when $B_{\nr}=B_{\const}$.
The following corollary records situations in which this equality holds for purely algebraic reasons.

\begin{cor}
\label{cor:comparisonFconstFplus}
Let $\pi_+\in\sP_+$.
Assume that for each $m \in M$, at least one of the following conditions is satisfied:
\begin{enumerate}
\item\label{it:unramified-is-constant}
the torus $T=R^1_{L_m/k(m)}\Gm$ over~$k(m)$
satisfies $\Br_{\nr}(T)=\Br_0(T)$;
\item\label{it:extension-is-trivial}
the extension $K_m/L_m$ is trivial.
\end{enumerate}
Then Conjecture~$\cFconst$ for~$\pi_+$ implies Conjecture~$\cFplus$ for~$\pi_+$.
\end{cor}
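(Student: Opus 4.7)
The plan is to observe that under either of the two assumptions, one has the inclusion $C_{m,\nr} \subseteq C_{m,\const}$ for each $m \in M$, from which the inclusion $B_\nr \subseteq B_\const$ of subgroups of $\Br(p^{-1}(U))$ follows at once. Granting this inclusion, for every $c \in U(k)$ we get $W_c(\A_k)^{B_\const} \subseteq W_c(\A_k)^{B_\nr}$, so Conjecture~$\cFconst$ for~$\pi_+$ (density of the union on the left as $c$ ranges over $U(k)$) implies density of $\bigcup_{c \in U(k)} W_c(\A_k)^{B_\nr}$ in $W(\A_k)$, which by Theorem~\ref{th:conjfnrconjfplus} is equivalent to Conjecture~$\cFplus$ for~$\pi_+$.

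It remains to verify the inclusion $C_{m,\nr} \subseteq C_{m,\const}$ under each of conditions~\ref{it:unramified-is-constant} and~\ref{it:extension-is-trivial}. Condition~\ref{it:extension-is-trivial} is immediate: when $K_m = L_m$, the group $C_m$ itself vanishes, so both $C_{m,\nr}$ and $C_{m,\const}$ are zero. Under condition~\ref{it:unramified-is-constant}, fix $\chi \in C_{m,\nr}$; by definition this means $\Cores_{L_m/k(m)}(z,\chi) \in \Br_{\nr}(T)$ for $T = R^1_{L_m/k(m)}\Gm$. The hypothesis $\Br_{\nr}(T) = \Br_0(T)$ then forces $\Cores_{L_m/k(m)}(z,\chi) \in \Br_0(T)$, so by Proposition~\ref{prop:chinr}~(1) we have $\chi \in \Imm\bigl(H^1(k(m),\Q/\Z)\to H^1(L_m,\Q/\Z)\bigr)$. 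Combined with the fact that $\chi \in C_m$, this places $\chi$ in $C_{m,\const}$, as desired.

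No real obstacle is expected: the content of the corollary is a direct bookkeeping consequence of the equivalence in Theorem~\ref{th:conjfnrconjfplus} together with the characterisation of constant classes $\Cores_{L_m/k(m)}(z,\chi)$ provided by Proposition~\ref{prop:chinr}~(1). The only mildly delicate point is to make sure that the subgroup inclusion $C_{m,\nr} \subseteq C_{m,\const}$ translates cleanly into $B_\nr \subseteq B_\const$, which is transparent from the definitions of these subgroups as being generated by the corestriction classes indexed by the respective sets of characters.
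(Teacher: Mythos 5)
Your proof is correct and follows essentially the same route as the paper: the paper's own argument establishes $C_{m,\nr}=C_{m,\const}$ (hence $B_{\nr}=B_{\const}$) via Proposition~\ref{prop:chinr}~(1) in case~\ref{it:unramified-is-constant} and via $C_m=0$ in case~\ref{it:extension-is-trivial}, and then concludes through Theorem~\ref{th:conjfnrconjfplus}, exactly as you do. You only verify the inclusion $C_{m,\nr}\subseteq C_{m,\const}$ rather than the equality, but that is precisely the direction needed, so this is a harmless (and slightly leaner) variant of the same proof.
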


\begin{proof}
Under either assumption, one has $C_{m,\nr}=C_{m,\const}$, according to Proposition~\ref{prop:chinr}~(1)
in case~\ref{it:unramified-is-constant}, and because
 $C_m=0$
in case~\ref{it:extension-is-trivial}.
Hence $B_{\nr}=B_{\const}$.
\end{proof}

Various conditions which ensure
that $\Br_{\nr}(T)=\Br_0(T)$ are listed in
Proposition~\ref{prop:criterion-for-brauer}.

\section{Known cases of Conjecture \texorpdfstring{$\cFplus$}{F₊}}
\label{sec:knowncases}

The concrete cases in which Conjecture~$\cF$ is currently known to hold are those
listed in \cite[\textsection9.2]{hwfibration} and in \cite{browningschindler}.
In many of these cases, the underlying arguments can be enhanced to prove
Conjecture~$\cFplus$ as well, as we verify in this section.

\subsection{From strong approximation}

The following proposition strengthens \cite[Corollary~9.10]{hwfibration},
where the analogous conclusion was obtained for Conjecture~$\cF$.

\begin{prop}
\label{prop:strongapproximpliesfp}
Let $\pi_+ \in \sP_+$.
If the variety~$W$ satisfies strong approximation off~$v_0$ for every finite place~$v_0$ of~$k$,
then Conjecture~$\cFplus$ holds for~$\pi_+$.
\end{prop}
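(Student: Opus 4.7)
The plan is to mirror the proof of \cite[Corollary~9.10]{hwfibration}, which establishes the analogous statement for Conjecture~$\cF$, with one additional observation: since $B \subseteq \Br(p^{-1}(U))$, any rational point of $p^{-1}(U)$ automatically satisfies the Brauer--Manin conditions imposed by~$B$. Indeed, for $c \in U(k)$ and $z \in W_c(k)$, each class $\beta \in B$ pulls back to an element of $\Br(W_c)$ whose evaluation at~$z$ lies in $\Br(k)$, so that the sum of local invariants of $\beta(z)$ vanishes by the Hasse--Brauer--Noether reciprocity law.

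Given $(z_v)_{v \in \Omega} \in W(\A_k)$ to be approximated and a finite nonempty set $S_0 \subset \Omega$ of places at which one wishes to approximate, I would first note that $p^{-1}(U)(k_v)$ is open and dense in $W(k_v)$ for every~$v$, as follows from Proposition~\ref{prop:fibresofp} and the implicit function theorem. Shrinking the chosen neighbourhood of $(z_v)_{v \in S_0}$ accordingly, I may assume that the approximating point is required to lie in $p^{-1}(U)(k_{v_1})$ at some distinguished place $v_1 \in S_0$.

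I would then pick a finite place $v_0 \notin S_0$ and invoke the hypothesis of strong approximation off~$v_0$ to produce $z \in W(k)$ that is close to~$z_v$ in~$W(k_v)$ for every $v \in S_0$. Continuity of~$p$ at~$v_1$ and the openness of~$U$ in~$\P^1_k$ then force $p(z) \in U(k_{v_1})$, and since $p(z) \in \P^1(k)$ this implies $c := p(z) \in U(k)$. Setting $z_v''' = z$ for all $v \in \Omega$, viewed through the diagonal embedding $W_c(k) \hookrightarrow W_c(\A_k)$, yields an adelic point $(z_v''')_{v \in \Omega} \in W_c(\A_k)$ approximating $(z_v)_{v \in \Omega}$ at the places of~$S_0$, and it belongs to $W_c(\A_k)^B$ by the reciprocity observation above. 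No substantive obstacle arises; the argument is simply a combination of strong approximation, which produces a rational point, with the reciprocity law, which disposes of the contribution of~$B$ at one stroke since $B$ consists of classes defined on~$p^{-1}(U)$.
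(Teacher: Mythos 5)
Your reciprocity observation is correct (classes of $B$ restricted to $W_c$ and evaluated at a rational point land in $\Br(k)$, so the sum of local invariants vanishes), and the device of forcing $c=p(z)$ into $U(k)$ by shrinking the neighbourhood at a distinguished place inside $p^{-1}(U)$ is fine. The gap lies in what ``dense in $W(\A_k)$'' means. Since $W$ is not proper, $W(\A_k)$ carries the restricted product topology with respect to the integral points $\sW(\sOint_v)$ of an integral model, so a basic neighbourhood of $(z_v)_{v \in \Omega}$ imposes, besides closeness at the finitely many places of $S_0$ (enlarged so that $z_v \in \sW(\sOint_v)$ outside it), the condition of lying in $\sW(\sOint_v)$ at \emph{every} place $v \notin S_0$. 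Strong approximation off $v_0$ controls your rational point $z$ at all places except $v_0$; at $v_0$ itself the diagonal point need not lie in $\sW(\sOint_{v_0})$, and since $v_0 \notin S_0$ this is precisely one of the conditions you must meet. As it stands, your argument only proves a weak-approximation-type statement at the places of $S_0$, not the density in $W(\A_k)$ asserted by Conjecture~$\cFplus$; indeed, if closeness at finitely many places were all that is required, the hypothesis would only need to be weak approximation, and the strong approximation assumption would play no role.

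This is exactly why the paper's proof does not take the diagonal adelic point: it chooses $v_0$ outside the finite set so that $v_0$ splits completely in all the fields $K_m$, applies strong approximation off $v_0$ to get $z \in \sW\big(\sOint_{S\cup\{v_0\}}\big)$ close to the given point at the places of $S$, and then \emph{replaces} the $v_0$-component by a local integral point $z'_{v_0} \in \sW(\sOint_{v_0})$ lying in the same fibre $W_c$, whose existence follows from Proposition~\ref{prop:localintegralpoint} because $v_0$ splits in the $L_m$. Once the $v_0$-component has been modified, the adelic point is no longer diagonal, so orthogonality to $B$ cannot be disposed of ``at one stroke'' by reciprocity; one needs the further observation that every $\beta \in B$ has vanishing local invariant at $v_0$ on any local point (again because $v_0$ splits completely in the $K_m$), whence $\sum_{v}\inv_v\beta(z'_v)=\sum_{v}\inv_v\beta(z)=0$. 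Both ingredients---the judicious choice of $v_0$ and the repair of the adelic point at $v_0$ compatible with the Brauer--Manin pairing---are missing from your proposal, and without them the proof does not go through.
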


\begin{proof}
Let $(z_v)_{v \in \Omega} \in W(\A_k)$.
Let $S \subset \Omega$ be a finite subset of places.  Let $p:\sW \to \P^1_{\sOint_S}$
be
the standard integral model of $p:W \to \P^1_k$
(see \textsection\ref{subsubsec:localintpoint}).
We choose~$S$ large enough that it satisfies
the assumptions of Proposition~\ref{prop:localintegralpoint}
and that $z_v \in \sW(\sOint_v)$ for all $v \in \Omega\setminus S$.
To prove the theorem, it suffices to produce $c \in U(k)$ and $(z'_v)_{v \in \Omega} \in W_c(\A_k)^B$
such that~$z'_v$ lies arbitrarily close to~$z_v$ for $v \in S$ and that $z'_v \in \sW(\sOint_v)$
for $v \in \Omega\setminus S$.

Fix a place $v_0 \in \Omega \setminus S$  that splits completely in~$K_m$ for all $m \in M$.
By assumption, there exists $z \in \sW\big(\sOint_{S \cup \{v_0\}}\big)$ lying arbitrarily
close to~$z_v$ for $v \in S$.    Set $c=p(z)$.
As~$v_0$ splits completely in~$L_m$ for all $m \in M$,
there exists $z'_{v_0} \in \sW(\sOint_{v_0})$
such that $p(z'_{v_0})=c$
(see Proposition~\ref{prop:localintegralpoint}).
Set $z'_v=z$ for $v \in \Omega \setminus \{v_0\}$.
For any $\beta \in B$, one has
 $\sum_{v \in \Omega} \inv_v \beta(z)=0$
(by the global reciprocity law)
and $\inv_{v_0} \beta(z'_{v_0})=\inv_{v_0} \beta(z)=0$
(since~$v_0$ splits completely in~$K_m$ for all $m \in M$),
hence $\sum_{v \in \Omega} \inv_v \beta(z'_v)=0$, as desired.
\end{proof}

In the next corollary,
case~(iv) is a
delicate
theorem of Browning and Schindler \cite{browningschindler} resting on methods
of analytic number theory.

\begin{cor}
\label{cor:fplusstrongapprox}
Let $\pi_+ \in \sP_+$.  Let $M' = \{m\in M\mkern1mu;\mkern2mu L_m \neq k(m)\}$.
Conjecture~$\cFplus$ holds for~$\pi_+$
under any of the following sets of assumptions:
\begin{enumerate}[label={\upshape(\roman*)}]
\item $\sum_{m\in M'} [k(m):k] \leq 2$;
\item $\sum_{m \in M'} [k(m):k]=3$ and $[L_m:k(m)]=2$ for all $m\in M'$;
\item $\sum_{m \in M'} [k(m):k]=3$ and there exists $m_0 \in M'$ such that $k(m_0)=k$ and such that
$[L_m:k(m)]=2$ for all $m\in M' \setminus \{m_0\}$;
\item $\sum_{m \in M'} [k(m):k]=3$, the set~$M'$ has cardinality~$2$, and $k=\Q$.
\end{enumerate}
\end{cor}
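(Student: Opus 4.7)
The plan is to deduce every case from Proposition~\ref{prop:strongapproximpliesfp} by verifying, case by case, that the variety~$W$ associated with~$\pi_+$ satisfies strong approximation off every finite place of~$k$. A preliminary application of Remark~\ref{rk:wwpiso} lets us replace $M$ by $M'$ without affecting~$W$ or the subgroup $B \subset \Br(p^{-1}(U))$, so that we may assume $L_m \neq k(m)$ for every $m \in M$ and the hypotheses of the strong approximation results to be quoted apply directly.

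For cases~(i) and~(ii), the required strong approximation off every finite place of~$k$ for~$W$ is exactly the content of \cite[Theorem~9.11]{hwfibration}, established there under precisely these small-rank hypotheses: total degree at most~$2$, or total degree exactly~$3$ with all splitting extensions quadratic. For case~(iv), the same property for~$W$ is the main theorem of Browning and Schindler, \cite[Theorem~1.2]{browningschindler}, obtained over~$\Q$ via the circle method in the setting of two non-split fibres of combined degree three. In each of these three subcases, feeding the cited strong approximation statement into Proposition~\ref{prop:strongapproximpliesfp} finishes the argument.

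Case~(iii) is the one that I expect to require the most care, since the splitting extension $L_{m_0}/k$ at the rational point $m_0$ is of arbitrary degree and hence the dimension of~$W$ is unbounded a~priori. My plan is again to prove strong approximation off any finite place for~$W$ and then invoke Proposition~\ref{prop:strongapproximpliesfp}. The natural route, already signalled in the introduction, is to move $m_0$ to~$\infty$ by a change of coordinates on~$\P^1_k$, so that the factor of~$W$ attached to $m_0$ takes the shape of a \emph{punctured affine cone} over $R^1_{L_{m_0}/k}\Gm$; strong approximation for this variety, which can be reached by embedding the norm-one torus into an appropriate simply connected group, can then be combined via a fibration-style argument with the residual rank-$2$ quadratic configuration (already handled in case~(i)), yielding the required strong approximation for the whole of~$W$. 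This is the step that I view as the main technical obstacle; it is precisely what is systematised in Remark~\ref{rem:puncturedaffinecone} and in the methods of \cite[\S 9.2]{hwfibration}, which I would invoke to close the argument.
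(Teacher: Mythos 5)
Your overall strategy is the paper's: reduce to $M=M'$ via Remark~\ref{rk:wwpiso} (note, though, that this reduction does affect the group $B$ -- it drops the generators attached to $m\in M\setminus M'$ with $K_m\neq k(m)$; this is harmless only because the reduction is used solely to identify the variety $W$, after which Proposition~\ref{prop:strongapproximpliesfp} is applied to the original $\pi_+$), then verify strong approximation off every finite place for $W$ case by case. Cases (i) and (iv) are essentially fine: in case (i) the paper identifies $W$ as the complement of a codimension-$2$ closed subset of affine space and quotes \cite{caoxustrongtoric,weistrongtoric}, and in case (iv) it quotes Browning--Schindler, so your citations discharge these cases.

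The gaps are in (ii) and, more seriously, (iii). For (ii), you cite \cite[Theorem~9.11]{hwfibration} as ``exactly'' giving strong approximation for $W$ (the punctured affine cone over an open subset of a four-dimensional quadric); but, as Remark~\ref{rem:puncturedaffinecone} records, the justification of precisely this strong approximation assertion in \emph{op.\ cit.}\ contained a gap, and Lemma~\ref{lem:puncturedaffinecone} ``is new and requires a proof, even for $n=4$'' -- so this citation cannot be used to close case (ii); the present paper has to prove Lemmas~\ref{lem:puncturedaffineconegeneralised} and~\ref{lem:puncturedaffinecone} for this purpose. For (iii), your geometric picture and mechanism are both off. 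After eliminating $(\lambda,\mu)$ using the two equations of the quadratic part, $W$ is \emph{not} a product of a ``factor attached to $m_0$'' with a rank-$2$ configuration; it is (up to removing a codimension-$2$ closed subset) the affine variety $N_{L_{m_0}/k}(z)=q(x_1,\dots,x_4)$ with $q$ a non-degenerate quaternary quadratic form, i.e.\ exactly the variety of Lemma~\ref{lem:puncturedaffineconegeneralised} with $n=4$. Moreover, strong approximation for ``a punctured affine cone over $R^1_{L_{m_0}/k}\Gm$'' obtained ``by embedding the norm-one torus into an appropriate simply connected group'' is not available: tori and their torsors never satisfy strong approximation off a finite place, and for general $L_{m_0}$ these torsors even carry nonconstant unramified Brauer classes (this is the theme of \S\ref{sec:unramified brauer classes}); the simply-connected-group phenomenon lives on the quadric side, not on the norm-torus side. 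What actually makes case (iii) work is the coupling of the norm form with a quadratic form in at least three variables, and the new Lemma~\ref{lem:puncturedaffineconegeneralised}: its proof computes $\bar k[U]^*$, $\Pic$, $\Br$ by induction on the rank of $L$, treats $n=2$ by Wei's theorem on toric varieties, and then performs an induction on $n$ via a fibration by hyperplane sections through a codimension-$2$ linear centre, applying Corollary~\ref{cor:concretecorollary} with parameters satisfying $L_m=k(m)$ (i.e.\ case (i) of the present corollary as input). None of this is supplied or correctly reduced to the literature in your sketch, and the sources you defer to (Remark~\ref{rem:puncturedaffinecone}, \cite[\S9.2]{hwfibration}) do not contain the needed statement -- the remark says the opposite. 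So cases (ii) and (iii) remain unproven as written.
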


\begin{proof}
We claim that in all cases, the variety~$W$ satisfies strong approximation off~$v_0$
for any finite place~$v_0$ of~$k$.
To prove this,
we may assume that $M=M'$, by Remark~\ref{rk:wwpiso}.
In case~(iv), the claim is then
 \cite[Corollary~2.1]{browningschindler}.
In the other cases, one observes,
as in the proof of \cite[Theorem~9.11]{hwfibration},
that the variety~$W$ is
isomorphic, in case~(i), to the complement of a codimension~$2$ closed subset in an affine
space, in case~(ii), to the punctured affine cone over the complement of a codimension~$2$
closed subset in a smooth projective quadric of dimension~$4$,
and in case~(iii), to a variety as in the statement of Lemma~\ref{lem:puncturedaffineconegeneralised}
below.
Such a variety satisfies
strong approximation off~$v_0$
for any finite place~$v_0$ of~$k$,
in case~(i) by \cite[Proposition~3.6]{caoxustrongtoric}
or \cite[Lemma~1.1]{weistrongtoric} (see also \cite[Lemma~1.8]{hwfibration}),
in cases~(ii) and~(iii) by
Lemma~\ref{lem:puncturedaffineconegeneralised}
and
Lemma~\ref{lem:puncturedaffinecone}.
\end{proof}

\begin{lem}
\label{lem:puncturedaffineconegeneralised}
Let~$k$ be a field of characteristic~$0$,
with algebraic closure~$\bar k$.
Let~$L$ be a nonzero \'etale algebra over~$k$,
of dimension~$m$.
Let $q \in k[x_1,\dots,x_n]$ be a non-degenerate quadratic form.
Denote by~$X$ the smooth closed subvariety
of $R_{L/k}(\A^1_L) \times \left(\A^n_k \setminus \{(0,\dots,0)\}\right)$
 defined by the equation
\begin{align}
N_{L/k}(z) = q(x_1,\dots,x_n)\rlap,
\end{align}
where~$z$ stands for a point of $R_{L/k}(\A^1_L)$ and $x_1,\dots,x_n$ are the coordinates of~$\A^n_k$.
Let $F \subset X$ be a closed subset of
codimension~$\geq 2$
and set $U=X \setminus F$.
\begin{enumerate}[label={\upshape(\roman*)}]
\setlength\itemsep{.5ex}
\item If $n \geq 1$, then~$U$ is geometrically integral.
\item If $n \geq 2$, then $\Br(U_{\bar k})=0$ and $\bar k[U]^*=\bar k^*$.
\item
If $n \geq 3$, then $\Pic(U_{\bar k})=0$.
If $n=2$, then $\Pic(U_{\bar k})\simeq \Z^{m-1}$ (ignoring Galois actions).
\item If $n\geq 3$, the pull-back map $\Br(k)\to \Br(U)$ is onto.
\item
If $n=2$ and both~$L$ and~$q$ are split, then
$\Gal(\bar k/k)$ acts trivially on $\Pic(U_{\bar k})$
and the pull-back map $\Br(k)\to \Br(U)$ is onto.
\end{enumerate}
Assume that~$k$ is a number field and let~$v_0$ be a place of~$k$.
\begin{enumerate}[label={\upshape(\roman*)}]
\setcounter{enumi}{5}
\item If $n=2$, the subset $U(k)$ is dense off~$v_0$ in $U(\A_k)^{\Br(U)}$
(see \cite[Definition~2.9]{wittenbergslc}).
\item If $n\geq 3$, the variety $U$ satisfies strong approximation off~$v_0$.
\end{enumerate}
\end{lem}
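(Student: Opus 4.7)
The plan is to exploit the action of the norm-one torus $T = R^1_{L/k}\Gm$ (of dimension $m-1$) on~$X$ by $t\cdot(z,x) = (tz,x)$. Setting $V_0 = (\A^n_k \setminus \{0\}) \setminus \{q=0\}$, the projection $\pi\colon X \to \A^n_k \setminus \{0\}$ onto the $x$\nobreakdash-coordinate exhibits $\pi^{-1}(V_0)$ as a $T$\nobreakdash-torsor over~$V_0$. Over~$\bar k$, the torus $T_{\bar k}$ splits as $\Gm^{m-1}$ and $\Pic(V_{0,\bar k}) = 0$, because $V_{0,\bar k}$ is the complement of the principal divisor $\{q=0\}$ in $\A^n_{\bar k}\setminus\{0\}$. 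Hence this torsor trivialises over~$\bar k$, giving $\pi^{-1}(V_0)_{\bar k} \simeq V_{0,\bar k} \times \Gm^{m-1}$. Assertion~(i) now follows, because $X$ is smooth of pure dimension $m+n-1$ while $X\setminus \pi^{-1}(V_0) = \pi^{-1}(\{q=0\}\setminus\{0\})$ has dimension at most $m+n-2$, so $X$ is geometrically integral, and $U$ is obtained from~$X$ by removing a closed subset of codimension~$\geq 2$.

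For (ii) and (iii), purity for units and the Picard group on the smooth scheme $X_{\bar k}$ allows us to replace~$U$ by~$X$. Let $D_i\subset X_{\bar k}$ denote the Zariski closure of $\{z_i = 0\}$. The localisation exact sequence for the inclusion $\pi^{-1}(V_0)_{\bar k}\hookrightarrow X_{\bar k}$ relates the units and Picard groups of $X_{\bar k}$ and $\pi^{-1}(V_0)_{\bar k}$ to the free abelian group on the irreducible components of the $D_i$. For $n\geq 3$, $\{q=0\}\setminus\{0\}$ is irreducible, each $D_i$ is irreducible, and the divisors of the invertible functions $q, z_1,\dots,z_{m-1}$ on $\pi^{-1}(V_0)_{\bar k}$ give a unimodular $m\times m$ matrix with respect to the basis $\{[D_i]\}$; this forces $\bar k[X_{\bar k}]^* = \bar k^*$ and $\Pic(X_{\bar k}) = 0$. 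For $n=2$ with $q = \ell_1 \ell_2$ split, each $D_i$ decomposes as $D_i^{(1)}\cup D_i^{(2)}$ along the two branches of $\{q=0\}\setminus\{0\}$, the unit group of $\pi^{-1}(V_0)_{\bar k}$ is freely generated modulo $\bar k^*$ by $\ell_1, \ell_2, z_1, \dots, z_{m-1}$, and the analogous computation yields $\bar k[X_{\bar k}]^* = \bar k^*$ and $\Pic(X_{\bar k}) \simeq \Z^{m-1}$, freely generated by $[D_1^{(1)}],\dots, [D_{m-1}^{(1)}]$. Finally, $\Br(X_{\bar k}) = 0$ for $n\geq 2$ follows by purity from the vanishing of $\Br(\pi^{-1}(V_0)_{\bar k})$, which is a standard computation for the product of $\Gm^{m-1}$ with the complement of a union of hyperplanes (for $n=2$) or of a quadric cone (for $n\geq 3$) in an affine space.

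Granted (ii) and (iii), the Hochschild--Serre spectral sequence, together with $\bar k[U]^* = \bar k^*$, yields an inclusion $\Br_1(U)/\Br_0(U) \hookrightarrow H^1(k, \Pic(U_{\bar k}))$. In case~(iv), the vanishing of $\Pic(U_{\bar k})$ forces $\Br_1(U) = \Br_0(U)$, and combined with $\Br(U_{\bar k}) = 0$ this gives $\Br(U) = \Br_0(U)$. In case~(v), the classes $[D_i^{(j)}]$ are all defined over~$k$ (since both~$L$ and~$q$ are split over~$k$), so the Galois action on the free group $\Pic(U_{\bar k}) \simeq \Z^{m-1}$ is trivial; hence $H^1(k, \Pic(U_{\bar k})) = 0$ and the same conclusion holds.

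The strong approximation assertions (vi) and (vii) will be the main obstacle. For~(vii), with $n\geq 3$, the plan is to combine strong approximation off~$v_0$ for the smooth affine quadrics $\{q(x) = c\}$ with $c\in k^*$ (a classical consequence of strong approximation for the associated spin group, available after a suitable enlargement ensuring that~$q$ is isotropic at a place outside~$\{v_0\}$) with strong approximation for the norm-one torus~$T$ on the fibres of~$\pi$, patching across the codimension-$2$ degeneration locus and using the triviality of $\Br(U)/\Br_0(U)$ from~(iv) to rule out any residual Brauer--Manin obstruction. For~(vi), with $n=2$, the variety~$U$ is a punctured affine cone involving a norm form equated to a binary quadratic form, and the density statement is obtained by descending along the $T$\nobreakdash-torsor $\pi^{-1}(V_0)\to V_0$ so as to reduce density in $U(\A_k)^{\Br(U)}$ to an approximation problem on the $2$\nobreakdash-dimensional base, incorporating the algebraic Brauer group computed in~(v) and invoking the strong-plus-mixed approximation techniques that are already available for such low-dimensional norm-cum-quadratic equations.
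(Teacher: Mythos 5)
Your computation of units and Picard groups via the localisation sequence for $\pi^{-1}(V_0)\hookrightarrow X$ is a reasonable and more direct alternative to the paper's induction on $m$ (which fibres over the coordinate $z_m$ and uses Tsen's theorem plus Hochschild--Serre), and your argument for the triviality of the Galois action in~(v) is fine. However, the Brauer part of~(ii) is not justified: you claim $\Br(X_{\bar k})=0$ ``by purity from the vanishing of $\Br(\pi^{-1}(V_0)_{\bar k})$''. Purity is irrelevant here, since $X\setminus\pi^{-1}(V_0)=\bigcup_i\{z_i=0\}$ is a divisor, not of codimension~$2$; what is true is only the injection $\Br(X_{\bar k})\hookrightarrow\Br(\pi^{-1}(V_0)_{\bar k})$. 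More seriously, the premise is false: $\pi^{-1}(V_0)_{\bar k}\simeq V_{0,\bar k}\times\Gm^{m-1}$ has a large Brauer group in general (already $\Br(\Gm^2_{\bar k})\neq 0$, via symbol classes such as $(z_i,z_j)$), so its vanishing is not a ``standard computation''. One must actually show that the classes of the open piece which extend to~$X$ are constant --- this is exactly the nontrivial step that the paper's induction (Tsen's theorem applied to the generic fibre over~$\A^1$, then Hochschild--Serre) carries out.

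The arithmetic statements (vi) and (vii), which are the real point of the lemma, are not proved in your proposal. For~(vi) you only gesture at ``descending along the $T$-torsor'' and ``strong-plus-mixed approximation techniques that are already available'', with no identifiable theorem behind them; the paper's proof exhibits $X$ (for $n=2$, after diagonalising~$q$) as a toric variety and invokes Wei's strong approximation theorem with Brauer--Manin obstruction for toric varieties. For~(vii) your plan rests on inputs that fail: norm-one tori do not satisfy strong approximation (so fibrewise approximation along $\pi$ breaks down), and strong approximation off~$v_0$ for the affine quadrics $\{q=c\}$ requires $\mathrm{Spin}(q)(k_{v_0})$ to be non-compact, which can fail since~$v_0$ is arbitrary and~$q$ may be anisotropic there; no ``enlargement'' repairs approximation off the fixed place~$v_0$. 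Moreover, ``patching across the degeneration locus'' over an $n$-dimensional base has no mechanism behind it. The paper instead proceeds by induction on~$n$: it blows up along a codimension-$2$ linear subspace through the origin, fibres over the pencil $\Lambda\simeq\P^1_k$ of hyperplanes, and applies its own fibration theorem (Corollary~\ref{cor:concretecorollary}, fed by the split-fibre cases of Conjecture~$\cFplus$ from Corollary~\ref{cor:fplusstrongapprox}~(i)), with~(vi) as the base case and~(iv) converting density in the Brauer--Manin set into genuine strong approximation. Without some substitute for this fibration step, your argument for (vi)--(vii) does not go through.
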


\begin{proof}
Let us first prove (ii)--(v).
As the restriction maps $H^q(X_{\bar k},\Gm)\to H^q(U_{\bar k},\Gm)$
and $H^q(X,\Gm)\to H^q(U,\Gm)$
are isomorphisms
for $q \leq 2$
(see \cite[Theorem~3.7.2~(i)]{ctskobook}),
we may assume that $U=X$.
We now argue by induction on~$m$, with $n\geq 2$ fixed but letting $k$, $L$, $q$ vary,
to prove (ii)--(v) under this assumption.
If $m=1$, then
$X=\A^n_k \setminus \{(0,\dots,0)\}$ and (ii)--(v) are true
(see \cite[Theorem~3.7.1, Theorem~6.1.1]{ctskobook}).
Let us fix $m>1$ and assume that (ii)--(v) hold for lower values of~$m$.
The Hochschild--Serre spectral sequence
shows that~(ii) and~(iii) imply~(iv); hence, in order to prove (ii)--(v), we
may assume, after extending the scalars,
that both~$q$ and~$L$ are split over~$k$.
The variety~$X$ is then isomorphic to the
subvariety of $\A^m_k \times \big(\A^n_k \setminus \{(0,\dots,0)\}\big)$,
with coordinates $z_1,\dots,z_m,x_1,\dots,x_n$,
defined by $\prod_{i=1}^m z_i = q(x_1,\dots,x_n)$.
Let $f:X \to \A^1_k$ be the projection to the coordinate~$z_m$.
The generic fibre~$X_\eta$ of~$f$
is a variety of the form appearing in
Lemma~\ref{lem:puncturedaffineconegeneralised}, associated with a split algebra of rank~$m-1$
and a split quadratic form of rank~$n$, over the function field~$K$ of~$\A^1_k$.
Letting~$\bar K$ be an algebraic closure of~$K$,
the induction hypothesis therefore guarantees that
$\bar K[X_\eta]^*=\bar K^*$ (hence $\bar k[X]^*=\bar k^*$, as~$f$ is surjective),
that $\Br(X_\eta \otimes_K \bar K)=0$,
that
$\Pic(X_\eta \otimes_K \bar K)=0$ if $n \geq 3$,
and that $\Pic(X_\eta \otimes_K \bar K)\simeq \Z^{m-2}$,
with trivial action of $\Gal(\bar K/K)$, if $n=2$.
As
 $\Br(K \otimes_k \bar k)=0$ (Tsen's theorem)
and  $H^1(K\otimes_k\bar k,\Pic(X_\eta \otimes_K \bar K))\simeq H^1(K\otimes_k\bar k,\Z^{m-2})=0$
if $n=2$,
one deduces, by the Hochschild--Serre spectral sequence,
that
\begin{align}
\label{eq:picxetabark}
\Pic(X_{\eta} \otimes_k \bar k)\simeq\begin{cases}
0 & \text{ if } n\geq 3\text{,} \\
\Z^{m-2} & \text{ if } n=2\text{,}
\end{cases}
\end{align}
with trivial action of $\Gal(\bar k/k)$,
and
that
$\Br(X_\eta \otimes_k \bar k)=0$.
It follows that $\Br(X_{\bar k})=0$,
since $\Br(X_{\bar k}) \subseteq \Br(X_\eta \otimes_k \bar k)$.
As the last part of~(v) results,
again by the Hochschild--Serre spectral sequence,
 from the rest of (ii)--(v),
we need only check that
\begin{align}
\label{eq:picxbarkneedtoprove}
\Pic(X_{\bar k})\simeq\begin{cases}
0 & \text{ if } n\geq 3\text{,} \\
\Z^{m-1} & \text{ if } n=2\text{,}
\end{cases}
\end{align}
with trivial action
of $\Gal(\bar k/k)$.
As~$X_{\bar k}$ is smooth and as $\Pic(\A^1_{\bar k})=0$, we have an exact sequence of $\Gal(\bar k/k)$\nobreakdash-modules
\begin{align}
\label{eq:npicxbarkpicxeta}
N \to \Pic(X_{\bar k}) \to \Pic(X_{\eta} \otimes_k \bar k) \to 0\rlap,
\end{align}
where~$N$ is the quotient of the group of divisors on~$X_{\bar k}$ supported on the fibres of~$f$ by
the subgroup $f^*\Div(\A^1_{\bar k})$.  
When $n \geq 3$, the fibres of~$f$ are geometrically integral, hence $N=0$,
which proves~\eqref{eq:picxbarkneedtoprove} in this case, in view of~\eqref{eq:picxetabark}.
Assume now that $n=2$. The fibres of~$f$ are then geometrically integral except $f^{-1}(0)$,
which is
the disjoint union of two geometrically integral subvarieties, 
say~$E$ and~$E'$.
Thus $N=\Z$, generated by the class of~$E_{\bar k}$, with trivial
action of~$\Gal(\bar k/k)$.
As $\bar K[X_\eta]^*=\bar K^*$, the class of~$E_{\bar k}$ in~$\Pic(X_{\bar k})$ has infinite order:
the first arrow of~\eqref{eq:npicxbarkpicxeta} is injective.
As any extension of $\Gal(\bar k/k)$\nobreakdash-modules of $\Z^{m-2}$
by~$\Z$ is equivariantly split, this completes the proof of~\eqref{eq:picxbarkneedtoprove}, and hence of (ii)--(v).

Let us prove~(i).
As~$U$ is smooth and nonempty, if it were not geometrically integral, it would not be geometrically connected,
which would contradict the last part of~(ii) if $n\geq 2$.
When $n=1$, the variety~$X_{\bar k}$ is isomorphic to the subvariety
of $\A^{m+1}_{\bar k}$, with coordinates $z_1,\dots,z_m,x$, defined by
$\prod_{i=1}^m z_i=x^2\neq 0$, which is indeed connected.

To prove~(vi), we note that if $n=2$, then~$X$ is a toric variety.
Indeed, after a change of variables, we may assume that~$q$ is diagonal, i.e.\ $q(x_1,x_2)=b(x_1^2-cx_2^2)$
for some $b,c\in k^*$;
the equations
$N_{L/k}(z)=x_1^2-cx_2^2 \neq 0$ then define a torus which acts on~$X$
with a dense open orbit.
As the variety~$X$ is toric and satisfies $\bar k[X]^*=\bar k^*$,
the main theorem of \cite{weistrongtoric} ensures the validity of~(vi).

We shall now prove that the assertion of~(vi) in fact holds for all $n \geq 2$,
by induction on~$n$.
In view of~(iv), this will establish~(vii).

Let $n \geq 3$ be such that the assertion of~(vi)
holds for smaller values of~$n$.
Choose a codimension~$2$ linear subspace
$D \subset \A^n_k$ containing $(0,\dots,0)$
such that $X \cap \big(R_{L/k}(\A^1_L) \times D\big)$ is smooth
and that
$F \cap \big(R_{L/k}(\A^1_L) \times D\big)$ has
codimension~$\geq 1$ in~$F$.
Write $\Lambda$ for the projective line parametrising
hyperplanes in~$\A^n_k$ containing~$D$.
Let $g:X' \to X$ be the blow-up of~$X$ along
$X \cap \big(R_{L/k}(\A^1_L) \times D\big)$
and $f:X' \to \Lambda$ the morphism obtained by composing~$g$, the second projection
$X \to \A^n_k$ and the rational map $\A^n_k \dashrightarrow \Lambda$ given by projection from~$D$.
The fibres of~$f$ are the varieties
$X \cap \big(R_{L/k}(\A^1_L) \times H\big)$
where $H$ ranges over the hyperplanes of~$\A^n_k$ containing~$D$.
As the restriction of~$q$ to any hyperplane of~$\A^n_k$ is a quadratic form of rank~$\geq n-2$,
and as $n \geq 3$, we deduce that the fibres of~$f$ are geometrically integral.
Let  $F'=g^{-1}(F)$
and $U'=X'\setminus F'$.
As~$F'$ has codimension~$\geq 2$ in~$X'$, the geometric generic fibre~$U'_{\etabar}$
of $f|_{U'}:U' \to \Lambda$
is a variety of the form appearing in
Lemma~\ref{lem:puncturedaffineconegeneralised}
(with~$n$ replaced by~$n-1$).
In particular,
it has no non-constant
invertible function, by~(ii), and the abelian group
$\Pic(U'_{\etabar})$ is torsion-free, by~(iii),
so that
$H^1_\et(U'_{\etabar},\Q/\Z)=0$;
and~(ii) ensures that $\Br(U'_{\etabar})=0$.
We can therefore apply Corollary~\ref{cor:concretecorollary} to~$f|_{U'}$ (recalling that $\Lambda \simeq \P^1_k$).
The parameters~$\pi_+$ which appear in the statement of Corollary~\ref{cor:concretecorollary}
satisfy $L_m=k(m)$ for all $m \in M$, so that Conjecture~$\cFplus$ holds for them,
by Corollary~\ref{cor:fplusstrongapprox}~(i).
We conclude that any point of $U'(\A_k)^{\Br(U')}$ can be approximated arbitrarily well
by a point of $U'_c(\A_k)^{\Br(U'_c)}$ for a rational point~$c$ of an arbitrary dense open
subset of~$\Lambda$.  By the induction hypothesis, this point
of $U'_c(\A_k)^{\Br(U'_c)}$
can in turn be approximated,
for the adelic topology off~$v_0$, by a rational point of~$U'_c$.  This shows that $U'(k)$ is
dense off~$v_0$ in $U'(\A_k)^{\Br(U')}$.
On the other hand, as the map $U'\to U$ induced by~$g$ is a blow-up with smooth centre,
it induces a surjection $U'(\A_k)^{\Br(U')} \twoheadrightarrow U(\A_k)^{\Br(U)}$.
Hence
 $U(k)$ is
dense off~$v_0$ in $U(\A_k)^{\Br(U)}$.
\end{proof}

\begin{lem}
\label{lem:puncturedaffinecone}
Let~$X$ be the punctured affine cone over an $n$\nobreakdash-dimensional smooth projective quadric,
over a number field~$k$.
Let~$v_0$ be a place of~$k$.
Let $U=X \setminus F$, where $F \subset X$ is a closed subset of
codimension~$\geq 2$.
If $n=2$, the subset $U(k)$ is dense off~$v_0$ in $U(\A_k)^{\Br(U)}$.
If $n\geq 3$, the variety $U$ satisfies strong approximation off~$v_0$.
\end{lem}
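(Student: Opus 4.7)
The plan is to identify $U$ with an open subvariety, of codimension $\geq 2$ complement, in one of the varieties covered by Lemma~\ref{lem:puncturedaffineconegeneralised}, and to then invoke that lemma directly. I would first write the defining quadratic form $q$ of the given smooth projective quadric $Q$ as an orthogonal sum $q = q_2 \perp q'$, where $q_2$ is a non-degenerate binary form and $q'$ is non-degenerate of rank $n$: such a decomposition exists by orthogonal diagonalisation of $q$ over $k$ and grouping of any two variables. The discriminant of $q_2$ determines a quadratic étale $k$\nobreakdash-algebra $L$ and, after a change of variables, one has $q_2(z_1,z_2) = a\,N_{L/k}(z_1 + \theta z_2)$ for some $a \in k^*$ and some generator $\theta$ of $L$ over $k$. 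Setting $\tilde q = -a^{-1} q'$, the equation $q = 0$ becomes $N_{L/k}(z) = \tilde q(x_1,\dots,x_n)$, so that the punctured affine cone $X$ identifies with the variety $\{N_{L/k}(z) = \tilde q(x)\} \setminus \{(0,0)\}$ inside $R_{L/k}(\A^1_L) \times \A^n_k$.

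Next, I would compare $X$ with the variety $X_0$ of Lemma~\ref{lem:puncturedaffineconegeneralised}, which is defined by the same equation but imposes the stronger non-vanishing condition $(x_1,\dots,x_n) \neq (0,\dots,0)$. The complement $X \setminus X_0$ is the locus $\{x = 0,\ z \neq 0\} \cap X$; since $x = 0$ forces $N_{L/k}(z) = 0$, this locus is empty when $L$ is a field and consists of two affine lines through the origin minus the origin when $L \simeq k \times k$, so it always has dimension $\leq 1$ and codimension $\geq n \geq 2$ in $X$. Setting $U' = U \cap X_0$, we thus have $U' = X_0 \setminus (F \cap X_0)$ with $F \cap X_0$ of codimension $\geq 2$ in $X_0$, while $U \setminus U'$ has codimension $\geq 2$ in the smooth variety $U$. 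Hence Lemma~\ref{lem:puncturedaffineconegeneralised} applies directly to $U'$ with its $n$\nobreakdash-parameter equal to the integer $n$ here.

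For $n \geq 3$, I would then deduce strong approximation off $v_0$ for $U$ from the same statement for $U'$ supplied by Lemma~\ref{lem:puncturedaffineconegeneralised}(vii): given an element of $U(\A_k^{v_0})$ together with a basic open neighbourhood, one replaces it at the finitely many relevant places by nearby $k_v$\nobreakdash-points of $U'$ (using $v$\nobreakdash-adic density of $U'(k_v)$ in $U(k_v)$, which follows from smoothness of $U$ and the codimension bound on $U \setminus U'$) and, at almost all remaining places, by $\sOint_v$\nobreakdash-integral points of a compatibly chosen integral model $\sU'$ of $U'$ contained in the one of $U$; strong approximation for $U'$ then produces a rational point of $U'(k) \subseteq U(k)$ with the required properties. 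For $n = 2$, purity of the Brauer group for smooth schemes (\cite[Corollaire~6.2]{grothbr3}) gives $\Br(U) \simeq \Br(U')$, and combining $v$\nobreakdash-adic density of $U'(k_v)$ in $U(k_v)$ with the vanishing of $\Br(\sOint_v)$ at almost all $v$, one replaces an element of $U(\A_k)^{\Br(U)}$ by a nearby element of $U'(\A_k)^{\Br(U')}$; Lemma~\ref{lem:puncturedaffineconegeneralised}(vi) applied to $U'$ then supplies the desired rational point. The only genuine obstacle lies in the initial identification of $X$ with $X_0$ up to a codimension $\geq 2$ locus, a routine manipulation of quadratic forms; after that, both conclusions follow mechanically, since Lemma~\ref{lem:puncturedaffineconegeneralised} has already absorbed the serious analytic and Brauer-theoretic content.
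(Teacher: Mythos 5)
Your proposal is correct and follows essentially the same route as the paper: the paper also rewrites the quadric (after diagonalising and normalising one coefficient) so that a binary subform becomes $N_{L/k}$ for a quadratic étale algebra $L$, identifies the variety of Lemma~\ref{lem:puncturedaffineconegeneralised} as an open subset of $X$ with complement of codimension $\geq 2$, and transfers the conclusions of that lemma to $U$. The only difference is that you spell out the routine transfer (purity for $\Br$, $v$\nobreakdash-adic density, integral models) which the paper leaves implicit.
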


\begin{rmk}
\label{rem:puncturedaffinecone}
In the case $n=4$,
Lemma~\ref{lem:puncturedaffinecone}
was asserted and used in the proof of
 \cite[Theorem~9.11]{hwfibration}.
However,
a gap in the justification given there
(as well as a very simple fix for the proof of that theorem,
by working around Lemma~\ref{lem:puncturedaffinecone})
was kindly pointed out to us by Yang Cao.
Thus, Lemma~\ref{lem:puncturedaffinecone}
is new
 and requires a proof,
even for $n=4$. The proof we give also fills
the aforementioned gap.
\end{rmk}

\begin{proof}[Proof of Lemma~\ref{lem:puncturedaffinecone}]
Choose, for the underlying quadric, a diagonal equation
of the form $\sum_{i=1}^{n+2} a_ix_i^2=0$ with $a_{n+2}=-1$.
Set
$q(x_1,\dots,x_n)=\sum_{i=1}^n a_ix_i^2$ and $L=k[t]/(t^2-a_{n+1})$.
The variety~$X^0$ associated with~$L$ and~$q$ by Lemma~\ref{lem:puncturedaffineconegeneralised}
is an open subvariety of~$X$ whose complement has codimension~$\geq 2$ in~$X$.
Hence the conclusions of Lemma~\ref{lem:puncturedaffineconegeneralised}
for $U^0 = X^0 \cap U$ imply the same conclusions for~$U$.
\end{proof}

\subsection{From Schinzel's hypothesis}
\label{subsec:from schinzel}

The next theorem strengthens \cite[Theorem~9.6]{hwfibration},
where the same conclusion was obtained for Conjecture~$\cF$
rather than Conjecture~$\cFconst$.
In its statement, almost abelian extensions are meant in
 the sense of \cite[Definition~9.4]{hwfibration}.
Let us recall that
abelian extensions as well as cubic extensions are almost abelian
(see the end of~\textsection\ref{sec:intro} for the definition).
We refer the reader to \cite[\textsection9.2.1]{hwfibration} for the statement
of Schinzel's hypothesis in its homogeneous form~$(\mathrm{HH}_1)$,
which follows from the more common Schinzel's hypothesis~$(\mathrm{H})$
and which takes,
as input, a finite collection of homogeneous polynomials in $k[\lambda,\mu]$.
Given a closed point $m \in \A^1_k$, we set $P_m(\lambda,\mu)=N_{k(m)/k}(\lambda-a_m\mu)$.

\begin{thm}
\label{th:hhfconst}
Let $\pi_+ \in \sP_+$.
Assume that for each $m\in M$, the extension $L_m/k(m)$ is almost abelian.
If Schinzel's hypothesis~$(\mathrm{HH}_1)$
holds for the set of homogeneous polynomials $(P_m(\lambda,\mu))_{m \in M'}$,
where $M' = \{m\in M\mkern1mu;\mkern2mu L_m \neq k(m)\}$,
then Conjecture~$\cFconst$ holds for~$\pi_+$.
\end{thm}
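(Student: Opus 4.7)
The plan is to refine the argument of \cite[Theorem~9.6]{hwfibration}, which under the same hypotheses establishes Conjecture~$\cF$ for the underlying triple~$\pi$. Given an adelic point $(z_v) \in W(\A_k)$ and a sufficiently large finite set $S$ of places of~$k$, that argument applies Schinzel's hypothesis~$(\mathrm{HH}_1)$ to the polynomials $(P_m)_{m \in M'}$ to produce a rational point $c=[c_1:c_2] \in U(k)$, arbitrarily close to $p(z_v)$ for $v \in S$, such that for each $m \in M'$ the element $P_m(c_1, c_2) \in \sOint_{k(m),S}$ factors as $u_m \pi_m$ with $u_m$ an $S$\nobreakdash-unit and $\pi_m$ a prime element. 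The almost-abelian hypothesis on $L_m/k(m)$ ensures that the place of~$k$ below $\pi_m$ splits completely in the Galois closure of $L_m/k(m)$, which makes the norm equation locally solvable at every place; a lift $(z'_v) \in W_c(\A_k)$ is then built place by place, using the implicit function theorem at $v \in S$ and Hensel's lemma outside.

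To upgrade $\cF$ to $\cFconst$, the additional task is to arrange that $(z'_v) \in W_c(\A_k)^{B_{\const}}$. A generator of $B_{\const}$ has the form $\beta=\Cores_{L_m/k}(z_m,\chi)$ with $\chi=\chi_0|_{L_m}$ for some $\chi_0 \in H^1(k(m),\Q/\Z)$ satisfying $\chi_0|_{K_m}=0$; by the projection formula, $\beta=\Cores_{k(m)/k}\big(b_m(\lambda-a_m\mu),\chi_0\big)$ in $\Br(p^{-1}(U))$. At every place $v \notin S$ at which $b_m(\lambda_v-a_m\mu_v)$ is a unit at all places of $k(m)$ above $v$, the local invariant $\inv_v \beta(z'_v)$ vanishes since $\chi_0$ is unramified there. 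By the controlled factorisation of $b_m(c_1 - a_m c_2)$, the only place of $k$ outside $S$ contributing nontrivially is the one below $\pi_m$, and its contribution equals $\chi_0(\Frob_{\pi_m})\in\Q/\Z$ under the identification $C_m=\Hom(\Gal(K_m/L_m),\Q/\Z)$. Combined with Harari's formal lemma to absorb the contribution at places of $S$ into the initial approximation, the total sum $\sum_v \inv_v\beta(z'_v)$ is then governed by $\chi_0(\Frob_{\pi_m})$.

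I would force this residual contribution to vanish by strengthening the Chebotarev conditions imposed in the Schinzel application: demand that $\pi_m$ split completely in the abelian extension $K_m/L_m$. Since $K_m/L_m$ is abelian, this splitting is a congruence condition on $\pi_m$ modulo the conductor of $K_m/L_m$, which can be added to the existing congruence conditions on $c$ without disturbing the Schinzel setup, as Schinzel's hypothesis for a single polynomial already entails Dirichlet's theorem on primes in arithmetic progressions. Under this reinforcement, $\Frob_{\pi_m}$ is trivial in $\Gal(K_m/L_m)$, hence $\chi_0(\Frob_{\pi_m})=0$ for every $\chi_0$ killed by $K_m$, and the $B_{\const}$ orthogonality follows.

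The main obstacle, I expect, is weaving the $K_m/L_m$-splitting into the almost-abelian reduction of \cite[Theorem~9.6]{hwfibration}: the conductors of the extensions $K_m/L_m$ must be absorbed into $S$ without upsetting the delicate balance of approximation, sign, and unit conditions on $c$, and one must verify that the twisting manoeuvre used in \emph{loc.\ cit.}\ to pass from the abelian to the almost-abelian non-abelian case remains compatible with the extra Chebotarev data and preserves the surjectivity of the auxiliary cohomological maps that govern the choice of parameters.
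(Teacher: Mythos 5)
Your key new step does not work as written, and the bookkeeping that motivates it is off. Write a generator of $B_{\const}$ as $\beta=\Cores_{L_m/k}(z_m,\chi)$ with $\chi=\chi_0|_{L_m}$, $\chi_0\in H^1(k(m),\Q/\Z)$, so that by the projection formula $\beta=\Cores_{k(m)/k}\big(b_m(\lambda-a_m\mu),\chi_0\big)$. The contributions $\inv_v\beta(z'_v)$ at $v\in S$ are forced, by continuity, to equal $\inv_v\beta(z_v)$ for the \emph{arbitrary} initial adelic point being approximated; they are nonzero in general and cannot be ``absorbed by Harari's formal lemma'', which plays no role here. Hence orthogonality to $B_{\const}$ requires the term at the place~$w$ of~$k(m)$ determined by $\pi_m$, namely $\chi_0(\Frob_w)$, to equal \emph{minus} the sum of the $S$-terms, not to vanish. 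Worse, the extra condition you propose cannot be superimposed: once $S$ contains the places ramified in~$K_m$ and the congruence/approximation conditions on $(c_1,c_2)$ at the places of~$S$ are fixed, global reciprocity applied to the element $b_m(c_1-a_mc_2)\in k(m)^*$ (a unit outside $S\cup\{w\}$) already determines $\chi_0(\Frob_w)$ as minus the sum of the local symbols at the places of~$S$; demanding in addition that $w$ split in the cyclic subextension of~$K_m$ cut out by~$\chi_0$ (so that $\chi_0(\Frob_w)=0$) is in general incompatible with those conditions, so the strengthened Chebotarev requirement is not a condition one is free to add.

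The same reciprocity computation shows that \emph{no} extra condition is needed, and this is how the paper argues. Writing $p=r\circ q$ with $q:W\to\A^2_k\setminus\{(0,0)\}$ and $r:\A^2_k\setminus\{(0,0)\}\to\P^1_k$, the displayed identity shows $B_{\const}\subseteq q^*\Br(r^{-1}(U))$; and the proof of \cite[Theorem~9.6]{hwfibration} in fact produces adelic points lying in a fibre of~$q$ over a \emph{rational} point $(c_1,c_2)$ of $\A^2_k\setminus\{(0,0)\}$ (this is exactly what Schinzel's hypothesis $(\mathrm{HH}_1)$ delivers). For such an adelic point, each $\beta\in B_{\const}$ evaluates, place by place, to the localisations of the single global class $\Cores_{k(m)/k}\big(b_m(c_1-a_mc_2),\chi_0\big)\in\Br(k)$, so the sum of invariants vanishes by the reciprocity law: your $S$-contributions and the term $\chi_0(\Frob_w)$ cancel automatically. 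The correct repair of your argument is therefore to drop the extra splitting condition entirely and invoke this automatic cancellation (equivalently, the fact that the relevant classes are pulled back from the base $\A^2_k\setminus\{(0,0)\}$ and are evaluated over a rational point of it), rather than to try to force $\Frob_w$ to be trivial.
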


\begin{proof}
Let $U = \P^1_k \setminus M$
and let $q:W \to \A^2_k \setminus \{(0,0)\}$ and $r:\A^2_k \setminus \{(0,0)\}\to \P^1_k$ denote the natural projections,
so that $p=r \circ q$.
Under our assumptions,
Conjecture~$\cF$ holds for~$\pi$, by Remark~\ref{rk:wwpiso} and \cite[Theorem~9.6, Proposition~9.9]{hwfibration}.
The proof given in \emph{loc.\ cit.}\ is in fact a proof of the more precise result that
the subset of $W(\A_k)$ consisting of the adelic points that lie in a fibre of~$q$
over a rational point of $\A^2_k \setminus \{(0,0)\}$
is dense in $W(\A_k)$.
On the other hand,
for any $m \in M$ and any $\chi \in H^1(k(m),\Q/\Z)$,
we have
\begin{align*}
\Cores_{L_m/k}(z_m,\chi)=\Cores_{k(m)/k}(N_{L_m/k(m)}(z_m),\chi)=\Cores_{k(m)/k}(b_m(\lambda-a_m\mu),\chi)\rlap{,}
\end{align*}
hence $B_{\const} \subseteq q^*\Br(r^{-1}(U))$
and any adelic point of~$p^{-1}(U)$ that lies in a fibre of~$q$
over a rational point of $\A^2_k \setminus \{(0,0)\}$
is therefore
automatically orthogonal to $B_{\const}$ with respect to the Brauer--Manin pairing.
\end{proof}

\begin{cor}
\label{cor:hh1fplus}
Let $\pi_+ \in \sP_+$.
Assume that for each $m\in M$, at least one of the following conditions is satisfied:
\begin{enumerate}
\item the extension $L_m/k(m)$ is cyclic, or it is almost abelian but not abelian;
\item the extension $L_m/k(m)$ is abelian and the extension $K_m/L_m$ is trivial.
\end{enumerate}
Let $M' = \{m\in M\mkern1mu;\mkern2mu L_m \neq k(m)\}$
and assume that Schinzel's hypothesis~$(\mathrm{HH}_1)$
holds for the set of homogeneous polynomials $(P_m(\lambda,\mu))_{m \in M'}$.
Then Conjecture~$\cFplus$ holds for~$\pi_+$.
\end{cor}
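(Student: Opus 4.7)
The plan is to combine Theorem~\ref{th:hhfconst}, which under Schinzel's hypothesis $(\mathrm{HH}_1)$ gives Conjecture~$\cFconst$ whenever every $L_m/k(m)$ is almost abelian, with Corollary~\ref{cor:comparisonFconstFplus}, which upgrades Conjecture~$\cFconst$ to Conjecture~$\cFplus$ under a local hypothesis on each $m \in M$. The only work is to verify that the hypotheses of both results are satisfied under either of the two alternatives in the statement.

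First I would observe that under either condition~(1) or condition~(2), the extension $L_m/k(m)$ is almost abelian: cyclic and abelian extensions are abelian (hence almost abelian), and almost abelian is tautologically almost abelian. Theorem~\ref{th:hhfconst} therefore applies to $\pi_+$ and yields Conjecture~$\cFconst$ for~$\pi_+$.

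Next, I would verify for each $m \in M$ that at least one of the two conditions of Corollary~\ref{cor:comparisonFconstFplus} holds. If $m$ falls under condition~(2), namely $K_m = L_m$, then condition~\ref{it:extension-is-trivial} of Corollary~\ref{cor:comparisonFconstFplus} is satisfied. If $m$ falls under condition~(1), I need condition~\ref{it:unramified-is-constant}, i.e.\ $\Br_{\nr}(T) = \Br_0(T)$ for the norm torus $T = R^1_{L_m/k(m)}\Gm$. When $L_m/k(m)$ is cyclic, this is case~(ii) of Proposition~\ref{prop:criterion-for-brauer}. When $L_m/k(m)$ is almost abelian but not abelian, Remark~\ref{rk:brnrtrivial}~(i) shows that $L_m/k(m)$ has prime degree, so that case~(iii) of Proposition~\ref{prop:criterion-for-brauer} applies.

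With the hypothesis of Corollary~\ref{cor:comparisonFconstFplus} verified, we conclude that Conjecture~$\cFconst$ for~$\pi_+$ implies Conjecture~$\cFplus$ for~$\pi_+$, completing the proof. There is no significant obstacle here since every ingredient is already in place in the paper; the entire argument is essentially an assembly exercise, and the only point requiring a moment's care is noticing that ``almost abelian but not abelian'' forces prime degree so that Proposition~\ref{prop:criterion-for-brauer}~(iii) is applicable.
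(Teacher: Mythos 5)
Your proposal is correct and follows exactly the paper's own argument: the paper likewise deduces Conjecture~$\cFconst$ from Theorem~\ref{th:hhfconst} and then upgrades to Conjecture~$\cFplus$ via Corollary~\ref{cor:comparisonFconstFplus}, invoking Proposition~\ref{prop:criterion-for-brauer} together with Remark~\ref{rk:brnrtrivial} (the prime-degree observation for almost abelian non-abelian extensions) to handle case~(1). Your write-up merely makes explicit the case-checking that the paper leaves to the cited references.
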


\begin{proof}
By Theorem~\ref{th:hhfconst},
Conjecture~$\cFconst$ holds for~$\pi_+$.
By Corollary~\ref{cor:comparisonFconstFplus},
Proposition~\ref{prop:criterion-for-brauer} and
Remark~\ref{rk:brnrtrivial}, Conjecture~$\cFconst$ for~$\pi_+$
implies Conjecture~$\cFplus$ for~$\pi_+$.
\end{proof}

The work of
Heath-Brown and Moroz~\cite{hbm}
on primes represented by binary cubic forms
implies the validity of
Schinzel's hypothesis~$(\mathrm{HH}_1)$
for a single polynomial of degree~$3$ with coefficients in~$\Q$
(see \cite[Remark~9.7]{hwfibration}).
We thus obtain the following corollary (to be compared with
Corollary~\ref{cor:fplusstrongapprox}~(iv)):

\begin{cor}
\label{cor:heathbrownmoroz}
Let $\pi_+ \in \sP_+$.
Assume that $k=\Q$, that there is a unique $m \in M$ such that $L_m \neq k(m)$,
 that this~$m$ is such that $[k(m):k]=3$,
and that for this~$m$, the extension $L_m/k(m)$ is  cyclic or is almost abelian but non-abelian.
Then Conjecture~$\cFplus$ holds for~$\pi_+$.
\end{cor}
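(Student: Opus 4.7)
The plan is to derive this as a direct application of Corollary~\ref{cor:hh1fplus}, using the Heath-Brown--Moroz theorem as the source of Schinzel's hypothesis~$(\mathrm{HH}_1)$. The two things to verify are the local condition on each extension $L_{m'}/k(m')$ for $m' \in M$, and the validity of $(\mathrm{HH}_1)$ for the relevant family of binary forms.

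First I would check the local conditions in Corollary~\ref{cor:hh1fplus}. For the distinguished closed point $m$ with $[k(m):\Q]=3$, the assumption that $L_m/k(m)$ is either cyclic or almost abelian but non-abelian is precisely condition~(1) of that corollary. For every other $m' \in M$, the equality $L_{m'}=k(m')$ from the hypothesis of the present corollary makes the extension $L_{m'}/k(m')$ trivial, hence cyclic, so condition~(1) again applies.

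Next I would verify Schinzel's hypothesis. The index set that governs the polynomials in Corollary~\ref{cor:hh1fplus} is $M' = \{m' \in M \mkern1mu;\mkern2mu L_{m'} \neq k(m')\}$, and by hypothesis this is the singleton $\{m\}$. Therefore $(\mathrm{HH}_1)$ is needed only for the single polynomial $P_m(\lambda,\mu) = N_{k(m)/\Q}(\lambda - a_m\mu)$. Since $a_m$ is the image in $k(m)$ of the coordinate $t$ and $[k(m):\Q]=3$, the element $a_m$ generates $k(m)$ over $\Q$, so $P_m$ is an irreducible homogeneous polynomial of degree~$3$ with coefficients in $\Q$. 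The theorem of Heath-Brown and Moroz~\cite{hbm} on primes represented by binary cubic forms delivers $(\mathrm{HH}_1)$ for exactly such a polynomial, as explained in \cite[Remark~9.7]{hwfibration}.

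With both ingredients in hand, one applies Corollary~\ref{cor:hh1fplus} and concludes that Conjecture~$\cFplus$ holds for~$\pi_+$. There is no real obstacle at this stage: all the work is already packed into Corollary~\ref{cor:hh1fplus} (which itself rests on Theorem~\ref{th:hhfconst}, the comparison Corollary~\ref{cor:comparisonFconstFplus}, and the torus-theoretic computations of Proposition~\ref{prop:criterion-for-brauer} and Remark~\ref{rk:brnrtrivial}) and, on the analytic side, in~\cite{hbm}.
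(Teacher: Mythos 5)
Your proposal is correct and follows exactly the paper's route: the paper deduces this corollary directly from Corollary~\ref{cor:hh1fplus}, noting that the hypotheses put every $m'\in M$ under condition~(1) of that corollary (the trivial extensions being cyclic) and that $M'=\{m\}$, so Schinzel's hypothesis~$(\mathrm{HH}_1)$ is only needed for the single irreducible binary cubic form $P_m$, where it is supplied by Heath-Brown--Moroz~\cite{hbm} (cf.\ \cite[Remark~9.7]{hwfibration}). Nothing is missing.
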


\subsection{From additive combinatorics}
\label{subsec:additivecomb}

Under the assumption that $k=\Q$,
Conjecture~$\cF$ is known to hold
for triples $\pi \in \sP$ such that~$M$ only consists of rational points.
This was proved by Matthiesen \cite{matthiesen}
(see \cite[Theorem~9.14]{hwfibration}),
following her work with Browning \cite{browningmatthiesen} and
using the methods of additive combinatorics developed by Green, Tao and Ziegler
\cite{gt0, gt1, gt2, gtz3}.
The next theorem strengthens this result, by replacing Conjecture~$\cF$
with Conjecture~$\cFplus$ while allowing non-rational points in~$M$ with trivial
extension $L_m/k(m)$ (and arbitrary finite abelian extensions $K_m/L_m$).

\begin{thm}
\label{thm:lilian}
Let $\pi_+\in\sP_+$.
Assume that $k=\Q$ and that for each $m \in M$,
at least one of the extensions $L_m/k(m)$ and $k(m)/k$ is trivial.
Then Conjecture~$\cFplus$ holds for~$\pi_+$.
\end{thm}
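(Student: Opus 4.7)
The plan is to deduce the theorem from Matthiesen's theorem~\cite{matthiesen}, which establishes Conjecture~$\cF$ whenever $M$ consists solely of rational points of~$\P^1_\Q$, by combining it with the comparison machinery of~\textsection\ref{sec:comparing} to upgrade $\cF$ to~$\cFplus$. I partition $M = M_r \sqcup M_t$ with $M_r = M \cap \P^1(\Q)$ and $M_t$ consisting of the non-rational points, which by hypothesis all satisfy $L_m^0 = k(m)$. If $M_r = \emptyset$, then $\{m \in M \mkern1mu;\mkern2mu L_m^0 \neq k(m)\} = \emptyset$, so Corollary~\ref{cor:fplusstrongapprox}~(i) applies directly; henceforth I fix a rational point $m_0 \in M_r$.

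Next, I appeal to Corollary~\ref{cor:fplusimpliesfplus} to replace~$\pi_+^0$ by an enlarged quadruple~$\pi_+$ designed so that the comparison results apply cleanly. Let
\begin{align*}
N = \sum_{m \in M_t}\Cores_{k(m)/\Q}\bigl(\Hom(\Gal(K_m^0/k(m)),\Q/\Z)\bigr) \subseteq H^1(\Q,\Q/\Z)
\end{align*}
and let $E/\Q$ be the finite abelian extension with $\Hom(\Gal(E/\Q),\Q/\Z) = N$. For arbitrary $(b_m) \in \prod_{m \in M} k(m)^*$, I set $L_{m_0} = K_{m_0}^0 \cdot E$ and $K_{m_0} = L_{m_0}$; $L_m = K_m^0$ and $K_m = L_m$ for each $m \in M_r \setminus \{m_0\}$; and keep $L_m = k(m)$ with $K_m = K_m^0$ for each $m \in M_t$. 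These choices satisfy the embedding condition of Corollary~\ref{cor:fplusimpliesfplus}, and $C_m$ vanishes for every $m \in M_r$ while remaining unchanged for $m \in M_t$.

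In this enlarged setting, the contributions from~$M_r$ to the images of~\eqref{eq:grouplm} and~\eqref{eq:groupkm} under~\eqref{eq:sumcorestrictions} coincide (since $K_m = L_m$), and the only discrepancy---coming from $M_t$'s contribution to~\eqref{eq:groupkm}, the contribution from $M_t$ to~\eqref{eq:grouplm} being zero because $L_m = k(m)$---is precisely~$N$, which lies inside $\Ker\bigl(H^1(\Q,\Q/\Z) \to H^1(L_{m_0},\Q/\Z)\bigr)$ by construction of~$E$ (as $E \subseteq L_{m_0} \cap \Q^{\ab}$). Proposition~\ref{prop:fromFtoFconst} therefore yields $B_{\const} \subseteq p^*\Br(U)$, reducing Conjecture~$\cFconst$ for this~$\pi_+$ to Conjecture~$\cF$ for the underlying triple~$\pi$. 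By Remark~\ref{rk:wwpiso}, $W$ coincides with the variety $W'$ for the triple~$\pi'$ obtained by dropping the points of~$M_t$, and since $M_t$ contains no rational points one has $U(\Q) = U'(\Q)$; hence~$\cF$ for~$\pi$ is equivalent to~$\cF$ for~$\pi'$, which is Matthiesen's theorem.

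Finally, I upgrade $\cFconst$ to~$\cFplus$ via Corollary~\ref{cor:comparisonFconstFplus}: condition~(ii) holds for every $m \in M_r$ because $K_m = L_m$, and condition~(i) holds vacuously for every $m \in M_t$ because $L_m = k(m)$ makes the norm torus $R^1_{L_m/k(m)}\Gm$ trivial. Corollary~\ref{cor:fplusimpliesfplus} then returns~$\cFplus$ for~$\pi_+^0$. The only non-routine ingredient in this scheme is the construction of~$E$, whose role is precisely to absorb the residual Brauer-theoretic contribution from the non-rational locus~$M_t$ into the rational locus via the enlargement of~$L_{m_0}$; the hypothesis that every non-rational point of~$M$ satisfies $L_m^0 = k(m)$ is essential here, since it ensures that no such contribution arises from~$M_t$ in~\eqref{eq:grouplm}, so that the enlargement of a single rational point~$m_0$ suffices.
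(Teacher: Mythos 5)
Your proposal is correct and follows essentially the same route as the paper: treat the case with no relevant enlargement via Corollary~\ref{cor:fplusstrongapprox}~(i), enlarge the fields at a rational point $m_0$ and invoke Corollary~\ref{cor:fplusimpliesfplus}, get Conjecture~$\cF$ from Matthiesen's theorem together with Remark~\ref{rk:wwpiso}, pass to~$\cFconst$ by the corestriction-image criterion, and then to~$\cFplus$ using that at each $m$ one of $K_m/L_m$ or $L_m/k(m)$ is trivial. The only (harmless) differences are cosmetic: you unwind Corollary~\ref{cor:comparisonFFconst} by constructing the absorbing abelian extension~$E$ explicitly from Proposition~\ref{prop:fromFtoFconst} (made simpler because you set $K_m=L_m$ at all rational points, so only the non-rational locus contributes), and you use Corollary~\ref{cor:comparisonFconstFplus} where the paper cites Remark~\ref{rks:surj99}~(ii).
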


\begin{proof}
Let
$\pi_+^0=(M, (L_m^0)_{m \in M}, (b_m^0)_{m \in M}, (K_m^0)_{m\in M}) \in \sP_+$
satisfy the assumptions of the theorem. We shall prove Conjecture~$\cFplus$ for~$\pi_+^0$.

Set
 $M' = \{m\in M\mkern1mu;\mkern2mu L_m^0 \neq k(m)\}$.
If $M'=\emptyset$, then
Conjecture~$\cFplus$ holds for~$\pi^0_+$ by Corollary~\ref{cor:fplusstrongapprox}~(i).
Otherwise, we choose $m_0 \in M'$ and note that $k(m_0)=k$ by assumption.
For $m \in M \setminus M'$,
let us set
 $K_m=K_m^0$ and $L_m=L_m^0=k(m)$.
For $m \in M' \setminus \{m_0\}$,
let us set $K_m=L_m=K_m^0$.
Let~$L_0/k$ denote the field extension given by
Corollary~\ref{cor:comparisonFFconst} applied to the extensions~$K_m/L_m/k(m)$
for $m \in M \setminus \{m_0\}$.
Finally, let us choose a finite extension~$L_{m_0}$ of~$K^0_{m_0}$ in which~$L_0$ can be embedded
$k$\nobreakdash-linearly, and set $K_{m_0}=L_{m_0}$.

According to Corollary~\ref{cor:fplusimpliesfplus}, we will be done if we prove Conjecture~$\cFplus$
for the parameter $\pi_+=(M, (L_m)_{m \in M}, (b_m)_{m \in M}, (K_m)_{m\in M})$
for any choice of $(b_m)_{m\in M}\in \prod_{m\in M}k(m)^*$.
Let us fix $(b_m)_{m\in M}$.  By Matthiesen's theorem \cite[Theorem~9.14, Proposition~9.9]{hwfibration}
and Remark~\ref{rk:wwpiso},
Conjecture~$\cF$ holds for~$\pi_+$.
By the definition of~$L_0$
(see Corollary~\ref{cor:comparisonFFconst}),
it follows
that Conjecture~$\cFconst$ holds for$~\pi_+$.
By Remark~\ref{rks:surj99}~(ii),
we conclude that Conjecture~$\cFplus$ holds for~$\pi_+$, as desired.
\end{proof}

\section{Applications}
\label{sec:newapplications}

As was the case for Conjecture~$\cF$ in~\cite{hwfibration}, our motivation
for Conjecture~$\cFplus$ ultimately lies in the following question (which
slightly refines Question~\ref{q:fibration-intro} by incorporating a
Hilbert subset into its statement):

\begin{question}
\label{q:fibration}
Let~$X$ be a smooth, proper, irreducible variety over a number field~$k$.
Let $f:X \to \P^1_k$ be a dominant morphism whose geometric generic fibre
is rationally connected.
Assume that $X_c(k)$ is dense in $X_c(\A_k)^{\Br(X_c)}$
for all rational points~$c$ of a Hilbert subset of~$\P^1_k$.
Does it follow that $X(k)$ is dense in $X(\A_k)^{\Br(X)}$?
\end{question}

Question~\ref{q:fibration} admits an affirmative answer if Conjecture~$\cF$ (or
Conjecture~$\cFplus$) holds true,
by \cite[Corollary~9.25]{hwfibration} (or Corollary~\ref{cor:concretecorollary} and Remarks~\ref{rk:onconcretecorollary} (i)--(ii)), and unconditionally, in various
special cases listed in \cite[\textsection9.4]{hwfibration}
and \cite{browningschindler}, the most notable one being when $k=\Q$ and the non-split fibres
of~$f$ lie over rational points of~$\P^1_k$ (using Matthiesen's theorem, see \cite[Theorem~9.28]{hwfibration}).

\subsection{Statements}

We now turn to the new affirmative answers to Question~\ref{q:fibration}
that can be obtained by combining the main results of \textsection\textsection\ref{sec:fibration}--\ref{sec:knowncases}.  In the statements below, we fix~$X$ and~$f$ as in Question~\ref{q:fibration},
and let $M \subset \P^1_k$ denote the locus of non-split fibres of~$f$.
For each $m \in M$, we choose an irreducible homogeneous polynomial $P_m(\lambda,\mu)$ that
vanishes on~$m$, where $\lambda, \mu$ denote homogeneous coordinates of~$\P^1_k$.
Following a terminology
introduced by Skorobogatov \cite{skorodescent}, the \emph{rank} of~$f$, denoted $\rank(f)$,
is the degree of~$M$ over~$k$ (viewing~$M$ as a reduced closed subscheme of~$\P^1_k$).
Finally, for $m \in M$, we say that a finite extension $L_m$ of~$k(m)$ \emph{splits the fibre~$X_m$}
if the variety $X_m \otimes_{k(m)} L_m$ over~$L_m$ is split.

\begin{thm}
\label{thm:schinzelcyclic}
Assume that the following two conditions are satisfied:
\begin{enumerate}
\item Schinzel's hypothesis~$(\mathrm{HH}_1)$
holds for the homogeneous polynomials $(P_m(\lambda,\mu))_{m \in M}$.
\item For each $m \in M$, there exists an extension of~$k(m)$ that splits the fibre $X_m$
and that is
either cyclic or almost abelian but non-abelian (e.g.\ a cubic extension).
\end{enumerate}
Then Question~\ref{q:fibration} admits an affirmative answer.
\end{thm}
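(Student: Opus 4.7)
The plan is to derive Theorem~\ref{thm:schinzelcyclic} from Corollary~\ref{cor:concretecorollary} via Corollary~\ref{cor:hh1fplus}: the flexibility provided by Conjecture~$\cFplus$, which allows the fields~$K_m$ to be prescribed separately from~$L_m$, is what will permit us to keep the parameters~$L_m$ as small as possible and thereby inherit their Galois structure from the splitting fields supplied by hypothesis~(2). First, Remark~\ref{rk:onconcretecorollary}~(ii) shows that assumptions~(i) and~(ii) of Corollary~\ref{cor:concretecorollary} hold automatically since $f$ is proper with rationally connected geometric generic fibre; after a change of coordinates on~$\P^1_k$, permitted by Remark~\ref{rk:onconcretecorollary}~(i), we may also ensure assumption~(iii) and that $\infty \notin M$.

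Next, I would specify the components required by Corollary~\ref{cor:concretecorollary}. For $m \in \A^1_k \setminus M$ any geometrically integral component works and produces $L_m = k(m)$. For $m \in M$, hypothesis~(2) provides an extension $F_m/k(m)$, either cyclic or almost abelian non-abelian, that splits~$X_m$; a geometrically integral open of $X_m \otimes_{k(m)} F_m$ lies in the base change of a unique irreducible component $Y_m \subseteq X_m$, which must have multiplicity~$1$. Setting $L_m$ equal to the algebraic closure of~$k(m)$ in~$k(Y_m)$ produces an embedding $L_m \hookrightarrow F_m$ over~$k(m)$. The crucial observation is that $L_m/k(m)$ is itself cyclic or almost abelian non-abelian: if $F_m/k(m)$ is cyclic, any subextension is cyclic; and if $F_m/k(m)$ is almost abelian non-abelian then it has prime degree, whence the non-splitness of~$X_m$ over~$k(m)$---which forces $L_m \neq k(m)$---yields $L_m = F_m$.

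Third, I would verify Conjecture~$\cFplus$ for every parameter $\pi_+ = (M_0, (L_m)_{m \in M_0}, (b_m)_{m \in M_0}, (K_m)_{m \in M_0}) \in \sP_+$ with $M_0 \subset \A^1_k$ finite and $(b_m)$, $(K_m)$ arbitrary. Writing $M_0' = \{m \in M_0 \mkern1mu;\mkern2mu L_m \neq k(m)\}$, one has $M_0' \subseteq M_0 \cap M$, so for $m \in M_0'$ the extension $L_m/k(m)$ falls under the first alternative of Corollary~\ref{cor:hh1fplus}, while Schinzel's~$(\mathrm{HH}_1)$ for $(P_m)_{m \in M_0'}$ follows immediately from its assumed validity for the full family $(P_m)_{m \in M}$. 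Corollary~\ref{cor:hh1fplus} then yields Conjecture~$\cFplus$ for~$\pi_+$, and a final application of Corollary~\ref{cor:concretecorollary} to the Hilbert subset provided by the hypothesis of Question~\ref{q:fibration} concludes the argument.

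The step one might worry about most---being forced to enlarge~$L_m$ in order to handle the Brauer group of~$X$, thereby losing the cyclic or almost-abelian-non-abelian property of $L_m/k(m)$---is precisely what has been neutralised by passing from Conjecture~$\cF$ to Conjecture~$\cFplus$ in \textsection\textsection\ref{sec:fibration}--\ref{sec:knowncases}. Within the older formalism based on Conjecture~$\cF$, one would be compelled to enlarge each~$L_m$ so as to absorb the ramification of the finitely many Brauer classes coming from the smooth fibres, and such an enlargement could readily convert a cyclic or almost-abelian-non-abelian extension into an abelian but non-cyclic one, putting it outside the scope of Corollary~\ref{cor:hh1fplus}; the framework of Conjecture~$\cFplus$, which encodes the required ramification data into the separate abelian fields~$K_m$, is what renders the direct argument above possible.
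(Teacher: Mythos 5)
Your proposal is correct and follows essentially the same route as the paper's own proof, which likewise just combines Corollary~\ref{cor:concretecorollary}, Remarks~\ref{rk:onconcretecorollary}~(i)--(ii) and Corollary~\ref{cor:hh1fplus}; your verification that each $L_m$ embeds $k(m)$\nobreakdash-linearly into the given splitting field and hence inherits the cyclic or almost-abelian-but-non-abelian property (using that almost abelian non-abelian extensions have prime degree and that non-splitness of $X_m$ forces $L_m\neq k(m)$) is precisely the point the paper leaves implicit. The only detail you pass over is that after the change of coordinates the polynomials $P_m$ are transformed, so one should also invoke the invariance of~$(\mathrm{HH}_1)$ under coordinate changes of~$\P^1_k$ (and its evident stability under passing to subfamilies and scaling), as the paper's proof does explicitly.
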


\begin{proof}
Combine Corollary~\ref{cor:concretecorollary}, Remarks~\ref{rk:onconcretecorollary}~(i) and~(ii),
the invariance
of~$(\mathrm{HH}_1)$
under changes of coordinates of~$\P^1_k$ (if $X_\infty$ is singular), and
Corollary~\ref{cor:hh1fplus}.
\end{proof}

Theorem~\ref{thm:schinzelcyclic} recovers and generalises a theorem of Smeets \cite[Corollaire~1.5]{smeets},
who dealt with the special case where the generic fibre of~$f$ is a torsor under a torus defined over~$k$ and quasi-split by a cyclic extension of~$k$.
Apart from this case,
Theorem~\ref{thm:schinzelcyclic} was previously known only under the assumption
that the smooth fibres of~$f$ satisfy weak approximation
(in which case~$X_m$ can be allowed to be split by an abelian extension;
see \cite[Corollary~9.27]{hwfibration},
which expanded on \cite{ctsksd98} and on
\cite[Theorem~4.6]{weioneq}).

\begin{thm}
\label{th:applicationrank2}
Question~\ref{q:fibration} admits an affirmative answer if $\rank(f)\leq 2$.
\end{thm}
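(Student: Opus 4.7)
The plan is to combine Corollary~\ref{cor:concretecorollary} with the case of Conjecture~$\cFplus$ established in Corollary~\ref{cor:fplusstrongapprox}~(i). After a preliminary change of coordinates on $\P^1_k$ to ensure that $f^{-1}(\infty)$ is smooth and geometrically integral, which is possible because only finitely many fibres of~$f$ can fail this property, the rational connectedness of the geometric generic fibre supplies hypotheses~(i) and~(ii) of Corollary~\ref{cor:concretecorollary} (see Remark~\ref{rk:onconcretecorollary}~(ii)) and the chosen coordinate supplies~(iii). For each $m \in \A^1_k$ I then choose the multiplicity-one irreducible component of~$f^{-1}(m)$ required by Corollary~\ref{cor:concretecorollary}, with the following convention: whenever $X_m$ is split, I pick a geometrically integral component of~$X_m$ (such a component exists, since any geometrically integral open subset of~$X_m$ is dense in a unique irreducible component, which is then itself geometrically integral and of multiplicity one); otherwise I make an arbitrary choice. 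With this convention, $L_m = k(m)$ whenever $X_m$ is split.

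It remains to verify the main hypothesis of Corollary~\ref{cor:concretecorollary}, namely, that Conjecture~$\cFplus$ holds for every finite $M \subset \A^1_k$, every $(b_m)_{m \in M}$ and every family of finite abelian extensions $(K_m/L_m)_{m \in M}$. Setting $M' = \{m \in M \mkern1mu;\mkern2mu L_m \neq k(m)\}$, the convention above ensures that $M'$ is contained in the non-split locus of~$f$, so
\[
\sum_{m \in M'} [k(m) : k] \leq \rank(f) \leq 2\rlap.
\]
Corollary~\ref{cor:fplusstrongapprox}~(i) then yields Conjecture~$\cFplus$ for the corresponding~$\pi_+$, and the last sentence of Corollary~\ref{cor:concretecorollary} completes the proof.

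There is no substantive obstacle: the real work has already been packaged, on the one hand, into Corollary~\ref{cor:concretecorollary} (which assembles Theorem~\ref{th:fibration} with the Brauer group specialisation result of Proposition~\ref{prop:specialisation}) and, on the other hand, into the strong approximation input for the variety~$W$ underlying Corollary~\ref{cor:fplusstrongapprox}~(i). The only mild point requiring care is to choose the multiplicity-one components so that the set~$M'$ matches the actual non-split locus of~$f$, rather than being artificially enlarged—this is precisely what lets rank~$\leq 2$ translate into the rank-$\leq 2$ hypothesis of Corollary~\ref{cor:fplusstrongapprox}~(i).
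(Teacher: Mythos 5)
Your argument is exactly the paper's: the proof there simply combines Corollary~\ref{cor:concretecorollary}, Remarks~\ref{rk:onconcretecorollary}~(i)--(ii) and Corollary~\ref{cor:fplusstrongapprox}~(i), and your proposal spells out the same reduction, including the (correct and implicitly needed) observation that choosing geometrically integral multiplicity-one components over split fibres forces $L_m=k(m)$ there, so that $M'$ lies in the non-split locus and has degree $\leq 2$. No gaps; this is the intended proof with the routine details made explicit.
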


\begin{proof}
Combine Corollary~\ref{cor:concretecorollary}, Remarks~\ref{rk:onconcretecorollary}~(i) and~(ii),
and Corollary~\ref{cor:fplusstrongapprox}~(i).
\end{proof}

Theorem~\ref{th:applicationrank2} was previously known only under the assumption
that~$k$ is totally imaginary or that~$M$ consists of rational points of~$\P^1_k$
(see \cite[Theorem~9.31]{hwfibration}).

\begin{thm}
\label{thm:applicationrank3}
Assume that $\rank(f)=3$ and that at least one of the following holds:
\begin{enumerate}[label={\upshape(\roman*)}]
\item $k=\Q$ and~$f$ has at least two non-split fibres;
\item\label{it:cyclic} $k=\Q$, the morphism~$f$ has a unique non-split fibre, say over $m$,
and~$X_m$ is split
by an extension of $k(m)$ that is
either cyclic or almost abelian but non-abelian;
\item\label{it:quadratic} for every $m \in M$, the fibre~$X_m$ is split by a quadratic extension of~$k(m)$;
\item\label{it:quadraticbis} there exists $m_0 \in M$ such that $k(m_0)=k$ and such that for every $m \in M \setminus\{m_0\}$, the fibre~$X_m$ is split by a quadratic extension of~$k(m)$.
\end{enumerate}
Then Question~\ref{q:fibration} admits an affirmative answer.
\end{thm}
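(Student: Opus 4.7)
The plan is to derive all four assertions as direct applications of Corollary~\ref{cor:concretecorollary}, combined with the catalogue of known cases of Conjecture~$\cFplus$ collected in~\textsection\ref{sec:knowncases}. Since $f$ is proper with rationally connected geometric generic fibre, assumptions~(i) and~(ii) of Corollary~\ref{cor:concretecorollary} hold (Remark~\ref{rk:onconcretecorollary}(ii)), and a change of coordinates on~$\P^1_k$ arranges assumption~(iii) as well (Remark~\ref{rk:onconcretecorollary}(i)). For each $m \in \A^1_k$, I choose an irreducible component $Y_m$ of multiplicity~$1$ of~$X_m$ and denote by $L_m$ the algebraic closure of $k(m)$ in $k(Y_m)$. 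For $m$ outside the non-split locus $M_0 \subset \A^1_k$ of~$f$, $Y_m$ can be taken geometrically integral, so that $L_m = k(m)$. For $m \in M_0$ occurring in cases~(ii), (iii), (iv), I exploit the splitting extension $F_m/k(m)$ provided by the hypothesis to select a component $Y_m$ of multiplicity~$1$ with $L_m \hookrightarrow F_m$ (such a $Y_m$ exists because a geometrically integral open subset of $X_m \otimes_{k(m)} F_m$ sits inside an irreducible component of $X_m \otimes_{k(m)} F_m$ that descends to an irreducible component of $X_m$ with the required property).

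It will then remain to verify Conjecture~$\cFplus$ for every $\pi_+ = (M, (L_m)_{m \in M}, (b_m)_{m \in M}, (K_m)_{m \in M}) \in \sP_+$, with $M$ an arbitrary finite subset of~$\A^1_k$ and $b_m, K_m$ arbitrary. The key observation is that any $m \in M \setminus M_0$ contributes $L_m = k(m)$ and so falls outside $M' = \{m : L_m \neq k(m)\}$; hence $M' = M \cap M_0$, and the structural conditions in the statements of~\textsection\ref{sec:knowncases}, which constrain only~$M'$, reduce to conditions on the non-split locus of~$f$.

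The four cases of the theorem are then each matched to a result of~\textsection\ref{sec:knowncases}. In case~(iii), every $m \in M'$ has $[L_m : k(m)] = 2$ and $\sum_{m \in M'} [k(m):k] = \rank(f) = 3$, so Corollary~\ref{cor:fplusstrongapprox}(ii) applies. In case~(iv), $[L_m : k(m)] = 2$ for $m \in M' \setminus \{m_0\}$, and the non-splitness of $X_{m_0}$ places $m_0$ in $M'$, whence Corollary~\ref{cor:fplusstrongapprox}(iii) applies. In case~(ii), either $F_m/k(m)$ is cyclic (and then $L_m$ is cyclic as a subextension) or it is almost abelian non-abelian of prime degree (in which case $L_m = F_m$ by non-splitness); in either sub-case Corollary~\ref{cor:heathbrownmoroz} applies. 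In case~(i), the assumption $|M_0| \geq 2$ combined with $\rank(f) = 3$ leaves $|M_0| \in \{2,3\}$: if $|M_0| = 3$, all non-split fibres lie over rational points of $\P^1_\Q$ and Theorem~\ref{thm:lilian} applies; if $|M_0| = 2$, the two closed points have degrees~$1$ and~$2$ and Corollary~\ref{cor:fplusstrongapprox}(iv) applies.

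The main point of care is the bookkeeping remark in the second paragraph---namely, that extra points $m \in M \setminus M_0$ pose no issue because the corresponding $L_m = k(m)$ makes the relevant hypotheses of each invoked result automatic or vacuous. Beyond this, the argument is a straightforward case analysis and I anticipate no substantive obstacle.
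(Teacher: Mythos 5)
Your proposal is correct and follows essentially the same route as the paper, whose proof is precisely the one-line combination of Corollary~\ref{cor:concretecorollary} and Remarks~\ref{rk:onconcretecorollary} with Corollary~\ref{cor:fplusstrongapprox}~(ii), (iii), (iv), Corollary~\ref{cor:heathbrownmoroz} and Theorem~\ref{thm:lilian}, matched to the five sub-cases exactly as you match them. The only difference is that you make explicit the bookkeeping the paper leaves implicit (choosing multiplicity-one components whose field $L_m$ embeds in the given splitting extension, and noting that $M'$ is the intersection of $M$ with the non-split locus, the smaller sub-cases being absorbed by Corollary~\ref{cor:fplusstrongapprox}~(i)).
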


\begin{proof}
Combine Corollary~\ref{cor:concretecorollary} and Remarks~\ref{rk:onconcretecorollary}
with
Corollary~\ref{cor:fplusstrongapprox}~(iv) (which builds on the work of Browning and Schindler)
in case~(i) if~$f$ has two non-split fibres,
with Theorem~\ref{thm:lilian}
(which builds on the work of Matthiesen)
 in case~(i)
 if~$f$ has three non-split fibres,
with Corollary~\ref{cor:heathbrownmoroz}
(which builds on the work of Heath-Brown and Moroz)
 in case~(ii),
 with Corollary~\ref{cor:fplusstrongapprox}~(ii)  in case~(iii)
and with Corollary~\ref{cor:fplusstrongapprox}~(iii)  in case~(iv).
\end{proof}

The above theorem collects everything that can be proved to this day about
Question~\ref{q:fibration} when~$f$ has rank~$3$.
Theorem~\ref{thm:applicationrank3} in case~(i) is \cite[Theorem~1.1]{browningschindler}
and is only stated for the record.
Cases~(ii), (iii) and~(iv), on the other hand, are entirely new.
Among them, the only previously known particular case
was a theorem
of
Colliot-Thélène and Skorobogatov \cite[Theorem~B]{ctskodescent},
who
 had established
Theorem~\ref{thm:applicationrank3} in case~(iii) under the
assumption that the smooth fibres of~$f$ satisfy weak approximation.

\subsection{Examples}
\label{subsec:examples}

We now describe some concrete examples of varieties for which one can prove
the density of rational points in the Brauer--Manin set by applying
Theorem~\ref{thm:applicationrank3} to rank~$3$ fibrations
meeting the requirements~\ref{it:cyclic}, \ref{it:quadratic} or \ref{it:quadraticbis} of its statement.

\subsubsection{An example for Theorem~\ref{thm:applicationrank3}~\ref{it:cyclic}}
\label{subsubsec:example 7.1(ii)}

Consider a number field~$k$
and a
nonzero étale algebra $L = \prod_i L_i$
over~$k$, where the~$L_i$ are number fields.
The arithmetic of smooth and proper models~$X$ of the affine
closed subvariety of $R_{L/k}(\A^1_L) \times \A^1_k$ defined by the equation
\begin{align*}
N_{L/k}(z) = p(t)\rlap,
\end{align*}
where~$z$ and~$t$ are coordinates in $R_{L/k}(\A^1_L)$ and in~$\A^1_k$
respectively,
and where $p \in k[t]$ is a polynomial in one variable,
has been extensively studied in the literature
(see e.g.\ 
\cite{heathbrownskorobogatov,cthasko,varillyviraychatelet,browningheathbrown,swarbrickjones,weioneq,derenthalsmeetswei,browningmatthiesen,irving,shute}).
One can always choose $X$ so that the projection $(z,t) \mapsto t$ extends to a
morphism $f: X \to \P^1_k$
each of whose smooth fibre is a compactification of a
torsor under the norm torus
defined by $N_{L/k}(z)=1$.
Then the generic fibre of~$f$ is
rationally connected
 and $X_c(k)$ is dense in $X_c(\A_k)^{\Br(X_c)}$
for all $c \in \P^1(k)$ such that~$X_c$ is smooth,
by a theorem of Colliot-Thélène and Sansuc
 (see~\cite[Theorem~6.3.1]{skobook}).
Applying Theorem~\ref{thm:applicationrank3}~\ref{it:cyclic} thus yields the
following:

\begin{cor}\label{cor:norms}
Let $b \in \Q^*$.
Let $e \geq 1$ be an integer.
Let $q \in \Q[t]$ be an irreducible cubic polynomial.
Set $E=\Q[t]/(q(t))$.
Let $L=\prod L_i$ be a nonzero \'etale $\Q$\nobreakdash-algebra,
where the~$L_i$ are number fields.
Let $X$ be a smooth and proper model over~$\Q$ of
the closed subvariety of $R_{L/\Q}(\A^1_L) \times \A^1_\Q$ defined by the equation
\begin{align*}
N_{L/\Q}(z) = bq(t)^e\rlap,
\end{align*}
where~$z$ denotes a point of $R_{L/\Q}(\A^1_L)$ and~$t$
is the coordinate of $\A^1_\Q$.
Suppose that the following conditions hold:
\begin{enumerate}
\item\label{it:infinity}
The gcd of the degrees $[L_i:\Q]$ divides $3e$.
\item
Writing the \'etale
$E$\nobreakdash-algebra $L \otimes_\Q E$ as a product of fields,
at least one of the factors is an extension of~$E$ that is
either cyclic or almost abelian but non-abelian.
\end{enumerate}
Then the subset $X(k)$ is dense in $X(\A_k)^{\Br(X)}$.
\end{cor}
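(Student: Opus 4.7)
The plan is to realise~$X$ as the total space of a fibration $f\colon X\to\P^1_\Q$ that satisfies all the hypotheses of Theorem~\ref{thm:applicationrank3}~\ref{it:cyclic}, and then invoke that theorem. Since the density of $X(\Q)$ in $X(\A_\Q)^{\Br(X)}$ is a birational invariant among smooth proper varieties, we may work with any convenient smooth and proper model of the given affine equation.

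First, I would extend the affine projection $(z,t)\mapsto t$ to a morphism $f\colon X\to\P^1_\Q$ whose generic fibre is the compactified $\Q(t)$\nobreakdash-torsor defined by $N_{L/\Q}(z)=bq(t)^e$ under the norm torus $T=R^1_{L/\Q}\Gm$; its geometric generic fibre is then smooth and rationally connected. The smooth fibres of~$f$ above rational $c\in\A^1(\Q)$ with $q(c)\neq0$ are smooth proper models of $T$\nobreakdash-torsors, so the theorem of Colliot-Th\'el\`ene and Sansuc \cite[Theorem~6.3.1]{skobook} guarantees that $X_c(\Q)$ is dense in $X_c(\A_\Q)^{\Br(X_c)}$ for~$c$ in the complement of the finite locus $\{q=0\}$, hence in a Hilbert subset of~$\P^1_\Q$.

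Next, I would analyse the non-split locus of~$f$. Above the closed point~$m$ cut out by $q(t)=0$, the scheme-theoretic fibre is reduced (because $N_{L/\Q}$ is a product of distinct irreducible polynomials) and its irreducible components correspond to the factorisation $L\otimes_\Q E=\prod_j L_E^{(j)}$: each component is defined, up to birational modification, by $N_{L_E^{(j)}/E}(z_j)=0$, is of multiplicity~$1$, and the algebraic closure of~$E$ in its function field is $L_E^{(j)}$. In particular $X_m$ is not split, but any extension of~$E$ containing some~$L_E^{(j)}$ does split it. Above $\infty$, condition~(1) is precisely what is needed to choose nonnegative integers $(a_i)$ with $\sum_i a_i[L_i\colon\Q]=3e$; after the weighted rescaling $t=1/s$, $z_i=y_i/s^{a_i}$, the equation near $s=0$ becomes $\prod_i N_{L_i/\Q}(y_i)=s^{3e}p(1/s)$, whose value at $s=0$ is the nonzero leading coefficient of~$p$. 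Resolving the resulting singularities by blow-ups along fibre-wise smooth centres produces a smooth proper model with a split fibre at~$\infty$. Hence the rank of~$f$ equals $\deg(m)=3$, with~$m$ the unique non-split fibre.

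Finally, condition~(2) provides an index~$j$ such that the factor $L_E^{(j)}/E$ is cyclic or almost abelian but non-abelian; this is an extension of $k(m)=E$ that splits~$X_m$, which is exactly the hypothesis~\ref{it:cyclic} of Theorem~\ref{thm:applicationrank3}. Applying that theorem to $f\colon X\to\P^1_\Q$ yields the desired density. The main technical obstacle is the construction of the smooth proper model with the prescribed split fibre at infinity, which requires a careful weighted-blow-up argument of the kind alluded to but standard in this literature (compare with the model-building step of \cite[Theorem~9.28]{hwfibration}); once this is in place, the remaining verifications are formal consequences of the structure of norm-form equations.
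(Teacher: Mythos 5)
Your proposal is correct and takes essentially the same route as the paper's own proof: fibre the variety over $\P^1_\Q$ via $t$, invoke the Colliot-Th\'el\`ene--Sansuc theorem for the smooth fibres (compactifications of torsors under $R^1_{L/\Q}\Gm$), use condition~(1) together with the rescaling $t=1/s$, $z_i=y_i s^{-a_i}$ (where $\sum_i a_i[L_i:\Q]=3e$) to see that a suitable smooth proper model has split fibre at infinity, use condition~(2) to exhibit a splitting extension of $E=k(m)$ for the fibre over the cubic point that is cyclic or almost abelian non-abelian, and conclude by Theorem~\ref{thm:applicationrank3}~\ref{it:cyclic}. The only (harmless) imprecision is your assertion that $X_m$ is necessarily non-split, hence $\rank(f)=3$: this can fail (for instance when a factor of $L\otimes_\Q E$ equals $E$, or because splitness of the fibre of the smooth proper model need not coincide with that of the affine fibre), but in that case $\rank(f)\leq 2$ and Theorem~\ref{th:applicationrank2} gives the conclusion instead.
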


The r\^ole of condition~\ref{it:infinity} in the above corollary is to ensure that when~$X$ is chosen
in such a way that the projection $(z,t)\mapsto t$ extends to a morphism
 $f: X \to \P^1_\Q$ (which we can assume, as the conclusion of the corollary
is a birational
invariant \cite[Remark~2.4~(iv)]{wittenbergslc}),
 the fibre $f^{-1}(\infty)$ is split.
 The following homogeneous variant of this example permits one to dispense with this condition:

\begin{cor}\label{cor:norms-2}
Let $b\in\Q^*$.
Let $e\geq 1$ be an integer.
Let $q \in \Q[\lambda,\mu]$ be an irreducible homogeneous cubic polynomial.
Let $L=\prod L_i$ be a nonzero \'etale $\Q$\nobreakdash-algebra,
where the~$L_i$ are number fields.
Let~$Y$ be a smooth and proper model over~$\Q$ of the closed
subvariety of $R_{L/\Q}(\A^1_L) \times (\A^2_\Q \setminus \{(0,0)\})$ defined by the equation
\begin{align*}
N_{L/\Q}(z) = bq(\lambda,\mu)^e\rlap,
\end{align*}
where $z$ denotes a point of $R_{L/\Q}(\A^1_L)$ and $\lambda,\mu$ are
the coordinates of
 $\A^2_\Q \setminus \{(0,0)\})$.
Setting $E=\Q[t]/(q(t,1))$
and writing the \'etale
$E$\nobreakdash-algebra $L \otimes_\Q E$ as a product of fields,
suppose that
at least one of the factors is an extension of~$E$ that is
either cyclic or almost abelian but non-abelian.
Then the subset $Y(k)$ is dense in $Y(\A_k)^{\Br(X)}$.
\end{cor}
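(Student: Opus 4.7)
The proof parallels that of Corollary~\ref{cor:norms}, with only the structure of the fibration changing. The projection $V \to \A^2_\Q \setminus \{(0,0)\} \to \P^1_\Q$ defined by $(z,\lambda,\mu) \mapsto [\lambda:\mu]$ yields a rational map $Y \dashrightarrow \P^1_\Q$ which, after replacing $Y$ by a suitable birational modification (permissible since the conclusion is a birational invariant, by \cite[Remark~2.4~(iv)]{wittenbergslc}), extends to a morphism $f \colon Y \to \P^1_\Q$. The plan is to verify the hypotheses of Theorem~\ref{thm:applicationrank3}~\ref{it:cyclic} for~$f$ and conclude.

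Since $q$ is an \emph{irreducible} homogeneous cubic, neither $\lambda$ nor $\mu$ divides it, so its zero locus in $\P^1_\Q$ is a single closed point $m$ of degree~$3$ with residue field $E$. Over any $c \in \P^1(\Q)$ with $q(c) \neq 0$, parametrising the preimage of~$c$ by $(\tau, (\lambda_0, \mu_0))$ with $(\lambda,\mu) = \tau(\lambda_0,\mu_0)$ exhibits the corresponding fibre as birational to a torsor under the torus $T \subset R_{L/\Q}\Gm \times \Gm$ defined by the equation $N_{L/\Q}(u) = v^{3e}$, via the action $(u,v) \cdot (z,\tau) = (uz, v\tau)$. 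Such torsors are rationally connected and, by the Colliot-Thélène--Sansuc theorem (\cite[Theorem~6.3.1]{skobook}), their rational points are dense in the relevant Brauer--Manin set. Thus the conditions on the generic and on the smooth rational fibres of~$f$ are satisfied, and $\rank(f) = 3$: no contribution arises at infinity since the leading coefficient of $q(t,1)$ is nonzero, this being the advantage of the homogeneous setup over Corollary~\ref{cor:norms}.

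The main step is to produce an extension of $E$ of the required type splitting the non-split fibre $Y_m$. Writing $L \otimes_\Q E = \prod_j F_j$, the scheme-theoretic fibre $V_m$ is cut out by $N_{L \otimes E/E}(z) = 0$, which factors over $E$ as $\prod_j N_{F_j/E}(z_j) = 0$. Since each $F_j/E$ is a field extension, the norm form $N_{F_j/E}$ is irreducible over~$E$, so $V_m$ is reduced, with one irreducible component of multiplicity~$1$ per factor~$F_j$. Geometrically over $\bar E$, the $j$\nobreakdash-th component decomposes as a union of $[F_j{:}E]$ hyperplanes indexed by embeddings $F_j \hookrightarrow \bar E$ over $E$ and transitively permuted by the Galois action; hence the algebraic closure of $E$ in its function field is~$F_j$, and in particular it is split by~$F_j$. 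By hypothesis, at least one $F_j$ is cyclic or almost abelian but non-abelian over $E$, and Theorem~\ref{thm:applicationrank3}~\ref{it:cyclic} applied with this $F_j$ as splitting extension yields the desired conclusion.
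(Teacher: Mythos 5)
Your proof is correct and follows essentially the route the paper intends for this corollary: fibre over $\P^1_\Q$ via $[\lambda:\mu]$, note that the smooth fibres are compactifications of torsors under the torus $N_{L/\Q}(z)=v^{3e}$ (so the Colliot-Thélène--Sansuc theorem applies to them), that the only possibly non-split fibre lies over the degree-$3$ point cut out by~$q$ and is split by a factor $F_j$ of $L\otimes_\Q E$, and conclude by Theorem~\ref{thm:applicationrank3}~\ref{it:cyclic}. The one step you leave implicit (as does the paper) is the passage from the affine fibre to the fibre of the smooth proper model, since the affine hypersurface can be singular when $e\geq 2$: your multiplicity-one computation shows that the generic points of the components of the affine fibre over~$m$ are regular points of the total space, so it suffices to choose the model to contain a neighbourhood of these points (or to invoke birational invariance of splitness of fibres of smooth proper models over $\P^1_k$), and then $F_j$ indeed splits the fibre of the model over~$m$.
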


\begin{rmk}
\label{rmk:oncor2}
When the gcd of the degrees $[L_i:\Q]$ divides $3e$, the variety $Y$ considered in Corollary~\ref{cor:norms-2} is birationally equivalent to $X \times \Gm$, where $X$ is the variety associated in Corollary~\ref{cor:norms}
with the polynomial $q(t,1)$.
As a result, Corollary~\ref{cor:norms} is in fact equivalent to a special case
of Corollary~\ref{cor:norms-2}.
We have nevertheless opted for stating Corollary~\ref{cor:norms} separately
in view of the
considerable attention
that the variety~$X$ has received
in the literature
(see the references at the beginning of~\textsection\ref{subsubsec:example 7.1(ii)}).
\end{rmk}

\subsubsection{An example for Theorem~\ref{thm:applicationrank3}~\ref{it:quadraticbis}}

We keep the set-up of~\textsection\ref{subsubsec:example 7.1(ii)} and
assume that~$L$ is a quartic extension of~$k$ and that $p\in k[t]$ is an
irreducible quadratic polynomial having its roots in~$L$.  In this case, using
techniques from analytic number theory, Browning and
Heath-Brown~\cite[Theorem~1]{browningheathbrown} established the Hasse
principle and weak approximation for~$X$ when~$k=\Q$.
Derenthal, Smeets and the second-named author then provided a second proof based on the descent method,
which led, in \cite[Theorem~1]{derenthalsmeetswei}, to the validity of the
same result over an arbitrary number field~$k$.
These authors also verified the equality $\Br(X)=\Br_0(X)$
 in the case under consideration
(see \cite[Theorem~4]{derenthalsmeetswei}).
The theorem of Browning and Heath-Brown was subsequently understood to fit into the framework of the
fibration method: indeed it can be seen as an application of
Browning and Schindler's
Theorem~\ref{thm:applicationrank3}~(i), in view of the equality $\Br(X)=\Br_0(X)$.
Until the present work, however, its generalisation
 to arbitrary number fields
had remained outside of the scope of the fibration method.
We remedy this
gap
 with Theorem~\ref{thm:applicationrank3}~(iv).
Applied to $f:X\to \P^1_k$ (with $m_0=\infty$),
the latter immediately yields
 the density of~$X(k)$ in $X(\A_k)^{\Br(X)}$,
thus recovering
 \cite[Theorem~1]{derenthalsmeetswei}
since
 $\Br(X)=\Br_0(X)$.

\subsubsection{An example for Theorem~\ref{thm:applicationrank3}~\ref{it:quadratic}}

Let us start with a field~$k$ of characteristic~$0$
and a reduced closed subscheme $M \subset \P^1_k$ of degree~$3$ over~$k$.
Let $U = \P^1_k \setminus M$.
Let~$C$ be a smooth projective curve over~$k$
(which we do not assume to be connected or geometrically connected) and $\pi:C \to \P^1_k$ be a finite
morphism satisfying the following condition:
\begin{enumerate}
\item[($*$)] the morphism~$\pi$ is \'etale over~$U$ and for every $m \in M$, the gcd of the ramification
indices of~$\pi$ at the points of~$\pi^{-1}(m)$ is equal to~$2$.
\end{enumerate}
For $b \in k^*$, we consider the closed subvariety~$X^0$ of
$R_{C/\P^1_k}(\A^1_k \times C)$ defined by
\begin{align*}
N_{C/\P^1_k}(z)=b
\end{align*}
and let $f^0:X^0\to \P^1_k$ denote the projection.

\begin{cor}
\label{cor:norms-3}
For any $M$, $C$, $\pi$, $b$ as above, and any smooth and proper model~$X$ of~$X^0$,
if~$k$ is a number field, the subset $X(k)$ is dense in $X(\A_k)^{\Br(X)}$.
\end{cor}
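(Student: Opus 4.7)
\emph{Proof plan.} My plan is to reduce the corollary to Theorem~\ref{thm:applicationrank3}~\ref{it:quadratic} applied to a smooth proper model $X$ of~$X^0$ equipped with a morphism $f \colon X \to \P^1_k$ extending~$f^0$ (or to Theorem~\ref{th:applicationrank2} if the rank of~$f$ turns out to be less than~$3$). Since the density of $X(k)$ in $X(\A_k)^{\Br(X)}$ is a birational invariant, I may, using Nagata compactification, Hironaka resolution, and further blow-ups to extend~$f^0$, arrange that $X$ contains $X^0$ as a dense open subscheme with $f|_{X^0}=f^0$.

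The hypotheses on the smooth fibres of~$f$ required by Question~\ref{q:fibration} are easy to check: for any $c \in U(k)$, the fibre $X^0_c$ is a torsor under the norm-one torus $R^1_{L_c/k}\Gm$, where $L_c$ denotes the étale $k$\nobreakdash-algebra corresponding to $\pi^{-1}(c)$. Such torsors are geometrically rational, whence the geometric generic fibre of~$f$ is rationally connected, and by the theorem of Colliot-Thélène and Sansuc \cite[Theorem~6.3.1]{skobook}, $X_c(k)$ is dense in $X_c(\A_k)^{\Br(X_c)}$ for every such~$c$.

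The main task is to analyse the fibres of~$f$ over the points of~$M$. Fix $m \in M$ and let $p_1,\dots,p_r$ be the preimages of~$m$ in~$C$, with residue fields~$\kappa_i$ and ramification indices~$e_i$; since~$C$ is smooth, the local ring of the schematic fibre $\pi^{-1}(m)$ at~$p_i$ is isomorphic to $\kappa_i[t]/(t^{e_i})$. A direct computation of the norm on the Weil restriction then shows that $X^0_m$ is isomorphic, as a $k(m)$\nobreakdash-scheme, to the product of an affine space with the closed subvariety $Z_m \subset \prod_i R_{\kappa_i/k(m)}\A^1_{\kappa_i}$ cut out by $\prod_i N_{\kappa_i/k(m)}(a_i)^{e_i}=b$. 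The key consequence of condition~($*$) is the factorisation this allows: writing $e_i=2f_i$ with $\gcd(f_1,\dots,f_r)=1$ and setting $s=\prod_i N_{\kappa_i/k(m)}(a_i)^{f_i}$, the defining equation of~$Z_m$ becomes $s^2=b$. Over $\overline{k(m)}$, this factors as $(u-\sqrt b)(u+\sqrt b)$ with $u=\prod_\nu \tilde z_\nu^{f_\nu}$ (the product now running over the geometric preimages of~$m$), and each linear factor cuts out a multiplicity-$1$, geometrically integral component of $Z_m \otimes \overline{k(m)}$---a torsor under a connected sub-torus of a split torus, the connectedness following from the coprimality of the~$f_\nu$. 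Consequently $X^0_m$---and therefore $X_m$, since $X^0$ is open in~$X$---contains a multiplicity-$1$ irreducible component split by the quadratic (possibly trivial) extension $k(m)(\sqrt b)/k(m)$. As $\deg M = 3$, the rank of~$f$ is at most~$3$, and I conclude by invoking Theorem~\ref{thm:applicationrank3}~\ref{it:quadratic} if $\rank(f)=3$ and Theorem~\ref{th:applicationrank2} if $\rank(f)\leq 2$. The main difficulty I anticipate is the explicit fibre computation itself, and in particular the verification that the multiplicity-$1$ geometric components are integral over the expected quadratic splitting field; once this is established, the fibration theorems do the work.
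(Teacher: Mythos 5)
Your proposal is correct and follows essentially the same route as the paper's proof: reduction by birational invariance to a smooth proper model containing $X^0$ with $f$ extending $f^0$, the Weil-restriction computation of the fibres (torsors under norm tori over~$U$, handled by Colliot-Thélène--Sansuc, and over $m \in M$ an affine-space factor times~$Z_m$, split by $k(m)(\sqrt b)$ because $\gcd(e_i)=2$), followed by Theorem~\ref{thm:applicationrank3}~\ref{it:quadratic} (or Theorem~\ref{th:applicationrank2} if the rank drops); your explicit factorisation $s^2=b$ together with the primitive-character argument is exactly the $e=2$ case of the paper's description of~$Z_m$ as a torsor under an extension of $\mmu_e$ by a torus in Lemma~\ref{lem:fibres of f0}. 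The one point you should make explicit is the smoothness of~$X^0$ (equivalently of~$f^0$, proved in Lemma~\ref{lem:fibres of f0} via the fibrewise description plus flatness), since your reduction step tacitly uses it when arranging that $X^0$ embeds as a dense open subscheme of a smooth proper~$X$, and likewise you should note that the fibres of~$f^0$ over all closed (not just rational) points of~$U$ are geometrically integral, so that the non-split locus of~$f$ is indeed contained in~$M$.
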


\begin{proof}
The variety~$X^0$ is smooth (see Lemma~\ref{lem:fibres of f0} below).
As the conclusion of the corollary is a birational invariant, we may therefore assume that~$X$
contains~$X^0$ as a dense open subset and that~$f^0$ extends to a morphism $f:X\to \P^1_k$.
The corollary then results from
Theorem~\ref{thm:applicationrank3}~\ref{it:quadratic} in view of the following description of the
fibres of~$f^0$.
\end{proof}

\begin{lem}
\label{lem:fibres of f0}
The morphism~$f^0$ is smooth.
Its fibres over~$U$ are geometrically integral.
For $m \in M$, the fibre $(f^0)^{-1}(m)$ is split by the extension $k(m)\big(\sqrt b\big)/k(m)$.
\end{lem}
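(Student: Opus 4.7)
The plan is to analyse the structure of $f^0$ locally over $\P^1_k$ and read off each of the three assertions from an explicit description of the fibres. Since $\pi\colon C \to \P^1_k$ is finite and flat between smooth curves, $\pi_*\sO_C$ is locally free of rank $n := \deg \pi$, so $R_{C/\P^1_k}(\A^1_k \times C)$ is the total space of a rank-$n$ vector bundle over $\P^1_k$, and is therefore smooth of relative dimension~$n$. The variety $X^0$ is cut out by the single equation $N_{C/\P^1_k}(z) - b = 0$, making it a Cartier divisor in a smooth irreducible variety, hence Cohen--Macaulay. I aim to show that each fibre of $f^0$ is smooth of dimension $n-1$; flatness then follows by miracle flatness (or by a direct argument that no irreducible component of $X^0$ is contracted), and combined with smoothness of fibres this gives smoothness of~$f^0$.

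For $c \in U$ the scheme $C_c$ is étale over $k(c)$, say $C_c = \Spec L$. Since $b \neq 0$, any solution of $N_{L/k(c)}(z) = b$ lies in $R_{L/k(c)}(\Gm)$ (zero divisors in étale algebras have zero norm), so $(f^0)^{-1}(c)$ is visibly a torsor under the connected norm-one torus $R^1_{L/k(c)}\Gm$. This settles both smoothness and geometric integrality of the fibre at~$c$.

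The crux of the argument is the analysis of $F_m := (f^0)^{-1}(m)$ for $m \in M$. Writing $\pi^{-1}(m) = \{P_1, \dots, P_s\}$ with ramification index $e_i$ at $P_i$ and $d_i = [k(P_i):k(m)]$, the Cohen structure theorem in characteristic~$0$ identifies $A_i := \sO_{C,P_i}/(t) \cong k(P_i)[u_i]/(u_i^{e_i})$. For $z_i = \sum_{j=0}^{e_i-1} a_{i,j} u_i^j \in A_i$, reading off multiplication by $z_i$ on the $\mathfrak{m}_i$-adic associated graded (a direct sum of $e_i$ copies of $k(P_i)$ on which $z_i$ acts as $a_{i,0}$) yields the key formula $N_{A_i/k(m)}(z_i) = N_{k(P_i)/k(m)}(a_{i,0})^{e_i}$. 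The defining equation of $F_m$ therefore involves only the leading variables $a_{i,0}$, giving an isomorphism $F_m \simeq V_m \times \A^N_{k(m)}$, where $V_m \subset \prod_i R_{k(P_i)/k(m)}(\A^1)$ is the hypersurface $\prod_i N_{k(P_i)/k(m)}(a_{i,0})^{e_i} = b$ and $N = \sum_i (e_i-1) d_i$. A gradient computation over $\overline{k(m)}$ (any solution forces all $a_{i,0,\sigma} \neq 0$, after which the partial $e_i b / a_{i,0,\sigma}$ is nonzero) shows that $V_m$ is smooth, and a dimension count gives $\dim F_m = n-1$, completing the verification that $f^0$ is smooth.

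For the splitting claim, condition~$(*)$ gives that every $e_i$ is even with $\gcd_i(e_i) = 2$; I write $e_i = 2f_i$ so that $\gcd_i(f_i) = 1$, turning the equation of $V_m$ into $\big(\prod_i N_{k(P_i)/k(m)}(a_{i,0})^{f_i}\big)^2 = b$, which splits over $k(m)(\sqrt{b})$ as a disjoint union $V_m^+ \sqcup V_m^-$ defined by $\prod_i N_{k(P_i)/k(m)}(a_{i,0})^{f_i} = \pm\sqrt{b}$. The main technical step—and the one I expect to require the most care—is to show $V_m^+$ is geometrically integral. Over $\overline{k(m)}$, $V_m^+$ sits inside the open locus where all $a_{i,0,\sigma}$ are nonzero (forced by $\sqrt{b} \neq 0$) and is cut out there by $\prod_{i,\sigma} a_{i,0,\sigma}^{f_i} = \sqrt{b}$; since $a_{i,0,\sigma}$ appears with exponent $f_i$ and $\gcd_{i,\sigma}(f_i) = \gcd_i(f_i) = 1$, I will invoke the standard irreducibility criterion that a polynomial $\prod_\alpha x_\alpha^{n_\alpha} - c$ with $c \neq 0$ is irreducible over an algebraically closed field of characteristic~$0$ iff $\gcd_\alpha(n_\alpha) = 1$ (proved by viewing it as an equation $x_1^{n_1} = c/\prod_{\alpha\neq 1}x_\alpha^{n_\alpha}$ over $\overline{k(m)}(x_2,\ldots)$ and applying Gauss's lemma). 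Hence $V_m^+$ is geometrically integral, and $V_m^+ \times \A^N$ provides the geometrically integral open subset of $F_m \otimes k(m)(\sqrt{b})$ required to split $F_m$.
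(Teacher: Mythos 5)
Your proof is correct, and its core is the same as the paper's: both identify the fibre over $m\in M$ by writing $\pi^{-1}(m)=\coprod_i\Spec\big(k(P_i)[u]/(u^{e_i})\big)$, computing $N_{A_i/k(m)}(z_i)=N_{k(P_i)/k(m)}(a_{i,0})^{e_i}$, splitting off an affine factor so that $F_m\simeq V_m\times\A^N$, and then deducing smoothness of $f^0$ from equidimensionality of the (smooth) fibres via miracle flatness on a Cohen--Macaulay source over the regular base $\P^1_k$. The only genuine divergence is the last step. The paper treats $V_m$ (its $Z$) uniformly as a torsor under a group of multiplicative type that is an extension of $\mmu_e$ by a torus, $e=\gcd_i(e_i)$; the induced $\mmu_e$-torsor is $z^e=b$, which gives geometric integrality when $e=1$ (covering the fibres over $U$ in the same stroke) and splitting by $k(m)(b^{1/e})$ in general, hence by $k(m)(\sqrt b)$ under~($*$). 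You instead exploit $e=2$ directly: writing $e_i=2f_i$ with $\gcd_i(f_i)=1$, you factor the equation over $k(m)(\sqrt b)$ into $\prod_i N_{k(P_i)/k(m)}(a_{i,0})^{f_i}=\pm\sqrt b$ and prove each factor geometrically integral via the criterion that $\prod_\alpha x_\alpha^{n_\alpha}-c$ ($c\neq 0$) is irreducible iff $\gcd_\alpha(n_\alpha)=1$, with an explicit Jacobian computation for smoothness; your fibres over $U$ are handled separately as torsors under norm-one tori. Your route is more hands-on and elementary, but is tailored to the hypothesis $\gcd_i(e_i)=2$; the paper's torsor-theoretic formulation costs a little more abstraction and buys the general statement (splitting by $k(m)(b^{1/e})$ for any $e$) and a single argument covering split and non-split fibres at once. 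Both are complete proofs of the lemma as stated.
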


\begin{proof}
As the morphism~$f^0$ is obtained by base change from the norm map
\begin{align}
\label{eq:norm weil restr}
N_{C/\P^1_k}: R_{C/\P^1_k}(\Gm \times C) \to \Gm \times \P^1_k
\end{align}
it suffices to prove that the latter is smooth,
and to describe its fibres.

The fibre of~\eqref{eq:norm weil restr} above an arbitrary point $(b,m)$ of $\Gm \times \P^1_k$
is the closed subvariety of $R_{\pi^{-1}(m)/m}(\A^1_k \times \pi^{-1}(m))$ defined by the equation
$N_{\pi^{-1}(m)/m}(z)=b$.
Let us choose an isomorphism $\pi^{-1}(m)=\coprod_{i=1}^s \Spec\big(k_i[v]/(v^{e_i})\big)$,
where $k_1,\dots,k_s$ are finite extensions of~$k(m)$
and $e_1,\dots,e_s$ are the ramification indices.
There results an isomorphism
\begin{align}
\label{eq:isoweil restr}
R_{\pi^{-1}(m)/m}(\A^1_k \times \pi^{-1}(m))=\prod_{i=1}^s \prod_{j=0}^{e_i-1}R_{k_i/k(m)}\A^1_{k_i}\rlap.
\end{align}
Letting $z_{i,j}$ stand for a point of the corresponding factor $R_{k_i/k(m)}\A^1_{k_i}$
in the right-hand side of~\eqref{eq:isoweil restr},
the equation
$N_{\pi^{-1}(m)/m}(z)=b$
is rewritten, through this isomorphism,
as
\begin{align}
\label{eq:normkikmeib}
\prod_{i=1}^s N_{k_i/k(m)}(z_{i,0})^{e_i}=b\rlap.
\end{align}
All in all,
the fibre of~\eqref{eq:norm weil restr}
above $(b,m)$ is isomorphic to
$Z \times \A_{k(m)}^{\deg(\pi)-\sum_{i=1}^s [k_i:k(m)]}$,
where~$Z$ denotes the closed subvariety of $\prod_{i=1}^s R_{k_i/k(m)}\Gm$ defined by~\eqref{eq:normkikmeib}.

Let $e$ denote the gcd of the~$e_i$.
It is easy to see that~$Z$
is a torsor under a group of multiplicative type over~$k(m)$ which is an extension of~$\mmu_e$ by a torus,
and that the torsor under~$\mmu_e$ induced by~$Z$ is the closed subvariety of
$\mathbf{G}_{\mathrm{m},{k(m)}}$ defined by $z^e=b$.  Hence~$Z$ is geometrically integral if
$e=1$, and in any case it is split by $k(m)(b^{1/e})/k(m)$.

Thanks to~($*$), we have now proved the second and third assertions of the lemma.
Our description of the fibres of~\eqref{eq:norm weil restr} also shows that they are smooth
and all have the same dimension (namely $\deg(\pi)-1$).
As in addition~\eqref{eq:norm weil restr}
is a finite type morphism between regular schemes (indeed, between smooth $\P^1_k$\nobreakdash-schemes,
see \cite[7.6/5]{blr}), it follows that it is smooth
(flatness being ensured by \cite[Proposition~6.1.5]{ega42}).
\end{proof}

It remains to give examples of covers~$\pi$ satisfying~($*$).

\begin{example}
\label{ex:biquadratic}
Condition~($*$) holds if $\pi:C\to \P^1_k$ is a connected Galois cover
 with branch locus equal to~$M$
and with Galois group $(\Z/2\Z)^2$.
Such covers exist if~$M$ consists of three rational points
(e.g.\ take $k(C)=k(t)\big(\sqrt{t(t+1)},\sqrt{t(t-1)}\big)$ if $M=\{-1,0,1\}$).
\end{example}

\begin{example}
\label{ex:twistedbiquadratic}
More generally, consider the algebraic group~$S$ over~$k$
defined as the kernel
of the norm map $N_{M/k}: R_{M/k}(\Z/2\Z) \to \Z/2\Z$.
This is a twisted form of $(\Z/2\Z)^2$.
Let~$t$ denote the parameter of~$\A^1_k$ and, assuming for simplicity that $M \subset \A^1_k$,
let $a \in H^0(M,\sO_M)$ denote the restriction of~$t$ to~$M$.
As~$M$ has degree~$3$ over~$k$, the exact sequence
\begin{align}
\xymatrix{
0 \ar[r] & S \ar[r] & R_{M/k}(\Z/2\Z) \ar[r] & \Z/2\Z \ar[r] & 0
}
\end{align}
induces an isomorphism
$H^1_{\et}(U,S) \isoto \Ker\big(N_{M/k}:H^1_{\et}(U \times_k M,\Z/2\Z) \to H^1_{\et}(U,\Z/2\Z)\big)$;
in particular, any invertible function on $U \times_k M$ whose norm down to~$U$ is a square defines
a class in $H^1_{\et}(U,S)$.  Applying this to the function $N_{M/k}(t-a)/(t-a)$,
we obtain the isomorphism
class of a torsor $C_U \to U$ under~$S$. Let $\pi:C \to \P^1_k$ be the cover obtained by compactifying
this torsor.  We claim that~($*$) holds and that~$C$ is geometrically connected over~$k$.
To check this, we may freely extend the scalars and thus assume
that~$k$ is algebraically closed and that $M=\{-1,0,1\}$,
in which case we can identify~$S$ with $(\Z/2\Z)^2$
and $\pi$ with the cover considered in Example~\ref{ex:biquadratic}.
\end{example}

\begin{rmks}
\label{rem:galois groups last example}
(i)
In the situation of Example~\ref{ex:twistedbiquadratic},
the Galois group of a Galois closure of the quartic extension
$k(C)/k(t)$, when viewed as a subgroup of~$S_4$, is $(\Z/2\Z)^2$ if~$M$ consists of three rational
points, or $D_4$ if~$M$ consists of a rational point and a quadratic point,
or~$A_4$ if~$M$ consists of a cubic point with cyclic residue field,
or else~$S_4$.

To see this, let~$k'$ be a minimal Galois extension of~$k$ that splits~$M$ completely
and $a_1, a_2, a_3$ be the values of~$t$ at the $k'$\nobreakdash-points of~$M$.
Let $p_i=\prod_{j \neq i}(t-a_j) \in k'[t]$ for every~$i$.
As the set $\{p_1,p_2,p_3\}$
is stable under $\Gal(k'/k)$ and as
 $k'(C)=k'(t)\big(\sqrt{p_1},\sqrt{p_2},\sqrt{p_3}\big)$,
the extension $k'(C)/k(t)$ is Galois.
Viewing $\Gal(k'/k)$ both as the quotient $\Gal(k'(t)/k(t))$ of $G=\Gal(k'(C)/k(t))$
and as its subgroup $\Gal(k'(C)/k(C))$, and noting that $k'(C)/k'(t)$ is biquadratic,
we find that
$G \simeq (\Z/2\Z)^2 \rtimes \Gal(k'/k)$, from which the claim follows easily.

(ii)
In the situation of Lemma~\ref{lem:fibres of f0}, even though~$f$ and~$X$ are not explicit,
it is possible to get a hold on the splitting behaviour of the fibres of~$f$
(rather than~$f^0$)
by considering the points of the generic fibre of~$f^0$ with values in complete discretely
valued fields with pseudo-algebraically closed residue field
(see \cite[Proposition~3.8]{ctdegenerescences}).
In this way, one can check that for every $m \in M$, the fibre~$f^{-1}(m)$ is split
if and only if~$b$ becomes a square in the field~$k'$ of
Remark~\ref{rem:galois groups last example}
(while it follows from the proof of Lemma~\ref{lem:fibres of f0}
that $(f^0)^{-1}(m)$ is split if and only if~$b$ becomes a square in~$k(m)$, a slightly stronger condition
in general).
Thus, in Examples~\ref{ex:biquadratic}
and~\ref{ex:twistedbiquadratic},
all of the fibres of~$f$ over~$M$ are truly non-split if~$b$ does not become a square in~$k'$.
\end{rmks}

\subsubsection{Further comments}

In all of the examples given in~\textsection\ref{subsec:examples},
the smooth fibres of the fibrations we construct are compactifications of torsors under algebraic tori.
Specifically, in Corollary~\ref{cor:norms},
the algebraic torus in question is $R^1_{L/\Q}\Gm$;
in Corollary~\ref{cor:norms-2}, it is the subtorus of $\Gm \times R_{L/\Q}\Gm$
defined by the equation $y^{3e}N_{L/k}(z)=1$;
and in Corollary~\ref{cor:norms-3},
the fibre over $c \in U$ is a compactification of a torsor under
the norm torus $R^1_{\pi^{-1}(c)/c}\Gm$.

Such torsors can have non-constant unramified Brauer classes and generally
fail to satisfy the Hasse principle or weak approximation.
Non-constant unramified Brauer classes in the fibres do exist,
in the case of Corollary~\ref{cor:norms}
(and therefore also of Corollary~\ref{cor:norms-2},
see
Remark~\ref{rmk:oncor2}),
when $E/\Q$ is a cyclic extension
and $L/\Q$ is a Galois extension with Galois group $G=\Z/3\Z \times \Z/3\Z$
that contains~$E$,
since in this case the norm torus~$T$ associated with~$L/\Q$ satisfies
$\Sha^2_{\cyc}(G,\widehat T)\neq 0$ by Remark~\ref{rk:brnrtrivial}~(ii).
(Recall that the injection~\eqref{eq:injectionbr1br0z}
is an isomorphism since~$k$ is a number field.)
In the case of Corollary~\ref{cor:norms-3},
the same phenomenon occurs
when~$M$ consists either of three rational points or of one cubic point with cyclic residue field,
according to Remark~\ref{rem:galois groups last example}~(i),
Remark~\ref{rk:brnrtrivial}~(ii) and Example~\ref{ex:abelian-alternating}.

Because of the presence of non-constant unramified Brauer classes in the fibres,
the various examples we have given are not covered by~\cite[Theorem~B]{ctskodescent}.

\bibliographystyle{amsalpha}
\bibliography{hww}
\end{document}